\titleformat*{\section}{\normalsize\bfseries}
\titleformat*{\subsection}{\normalsize\bfseries}
\def\R{\mathbb{R}}
\def\e{{\varepsilon}}        
\def\p{\partial}
\newtheorem{thm}{Theorem}[section]
\newtheorem{lem}[thm]{Lemma}
\newtheorem{cor}[thm]{Corollary}
\newtheorem{prop}[thm]{Proposition}
\newtheorem{rem}[thm]{Remark}
\begin{document}

\title{
\vspace{-1cm}
\large{\bf Optimal Decay Estimate and Asymptotic Profile for Solutions to the Generalized Zakharov--Kuznetsov--Burgers Equation in 2D}}
\author{Ikki Fukuda and Hiroyuki Hirayama 
}
\date{}
\maketitle

\footnote[0]{2020 Mathematics Subject Classification: 35B40, 35Q53.}

\vspace{-0.75cm}
\begin{abstract}
We consider the Cauchy problem for the generalized Zakharov--Kuznetsov--Burgers equation in 2D. 
This is one of the nonlinear dispersive-dissipative equations, which has a spatial anisotropic dissipative term $-\mu u_{xx}$. 
In this paper, we prove that the solution to this problem decays at the rate of $t^{-\frac{3}{4}}$ in the $L^{\infty}$-sense, provided that the initial data $u_{0}(x, y)$ satisfies $u_{0}\in L^{1}(\R^{2})$ and some appropriate regularity assumptions. 
Moreover, we investigate the more detailed large time behavior and obtain a lower bound of the $L^{\infty}$-norm of the solution. As a result, we prove that the given decay rate $t^{-\frac{3}{4}}$ of the solution to be optimal. Furthermore, combining the techniques used for the parabolic equations and for the Schr$\ddot{\mathrm{o}}$dinger equation, we derive the explicit asymptotic profile for the solution. 
\end{abstract}

\medskip
\noindent
{\bf Keywords:} 
Zakharov--Kuznetsov--Burgers equation; optimal decay estimate; asymptotic profile. 

\section{Introduction}  

We consider the Cauchy problem for the generalized Zakharov--Kuznetsov--Burgers equation in 2D:
\begin{align}\label{ZKB}
\begin{split}
& u_{t} + u_{xxx} + u_{yyx} - \mu u_{xx} + \beta u^{p}u_{x} = 0, \ \ (x, y) \in \R^{2}, \ t>0,\\
& u(x, y, 0) = u_{0}(x, y), \ \ (x, y) \in \R^{2}, 
\end{split}
\end{align}
where $u = u(x, y, t)$ is a real-valued unknown function, $u_{0}(x, y)$ is a given initial data, $p\ge1$ is an integer, $\beta \in \R$ and $\mu>0$. 
The subscripts $x$, $y$ and $t$ denote the partial derivatives with respect to $x$, $y$ and $t$, respectively. 
When $p=1$, this equation \eqref{ZKB} is called the Zakharov--Kuznetsov--Burgers equation, which appears in the dust-ion-acoustic-waves in dusty-plasmas (cf.~\cite{MS08}). Also, \eqref{ZKB} can be considered as a two dimensional model of the generalized Korteweg-de Vries--Burgers (KdV--Burgers) equation (see \eqref{KdVB} below). 
The main purpose of our study is to analyze the large time asymptotic behavior of the solutions to the Cauchy problem \eqref{ZKB}. 
In particular, we would like to derive the optimal decay estimate and the asymptotic profile for solutions. 

First of all, let us recall some known results related to this problem. 
Taking $\mu=0$ and replacing $t>0$ with $t\in \R$ in \eqref{ZKB}, we derive the following generalized Zakharov--Kuznetsov equation: 
\begin{align}\label{gZK}
\begin{split}
& u_{t} + u_{xxx} + u_{yyx} + \beta u^{p}u_{x} = 0, \ \ (x, y) \in \R^{2}, \ t\in \R,\\
& u(x, y, 0) = u_{0}(x, y), \ \ (x, y) \in \R^{2}. 
\end{split}
\end{align}
This equation has the conserved mass and energy:
\[
M(u(t)):=\int_{\R^2}u(t)^2dxdy,\ \ 
E(u(t)):=\frac{1}{2}\int_{\R^2}|\nabla u(t)|^2dxdy
-\frac{\beta}{(p+1)(p+2)}\int_{\R^{2}} u(t)^{p+2}dxdy. 
\]
The local well-posedness of 2D generalized Zakharov--Kuznetsov equation 
in the Sobolev space $H^s(\R^2)$ is well studied by many authors 
(cf.~\cite{FLP12, GH14, Ki21, Ki22, LP11, LP09, MP15, RV12}). 
In particular, the local well-posedness of \eqref{gZK} in $H^s(\R^2)$ 
is established for the following $s\in \R$ (see \cite{Ki21, Ki22, RV12}):
\[
\begin{cases}
s>-\frac{1}{4}& {\rm if}\ \ p=1,\\
s\ge \frac{1}{4}& {\rm if}\ \ p=2,\\
s>\frac{5}{12}& {\rm if}\ \ p=3,\\
s>s_c:=1-\frac{2}{p}& {\rm if}\ \ p\ge 4, 
\end{cases}
\]
where $s_c$ is the scaling critical Sobolev exponent. The global well-posedness is obtained by using the conserved quantities $M(u(t))$ and $E(u(t))$ as the above. 
In \cite{Ki21}, Kinoshita proved that 
\eqref{gZK} with $p=1$ is globally well-posed in $L^2(\R^2)$. 
In \cite{BL03}, Biagioni--Linares proved that 
(\ref{gZK}) with $p=2$ is globally well-posed in $H^1(\R^2)$. 
For the case $p\ge 3$, 
Linares--Pastor \cite{LP11} showed that 
\eqref{gZK} is globally well-posed for small initial data in $H^1(\R^2)$. 
The large time behavior of the solution to \eqref{gZK} related to the decay and scattering results are considered by Farah--Linares--Pastor \cite{FLP12}. In particular, they proved that the global solution to \eqref{gZK} with $p\ge 3$ 
for small initial data scatters in $H^1(\R^2)$. 
More precisely, let $q=2p+2$, $q'=\frac{2p+2}{2p+1}$ and $\theta=\frac{p}{p+1}$, if $u_0\in H^1(\R^2)\cap L^{q'}(\R^2)$ 
is small enough, then the global solution $u(x, y, t)$ to \eqref{gZK} with $p\ge 3$ 
satisfies the following estimate: 
\[
\left\|u(\cdot, \cdot, t)\right\|_{L^{q}}\le C(1+|t|)^{-\frac{2\theta}{3}}, \ \ t\in \R.
\]
Moreover, the following asymptotic formula has been established: 
\[
\lim_{t\rightarrow \pm \infty}
\left\|u(t)-e^{-t\left(\partial_x^3+\partial_x\partial_y^2\right)}f_{\pm}\right\|_{H^1}=0
\]
for some $f_{\pm}\in H^1(\R^2)$. 
Namely, the original solution $u(x, y, t)$ to \eqref{gZK} converges to the solution to the linearized problem of \eqref{gZK} 
as $t\rightarrow \pm \infty$. 

Compared with \eqref{gZK}, our target problem \eqref{ZKB}, which has the dissipative term $-\mu u_{xx}$, 
should be considered as a dispersive-dissipative equation, not as a purely dispersive equation. 
However, especially in the higher dimensions, there are very few results on the large time behavior of the solutions to dispersive-dissipative equations with spatial anisotropic dissipative term like that of \eqref{ZKB}. 
Actually, there are only two such papers \cite{FH23, M99}, up to the authors knowledge (we will explain them later). 
On the other hand, for the one-dimensional case, there are a lot of results for dispersive-dissipative equations. 
Before introducing the results in the higher dimensional case, let us recall about the one-dimensional problem. 
First, we shall introduce some related results on the following Cauchy problem for the generalized KdV--Burgers equation, which is a one-dimensional version for \eqref{ZKB}: 
\begin{align}\label{KdVB}
\begin{split}
& u_{t} + u_{xxx} -\mu u_{xx} + \beta u^{p}u_{x} = 0, \ \ x \in \R, \ t>0,\\
& u(x, 0) = u_{0}(x), \ \ x \in \R.
\end{split}
\end{align}
For the above problem \eqref{KdVB}, since the global well-posedness in appropriate Sobolev spaces can be easily proved by virtue of the dissipation effect, the main research topic is about the large time asymptotic behavior of the solution. This topic was first studied by Amick--Bona--Schonbek \cite{ABS89}. They derived the time decay estimates of the solution, in the case of $p=1$. In particular, they showed that if $u_{0}\in H^{2}(\R) \cap L^{1}(\R)$, then the solution satisfies the following estimate: 
\begin{equation}\label{KdVB-decay}
\left\|u(\cdot, t)\right\|_{L^{q}} \le Ct^{-\frac{1}{2}\left(1-\frac{1}{q}\right)}, \ \ t>0, \ 2\le q\le \infty. 
\end{equation}
We note that the decay rate of the solution given by \eqref{KdVB-decay} is the same as the one for the solution to the parabolic equations such as the linear heat equation: 
\[
u_{t}-\mu u_{xx}=0, \ \ x \in \R, \ t>0. 
\] 
It means that the dissipation effect is stronger than the dispersion effect. 
Moreover, in view of the asymptotic profile, it is known that the nonlinearity and the dissipation are balanced each other. 
Actually, Karch \cite{K99-2} studied \eqref{KdVB} with $p=1$ in more details and extended the results given in \cite{ABS89}. If $u_{0}\in H^{1}(\R)\cap L^{1}(\R)$, then the following asymptotic formula can be established: 
\begin{equation}\label{Karch}
\lim_{t\to \infty}t^{\frac{1}{2}\left(1-\frac{1}{q}\right)}\left\|u(\cdot, t)-\chi(\cdot, t)\right\|_{L^{q}}
=0, \ \ 1\le q\le \infty,
\end{equation}
where $\chi(x, t)$ is the self-similar solution to the following Burgers equation: 
\begin{equation*}
\chi_{t}-\mu \chi_{xx}+\beta \chi \chi_{x}=0, \ \ x\in \R, \ t>0. 
\end{equation*}
This self-similar solution $\chi(x, t)$ can be obtained explicitly as follows (cf.~\cite{C51, H50}): 
\begin{equation*}
\chi(x, t):=\frac{1}{\sqrt{t}} \chi_{*} \left(\frac{x}{\sqrt{t}}\right), \ \ x\in \R,\ t>0,
\end{equation*}
where
\begin{equation}\label{chi_star_def}
\chi_{*}(x):=\frac{\sqrt{\mu}}{\beta}\frac{\left(e^{\frac{\beta M}{2\mu}}-1\right)e^{-\frac{x^{2}}{4\mu}}}{\sqrt{\pi}+\left(e^{\frac{\beta M}{2\mu}}-1\right)\displaystyle \int_{x/\sqrt{4\mu}}^{\infty}e^{-y^{2}}dy}, 
\ \ M:=\int_{\R}u_{0}(x)dx. 
\end{equation}
Moreover, Hayashi--Naumkin \cite{HN06} improved the asymptotic rate given in \eqref{Karch} of the solution $u(x, t)$ to the above self-similar solution $\chi(x, t)$. More precisely, they showed that if we additionally assume $xu_{0}\in L^{1}(\R)$, then $u(x, t)$ goes to $\chi(x, t)$ at the rate of $t^{-\frac{1}{2}-\delta}$ in the $L^{\infty}$-sense, where $\delta\in (0, 1/2)$. Furthermore, Kaikina--Ruiz-Paredes \cite{KP05} succeeded to construct the second asymptotic profile of the solution, under the condition $xu_{0}\in L^{1}(\R)$. In view of the second asymptotic profile, they proved that the optimal asymptotic rate to $\chi(x, t)$ is $t^{-1}\log t$ in the $L^{\infty}$-sense, and also showed that the effect of the dispersion term $u_{xxx}$ appears from the second term of its asymptotics. For more related results concerning this problem, we can also refer to \cite{F19-1, F19-2, HKNS06}. 

Compared with the case of $p=1$, we can say that the nonlinearity is weak if $p\ge2$ in \eqref{KdVB}. Because if the solution $u(x, t)$ decays, then the nonlinear term $\beta u^{p}u_{x}$ decays fast enough. In this case, the main part of the solution $u(x, t)$ is governed by the heat kernel:
\[
G(x, t):=\frac{1}{\sqrt{4\pi \mu t}}\exp\left(-\frac{x^{2}}{4\mu t}\right), \ \ x\in \R, \ t>0. 
\]
Actually, if $p\ge2$ and $u_{0}\in H^{1}(\R)\cap L^{1}(\R)$ is small enough, then we are able to obtain the same decay estimate \eqref{KdVB-decay}. Moreover, the following asymptotic formula also has been established: 
\begin{equation}\label{Karch-heat}
\lim_{t\to \infty}t^{\frac{1}{2}\left(1-\frac{1}{q}\right)}\left\|u(\cdot, t)-MG(\cdot, t)\right\|_{L^{q}}
=0, \ \ 1\le q\le \infty,
\end{equation}
where $M$ is defined in \eqref{chi_star_def}. 
This asymptotic formula \eqref{Karch-heat} can be found in e.g.~\cite{K97-2, K99-1} by Karch. Especially in \cite{K99-1}, he studied \eqref{KdVB} in details when $p\ge2$ (in fact, for more general nonlinearities are treated), and constructed the second asymptotic profiles of the solution. Indeed, it was shown in \cite{K99-1} that if we additionally assume $xu_{0}\in L^{1}(\R)$, then the optimal asymptotic rates to $MG(x, t)$ are given by $t^{-1+\frac{1}{2q}}$ when $p>2$ and $t^{-1+\frac{1}{2q}}\log t$ when $p=2$, respectively, in the $L^{q}$-sense. Furthermore, according to his results, it can be seen that the effects of the nonlinear term $\beta u^{p}u_{x}$ and the dispersion term $u_{xxx}$ appear from the second asymptotic profile of the solution, in the case of $p>2$. However, we are able to find that the effect of 
the dispersion term does not appear in the second asymptotic profile when $p=2$. The results introduced in this paragraph also hold for more general dispersive-dissipative equations which has a similar structure to \eqref{KdVB}, such as the (generalized) BBM--Burgers equation: 
\[
u_{t}-u_{xxt}+ u_{xxx} -\mu u_{xx}+\beta u^{p}u_{x} =0, \ \ x\in \R, \ t>0. 
\]
For details, see e.g.~\cite{ABS89, HKN07, HKNS06, K97-1, K97-2, K99-1} and also references therein. 

The above asymptotic formulas \eqref{Karch} and \eqref{Karch-heat} imply that the dispersion term $u_{xxx}$ can be negligible in the sense of the first approximation, due to the dissipation effect. From this point of view, we can expect that the same phenomenon also to occur in the higher dimensional problem like our target equation \eqref{ZKB}. However, \eqref{ZKB} has a spatial anisotropy and the dissipative term $-\mu u_{xx}$ works only in the $x$-direction. 
Moreover, we also need to consider the effect of the dispersion term $u_{yyx}$ 
which works in both the $x$-direction and the $y$-direction. 
Therefore, the method used in the one-dimensional problems is difficult to apply to the analysis for \eqref{ZKB}. Of course, the method used for dispersive equations like \eqref{gZK} also cannot be applied to \eqref{ZKB}.
For this reason, in order to analyze the structure of the solution to the anisotropic dispersive-dissipative equations, 
we need to establish some different methods. 
As we mentioned in the above, the only known results related to the anisotropic dispersive-dissipative equations, are \cite{FH23, M99} for the following generalized Kadomtsev--Petviashvili--Burgers equation: 
\begin{align}\label{KPB}
\begin{split}
& u_{t} + u_{xxx} + \e \p_{x}^{-1}u_{yy} -\mu u_{xx} + \beta u^{p}u_{x} = 0, \ \ (x, y) \in \R^{2}, \ t>0,\\
& u(x, y, 0) = u_{0}(x, y), \ \ (x, y) \in \R^{2},  
\end{split}
\end{align}
where $\e \in \{-1, 1\}$, while the anti-derivative operator $\p_{x}^{-1}$ is defined by 
\begin{equation}\label{anti-derivative}
\p_{x}^{-1}f(x, y):=\mathcal{F}^{-1}\left[ (i\xi)^{-1}\hat{f}(\xi, \eta) \right](x, y).
\end{equation}
The definition of the Fourier transform $\hat{f}(\xi, \eta)=\mathcal{F}[f](\xi, \eta)$ will be given at the end of this section. 
In what follows, we would like to explain about the known results for \eqref{KPB} given in \cite{FH23, M99}. 
In order to introduce the results about \eqref{KPB}, for $s\ge0$, let us introduce the following function space: 
\begin{align}
X^{s}(\R^{2}) := \left\{ f \in H^{s}(\R^{2}); \ \|f\|_{X^{s}} := \|f\|_{H^{s}} + \left\| \partial_x^{-1}f \right\|_{H^{s}} <\infty \right\}.  \label{space-Xs}
\end{align}
First of all, we shall recall the fundamental results about the global well-posedness of \eqref{KPB}. Molinet~\cite{M99} constructed the solutions to \eqref{KPB}, provided the initial data $u_{0} \in X^{s}(\R^{2})$ for $s>2$. More precisely, he showed that if $p\ge1$ and $\left\|u_{0}\right\|_{H^{1}}+\left\|\p_{x}^{2}u_{0}\right\|_{L^{2}}+\left\|\p^{-1}_{x}\p_{y}u_{0}\right\|_{L^{2}}$ is sufficiently small, then \eqref{KPB} has a unique global mild solution $u(x, y, t)$. 
Moreover, concerning the well-posedness for \eqref{KPB}, we can also refer to another his paper \cite{M00}.

In addition to the global well-posedness, Molinet also discussed the large time behavior of the solution to \eqref{KPB} in \cite{M99}. More precisely, he gave some results about the decay estimates for the solution. In particular, under the additional assumptions $u_{0} \in X^{s}(\R^{2})$ for $s\ge3$, $\p_{x}^{-1}u_{0} \in L^{1}(\R^{2})$ and $\p_{x}^{-1}\p_{y}^{m}u_{0} \in L^{1}(\R^{2})$ for some integer $m\ge3$, then the solution to \eqref{KPB} satisfies 
\begin{equation}\label{u-decay-Molinet}
\left\|\p_{x}^{l}u(\cdot, \cdot, t)\right\|_{L^{\infty}} \le C
\begin{cases}
t^{-\frac{7}{4}-\frac{l}{2}},& p\ge2,  \\
t^{-\frac{3}{2}-\frac{l}{2}},& p=1,  
\end{cases}
\end{equation}
for all $t\ge1$ and non-negative integer $l$. 
Moreover, the same decay rate is also obtained 
for $\left\|\p_{x}^{l}\partial_y^ju(\cdot, \cdot, t)\right\|_{L^{\infty}}$
if $\p_{x}^{-1}\p_{y}^{j}u_{0} \in L^{1}(\R^{2})$ 
holds for $j \in \left\{1, \cdots, \min\{[s], m\}-3\right\}$. 
Here, we note that this decay estimate \eqref{u-decay-Molinet} is a different from not only \eqref{KdVB-decay} but also the one for the solution to two-dimensional parabolic equations and dispersive equations. Actually, this estimate \eqref{u-decay-Molinet} is constructed by the interactions of the effects of the dissipation $-\mu u_{xx}$ in the $x$-direction, the dispersion $\e \p_{x}^{-1}u_{yy}$ in both the $x$-direction and the $y$-direction, and also the anti-derivative $\p_{x}^{-1}$ on the initial data $u_{0}(x, y)$. 

In \cite{M99}, Molinet also mentioned about the optimality for the decay rate $t^{-\frac{7}{4}}$ given in \eqref{u-decay-Molinet} if $p\ge2$. 
To state such a result, we introduce the following linearized Cauchy problem for \eqref{KPB}: 
\begin{align}\label{LKPB}
\begin{split}
& \tilde{u}_{t} + \tilde{u}_{xxx} + \e \p_{x}^{-1}\tilde{u}_{yy} -\mu \tilde{u}_{xx} = 0, \ \ (x, y) \in \R^{2}, \ t>0, \\
& \tilde{u}(x, y, 0) = u_{0}(x, y), \ \ (x, y) \in \R^{2}. 
\end{split}
\end{align}
For this problem, he proved that if we assume $u_{0}\in X^{1}(\R^{2})$ and $(1+y^{4})\p_{x}^{-1}u_{0}\in L^{1}(\R^{2})$, then the following estimate holds: 
\begin{equation}\label{Molinet-linear}
\liminf_{t\to \infty}t^{\frac{7}{4}}\left\|\tilde{u}(\cdot, \cdot, t)\right\|_{L^{\infty}}
\ge C\left(\int_{\R^{2}}\p_{x}^{-1}u_{0}(x, y)dxdy\right)^{2}. 
\end{equation}
In \cite{M99}, the optimality for the decay rate $t^{-\frac{7}{4}}$ given in \eqref{u-decay-Molinet} is concluded by based on the above estimate \eqref{Molinet-linear}. 
However, \eqref{Molinet-linear} is the estimate for the solution $\tilde{u}(x, y, t)$ to the linearized problem \eqref{LKPB} not for the original solution $u(x, y, t)$ to the nonlinear problem \eqref{KPB}. 
In order to conclude that the decay rate $t^{-\frac{7}{4}}$ given in \eqref{u-decay-Molinet} is optimal, it is necessary to evaluate $t^{\frac{7}{4}}\left\|u(\cdot, \cdot, t)\right\|_{L^{\infty}}$ uniformly from below. 
Recently in \cite{FH23}, the authors succeed to solve this problem by deriving the lower bound of the $L^{\infty}$-norm for the original solution $u(x, y, t)$ to \eqref{KPB}. More precisely, under the same assumptions in the results given by Molinet~\cite{M99} and $p\ge2$, there exist positive constants $C_{\dag}>0$ and $T_{\dag}>0$ such that the solution $u(x, y, t)$ to \eqref{KPB} satisfies 
\begin{equation}\label{u-est-lower-FH}
\left\|\p_{x}^{l}\p_{y}^{j}u(\cdot, \cdot, t)\right\|_{L^{\infty}} \ge C_{\dag}\left|\mathcal{M}_{j}\right|t^{-\frac{7}{4}-\frac{l}{2}}, \ \ t \ge T_{\dag}, 
\end{equation}
for all non-negative integer $l$, where the constant $\mathcal{M}_{j}$ is defined by 
\begin{equation}\label{DEF-mathM-FH}
\mathcal{M}_{j} := \int_{\R^{2}}\p_{x}^{-1}\p_{y}^{j}u_{0}(x, y)dxdy-\frac{\beta}{p+1}\int_{0}^{\infty}\int_{\R^{2}}\p_{y}^{j}\left(u^{p+1}(x, y, t)\right)dxdydt. 
\end{equation}
By virtue of the above result, we can conclude that the $L^{\infty}$-decay estimate \eqref{u-decay-Molinet} is optimal with respect to the time decaying order, under the condition $\mathcal{M}_{j}\neq0$. In addition, they also mentioned a sufficient condition for $\mathcal{M}_{j}\neq0$ in \cite{FH23}. Moreover, their result does not need the weight condition $(1+y^{4})\p_{x}^{-1}u_{0}\in L^{1}(\R^{2})$ for the initial data $u_{0}(x, y)$ such as in \cite{M99}. 

Furthermore, they also showed that an approximation formula for the solution to \eqref{KPB}. To introduce such a result, let us consider the following problem: 
\begin{align}\label{w-asymp}
\begin{split}
& w_{t} + \e \p_{x}^{-1}w_{yy} -\mu w_{xx} = -\beta u^{p}u_{x}, \ \ (x, y) \in \R^{2}, \ t>0,\\
& w(x, y, 0) = u_{0}(x, y), \ \ (x, y) \in \R^{2}, 
\end{split}
\end{align}
where $u(x, y, t)$ is the original solution to \eqref{KPB}. Under some appropriate regularity assumptions on the initial data $u_{0}(x, y)$, the solution $w(x, y, t)$ to \eqref{w-asymp} can be expressed by 
\begin{equation}\label{w-sol}
w(t)=K(t)*u_{0} -\frac{\beta}{p+1}\int_{0}^{t} \p_{x}K(t-\tau)*\left(u^{p+1}(\tau)\right) d\tau, 
\end{equation}
where $K(x, y, t)$ is defined by 
\begin{align}
& K(x, y, t) := t^{ -\frac{5}{4} } K_{*} \left( xt^{-\frac{1}{2}}, yt^{-\frac{3}{4}} \right), \ \ (x, y)\in \R^{2}, \ \ t>0,  \label{DEF-K}\\
& K_{*}(x, y) := \frac{ 1 }{ 4\pi^{ \frac{3}{2} } \mu^{\frac{3}{4}} } \int_{0}^{\infty} r^{-\frac{1}{4} }e^{ -r } \cos \left( x \sqrt{\frac{r}{\mu}} +\frac{y^{2}}{4\e}\sqrt{\frac{r}{\mu}} - \frac{\pi}{4}\e \right) dr, \ \ (x, y)\in \R^{2}.  \label{DEF-K*}
\end{align}
Here, we note that $K(x, y, t)$ is the integral kernel of the solution to the linearized problem for \eqref{w-asymp}. Under the same assumptions to get \eqref{u-decay-Molinet} and \eqref{u-est-lower-FH}, we can see that the original solution $u(x, y, t)$ to \eqref{KPB} is well approximated by the above function $w(x, y, t)$. More precisely, the following approximation formula has been established: 
 \begin{equation}
\left\|\p_{x}^{l}\p_{y}^{j}\left(u(\cdot, \cdot, t)-w(\cdot, \cdot, t)\right)\right\|_{L^{\infty}}\le Ct^{-\frac{9}{4}-\frac{l}{2}}, \ \ t\ge2, \label{u-asymp}
\end{equation}
for all non-negative integer $l$. This approximation formula \eqref{u-asymp} tells us that the dispersion term $u_{xxx}$ in the direction in which the dissipative term $-\mu u_{xx}$ is working can be negligible as $t\to \infty$. 
Therefore, we can say that \eqref{KPB} has the similar property of the one-dimensional problem \eqref{KdVB}. 

Based on the above known results given in \cite{FH23, M99}, it is expected that the similar results as for \eqref{KPB} can be obtained for our target problem \eqref{ZKB}, since \eqref{ZKB} has a similar anisotropic structure to \eqref{KPB}. In this paper, applying and developing the method used in \cite{FH23, M99}, we analyze the large time asymptotic behavior of the solution to \eqref{ZKB}. More precisely, we shall derive the optimal decay estimate and the detailed asymptotic profile for the solution. 

\bigskip
\par\noindent
\textbf{\bf{Main results.}} 

\medskip
In what follows, we state our main results. In order to doing that, we shall define the Sobolev spaces $H^{s}(\R^{2})$ and the anisotropic Sobolev spaces $H^{s_{1}, s_{2}}(\R^{2})$ for $s, s_{1}, s_{2}\in \R$ as follows: 
\begin{align}
&H^{s}(\R^{2}):=\left\{f \in \mathcal{S}'(\R^{2}); \ \left\|f\right\|_{H^{s}}:=\left\|\left(1+\xi^{2}+\eta^{2}\right)^{\frac{s}{2}}\hat{f}(\xi, \eta)\right\|_{L^{2}_{\xi \eta}}<\infty \right\}, \label{DEF-Sobolev}\\
&H^{s_{1}, s_{2}}(\R^{2}):=\left\{f \in \mathcal{S}'(\R^{2}); \ \left\|f\right\|_{H^{s_{1}, s_{2}}}:=\left\|\left(1+\xi^{2}\right)^{\frac{s_{1}}{2}}\left(1+\eta^{2}\right)^{\frac{s_{2}}{2}}\hat{f}(\xi, \eta)\right\|_{L^{2}_{\xi \eta}}<\infty \right\}. \label{DEF-aniso-Sobolev}
\end{align}
First of all, we would like to introduce the results concerning the global well-posedness for \eqref{ZKB}: 
\begin{thm}\label{thm.GWP-H21}
Let $p\ge1$ be an integer. Assume that $u_{0}\in H^{2, 1}(\R^{2})$ and $\left\|u_{0}\right\|_{H^{1}}+\left\|\p_{x}u_{0}\right\|_{H^{1}}$ is sufficiently small. 
Then, \eqref{ZKB} is globally well-posed in $H^{2, 1}(\R^{2})$. 
More precisely, there exists a unique global mild solution $u \in C([0, \infty); H^{2, 1}(\R^{2}))$ to \eqref{ZKB} satisfying 
\begin{equation}\label{apriori-H21}
\left\|u(\cdot, \cdot, t)\right\|_{H^{2, 1}}^{2}+\mu \int_{0}^{t} \left\|u_{x}(\cdot, \cdot , \tau)\right\|_{H^{2, 1}}^{2} d\tau \le C\left\|u_{0}\right\|_{H^{2, 1}}^{2}, \ \ t\ge0. 
\end{equation}
Moreover, the mapping $u_{0} \mapsto u$ is continuous from $H^{2, 1}(\R^{2})$ into $C([0, \infty); H^{2, 1}(\R^{2}))$. 
\end{thm}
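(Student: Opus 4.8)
The plan is to construct the solution via a contraction argument on the Duhamel formulation and then to promote local to global existence by a continuity argument resting on an a priori energy estimate. Write $S(t):=\mathcal{F}^{-1}\bigl[e^{t(i\xi^{3}+i\xi\eta^{2}-\mu\xi^{2})}\bigr]\mathcal{F}$ for the linear propagator of \eqref{ZKB}; since its symbol has modulus $e^{-\mu t\xi^{2}}\le 1$, Plancherel's theorem immediately gives the contraction bound $\|S(t)f\|_{H^{s_{1},s_{2}}}\le\|f\|_{H^{s_{1},s_{2}}}$ for $t\ge0$, the parabolic smoothing in $x$, $\|\p_{x}S(t)f\|_{H^{s_{1},s_{2}}}\le C(\mu t)^{-1/2}\|f\|_{H^{s_{1},s_{2}}}$, and the space--time estimate $\mu\int_{0}^{\infty}\|\p_{x}S(\tau)f\|_{H^{s_{1},s_{2}}}^{2}\,d\tau\le\tfrac12\|f\|_{H^{s_{1},s_{2}}}^{2}$. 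Rewriting the nonlinearity in divergence form, $\beta u^{p}u_{x}=\frac{\beta}{p+1}\p_{x}(u^{p+1})$, the mild formulation becomes $u=\Phi(u)$ with $\Phi(u)(t):=S(t)u_{0}-\frac{\beta}{p+1}\int_{0}^{t}\p_{x}S(t-\tau)\bigl(u^{p+1}(\tau)\bigr)\,d\tau$, where one full $x$-derivative has been transferred onto the smoothing propagator. Using these linear bounds, the embedding $H^{1,1}(\R^{2})\hookrightarrow L^{\infty}(\R^{2})$ (hence that $H^{2,1}(\R^{2})$ is an algebra and $\|u^{p+1}\|_{H^{2,1}}\le C\|u\|_{H^{2,1}}^{p+1}$), and $\int_{0}^{t}(t-\tau)^{-1/2}\,d\tau=2\sqrt t$, one verifies that $\Phi$ is a contraction on a ball of $C([0,T];H^{2,1}(\R^{2}))$ for $T=T(\|u_{0}\|_{H^{2,1}})>0$ small, producing a unique local mild solution; the same difference estimate for $\Phi$ gives continuous dependence locally. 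By the parabolic smoothing in $x$ (or by approximating $u_{0}$ by smooth data and passing to the limit) the local solution is regular enough to justify the energy manipulations below.

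For the a priori bound I would proceed in two tiers. First, I test the equations satisfied by $\p_{x}^{a}\p_{y}^{b}u$ against $\p_{x}^{a}\p_{y}^{b}u$ for the indices $(a,b)$ appearing in the lower-order quantity $\|u\|_{H^{1}}^{2}+\|\p_{x}u\|_{H^{1}}^{2}$ (whose data norm is exactly the one assumed small) and sum. The dispersive terms $u_{xxx}$ and $u_{yyx}$ contribute nothing after integration by parts --- each produces $\p_{x}$ of a perfect square --- while $-\mu u_{xx}$ yields the good term $2\mu\|\p_{x}^{a+1}\p_{y}^{b}u\|_{L^{2}}^{2}$, and the divergence form makes every nonlinear contribution of the shape $-\frac{\beta}{p+1}\int\p_{x}^{a}\p_{y}^{b}(u^{p+1})\,\p_{x}^{a+1}\p_{y}^{b}u$, in which the second factor carries the extra $x$-derivative measured by the dissipation. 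By Cauchy--Schwarz and Young, these are dominated by $\e\,\|u_{x}\|_{(\cdot)}^{2}$ (absorbed) plus $C_{\e}\|\p_{x}^{a}\p_{y}^{b}(u^{p+1})\|_{L^{2}}^{2}$, and estimating the latter by the Leibniz rule together with anisotropic Gagliardo--Nirenberg inequalities of the type $\|f\|_{L^{\infty}(\R^{2})}\le C\|f\|_{L^{2}}^{1/4}\|\p_{x}f\|_{L^{2}}^{1/4}\|\p_{y}f\|_{L^{2}}^{1/4}\|\p_{x}\p_{y}f\|_{L^{2}}^{1/4}$ produces coefficients that are powers of $L^{\infty}$-type norms controlled by the lower-order quantity itself; a standard continuity argument with the smallness of $\|u_{0}\|_{H^{1}}+\|\p_{x}u_{0}\|_{H^{1}}$ then gives a uniform-in-time small bound on $\|u\|_{H^{1}}^{2}+\|\p_{x}u\|_{H^{1}}^{2}$ and, crucially, that $\int_{0}^{\infty}\|u_{x}\|^{2}_{(\text{lower})}\,d\tau$ is finite and small. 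In the second tier I run the same energy computation for the full $\|u\|_{H^{2,1}}^{2}$, obtaining $\frac{d}{dt}\|u\|_{H^{2,1}}^{2}+2\mu\|u_{x}\|_{H^{2,1}}^{2}\le(\text{nonlinear terms})$, where now each nonlinear term carries a coefficient bounded by the (small) lower-order norm times a dissipation density which is time-integrable by the first tier; absorbing the resulting $\e\|u_{x}\|_{H^{2,1}}^{2}$ and applying Gronwall's inequality yields \eqref{apriori-H21} with a universal constant --- even though $\|u_{0}\|_{H^{2,1}}$ itself need not be small. The uniform bound then extends the solution to $[0,\infty)$, and feeding it back into the difference estimate gives uniqueness and continuous dependence of $u_{0}\mapsto u$ on $[0,\infty)$.

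The main obstacle is precisely the nonlinear estimate in the energy identity: because the dissipation $-\mu u_{xx}$ acts only in the $x$-direction, any nonlinear contribution that has already ``spent'' a $y$-derivative must be controlled by $\mu\|u_{x}\|_{H^{2,1}}^{2}$ alone, with no $y$-smoothing available. This is exactly where the divergence form $u^{p}u_{x}=\frac{1}{p+1}\p_{x}(u^{p+1})$ is indispensable, since it shifts one $x$-derivative off the product onto the dissipative factor, and where one is forced to use genuinely anisotropic Gagliardo--Nirenberg and product inequalities on $H^{2,1}(\R^{2})$ rather than the isotropic ones (which would demand control of $\p_{y}^{2}u$). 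Checking that the regularity $H^{2,1}$ is exactly enough for all these terms to close, and that the contributions not absorbable by the dissipation are at least cubic (hence tamed by the small-data continuity argument), is the technical heart of the proof.
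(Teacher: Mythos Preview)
Your overall architecture --- local well-posedness by contraction in $H^{2,1}$ using the $x$-smoothing of the semigroup, followed by an energy/continuity argument to globalize --- matches the paper's. The local step and the second tier of your energy argument (the $\|u_{xxy}\|_{L^2}$ level, closed using the smallness and time-integrability from the lower tiers) are essentially what the paper does.

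There is, however, a genuine gap in your first tier, precisely at the $\|u_y\|_{L^2}$ estimate. Testing $\partial_y u$-equation against $\partial_y u$ yields
\[
\tfrac12\frac{d}{dt}\|u_y\|_{L^2}^{2}+\mu\|u_{xy}\|_{L^2}^{2}
=-\beta\int_{\R^2} u^{p}u_y u_{xy}\,dxdy,
\]
and your Cauchy--Schwarz/Young split gives, after absorbing $\varepsilon\|u_{xy}\|_{L^{2}}^{2}$, a residual $C\|u\|_{L^\infty}^{2p}\|u_y\|_{L^2}^{2}$. The problem is that $\|u_y\|_{L^2}^{2}$ is \emph{not} a dissipation density of any lower level (the equation has no $y$-smoothing whatsoever), so the time integral of this term is at best $C M^{p}\int_{0}^{t}\|u_y\|_{L^2}^{2}\,d\tau\sim C M^{p+1}t$, which destroys any continuity/bootstrap argument regardless of how small $M$ is. One checks that every attempt to trade derivatives (e.g.\ via Lemma~\ref{lem.est-L2q} or the anisotropic $L^\infty$ bound) eventually produces a factor of $\|u_{yy}\|_{L^2}$, which lies outside $H^{2,1}$.

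The paper circumvents this by a device specific to the ZK structure: in Proposition~\ref{prop.apriori-1st} it does not test the three $H^{1}$ pieces separately, but combines the multipliers $-u_{xx}$, $-u_{yy}$ \emph{and} $-\tfrac{\beta}{p+1}u^{p+1}$ into the modified energy $\|u_x\|_{L^2}^{2}+\|u_y\|_{L^2}^{2}-\tfrac{2\beta}{(p+1)(p+2)}\int u^{p+2}$. The point is the algebraic identity
\[
u^{p}u_{x}(u_{xx}+u_{yy})+\tfrac{1}{p+1}\,u^{p+1}(u_{xx}+u_{yy})_{x}
=\tfrac{1}{p+1}\,\partial_{x}\!\bigl(u^{p+1}(u_{xx}+u_{yy})\bigr),
\]
which makes the dangerous $\int u^{p}u_{x}u_{yy}$ contribution cancel exactly, leaving only $-\tfrac{2\beta\mu}{p+1}\int u^{p+1}u_{xx}$. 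This residual term involves only $x$-derivatives and can be absorbed by $\mu\|u_{xx}\|_{L^2}^{2}$ (or, after Lemma~\ref{lem.est-L2q}, by $\int_0^t\|u_x\|_{L^2}^{2}d\tau$, which \emph{is} time-integrable from the $L^2$ conservation law). Once this $H^1$ step closes, the $(u_{xx},u_{xy})$ level (Proposition~\ref{prop.apriori-2nd}) and the top $u_{xxy}$ level (Proposition~\ref{prop.apriori-xxy}) proceed along the lines you describe. So your proposal needs this one additional idea --- the conserved-energy multiplier $-\tfrac{\beta}{p+1}u^{p+1}$ --- to make the first tier close.
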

%
\begin{rem}
\begin{itemize}
\item[{\rm (i)}] The local well-posedness can be also 
    obtained in $H^s(\R^2)$ for $s>1$ 
    and $H^{s_1,s_2}(\R^2)$ for $s_1>\frac{1}{2}$, $s_2>\frac{1}{2}$ 
    without the smallness assumption of the initial data $u_{0}(x, y)$. (See, Theorem~\ref{thm.LWP} below.) 
    The key tool for the local results is     
    the property of Banach algebra for
    $H^s(\R^2)$ and $H^{s_1,s_2}(\R^2)$. 
    (For $H^{s_1,s_2}(\R^2)$, see Proposition~\ref{prop.Banach-al} below.)
\item[{\rm (ii)}] The well-posedness of \eqref{ZKB} is studied by Esfahani {\rm \cite{E11}}. 
He proved that \eqref{ZKB} is locally well-posed in $H^s(\R^2)$ for $s>2$. 
Our local result is a refinement of his result. 
The proof in {\rm \cite{E11}} is based on the parabolic regularization for the $y$-direction, 
and the continuity of the solution $u(x, y, t)$ with respect to $t$ is 
obtained only weak sense. 
But the parabolic regularization is not necessary 
because the derivative loss from the nonlinear term $\beta u^pu_x$
can be recovered by using the effect from the dissipative term $-\mu u_{xx}$. 
Our proof is based on this fact and the fixed point argument. 
\item[{\rm (iii)}] For the case $p=1$, the local well-posedness of \eqref{ZKB} in $H^s(\R^2)$ 
and the global well-posedness of \eqref{ZKB} in $H^{s,0}(\R^2)$ for $s>-\frac{1}{2}$ 
are established by the second author in {\rm \cite{H19}}. 
\end{itemize}
\end{rem}
%

\begin{rem}
We believe that the similar results in Theorem~\ref{thm.GWP-H21} can be obtained for the equations of the following form, under some suitable assumptions: 
\[
u_{t} + \mathcal{P}u - \mu u_{xx} + \beta u^{p}u_{x} = 0, \ \ (x, y) \in \R^{2}, \ t>0, 
\]
where the dispersive operator $\mathcal{P}$ is defined via the Fourier transform as $\widehat{\mathcal{P}f}(\xi, \eta)=iP(\xi, \eta)\hat{f}(\xi, \eta)$ with a real-valued measurable function $P(\xi, \eta)$. 
Note that $\mathcal{P}$ is a skew-symmetric operator. 
\end{rem}

From now on, we consider the large time behavior of the global solution constructed in the above Theorem~\ref{thm.GWP-H21}. 
Our next result is related to the $L^{\infty}$ and $L^{2}$-decay estimates of the solution. To state such a result, for simplicity, we set 
\begin{equation}\label{data}
E_{0}:=\left\|u_{0}\right\|_{H^{2, 1}}+\left\|u_{0}\right\|_{L^{1}}, \ \ E_{1}:=\left\|u_{0}\right\|_{H^{2, 1}}+\left\|u_{0}\right\|_{L^{1}}+\left\|\p_{y}u_{0}\right\|_{L^{1}}. 
\end{equation}
Then, we have the following result: 
\begin{thm}\label{thm.main-decay}
Let $p\ge2$ be an integer. 
Assume that $u_{0}\in H^{2, 1}(\R^{2})\cap L^{1}(\R^{2})$ and $E_{0}$ is sufficiently small. 
Then, the solution $u(x, y, t)$ to \eqref{ZKB} satisfies the following estimate: 
\begin{equation}\label{u-sol-decay-Linf}
\left\|\p_{x}^{l}u(\cdot, \cdot, t)\right\|_{L^{\infty}} \le CE_{0}(1+t)^{-\frac{3}{4}-\frac{l}{2}}, \ \ t\ge0, \ l=0, 1.  
\end{equation}
Moreover, if we additionally assume $\p_{y}u_{0}\in L^{1}(\R^{2})$ and $E_{1}$ is sufficiently small, 
then, the solution $u(x, y, t)$ to \eqref{ZKB} satisfies the following estimate: 
\begin{equation}\label{u-sol-decay-L2}
\left\|\p_{x}^{l}u(\cdot, \cdot, t)\right\|_{L^{2}} \le CE_{1}(1+t)^{-\frac{1}{4}-\frac{l}{2}}, \ \ t\ge0, \ l=0, 1.  
\end{equation}
\end{thm}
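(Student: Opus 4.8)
The plan is to run a bootstrap (continuity) argument on the weighted decay norm, using the Duhamel representation associated with the anisotropic linear semigroup together with the energy bound \eqref{apriori-H21} already provided by Theorem \ref{thm.GWP-H21}. Write \eqref{ZKB} in integral form as
\begin{equation}\label{proof-duhamel}
u(t)=S(t)u_{0}-\frac{\beta}{p+1}\int_{0}^{t}\p_{x}S(t-\tau)\bigl(u^{p+1}(\tau)\bigr)\,d\tau,
\end{equation}
where $S(t)=e^{-t(\p_{x}^{3}+\p_{y}^{2}\p_{x}-\mu\p_{x}^{2})}$ is the solution operator of the linearized equation. The first step is to establish linear estimates for $S(t)$ of parabolic--anisotropic type: for the initial datum we expect
\[
\|\p_{x}^{l}S(t)f\|_{L^{\infty}}\le Ct^{-\frac{3}{4}-\frac{l}{2}}\|f\|_{L^{1}},\qquad
\|\p_{x}^{l}S(t)f\|_{L^{2}}\le Ct^{-\frac{1}{4}-\frac{l}{2}}\|f\|_{L^{1}},
\]
and for the Duhamel kernel, a version that trades the $\p_{x}$ in front of the nonlinearity and exploits one $\p_{y}$ derivative of $u^{p+1}$ landing on the datum — this is where the $\|\p_{y}u_{0}\|_{L^{1}}$ hypothesis and the definition of $E_{1}$ enter. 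These bounds follow from scaling: on the Fourier side the symbol is $e^{-\mu t\xi^{2}}e^{it(\xi^{3}+\eta^{2}\xi)}$, the real part gives Gaussian decay in $\xi$ only, and the $\eta$-integration is controlled by a stationary-phase/van der Corput estimate in the $\eta$ variable (the phase $\eta\mapsto t\xi\eta^{2}$ is quadratic), yielding the extra $\eta$-gain responsible for the $3/4$ rather than $1/2$ rate; this is exactly the mechanism behind \eqref{DEF-K}--\eqref{DEF-K*} in the KPB analysis, adapted here to the $u_{yyx}$ dispersion.

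The second step is the nonlinear closure. Define
\[
M(t):=\sup_{0\le\tau\le t}\Bigl[(1+\tau)^{\frac{3}{4}}\|u(\tau)\|_{L^{\infty}}+(1+\tau)^{\frac{5}{4}}\|\p_{x}u(\tau)\|_{L^{\infty}}\Bigr]
\]
for the $L^{\infty}$ part, and the analogous quantity with $L^{2}$ weights for the second estimate. Inserting \eqref{proof-duhamel}, bounding $S(t)u_{0}$ by the linear estimate, and for the Duhamel term splitting $\int_{0}^{t}=\int_{0}^{t/2}+\int_{t/2}^{t}$: on $[0,t/2]$ put the singular kernel on the $S$ factor and use $\|u^{p+1}(\tau)\|_{L^{1}}\le\|u(\tau)\|_{L^{2}}^{2}\|u(\tau)\|_{L^{\infty}}^{p-1}$, controlled by \eqref{apriori-H21} and $M(\tau)$; on $[t/2,t]$ put the $\p_{x}$ on $u^{p+1}$ instead (so that $\p_{x}(u^{p+1})=(p+1)u^{p}\p_{x}u$) and pair the kernel's $L^{1}\to L^{\infty}$ bound with $\|u^{p}\p_{x}u(\tau)\|_{L^{1}}$. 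Since $p\ge2$, the resulting time integrals converge and produce a bound of the form $M(t)\le CE_{0}+CM(t)^{2}$ (more precisely $CE_{0}+C\,\text{(energy)}^{p-1}M(t)^{2}$, with the energy small by \eqref{apriori-H21}); the standard continuity argument then gives $M(t)\le CE_{0}$ for all $t$, provided $E_{0}$ is small. The $L^{2}$-estimate \eqref{u-sol-decay-L2} is obtained in the same loop, now needing the $\p_{y}$-weighted linear estimate and hence the smallness of $E_{1}$; note the $L^{2}$-norms without a $\p_{y}$-derivative gain only the rate $t^{-1/4-l/2}$, consistent with the statement.

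The main obstacle I anticipate is \emph{not} the abstract bootstrap but the precise verification of the anisotropic linear decay estimates with the correct exponents, in particular the Duhamel estimate in which a $\p_{y}$ acts on $u^{p+1}$: one must check that transferring that $\p_{y}$ through $S(t-\tau)$ costs only a factor that keeps the time integral convergent near $\tau=t$, and simultaneously that near $\tau=0$ the datum term genuinely needs $\p_{y}u_{0}\in L^{1}$ rather than more. Carrying this out requires a careful stationary-phase analysis of $K_{*}$-type kernels for the $u_{yyx}$ symbol (as opposed to the $\p_{x}^{-1}u_{yy}$ symbol of \eqref{KPB}), tracking how many $y$-derivatives the kernel can absorb while staying in $L^{1}_{x,y}$. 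A secondary technical point is ensuring the low-frequency part is handled by the $L^{1}$-norm while the high-frequency part is absorbed by the $H^{2,1}$-energy via \eqref{apriori-H21}, so that no regularity beyond $H^{2,1}$ is needed; this interplay, rather than any single hard inequality, is the delicate part of the argument.
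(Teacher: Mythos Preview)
Your overall architecture --- Duhamel representation, bootstrap on a weighted sup-norm, time splitting $\int_0^{t/2}+\int_{t/2}^t$, and continuity argument --- matches the paper exactly, but there is a concrete gap in your treatment of the $[t/2,t]$ piece of the $L^\infty$ estimate. You propose to use the $L^1\to L^\infty$ smoothing of $S$ together with $\|\p_x^{l+1}(u^{p+1})(\tau)\|_{L^1}$. For $l=1$, $p=2$ this does not close: integrability at $\tau=t$ forces both $\p_x$'s onto $u^{p+1}$, and the term $u^{2}\p_x^{2}u$ in $\p_x^{2}(u^{3})$ can only be estimated by $\|u\|_{L^\infty}\|u\|_{L^2}\|\p_x^{2}u\|_{L^2}\le C(1+\tau)^{-3/4}$, since $\p_x^{2}u$ is controlled only in $L^{2}$ by the energy and hence only one factor of $\|u\|_{L^\infty}$ is available. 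Multiplying by $(1+t)^{5/4}$ and integrating $(t-\tau)^{-3/4}$ over $[t/2,t]$ produces a factor $(1+t)^{3/4}$, so the inequality $M(t)\le CE_0+CM(t)^{p}$ is never reached. A related point: your linear estimate $\|\p_x^{l}S(t)f\|_{L^2}\le Ct^{-1/4-l/2}\|f\|_{L^1}$ is also false as stated (take $f(x,y)=\phi(x)\psi(y)$ and let $\psi$ concentrate); since $e^{it\xi\eta^{2}}$ is unimodular, no $\eta$-decay comes from the kernel, and one genuinely needs $\|f\|_{L^1}+\|\p_yf\|_{L^1}$ on the right.

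The paper's remedy is to abandon the $L^1\to L^\infty$ route on $[t/2,t]$ and instead bound the Duhamel term on the Fourier side, inserting $(1+\eta^{2})^{\pm 1/2}$ and applying Cauchy--Schwarz separately in $\xi$ and $\eta$ (Proposition~\ref{prop.Duhamel-est-Linf}):
\[
\Bigl\|\p_x^{l}\!\!\int_{t/2}^{t}\p_xU(t-\tau)*g(\tau)\,d\tau\Bigr\|_{L^\infty}\le C\int_{t/2}^{t}(t-\tau)^{-3/4}\bigl(\|\p_x^{l}g(\tau)\|_{L^2}+\|\p_x^{l}\p_yg(\tau)\|_{L^2}\bigr)\,d\tau,
\]
with exponent $-3/4$ independent of $l$. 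For $g=u^{p+1}$ the right-hand side involves $\|u^{p+1}\|_{L^2}\le\|u\|_{L^\infty}^{p}\|u\|_{L^2}\le C(1+\tau)^{-3p/4}$, one full factor $(1+\tau)^{-3/4}$ better than the $L^{1}$ norm, and the bootstrap closes for $p=2$, $l=1$ with exactly zero margin (see \eqref{Linf-decay-I2}). Thus the $\p_y$-derivative on $u^{p+1}$ that you correctly anticipated is already essential in the $L^\infty$ argument, through the $H^{0,1}$ norm of $u^{p+1}$ controlled by the a~priori $H^{2,1}$ energy, not only in the $L^{2}$ one; the same $(1+\eta^{2})^{-1}$ trick is what makes the $L^{1}\to L^{2}$ linear bound work in Proposition~\ref{prop.linear-estL2} and is the real source of the hypothesis $\p_yu_{0}\in L^{1}$.
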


\begin{rem}
By virtue of the above estimates \eqref{u-sol-decay-Linf} and \eqref{u-sol-decay-L2}, 
we can evaluate the $L^{q}$-norm of the solution $u(x, y, t)$ for any $2\le q\le \infty$. 
Actually, applying the interpolation inequality, we have 
\begin{align*}
\left\|\p_{x}^{l}u(\cdot, \cdot, t)\right\|_{L^{q}}
&\le \left\|\p_{x}^{l}u(\cdot, \cdot, t)\right\|_{L^{\infty}}^{1-\frac{2}{q}}\left\|\p_{x}^{l}u(\cdot, \cdot, t)\right\|_{L^{2}}^{\frac{2}{q}} \\
&\le C(1+t)^{-\frac{3}{4}+\frac{1}{q}-\frac{l}{2}}, \ \ t\ge0, \ l=0, 1, \ 2\le q\le \infty. 
\end{align*}
\end{rem}

\begin{rem}
We believe that the similar results in Theorem~\ref{thm.main-decay} can be obtained for the equations of the following form, under some suitable assumptions: 
\[
u_{t} + \mathcal{Q}u + u_{yyx} - \mu u_{xx} + \beta u^{p}u_{x} = 0, \ \ (x, y) \in \R^{2}, \ t>0, 
\]
where the dispersive operator $\mathcal{Q}$ is defined via the Fourier transform as $\widehat{\mathcal{Q}f}(\xi, \eta)=iQ(\xi)\hat{f}(\xi, \eta)$ with a real-valued measurable function $Q(\xi)$. 
Note that $\mathcal{Q}$ is a skew-symmetric operator which acts only on the variable $x$. 
\end{rem}

Next, we would like to introduce an approximation formula for the solution to \eqref{ZKB}. 
In order to state that formula, let us consider the following Cauchy problem: 
\begin{align}\label{approx-CP}
\begin{split}
& v_{t} + v_{yyx} -\mu v_{xx}=0, \ \ (x, y) \in \R^{2}, \ t>0, \\
& v(x, y, 0) = u_{0}(x, y), \ \ (x, y) \in \R^{2}. 
\end{split}
\end{align}
We note that the solution $v(x, y, t)$ to \eqref{approx-CP} can be expressed by 
\begin{equation}\label{approx-sol}
v(x, y, t)=\left(V(t)*u_{0}\right)(x, y), \ \ (x, y) \in \R^{2}, \ t>0, 
\end{equation}
where the function $V(x, y, t)$ is given by
\begin{align}
&V(x, y, t) := t^{ -\frac{3}{4} } V_{*} \left( xt^{-\frac{1}{2}}, yt^{-\frac{1}{4}} \right), \ \ (x, y)\in \R^{2}, \ \ t>0,  \label{DEF-V}\\
&V_{*}(x, y) := \frac{ 1 }{ 4\pi^{ \frac{3}{2} }\mu^{\frac{1}{4}} } \int_{0}^{\infty} r^{-\frac{3}{4} }e^{ -r } \cos \left( x \sqrt{\frac{r}{\mu}} -\frac{y^{2}}{4}\sqrt{\frac{\mu}{r}} +\frac{\pi}{4}\right) dr, \ \ (x, y)\in \R^{2}.  \label{DEF-V*}
\end{align}
We will discuss this fact \eqref{approx-sol} in Section~3 below. Under the appropriate assumptions, if $p$ is large enough (weak nonlinearity), we are able to prove that the original solution $u(x, y, t)$ to \eqref{ZKB} is well approximated by the above function $v(x, y, t)$. Actually, the following result can be established: 
\begin{thm}\label{thm.main-approximation}
Let $p\ge2$ be an integer and $l\in \{0, 1\}$ satisfying $p>(4+2l)/3$. Assume that $u_{0}\in H^{2, 1}(\R^{2})\cap L^{1}(\R^{2})$ and $E_{0}$ is sufficiently small. Then, the solution $u(x, y, t)$ to \eqref{ZKB} and the solution $v(x, y, t)$ to \eqref{approx-CP} satisfy the following approximation formula: 
\begin{align}\label{u-sol-approximation}
\lim_{t\to \infty}t^{\frac{3}{4}+\frac{l}{2}}\left\|\p_{x}^{l}\left(u(\cdot, \cdot, t)-v(\cdot, \cdot, t)\right)\right\|_{L^{\infty}}=0. 
\end{align}
\end{thm}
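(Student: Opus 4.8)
The plan is to write the original solution $u$ to \eqref{ZKB} via Duhamel's formula against the linear semigroup of \eqref{approx-CP}, and compare it with the representation \eqref{approx-sol} of $v$. Let $V(t)*\,\cdot\,$ denote the solution operator for \eqref{approx-CP}; since $u$ solves $u_{t}+u_{yyx}-\mu u_{xx}=-u_{xxx}-\beta u^{p}u_{x}$, we get
\begin{align}\label{prop-duhamel}
\begin{split}
u(t)-v(t)=-\int_{0}^{t}V(t-\tau)*\p_{x}^{3}u(\tau)\,d\tau
-\frac{\beta}{p+1}\int_{0}^{t}V(t-\tau)*\p_{x}\bigl(u^{p+1}(\tau)\bigr)\,d\tau.
\end{split}
\end{align}
So $\p_{x}^{l}(u-v)$ is a sum of two terms, and it suffices to show each, multiplied by $t^{3/4+l/2}$, tends to $0$ in $L^{\infty}$. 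The key analytic input is a set of $L^{1}$–$L^{\infty}$ (and $L^{1}$–$L^{2}$) decay estimates for the kernel $V(x,y,t)$ and its $x$-derivatives, obtained from the explicit oscillatory-integral form \eqref{DEF-V}, \eqref{DEF-V*}: by the scaling in \eqref{DEF-V} one has $\|\p_{x}^{k}V(t)\|_{L^{\infty}}\le C t^{-(3/4)-k/2}$ and, more importantly, a smoothing estimate $\|\p_{x}^{k}V(t)*f\|_{L^{\infty}}\le C t^{-(3/4)-k/2-1/2}\|\p_{x}^{-1}f\|_{L^{1}}$-type bound is \emph{not} what we want here; rather, since $f=\p_{x}^{3}u$ and $f=\p_{x}(u^{p+1})$ already carry an $x$-derivative, we use $\|\p_{x}^{l}V(t)*\p_{x}g\|_{L^{\infty}}\le C t^{-(3/4)-(l+1)/2}\|g\|_{L^{1}}$ together with the parabolic-type smoothing $\|\p_{x}^{l}V(t)*\p_{x}^{3}g\|_{L^{\infty}}\le C t^{-(3/4)-(l+1)/2}\|\p_{x}^{2}g\|_{L^{1}}$ for the dispersive term, all of which follow from differentiating \eqref{DEF-V*} under the integral and estimating the $r$-integral (the factor $r^{-3/4}e^{-r}$ keeps everything integrable near $r=0$, and the Gaussian-in-$y$ structure after the $r$-integration gives the $L^{1}_{y}$ control). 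These kernel bounds should be collected as lemmas in Section~3.

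Next I would split each Duhamel integral at $\tau=t/2$. On $[0,t/2]$ the kernel is evaluated at times $\ge t/2$, so we put all $x$-derivatives and as much decay as the data allows onto $V(t-\tau)$ and measure $u(\tau)$ in low norms: for the linear term $\int_{0}^{t/2}\|\p_{x}^{l}V(t-\tau)*\p_{x}^{3}u(\tau)\|_{L^{\infty}}d\tau\lesssim \int_{0}^{t/2}(t-\tau)^{-(3/4)-(l+1)/2}\|\p_{x}^{2}u(\tau)\|_{L^{1}}d\tau$; bounding $\|\p_{x}^{2}u(\tau)\|_{L^{1}}$ by interpolation between the $H^{2,1}$ a priori bound \eqref{apriori-H21} and the decay estimates of Theorem~\ref{thm.main-decay} gives an integrand that is integrable in $\tau$ near $0$ and yields total size $O(t^{-(3/4)-(l+1)/2}\cdot t)=O(t^{-(1/4)-l/2})$, i.e. a gain of $t^{-1/2}$ over the target rate $t^{-(3/4)-l/2}$, hence $t^{3/4+l/2}\times(\text{this})\to 0$. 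For the nonlinear term on $[0,t/2]$, $\int_{0}^{t/2}(t-\tau)^{-(3/4)-(l+1)/2}\|u^{p+1}(\tau)\|_{L^{1}}d\tau$ and $\|u^{p+1}(\tau)\|_{L^{1}}\le\|u(\tau)\|_{L^{2}}^{2}\|u(\tau)\|_{L^{\infty}}^{p-1}\lesssim E_{1}^{p+1}(1+\tau)^{-1/2}(1+\tau)^{-(3/4)(p-1)}$ from Theorem~\ref{thm.main-decay}; the exponent condition $p>(4+2l)/3$ is exactly what makes the time integral converge (or at worst grow slower than the needed power), again producing $o(t^{-(3/4)-l/2})$.

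On $[t/2,t]$ the roles reverse: now $t-\tau$ is small and $\tau\ge t/2$, so I keep fewer derivatives on $V$ and exploit the decay of $u(\tau)$ at large $\tau$. For the linear term I use $\|\p_{x}^{l}V(t-\tau)*\p_{x}g\|_{L^{\infty}}\lesssim (t-\tau)^{-(3/4)-(l+1)/2}\|g\|_{L^{1}}$ with $g=\p_{x}^{2}u(\tau)$ — but here the $(t-\tau)^{-(3/4)-(l+1)/2}$ factor is not integrable at $\tau=t$, so instead one should place a full $x$-derivative-minus-one onto the kernel, using a bound of the form $\|\p_{x}^{l}V(s)*h\|_{L^{\infty}}\lesssim s^{-(3/4)-l/2-\theta}\|h\|_{L^{1}}^{1-\sigma}\|\cdots\|^{\sigma}$ chosen so the $s$-power is $>-1$, while $h=\p_{x}^{3}u(\tau)$ contributes the extra smallness through $\|\p_{x}^{3}u(\tau)\|_{L^{2}}\lesssim (1+\tau)^{-\text{(large)}}$ (or through the dissipation integral $\int_{t/2}^{t}\|\p_{x}u_x\|_{H^{2,1}}^{2}$ in \eqref{apriori-H21}, via Cauchy–Schwarz in $\tau$). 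The nonlinear term on $[t/2,t]$ is handled the same way with $\|u^{p+1}(\tau)\|_{L^{1}}\lesssim(1+\tau)^{-1/2-(3/4)(p-1)}\le (1+t)^{-1/2-(3/4)(p-1)}$ pulled out, leaving $\int_{t/2}^{t}(t-\tau)^{-(3/4)-(l+1)/2+\text{adj}}d\tau$ finite; since $p>(4+2l)/3$ the prefactor decays strictly faster than $t^{-(3/4)-l/2}$. Summing the two pieces and letting $t\to\infty$ gives \eqref{u-sol-approximation}.

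The main obstacle I anticipate is the $[t/2,t]$ portion: near $\tau=t$ the kernel derivatives blow up, so one cannot simply put all three $x$-derivatives (from $u_{xxx}$) or the extra $\p_{x}$ (from the nonlinearity) on $V$. The resolution — and the technically delicate point — is to balance derivatives between $V(t-\tau)$ and $u(\tau)$ so that the $(t-\tau)$-singularity stays integrable while still extracting enough $\tau$-decay from $u$; this requires the sharp smoothing estimates for $V$ (with \emph{fractional} gains, proved from \eqref{DEF-V*}), the full strength of the energy–dissipation inequality \eqref{apriori-H21} used through a Cauchy–Schwarz argument in the $\tau$-integral, and the decay rates of Theorem~\ref{thm.main-decay} for both $\p_{x}^{0}u$ and $\p_{x}u$ in $L^{2}$ and $L^{\infty}$. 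The hypothesis $p>(4+2l)/3$ enters precisely to close the nonlinear contribution in this regime.
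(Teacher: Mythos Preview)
Your decomposition \eqref{prop-duhamel} is legitimate, but it creates a difficulty that the paper's proof sidesteps entirely, and your sketch does not close the gap.

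\textbf{The dispersive Duhamel term does not close.} You propose to bound the $[0,t/2]$ piece by $\int_{0}^{t/2}(t-\tau)^{-\frac34-\frac{l+1}{2}}\|\p_{x}^{2}u(\tau)\|_{L^{1}}\,d\tau$, claiming the $L^{1}$ norm of $\p_{x}^{2}u$ follows ``by interpolation between the $H^{2,1}$ a priori bound and the decay estimates of Theorem~\ref{thm.main-decay}''. This is wrong: interpolation between $L^{2}$ and $L^{\infty}$ only yields $L^{q}$ for $q\ge 2$, never $L^{1}$. Under the hypotheses of the theorem (only $E_{0}$ small, not $E_{1}$) you do not have any $L^{1}$ control on $\p_{x}^{2}u$, nor on $u$ itself, nor any decaying $L^{2}$ bound. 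If instead you try the $L^{2}$--based endpoint (as in the paper's Proposition~\ref{prop.Duhamel-est-Linf}), you are forced to put at least two $x$--derivatives on $u$; with only the bounded $H^{2,1}$ norm, the $[0,t/2]$ integral for $l=0$ produces at best $O(t^{-3/4})$, which is exactly the target rate and not $o(t^{-3/4})$. The $[t/2,t]$ piece is worse: for $l=1$ you would need $\p_{x}^{3}u,\p_{x}^{3}\p_{y}u\in L^{2}$, which $H^{2,1}$ does not give. The vague ``fractional gains'' and ``adjustments'' you invoke do not repair this.

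\textbf{How the paper avoids the problem.} The paper writes $u$ via Duhamel against the \emph{full} linear semigroup $U$ (which already contains $\p_{x}^{3}$), so that
\[
u(t)-v(t)=\bigl(U(t)-V(t)\bigr)*u_{0}-\frac{\beta}{p+1}\int_{0}^{t}\p_{x}U(t-\tau)*\bigl(u^{p+1}(\tau)\bigr)\,d\tau.
\]
The dispersive contribution is now entirely in the first term and is handled by a direct kernel bound: from $e^{it\xi^{3}}=1+it\xi^{3}e^{i\theta t\xi^{3}}$ one gets $\|\p_{x}^{l}(U-V)(t)\|_{L^{\infty}}\le Ct^{-\frac54-\frac{l}{2}}$ (Proposition~\ref{prop.L-ap-U}), hence $t^{\frac34+\frac{l}{2}}\|\p_{x}^{l}((U-V)(t)*u_{0})\|_{L^{\infty}}\le Ct^{-\frac12}\|u_{0}\|_{L^{1}}\to 0$, using only $u_{0}\in L^{1}$. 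No regularity or decay of $u$ is needed here at all. The nonlinear Duhamel term is then exactly the one already estimated in the proof of Theorem~\ref{thm.u-sol-decay-Linf}; on $[t/2,t]$ the paper uses the $L^{2}$--based bound of Proposition~\ref{prop.Duhamel-est-Linf} (which gives a $(t-\tau)^{-3/4}$ singularity, integrable at $\tau=t$, and measures $u^{p+1}$ and $\p_{y}(u^{p+1})$ in $L^{2}$), not your proposed $L^{1}$ bound with the non-integrable $(t-\tau)^{-5/4-l/2}$ factor. The condition $p>(4+2l)/3$ enters only through this nonlinear piece, exactly as you identified.
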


\begin{rem}
If $l=0$, the assumption for the nonlinear exponent $p>4/3$ actually means that $p\ge2$ because $p$ is an integer. 
Analogously, if $l=1$, this assumption $p>2$ implies $p\ge3$. 
However, this expression $p>(4+2l)/3$ actually appears in the proof of the theorem (see \eqref{Duhamel-est-final} below).
\end{rem}

\begin{rem}
We remark that the dispersion term $v_{xxx}$ is not included in \eqref{approx-CP}. Therefore, this approximation formula \eqref{u-sol-approximation} implies that the dispersion effect in the direction in which the dissipative term is working can be negligible as $t\to \infty$. 
This result is similar to the formula \eqref{u-asymp} given in {\rm \cite{FH23}} for the generalized KP--Burgers equation \eqref{KPB}. On the other hand, compared with \eqref{w-asymp}, we note that the equation \eqref{approx-CP} does not include the term $\beta u^{p}u_{x}$ related to the unknown function $u(x, y, t)$. Namely, we can say that the formula \eqref{u-sol-approximation} is more useful than \eqref{u-asymp}.  
\end{rem}

Moreover, in addition to the above Theorem~\ref{thm.main-approximation}, we have succeeded to get a lower bound of the $L^{\infty}$-norm of the leading term of $v(x, y, t)$ (see Proposition~\ref{prop.mathV-lower} below). Thus, combining the estimate for that leading term of $v(x, y, t)$ (see \eqref{mathV-lower} below) and the above formula \eqref{u-sol-approximation}, we are able to obtain the following lower bound of the $L^{\infty}$-norm for the solution $u(x, y, t)$ to \eqref{ZKB}: 
\begin{thm}\label{thm.main-u-est-lower}
Let $p\ge2$ be an integer and $l\in \{0, 1\}$ satisfying $p>(4+2l)/3$. Assume that $u_{0}\in H^{2, 1}(\R^{2})\cap L^{1}(\R^{2})$ and $E_{0}$ is sufficiently small. 
Then, there exists positive constant $c_{0}>0$ such that the solution $u(x, y, t)$ to \eqref{ZKB} satisfies the following estimate: 
\begin{equation}\label{u-est-lower}
\liminf_{t\to \infty}t^{\frac{3}{4}+\frac{l}{2}}\left\|\p_{x}^{l}u(\cdot, \cdot, t)\right\|_{L^{\infty}} \ge c_{0}\left|\int_{\R^{2}}u_{0}(x, y)dxdy\right|. 
\end{equation}
\end{thm}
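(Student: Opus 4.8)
The plan is to route the lower bound through the explicit comparison function $v$, combining the approximation formula \eqref{u-sol-approximation} with a lower bound for the self-similar leading term of $v$. Write $M:=\int_{\R^{2}}u_{0}(x,y)\,dxdy$ and $\mathcal{V}(x,y,t):=M\,V(x,y,t)$, where $V$ is the kernel \eqref{DEF-V}--\eqref{DEF-V*} and $v(t)=V(t)*u_{0}$ by \eqref{approx-sol}. Starting from the triangle inequality
\begin{equation*}
\left\|\p_{x}^{l}u(\cdot,\cdot,t)\right\|_{L^{\infty}}\ \ge\ \left\|\p_{x}^{l}\mathcal{V}(\cdot,\cdot,t)\right\|_{L^{\infty}}-\left\|\p_{x}^{l}\bigl(u-v\bigr)(\cdot,\cdot,t)\right\|_{L^{\infty}}-\left\|\p_{x}^{l}\bigl(v-\mathcal{V}\bigr)(\cdot,\cdot,t)\right\|_{L^{\infty}},
\end{equation*}
I would multiply by $t^{\frac{3}{4}+\frac{l}{2}}$ and pass to $\liminf_{t\to\infty}$. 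The middle term tends to $0$ by Theorem~\ref{thm.main-approximation}; the first term is controlled from below by Proposition~\ref{prop.mathV-lower}; so the heart of the matter is that the last term must also vanish after rescaling.

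To handle that term I would use the self-similar scaling $\p_{x}^{l}V(x,y,t)=t^{-\frac{3}{4}-\frac{l}{2}}(\p_{x}^{l}V_{*})(xt^{-\frac{1}{2}},yt^{-\frac{1}{4}})$ read off from \eqref{DEF-V}, together with the substitution $X=xt^{-\frac{1}{2}}$, $Y=yt^{-\frac{1}{4}}$, to obtain
\begin{equation*}
t^{\frac{3}{4}+\frac{l}{2}}\,\p_{x}^{l}\bigl(v-\mathcal{V}\bigr)(x,y,t)=\int_{\R^{2}}\Bigl[(\p_{x}^{l}V_{*})\bigl(X-x't^{-\frac{1}{2}},\,Y-y't^{-\frac{1}{4}}\bigr)-(\p_{x}^{l}V_{*})(X,Y)\Bigr]u_{0}(x',y')\,dx'dy'.
\end{equation*}
Applying $\p_{x}^{l}$ to \eqref{DEF-V*} yields an integrand bounded by $C\,r^{-\frac{3}{4}+\frac{l}{2}}e^{-r}$, so $\p_{x}^{l}V_{*}$ is bounded; moreover it vanishes at spatial infinity, hence is uniformly continuous on $\R^{2}$, by the (by now routine) integration-by-parts and stationary-phase analysis of the oscillatory integral \eqref{DEF-V*} --- the same analysis underlying the decay bound \eqref{u-sol-decay-Linf}. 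Consequently the supremum over $(X,Y)$ of the bracketed difference tends to $0$ as $t\to\infty$ for each fixed $(x',y')$ and is bounded by $2\|\p_{x}^{l}V_{*}\|_{L^{\infty}}$, so the dominated convergence theorem --- with dominating function $2\|\p_{x}^{l}V_{*}\|_{L^{\infty}}|u_{0}(x',y')|\in L^{1}(\R^{2})$ --- gives $t^{\frac{3}{4}+\frac{l}{2}}\|\p_{x}^{l}(v-\mathcal{V})(\cdot,\cdot,t)\|_{L^{\infty}}\to0$, using only $u_{0}\in L^{1}(\R^{2})$ and no moment condition.

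The same scaling identity gives $t^{\frac{3}{4}+\frac{l}{2}}\|\p_{x}^{l}\mathcal{V}(\cdot,\cdot,t)\|_{L^{\infty}}=\|\p_{x}^{l}V_{*}\|_{L^{\infty}}\,|M|$ for every $t>0$, which is the lower bound \eqref{mathV-lower} of Proposition~\ref{prop.mathV-lower}; the constant $\|\p_{x}^{l}V_{*}\|_{L^{\infty}}$ is positive because $V(t)$ is the fundamental solution of \eqref{approx-CP}, whose Fourier symbol $e^{t(i\xi\eta^{2}-\mu\xi^{2})}$ never vanishes, so that $\widehat{\p_{x}^{l}V_{*}}(\xi,\eta)$ is a nonzero multiple of $(i\xi)^{l}e^{i\xi\eta^{2}-\mu\xi^{2}}$ and $\p_{x}^{l}V_{*}$ is not the zero function. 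Putting these three facts into the displayed inequality and taking $\liminf_{t\to\infty}$ yields \eqref{u-est-lower} with $c_{0}=\|\p_{x}^{l}V_{*}\|_{L^{\infty}}>0$. Within this proof the genuinely new work is the rescaled convergence $v\to\mathcal{V}$ under the mere hypothesis $u_{0}\in L^{1}$; the main obstacle overall is the oscillatory-integral study of $V_{*}$ --- in particular its decay at spatial infinity, which is what secures the uniform continuity needed in the dominated-convergence step --- but this is exactly the analysis already carried out for \eqref{u-sol-decay-Linf} and \eqref{u-sol-approximation}, so it introduces no essentially new difficulty here.
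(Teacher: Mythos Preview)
Your route differs from the paper's in a way that creates a real gap. The paper does not pass all the way to the self-similar profile $M\,V$; its leading term is the intermediate function $\mathcal{V}_{0}(x,y,t)=\int_{\R}V(x,y-w,t)M_{0}(w)\,dw$ of \eqref{DEF-mathV}, a convolution in $y$ only with $M_{0}(w)=\int_{\R}u_{0}(x,w)\,dx$. This is \emph{not} your $\mathcal{V}=M\,V$, so your appeals to Proposition~\ref{prop.mathV-lower} and \eqref{mathV-lower} are misplaced (your separate scaling argument for the identity $t^{3/4+l/2}\|\p_{x}^{l}\mathcal{V}\|_{L^{\infty}}=\|\p_{x}^{l}V_{*}\|_{L^{\infty}}|M|$ is of course fine on its own). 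With $\mathcal{V}_{0}$ the paper proves $v\to\mathcal{V}_{0}$ in Theorem~\ref{thm.L-ap-LZKB} using only the mean value theorem in the $x$-variable and the bound on $\p_{x}^{l+1}V$, then gets the lower bound for $\mathcal{V}_{0}$ in Proposition~\ref{prop.mathV-lower} by evaluating at the origin and applying dominated convergence in $(r,w)$; combined with Corollary~\ref{cor.L-decay-LZKB-ap} and \eqref{Duhamel-est-final} this yields \eqref{u-est-lower}.

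The substantive gap in your argument is the uniform continuity of $\p_{x}^{l}V_{*}$ on $\R^{2}$. Your dominated-convergence step needs $\sup_{(X,Y)}\bigl|(\p_{x}^{l}V_{*})(X-a,Y-b)-(\p_{x}^{l}V_{*})(X,Y)\bigr|\to0$ as $(a,b)\to0$, and you justify this by claiming $\p_{x}^{l}V_{*}$ vanishes at spatial infinity ``by the same analysis underlying \eqref{u-sol-decay-Linf}''. But nothing in the paper establishes decay of $V_{*}$: Proposition~\ref{prop.L-decay-V} and the proofs behind \eqref{u-sol-decay-Linf} and \eqref{u-sol-approximation} use only the \emph{boundedness} $\|\p_{x}^{l}V_{*}\|_{L^{\infty}}<\infty$, which comes from the absolutely convergent integral $\int_{0}^{\infty}r^{-3/4+l/2}e^{-r}\,dr$. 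Uniform control in the $y$-direction is genuinely harder --- differentiating \eqref{DEF-V*} in $y$ produces the non-absolutely-convergent integrand $r^{-5/4}e^{-r}\sin(\cdots)$, so Lipschitz regularity in $y$ is not free. One can in fact show $\p_{x}^{l}V_{*}\to0$ at infinity via integration by parts in the representation \eqref{re-V}, but that is new work you would have to supply; it is not ``the same analysis'' as anything already in the paper. The paper's device of keeping the $y$-convolution in $\mathcal{V}_{0}$ is precisely what sidesteps this: only $x$-regularity of $V$ is required, and that is immediate.
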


\begin{rem}
By virtue of the above result \eqref{u-est-lower}, we can conclude that the $L^{\infty}$-decay estimate \eqref{u-sol-decay-Linf} is optimal with respect to the time decaying order $t^{-\frac{3}{4}-\frac{l}{2}}$, under the assumptions in Theorem~\ref{thm.main-approximation} and the additional condition 
\[
\int_{\R^{2}}u_{0}(x, y)dxdy \neq 0.
\] 
Compared with \eqref{u-est-lower-FH}, while the definition of $\mathcal{M}_{j}$ depends on the unknown function $u(x, y, t)$, in our result \eqref{u-est-lower}, $\int_{\R^{2}}u_{0}(x, y)dxdy$ in the lower bound depends only on the initial data $u_{0}(x, y)$.
\end{rem}

Finally, we shall introduce the detailed asymptotic profile for the solution $u(x, y, t)$ to \eqref{ZKB}. 
Under the additional regularity and weight assumption on the initial data $u_{0}(x, y)$, combining the techniques used for the parabolic equations and for the Schr$\ddot{\mathrm{o}}$dinger equation, we have achieved to derive the asymptotic profile. Now, we define the new function $\psi(x, y, t)$ as follows: 
\begin{align}\label{DEF-psi}
\begin{split}
\psi(x, y, t):=&\, \mathcal{M}\left[u_{0}\right]\left(yt^{-1}\right)V(-x, y, t), \\
=&\, \mathcal{M}\left[u_{0}\right]\left(yt^{-1}\right)V_{*} \left( -xt^{-\frac{1}{2}}, yt^{-\frac{1}{4}} \right)t^{ -\frac{3}{4} }, \ \ (x, y)\in \R^{2}, \ t>0,
\end{split}
\end{align}
where $V(x, y, t)$ and $V_{*}(x, y)$ are defined by \eqref{DEF-V} and \eqref{DEF-V*}, respectively. On the other hand, the functional $\mathcal{M}\left[u_{0}\right](y)$ is defined by
\begin{equation}\label{DEF-mathM}
\mathcal{M}\left[u_{0}\right](y):=\sqrt{2\pi }\left(\int_{\R}\mathcal{F}_{\xi}^{-1}\left[\mathcal{F}\left[u_{0}\right]\left(\xi, \frac{y}{2\xi }\right)\right](x)dx\right), \ \ u_{0}\in L^{1}(\R^{2}). 
\end{equation}
Moreover, in order to state our final result, let us introduce the fractional derivative $D_{x}^{\gamma}$ for $\gamma \in \R$, via the Fourier transform as follows: 
\begin{equation}\label{frac-derivative}
D^{\gamma}_{x}h(x, y):=\mathcal{F}^{-1}_{\xi}\left[ |\xi|^{\gamma}\mathcal{F}_{x}[h](\xi, y) \right](x), \ \ \gamma \in \R. 
\end{equation}
Then, the following asymptotic formula can be established: 
\begin{thm}\label{thm.main-u-asymptotic}
Let $p\ge2$ be an integer and $l\in \{0, 1\}$ satisfying $p>(4+2l)/3$. Assume that $u_{0}\in H^{2, 1}(\R^{2})\cap L^{1}(\R^{2})$ and $E_{0}$ is sufficiently small. 
In addition, suppose $y^{2}D_{x}^{-\alpha}u_{0}\in L^{1}(\R^{2})$ with $\alpha>1/2$ if $l=0$, while $y^{2}u_{0}\in L^{1}(\R^{2})$ if $l=1$.  
Then, the solution $u(x, y, t)$ to \eqref{ZKB} and the function $\psi(x, y, t)$ given by \eqref{DEF-psi} satisfy the following asymptotic formula: 
\begin{equation}\label{u-asymptotic}
\lim_{t\to \infty}t^{\frac{3}{4}+\frac{l}{2}}\left\|\p_{x}^{l}\left(u(\cdot, \cdot, t)-\psi(\cdot, \cdot, t)\right)\right\|_{L^{\infty}}=0. 
\end{equation}
\end{thm}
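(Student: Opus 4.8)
The plan is to deduce the statement from the approximation formula \eqref{u-sol-approximation} of Theorem~\ref{thm.main-approximation} and then to carry out a careful stationary-phase/heat-kernel analysis of the linear propagator. Since the hypotheses of Theorem~\ref{thm.main-u-asymptotic} contain those of Theorem~\ref{thm.main-approximation}, the latter already gives $t^{\frac34+\frac l2}\|\p_x^l(u(\cdot,\cdot,t)-v(\cdot,\cdot,t))\|_{L^\infty}\to0$, where $v=V(t)*u_0$ is the solution of \eqref{approx-CP} given by \eqref{approx-sol}. Writing $u-\psi=(u-v)+(v-\psi)$, it therefore suffices to prove
\[
\lim_{t\to\infty}t^{\frac34+\frac l2}\left\|\p_x^l\bigl(v(\cdot,\cdot,t)-\psi(\cdot,\cdot,t)\bigr)\right\|_{L^\infty}=0,
\]
which is a purely linear problem. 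On the Fourier side $\wh v(\xi,\eta,t)=e^{-\mu\xi^2t}e^{\pm i\xi\eta^2t}\wh u_0(\xi,\eta)$ (the sign being fixed by the Fourier convention of Section~1): the factor $e^{-\mu\xi^2t}$ is the heat semigroup in the $x$-variable, concentrating $\xi$ at scale $t^{-1/2}$, while $e^{\pm i\xi\eta^2t}$ is a free Schr\"odinger-type propagator in the $y$-variable with ``time'' $\xi t$. This is precisely the setting in which the parabolic and the Schr\"odinger techniques should be combined.

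First I would treat the $\eta$-integral. Completing the square in $\eta$ and translating, the $\eta$-part becomes a Fresnel integral of the form $\int_\R e^{\pm i\xi t\eta^2}\wh u_0\bigl(\xi,\eta-\tfrac{y}{2\xi t}\bigr)\,d\eta$, and the classical large-time asymptotics of the free Schr\"odinger group extract a leading term proportional to $(\xi t)^{-1/2}\,\wh u_0\bigl(\xi,-\tfrac{y}{2\xi t}\bigr)$, with a remainder governed by the second $\eta$-derivative of $\wh u_0$ --- hence by the weights $y^2u_0$ (resp.\ $y^2D_x^{-\alpha}u_0$) appearing in the hypotheses. The crucial observation is that $-\tfrac{y}{2\xi t}=\tfrac{-(y/t)}{2\xi}$, so the surviving argument of $\wh u_0$ is exactly the curve occurring in the definition \eqref{DEF-mathM} of $\mathcal{M}[u_0]$, evaluated at the slow variable $y/t$. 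Carrying out next the (now essentially heat-type) $\xi$-integral, and recognising via \eqref{DEF-V}--\eqref{DEF-V*} that the same $\xi$-integral without the $\wh u_0$-factor reproduces $V(-x,y,t)$, the product structure in $\xi$ turns into a convolution in $x$ whose leading term is a constant multiple of
\[
V(-x,y,t)\int_\R \mathcal{F}_\xi^{-1}\!\left[\wh u_0\bigl(\xi,\tfrac{y/t}{2\xi}\bigr)\right](x')\,dx' .
\]
By \eqref{DEF-mathM} and \eqref{DEF-psi} this is precisely $\psi(x,y,t)$, the normalising constants matching thanks to the Fourier convention used.

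It then remains to show that every remainder generated along the way is $o(t^{-\frac34-\frac l2})$ in $L^\infty$. Three points are delicate. (i) The Schr\"odinger remainder, together with the replacement of $\wh u_0\bigl(\xi,\eta-\tfrac{y}{2\xi t}\bigr)$ by $\wh u_0\bigl(\xi,-\tfrac{y}{2\xi t}\bigr)$, must be controlled \emph{uniformly in $\xi$}, and in particular on the region $|\xi|\lesssim t^{-1}$, where the Schr\"odinger ``time'' $\xi t$ is only $O(1)$ and the asymptotic expansion degenerates; this is handled by a second-order Taylor expansion in $\eta$, which is exactly why $y^2u_0\in L^1$ (when $l=1$) is assumed. (ii) The behaviour near $\xi=0$: there $\wh u_0(\xi,\tfrac{y/t}{2\xi})$ is evaluated at a large second argument and the $\xi$-kernel carries a $|\xi|^{-1/2}$-type singularity. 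For $l=1$ the extra $\p_x$ produces a factor $i\xi$ that cancels the singularity, so $y^2u_0\in L^1$ suffices; for $l=0$ this is precisely the role of the weighted anti-derivative hypothesis $y^2D_x^{-\alpha}u_0\in L^1$ with $\alpha>1/2$, which allows one to absorb the $|\xi|^{-1/2}$ singularity --- in the spirit of the $\p_x^{-1}$-estimates of \cite{FH23,M99} --- while still keeping two $\eta$-derivatives available. (iii) One must also verify that $\mathcal{M}[u_0]$ is well defined and continuous under these hypotheses, so that $\psi$ is a genuine function with $\|\psi(\cdot,\cdot,t)\|_{L^\infty}=O(t^{-3/4})$ and the asserted limit is meaningful.

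The main obstacle is point (ii) together with the requirement that all estimates be uniform in $(x,y)$: away from the self-similar region $|x|\lesssim t^{1/2}$, $|y|\lesssim t^{1/4}$ both $v$ and $\psi$ are already small, but the error terms decay there only polynomially in $|x|$ and $|y|$, so one must still show that they beat $t^{-\frac34-\frac l2}$ at every point; controlling the $\xi\to0$ singularity with only the stated weights, rather than with a stronger $\p_x^{-1}u_0\in L^1$-type assumption, is the technical heart of the matter. The arithmetic condition $p>(4+2l)/3$ plays no direct role in this linear analysis --- it is used only once, to invoke Theorem~\ref{thm.main-approximation} in the reduction step.
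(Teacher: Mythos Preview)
Your proposal is correct and follows essentially the same route as the paper: reduce to the linear problem via Theorem~\ref{thm.main-approximation}, then exploit that for fixed $\xi$ the $y$-evolution is a free Schr\"odinger flow, extract the leading term $\mathcal{F}[u_0](\xi,y/(2\xi t))$ by the standard quadratic-phase expansion, and control the remainder using the $y^2$-weight (with $D_x^{-\alpha}$ inserted when $l=0$ to tame the $|\xi|^{-3/2}$ singularity near $\xi=0$). The only cosmetic difference is that the paper takes a partial Fourier transform in $x$ and writes the Schr\"odinger kernel in physical $y$-space (equations \eqref{phi-sol}--\eqref{integrand}), whereas you begin on the full Fourier side and perform the $\eta$-integral; these are dual versions of the same computation, and your identification of the three delicate points (Schr\"odinger remainder uniformly in $\xi$, the $\xi\to0$ singularity, and the convolution-to-product replacement handled as in Theorem~\ref{thm.L-ap-LZKB}) matches the paper's treatment exactly.
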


\begin{rem}
For $\psi(x, y, t)$, we remark that the following fact holds: 
\[
\left\| \p_{x}^{l}\psi(\cdot, \cdot, t)\right\|_{L^{\infty}}=\left\|\mathcal{M}\left[u_{0}\right](\cdot)\p_{x}^{l}V_{*}(\cdot, \cdot)\right\|_{L^{\infty}}t^{-\frac{3}{4}-\frac{l}{2}}, \ \ t>0, \ l \in \mathbb{N}\cup \{0\}. 
\]
Therefore, \eqref{u-asymptotic} certainly has meaning as the asymptotic formula, because the original solution $\p_{x}^{l}u(x, y, t)$ to \eqref{ZKB} has the same $L^{\infty}$-decay order $t^{-\frac{3}{4}-\frac{l}{2}}$ as $\p_{x}^{l}\psi(x, y, t)$ for $l=0, 1$. 
\end{rem}

The rest of this paper is organized as follows. In Section~2, we discuss the local and global well-posedness for \eqref{ZKB}. First, we introduce a couple of properties for the functions belong the anisotropic Sobolev spaces $H^{s_{1}, s_{2}}(\R^{2})$ in Subsection~2.1, and then we prove the local well-posedness in Subsection~2.2. After that, in order to discuss the global well-posedness, we derive a priori estimates for the solution in Subsection~2.3 and prove Theorem~\ref{thm.GWP-H21} as a result. The $L^{\infty}$ and $L^{2}$-decay estimates are treated in Section~3. More precisely, Subsection~3.1 is devoted to the decay estimates for the linear solution. The estimates for the nonlinear solution, i.e. Theorem~\ref{thm.main-decay} is proved in Subsection~3.2. Next, the proofs of Theorems~\ref{thm.main-approximation} and \ref{thm.main-u-est-lower} are given in Section~4. In Subsection~4.1, we give the proof of Theorem~\ref{thm.main-approximation}, i.e. the approximation formula for the solution. After that, by using Theorem~\ref{thm.main-approximation}, we shall derive the lower bound for the $L^{\infty}$-norm of the solution $u(x, y, t)$ in Subsection~4.2 and prove Theorem~\ref{thm.main-u-est-lower}. Finally in Section~5, we derive the detailed asymptotic profile for the solution $u(x, y, t)$ to \eqref{ZKB} and give the proof of Theorem~\ref{thm.main-u-asymptotic}. 
The main novelty of this paper lies in the derivation of the optimal $L^{\infty}$-decay estimate (i.e. Theorems~\ref{thm.main-decay} and \ref{thm.main-u-est-lower}) and of the asymptotic profile (i.e. Theorem~\ref{thm.main-u-asymptotic}) for the solution $u(x, y, t)$ to \eqref{ZKB}. 
In particular, for deriving the asymptotic profile of the solution, we provide a new method that combines the techniques used for the parabolic equations and for the Schr$\ddot{\mathrm{o}}$dinger equation. 

\medskip
\par\noindent
\textbf{\bf{Notations.}} 

\medskip
We define the Fourier transform of $f$ and the inverse Fourier transform of $g$ as follows:
\[
\hat{f}(\xi, \eta) = \mathcal{F}[f](\xi, \eta) := \frac{1}{2\pi}\int_{\R^{2}}e^{-ix\xi-iy\eta}f(x, y)dxdy, \ \ 
\mathcal{F}^{-1}[g](x, y):=\frac{1}{2\pi}\int_{\R^{2}}e^{ix\xi+iy\eta}g(\xi, \eta)d\xi d\eta.
\]
We also define $\mathcal{F}_x$, $\mathcal{F}^{-1}_{\xi}$, $\mathcal{F}_y$ and $\mathcal{F}^{-1}_{\eta}$ as follows:
\begin{align*}
&\mathcal{F}_x[f](\xi, y) := \frac{1}{\sqrt{2\pi}}\int_{\R}e^{-ix\xi}f(x, y)dx, \ \ 
\mathcal{F}_{\xi}^{-1}[g](x, y):=\frac{1}{\sqrt{2\pi}}\int_{\R}e^{ix\xi}g(\xi, y)d\xi, \\
&\mathcal{F}_y[f](x, \eta) := \frac{1}{\sqrt{2\pi}}\int_{\R}e^{-iy\eta}f(x, y)dy, \ \ 
\mathcal{F}_{\eta}^{-1}[g](x, y):=\frac{1}{\sqrt{2\pi}}\int_{\R}e^{iy\eta}g(x, \eta)d\eta.
\end{align*}
Via the Fourier transform, we also define the fractional derivative $D_{x}^{\gamma}$ as the formula \eqref{frac-derivative}.  

\medskip
For $1\le p\le \infty$, $L^{p}(\R^{2})$ means the usual Lebesgue spaces. Also, the Schwartz space and its dual space are denoted by $\mathcal{S}(\R^{2})$ and $\mathcal{S}'(\R^{2})$, respectively. 
Moreover, for $s, s_{1}, s_{2}\in \R$, the Sobolev spaces $H^{s}(\R^{2})$ and the anisotropic Sobolev spaces $H^{s_{1}, s_{2}}(\R^{2})$ are defined by \eqref{DEF-Sobolev} and \eqref{DEF-aniso-Sobolev}, respectively. Note that if $s=s_{1}+s_{2}$, then we obtain $\left\|f\right\|_{H^{s_{1}, s_{2}}}\le \left\|f\right\|_{H^{s}}$ for $f\in H^{s}(\R^{2})$, i.e. $H^{s}(\R^{2})\hookrightarrow H^{s_{1}, s_{2}}(\R^{2})$.

\medskip
Let $T>0$, $1\le p \le \infty$ and $X$ be a Banach space. Then, $L^{p}(0,T; X)$ denotes the space of measurable functions $u: (0, T) \to X$ such that $\|u(t)\|_{X}$ belongs to $L^{p}(0, T)$. Also, $C([0, T]; X)$ denotes the subspace of $L^{\infty}(0, T; X)$ of all continuous functions $u: [0, T] \to X$. 

\medskip
Throughout this paper, $C$ denotes various positive constants, which may vary from line to line during computations. Also, it may depend on the norm of the initial data. However, we note that it does not depend on the space variable $(x, y)$ and the time variable $t$. 

\section{Well-posedness}

In this section, we shall prove our first main result Theorem~\ref{thm.GWP-H21}, i.e. we discuss the well-posedness for \eqref{ZKB}. First of all, we would like to transform \eqref{ZKB} to the corresponding integral equation \eqref{integral-eq} below. In order to doing that, let us introduce the following Cauchy problem: 
\begin{align}\label{LZKB}
\begin{split}
& \tilde{u}_{t} + \tilde{u}_{xxx} + \tilde{u}_{yyx} -\mu\tilde{u}_{xx}=0, \ \ (x, y) \in \R^{2}, \ t>0, \\
& \tilde{u}(x, y, 0) = u_{0}(x, y), \ \ (x, y) \in \R^{2}. 
\end{split}
\end{align}
The solution to the above Cauchy problem can be written by 
\begin{equation}\label{LZKB-sol}
\tilde{u}(x, y, t)=(U(t)*u_{0})(x, y), \ \ (x, y) \in \R^{2}, \ t>0, 
\end{equation}
where the integral kernel $U(x, y, t)$ is defined by 
\begin{equation}\label{DEF-U}
U(x, y, t) := \frac{1}{2\pi}\mathcal{F}^{-1} \left[ e^{ -\mu t \xi^{2} + it\xi^{3} + it\xi \eta^{2} } \right] (x, y). 
\end{equation}
Applying the Duhamel principle to \eqref{ZKB}, we obtain the following integral equation: 
\begin{equation}\label{integral-eq}
u(t)=U(t)*u_{0}-\frac{\beta}{p+1}\int_{0}^{t}\p_{x}U(t-\tau)*\left(u^{p+1}(\tau)\right)d\tau. 
\end{equation}
In what follows, we would like to consider this integral equation \eqref{integral-eq}. 

\subsection{Preliminaries}  

In this subsection, we shall prepare a couple of lemmas and propositions to discuss the well-posedness for \eqref{ZKB}. 
We start with introducing the properties of the integral kernel $U(x, y, t)$: 
\begin{lem}\label{lem.U-Hs-est}
Let $s, s_{1}, s_{2}\in \R$ and $a>0$. Then, the following estimates 
\begin{align}
\left\|\left(1-\p_{x}^{2}\right)^{\frac{a}{2}}\left(U(t)*f\right)\right\|_{H^{s}}
&\le C\left\{1+(\mu t)^{-\frac{a}{2}}\right\} \left\|f\right\|_{H^{s}}, \ \ t>0, \label{U-Hs-est-1}\\
\left\|U(t)*g\right\|_{H^{s_{1}+a, s_{2}}}
&\le C\left\{1+(\mu t)^{-\frac{a}{2}}\right\} \left\|g\right\|_{H^{s_{1}, s_{2}}}, \ \ t>0, \label{U-Hs-est-2}
\end{align}
hold for $f\in H^{s}(\R^{2})$ and $g\in H^{s_{1}, s_{2}}(\R^{2})$, respectively. Here, $U(x, y, t)$ is defined by \eqref{DEF-U}. 
\end{lem}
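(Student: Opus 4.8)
\textbf{Proof proposal for Lemma~\ref{lem.U-Hs-est}.}

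The plan is to work entirely on the Fourier side, where the convolution with $U(t)$ becomes multiplication by the symbol $e^{-\mu t\xi^{2}+it\xi^{3}+it\xi\eta^{2}}$, whose modulus is exactly $e^{-\mu t\xi^{2}}$. For \eqref{U-Hs-est-1}, I would write
\[
\left\|\left(1-\p_{x}^{2}\right)^{\frac{a}{2}}\left(U(t)*f\right)\right\|_{H^{s}}^{2}
=\int_{\R^{2}}\left(1+\xi^{2}+\eta^{2}\right)^{s}\left(1+\xi^{2}\right)^{a}e^{-2\mu t\xi^{2}}\left|\hat f(\xi,\eta)\right|^{2}d\xi d\eta,
\]
so the whole question reduces to bounding $\left(1+\xi^{2}\right)^{a}e^{-2\mu t\xi^{2}}$ by $C\{1+(\mu t)^{-a/2}\}^{2}$ uniformly in $\xi\in\R$. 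To see this, split $\left(1+\xi^{2}\right)^{a}\le 2^{a}\max\{1,\xi^{2a}\}\le 2^{a}(1+\xi^{2a})$ and estimate the two pieces separately: the constant piece gives $e^{-2\mu t\xi^{2}}\le 1$, and for the other piece use the elementary inequality $\xi^{2a}e^{-2\mu t\xi^{2}}=(\mu t)^{-a}(\mu t\xi^{2})^{a}e^{-2\mu t\xi^{2}}\le C_{a}(\mu t)^{-a}$, since $\sigma\mapsto \sigma^{a}e^{-2\sigma}$ is bounded on $[0,\infty)$. Combining, $\left(1+\xi^{2}\right)^{a}e^{-2\mu t\xi^{2}}\le C\bigl(1+(\mu t)^{-a}\bigr)\le C\{1+(\mu t)^{-a/2}\}^{2}$, and pulling this constant out of the integral leaves exactly $\|f\|_{H^{s}}^{2}$.

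For \eqref{U-Hs-est-2} the argument is essentially identical, just with the anisotropic weight: here
\[
\left\|U(t)*g\right\|_{H^{s_{1}+a,s_{2}}}^{2}
=\int_{\R^{2}}\left(1+\xi^{2}\right)^{s_{1}+a}\left(1+\eta^{2}\right)^{s_{2}}e^{-2\mu t\xi^{2}}\left|\hat g(\xi,\eta)\right|^{2}d\xi d\eta,
\]
and since the extra factor $\left(1+\xi^{2}\right)^{a}$ depends only on $\xi$, the same pointwise bound $\left(1+\xi^{2}\right)^{a}e^{-2\mu t\xi^{2}}\le C\{1+(\mu t)^{-a/2}\}^{2}$ applies verbatim, leaving $\|g\|_{H^{s_{1},s_{2}}}^{2}$. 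The factor $e^{i t\xi^{3}+it\xi\eta^{2}}$ plays no role whatsoever because it has modulus one; this is the point where the dissipation in the $x$-direction does all the work.

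There is no real obstacle here — the lemma is a direct consequence of the smoothing effect of the Gaussian factor $e^{-\mu t\xi^{2}}$ in the symbol, and the only thing to be careful about is the bookkeeping of constants so that the bound is genuinely stated as $1+(\mu t)^{-a/2}$ (which is the form convenient for later time-integrability arguments) rather than $1+(\mu t)^{-a}$; this is harmless since $1+(\mu t)^{-a}\le(1+(\mu t)^{-a/2})^{2}$. The case $a=0$ is trivial (the left side is just $\|f\|_{H^{s}}$ resp. $\|g\|_{H^{s_{1},s_{2}}}$ since $|e^{-\mu t\xi^{2}+i(\cdots)}|\le1$), so one may assume $a>0$ throughout. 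I would present the single scalar inequality for $\left(1+\xi^{2}\right)^{a}e^{-2\mu t\xi^{2}}$ first as a preliminary observation, then apply it twice via Plancherel.
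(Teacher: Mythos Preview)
Your proposal is correct and follows essentially the same approach as the paper: pass to the Fourier side via Plancherel, observe that the oscillatory factor has modulus one, and reduce everything to the scalar bound $(1+\xi^{2})^{a/2}e^{-\mu t\xi^{2}}\le C\{1+(\mu t)^{-a/2}\}$. The only cosmetic difference is in how that scalar inequality is established: the paper computes the exact maximum of $h(\xi)=(1+\xi^{2})^{a/2}e^{-\mu t\xi^{2}}$ by differentiating and splitting into the cases $t\ge a/(2\mu)$ and $t<a/(2\mu)$, whereas you use the split $(1+\xi^{2})^{a}\le 2^{a}(1+\xi^{2a})$ together with the boundedness of $\sigma\mapsto\sigma^{a}e^{-2\sigma}$; both arguments are standard and yield the same conclusion.
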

\begin{proof}
For the second inequality \eqref{U-Hs-est-2}, it follows from a direct computation that 
\begin{align*}
&\left\|U(t)*g\right\|_{H^{s_{1}+a, s_{2}}}
=\left\| \left(1+\xi^{2}\right)^{\frac{s_{1}+a}{2}} \left(1+\eta^{2}\right)^{\frac{s_{2}}{2}} e^{-\mu t\xi^{2}}e^{it\xi^{3}+it\xi \eta^{2}}\hat{g}(\xi, \eta)\right\|_{L^{2}_{\xi \eta}} \nonumber \\
&\le \sup_{\xi \in \R}\left\{ \left(1+\xi^{2}\right)^{\frac{a}{2}} e^{-\mu t\xi^{2}}\right\}\left\|g\right\|_{H^{s_{1}, s_{2}}}
=\left\|g\right\|_{H^{s_{1}, s_{2}}}
\begin{cases}
\displaystyle 1, &\displaystyle t\ge \frac{a}{2\mu},  \\
\displaystyle \left(\frac{a}{2\mu t}\right)^{\frac{a}{2}}e^{-\mu t\left(\frac{a}{2\mu t}-1\right)}, &\displaystyle 0<t< \frac{a}{2\mu}
\end{cases}\nonumber \\
&\le \left\{1+\left(\frac{a}{2\mu t}\right)^{\frac{a}{2}}\right\}\left\|g\right\|_{H^{s_{1}, s_{2}}}
\le C\left\{1+(\mu t)^{-\frac{a}{2}}\right\}\left\|g\right\|_{H^{s_{1}, s_{2}}}, \ \ t>0. 
\end{align*}
The first inequality \eqref{U-Hs-est-1} can be shown in the same way. 
\end{proof}

Secondly, we shall prove that the anisotropic Sobolev space $H^{s_{1}, s_{2}}(\R^{2})$ is a Banach algebra if $s_{1}>1/2$ and $s_{2}>1/2$. 
This fact (\eqref{BAnach-al} below) plays an important role in the proof of the local well-posedness for \eqref{ZKB}. 
In order to state such a result, let us prepare the following lemma. For simplicity, we set $\langle \xi \rangle:=\left(1+\xi^{2}\right)^{\frac{1}{2}}$ and $\langle \eta \rangle:=\left(1+\eta^{2}\right)^{\frac{1}{2}}$ in what follows.  
\begin{lem}\label{lem.Banach-al-pre}
Let $s_{1}>1/2$, $s_{2}>1/2$, $F\in H^{s_{1}, 0}(\R^{2})$ and $G\in H^{0, s_{2}}(\R^{2})$. Then, we have 
\begin{equation}\label{BAnach-al-pre}
\left\|FG\right\|_{L^{2}} \le C\left\|F\right\|_{H^{s_{1}, 0}} \left\|G\right\|_{H^{0, s_{2}}}. 
\end{equation}
\end{lem}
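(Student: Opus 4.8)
\textbf{Proof proposal for Lemma~\ref{lem.Banach-al-pre}.}

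The plan is to estimate $\|FG\|_{L^2}$ by passing to the Fourier side and splitting the convolution $\widehat{FG} = \hat F * \hat G$ (up to the $2\pi$ normalization) according to which of the two frequency variables of the factors is ``large''. Writing $\zeta=(\xi,\eta)$ and $\zeta'=(\xi',\eta')$, we have $\widehat{FG}(\zeta) = \frac{1}{2\pi}\int_{\R^2}\hat F(\zeta-\zeta')\hat G(\zeta')\,d\zeta'$. Since the weight in $H^{s_1,0}$ is $(1+\xi^2)^{s_1/2}$ and the weight in $H^{0,s_2}$ is $(1+\eta^2)^{s_2/2}$, the natural approach is to insert the factor $1 = (1+(\xi-\xi')^2)^{s_1/2}(1+(\xi-\xi')^2)^{-s_1/2}$ on the $\hat F$ part and $1 = (1+(\eta')^2)^{s_2/2}(1+(\eta')^2)^{-s_2/2}$ on the $\hat G$ part, so that
\begin{align*}
|\widehat{FG}(\zeta)| \le \frac{1}{2\pi}\int_{\R^2} \frac{\big|(1+(\xi-\xi')^2)^{s_1/2}\hat F(\zeta-\zeta')\big|\,\big|(1+(\eta')^2)^{s_2/2}\hat G(\zeta')\big|}{(1+(\xi-\xi')^2)^{s_1/2}(1+(\eta')^2)^{s_2/2}}\,d\zeta'.
\end{align*}
Then I would apply the Cauchy--Schwarz inequality in $\zeta'$, pulling out the $L^2_{\zeta'}$ norms of $(1+(\xi-\xi')^2)^{s_1/2}\hat F(\zeta-\zeta')$ (which after a shift equals $\|F\|_{H^{s_1,0}}$, independent of $\zeta$) and of $(1+(\eta')^2)^{s_2/2}\hat G(\zeta')$ (which equals $\|G\|_{H^{0,s_2}}$), leaving behind the ``kernel'' factor
\[
\left(\int_{\R^2}\frac{d\zeta'}{(1+(\xi-\xi')^2)^{s_1}(1+(\eta')^2)^{s_2}}\right)^{1/2}.
\]

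The key observation is that this last integral \emph{factors}: the $\xi'$-integral is $\int_{\R}(1+(\xi-\xi')^2)^{-s_1}\,d\xi'$, which is finite and independent of $\xi$ (by translation invariance) precisely because $s_1>1/2$, and the $\eta'$-integral is $\int_{\R}(1+(\eta')^2)^{-s_2}\,d\eta'$, finite because $s_2>1/2$. Hence the kernel factor is a finite constant $C=C(s_1,s_2)$, uniform in $\zeta$. Taking the $L^2_\zeta$ norm of $|\widehat{FG}(\zeta)| \le C\|F\|_{H^{s_1,0}}\|G\|_{H^{0,s_2}}$ does \emph{not} immediately work since the right side is a constant; instead I would keep one Fourier variable free: more carefully, after Cauchy--Schwarz one should retain $|\widehat{FG}(\zeta)|^2 \le C^2 \big(\text{const}\big)\cdot \big(|(1+\,\cdot\,)^{s_1/2}\hat F|^2 * |(1+\,\cdot\,)^{s_2/2}\hat G|^2\big)(\zeta)$ is not quite the structure either. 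The cleanest route: bound $|\widehat{FG}(\zeta)|^2 \le \frac{C}{(2\pi)^2}\Big(\int \frac{d\zeta'}{(1+(\xi-\xi')^2)^{s_1}(1+(\eta')^2)^{s_2}}\Big)\Big(\int |(1+(\xi-\xi')^2)^{s_1/2}\hat F(\zeta-\zeta')|^2|(1+(\eta')^2)^{s_2/2}\hat G(\zeta')|^2\,d\zeta'\Big)$; the first factor is the uniform constant $C^2$, and then $\|FG\|_{L^2}^2 = \|\widehat{FG}\|_{L^2_\zeta}^2 \le C^2\int_{\R^2}\int_{\R^2}|(1+(\xi-\xi')^2)^{s_1/2}\hat F(\zeta-\zeta')|^2\,|(1+(\eta')^2)^{s_2/2}\hat G(\zeta')|^2\,d\zeta'\,d\zeta$, and Fubini (integrating in $\zeta$ first, via the shift $\zeta\mapsto \zeta+\zeta'$) separates this into $\|F\|_{H^{s_1,0}}^2\,\|G\|_{H^{0,s_2}}^2$. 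This gives \eqref{BAnach-al-pre}.

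The only genuine subtlety — and the step I would be most careful about — is the placement of the weights: one must put the $\xi$-weight entirely on $F$ and the $\eta$-weight entirely on $G$ so that the leftover kernel decouples into a product of two one-dimensional integrals, each convergent under the stated exponent conditions $s_1>1/2$ and $s_2>1/2$. If one tried to use a mixed weight $(1+\xi^2+\eta^2)^{s/2}$ this decoupling would fail; the asymmetry of the hypothesis ($F\in H^{s_1,0}$, $G\in H^{0,s_2}$) is exactly what makes the argument go through, and it is this asymmetric version that will feed into the Banach-algebra property of $H^{s_1,s_2}(\R^2)$ via the elementary inequality $(1+\xi^2)^{s_1/2}(1+\eta^2)^{s_2/2} \lesssim (1+(\xi-\xi')^2)^{s_1/2}(1+\eta'^2)^{s_2/2} + \cdots$ (a Peetre-type splitting) in the subsequent Proposition.
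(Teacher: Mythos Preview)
Your argument is correct, but it differs from the paper's. The paper proceeds by an iterated Young inequality: writing the two-dimensional convolution $\hat F * \hat G$ as a $\xi$-convolution followed by an $\eta$-convolution, it first applies $L^1_\xi * L^2_\xi \hookrightarrow L^2_\xi$ to get $\|(\hat F * \hat G)(\cdot,\eta)\|_{L^2_\xi}\le \int \|A_{\eta_1}\|_{L^1_\xi}\|B_{\eta-\eta_1}\|_{L^2_\xi}\,d\eta_1$, and then applies $L^2_\eta * L^1_\eta \hookrightarrow L^2_\eta$ to this one-dimensional convolution in $\eta$; the two Cauchy--Schwarz steps that convert $L^2$ to $L^1$ (in $\xi$ for $\hat F$, in $\eta$ for $\hat G$) are precisely where $s_1>1/2$ and $s_2>1/2$ enter. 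Your route instead applies Cauchy--Schwarz once to the full two-dimensional convolution with the product weight $(1+(\xi-\xi')^2)^{-s_1}(1+\eta'^2)^{-s_2}$, observes that this kernel factors into two convergent one-dimensional integrals, and finishes with Fubini and a translation. Both arguments are elementary and of comparable length; yours is a single-step ``Schur-type'' estimate that exploits the product structure of the weight directly, while the paper's makes the anisotropy more visible by handling the two frequency variables sequentially. Either feeds equally well into the subsequent Banach-algebra proposition.
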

\begin{proof}
By the Plancherel theorem, the Young inequality and the Cauchy--Schwarz inequality, we can see that 
\begin{align*}
\left\|FG\right\|_{L^{2}}
&=\frac{1}{2\pi } \left\|\hat{F}*\hat{G}\right\|_{L^{2}}
=\frac{1}{2\pi } \left\| \int_{\R^{2}}\hat{F}(\xi_{1}, \eta_{1})\hat{G}(\xi-\xi_{1}, \eta-\eta_{1})d\xi_{1} d\eta_{1}\right\|_{L^{2}_{\xi \eta}} \\
&\le \left\|\int_{\R} \left\|\hat{F}(\xi, \eta_{1})\right\|_{L^{1}_{\xi}} \left\|\hat{G}(\xi, \eta-\eta_{1})\right\|_{L^{2}_{\xi}} d\eta_{1} \right\|_{L^{2}_{\eta}}
\le \left\| \left\|\hat{F}(\xi, \eta)\right\|_{L^{1}_{\xi}} \right\|_{L^{2}_{\eta}} \left\|\left\|\hat{G}(\xi, \eta)\right\|_{L^{2}_{\xi}} \right\|_{L^{1}_{\eta}} \\
&\le \left\| \left\|\langle \xi \rangle ^{-s_{1}}\right\|_{L^{2}_{\xi}} \left\|\langle \xi \rangle ^{s_{1}}\hat{F}(\xi, \eta)\right\|_{L^{2}_{\xi}}  \right\|_{L^{2}_{\eta}} \left\|\langle \eta \rangle ^{-s_{2}}\right\|_{L^{2}_{\eta}} \left\|\langle \eta \rangle ^{s_{2}}\left\|\hat{G}(\xi, \eta)\right\|_{L^{2}_{\xi}}  \right\|_{L^{2}_{\eta}} \\
&\le C\left\|F\right\|_{H^{s_{1}, 0}} \left\|G\right\|_{H^{0, s_{2}}}. 
\end{align*}
Here, we used the fact $\langle \xi \rangle ^{-s_{1}}, \langle \eta \rangle ^{-s_{2}}\in L^{2}(\R)$ if $s_{1}>1/2$ and $s_{2}>1/2$ holds. 
\end{proof}
By virtue of the above Lemma~\ref{lem.Banach-al-pre}, we can show that the anisotropic Sobolev space $H^{s_{1}, s_{2}}(\R^{2})$ is a Banach algebra under the conditions $s_{1}>1/2$ and $s_{2}>1/2$ as follows: 
\begin{prop}\label{prop.Banach-al}
Let $s_{1}>1/2$, $s_{2}>1/2$ and $f, g\in H^{s_{1}, s_{2}}(\R^{2})$. Then, we have 
\begin{equation}\label{BAnach-al}
\left\|fg\right\|_{H^{s_{1}, s_{2}}} \le C\left\|f\right\|_{H^{s_{1}, s_{2}}} \left\|g\right\|_{H^{s_{1}, s_{2}}}. 
\end{equation}
\end{prop}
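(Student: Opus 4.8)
The plan is to reduce the Banach algebra estimate for $H^{s_1,s_2}(\R^2)$ to the mixed-regularity bilinear estimate \eqref{BAnach-al-pre} established in Lemma~\ref{lem.Banach-al-pre}. First I would recall that, by definition of the norm, $\|fg\|_{H^{s_1,s_2}}$ is comparable to $\| (1+\xi^2)^{s_1/2}(1+\eta^2)^{s_2/2}\,\widehat{fg}(\xi,\eta)\|_{L^2_{\xi\eta}}$, and that $\widehat{fg}$ is (up to a fixed constant) the convolution $\hat f * \hat g$. The standard device is the elementary inequality
\[
(1+\xi^2)^{s_1/2} \le C\Big( (1+\xi_1^2)^{s_1/2} + (1+(\xi-\xi_1)^2)^{s_1/2}\Big),
\]
valid for $s_1 \ge 0$, and similarly in the $\eta$-variable. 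Distributing the product of the two weights $(1+\xi^2)^{s_1/2}(1+\eta^2)^{s_2/2}$ over the convolution with this splitting in both variables produces four terms, in each of which the full weight $(1+\cdot^2)^{s_1/2}(1+\cdot^2)^{s_2/2}$ lands on exactly one of the two factors in one variable, while in the other variable the weight is split so that one factor carries $(1+\cdot^2)^{s_i/2}$ and the other carries nothing.

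Next I would identify each of the four resulting convolution expressions with the left-hand side of \eqref{BAnach-al-pre} applied to suitably weighted versions of $f$ and $g$. Concretely, after a change of variables each term is bounded (via Plancherel in reverse) by $\|F G\|_{L^2}$ where, say, $F$ has Fourier transform $(1+\xi^2)^{s_1/2}\hat f$ — so $F \in H^{s_1,0}$ with $\|F\|_{H^{s_1,0}} \le \|f\|_{H^{s_1,s_2}}$ — and $G$ has Fourier transform $(1+\eta^2)^{s_2/2}\hat g$ — so $G \in H^{0,s_2}$ with $\|G\|_{H^{0,s_2}} \le \|g\|_{H^{s_1,s_2}}$. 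Applying Lemma~\ref{lem.Banach-al-pre} (and its obvious variant with the roles of the two variables interchanged, which follows by symmetry of the argument there) to each of the four pieces gives a bound by $C\|f\|_{H^{s_1,s_2}}\|g\|_{H^{s_1,s_2}}$, and summing yields \eqref{BAnach-al}.

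I do not anticipate a genuine obstacle here: the only point requiring a little care is the bookkeeping of the four cross terms and checking that in every case one can dominate each weighted factor by the full $H^{s_1,s_2}$-norm of $f$ or $g$ — which works precisely because $s_1,s_2 \ge 0$, so inserting the "missing" weight $(1+\eta^2)^{s_2/2}$ or $(1+\xi^2)^{s_1/2}$ only increases the norm. One should also note that Lemma~\ref{lem.Banach-al-pre} is stated with $F\in H^{s_1,0}$, $G\in H^{0,s_2}$, but the mirror-image estimate $\|FG\|_{L^2}\le C\|F\|_{H^{0,s_2}}\|G\|_{H^{s_1,0}}$ holds with an identical proof (swap the roles of $\xi$ and $\eta$, and of $L^1_\xi$-type and $L^2$-type bounds), so all four terms are covered. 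The condition $s_1>1/2$, $s_2>1/2$ enters only through Lemma~\ref{lem.Banach-al-pre}, where it guarantees $(1+\xi^2)^{-s_1/2}\in L^2_\xi$ and $(1+\eta^2)^{-s_2/2}\in L^2_\eta$.
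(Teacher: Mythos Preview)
Your overall strategy --- split the weight $(1+\xi^2)^{s_1/2}(1+\eta^2)^{s_2/2}$ across the convolution via the elementary inequality, producing four terms --- is exactly the paper's approach. However, two points in your execution are incorrect.

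First, in your concrete example you take $\hat F = (1+\xi^2)^{s_1/2}\hat f$ and assert $\|F\|_{H^{s_1,0}} \le \|f\|_{H^{s_1,s_2}}$. This is false: by definition $\|F\|_{H^{s_1,0}} = \|(1+\xi^2)^{s_1}\hat f\|_{L^2}$ carries a \emph{double} $\xi$-weight and is not controlled by $\|f\|_{H^{s_1,s_2}}$. For the cross term $\bigl\|\bigl((1+\xi^2)^{s_1/2}\hat f\bigr)*\bigl((1+\eta^2)^{s_2/2}\hat g\bigr)\bigr\|_{L^2}$ the correct assignment is the reverse: set $\hat F = (1+\eta^2)^{s_2/2}\hat g$ and $\hat G = (1+\xi^2)^{s_1/2}\hat f$, so that $\|F\|_{H^{s_1,0}} = \|g\|_{H^{s_1,s_2}}$ and $\|G\|_{H^{0,s_2}} = \|f\|_{H^{s_1,s_2}}$, and then Lemma~\ref{lem.Banach-al-pre} applies cleanly. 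This is precisely what the paper does.

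Second, and more substantively, Lemma~\ref{lem.Banach-al-pre} (even together with its mirror version) does \emph{not} handle the two ``diagonal'' terms in which both weights land on the same factor, e.g.\ $I_1 = \bigl\|\bigl((1+\xi^2)^{s_1/2}(1+\eta^2)^{s_2/2}\hat f\bigr)*\hat g\bigr\|_{L^2}$. Every assignment of $F,G$ in the lemma forces an excess weight on one of them. The paper treats $I_1$ (and symmetrically $I_4$) by the simpler direct route: Young's inequality $\|h_1*h_2\|_{L^2}\le\|h_1\|_{L^2}\|h_2\|_{L^1}$ gives $I_1 \le \|f\|_{H^{s_1,s_2}}\|\hat g\|_{L^1}$, and then Cauchy--Schwarz yields
\[
\|\hat g\|_{L^1}\le\bigl\|(1+\xi^2)^{-s_1/2}(1+\eta^2)^{-s_2/2}\bigr\|_{L^2}\,\|g\|_{H^{s_1,s_2}},
\]
which is finite precisely because $s_1,s_2>1/2$. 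Lemma~\ref{lem.Banach-al-pre} is invoked only for the two genuinely mixed terms.
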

\begin{proof}
It follows from the Plancherel theorem that 
\begin{align}
\left\|fg\right\|_{H^{s_{1}, s_{2}}}
&\le \left\| \left( \langle \xi \rangle ^{s_{1}} \langle \eta \rangle ^{s_{2}} \hat{f} \right) * \hat{g} \right\|_{L^{2}_{\xi \eta}}
+\left\| \left( \langle \xi \rangle ^{s_{1}} \hat{f} \right) * \left( \langle \eta \rangle ^{s_{2}} \hat{g} \right) \right\|_{L^{2}_{\xi \eta}} \nonumber \\
&\ \ \ \ + \left\| \left( \langle \eta \rangle ^{s_{2}} \hat{f} \right) * \left(\langle \xi \rangle ^{s_{1}}  \hat{g} \right)\right\|_{L^{2}_{\xi \eta}}
+\left\| \hat{f} * \left( \langle \xi \rangle ^{s_{1}} \langle \eta \rangle ^{s_{2}} \hat{g} \right)\right\|_{L^{2}_{\xi \eta}} 
=: I_{1}+I_{2}+I_{3}+I_{4}. \label{fg-est}
\end{align}
Now, let us evaluate the right hand side of \eqref{fg-est}. For $I_{1}$, by using the Young inequality and the Cauchy--Schwarz inequality, we obtain 
\begin{align}
I_{1} 
&\le \left\| \langle \xi \rangle ^{s_{1}} \langle \eta \rangle ^{s_{2}} \hat{f} \right\|_{L^{2}_{\xi \eta}} \left\| \langle \xi \rangle ^{-s_{1}} \langle \eta \rangle ^{-s_{2}} \right\|_{L^{2}_{\xi \eta}} \left\| \langle \xi \rangle ^{s_{1}} \langle \eta \rangle ^{s_{2}} \hat{g} \right\|_{L^{2}_{\xi \eta}}  
\le C\left\|f\right\|_{H^{s_{1}, s_{2}}} \left\|g\right\|_{H^{s_{1}, s_{2}}}. \label{fg-I1-est}
\end{align}
Next, applying Lemma~\ref{lem.Banach-al-pre} for $I_{2}$ as $\hat{F}(\xi, \eta)=\langle \eta \rangle ^{s_{2}} \hat{g}(\xi, \eta)$ and $\hat{G}(\xi, \eta)= \langle \xi \rangle ^{s_{1}}\hat{f}(\xi, \eta)$, we get
\begin{equation}\label{fg-est-I2}
I_{2} = 2\pi \left\|FG\right\|_{L^{2}} \le C \left\|F\right\|_{H^{s_{1}, 0}} \left\|G\right\|_{H^{0, s_{2}}}
=\left\|f\right\|_{H^{s_{1}, s_{2}}} \left\|g\right\|_{H^{s_{1}, s_{2}}}.
\end{equation}
In addition, the estimates for $I_{3}$ and $I_{4}$ can be given by the same way as \eqref{fg-est-I2} and \eqref{fg-I1-est}, respectively. 
Therefore, combining these facts and \eqref{fg-est}, we can prove the desired result \eqref{BAnach-al}. 
\end{proof}

Finally in this subsection, we would like to prepare some useful inequalities, which related to the Gagliardo--Nirenberg inequality. 
The following two lemmas will be used throughout the paper. 
\begin{lem}\label{lem.est-Linfty}
Let $f \in H^{1, 1}(\R^{2})$. Then, the following inequality holds: 
\begin{equation}\label{est-Linf}
\left\|f\right\|_{L^{\infty}}^{2}\le 2\left(\left\|f_{x}\right\|_{L^{2}}\left\|f_{y}\right\|_{L^{2}} + \left\|f\right\|_{L^{2}}\left\|f_{xy}\right\|_{L^{2}}\right). 
\end{equation}
\end{lem}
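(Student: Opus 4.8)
The plan is to prove the pointwise bound $|f(x,y)|^2 \le 2(\|f_x\|_{L^2}\|f_y\|_{L^2} + \|f\|_{L^2}\|f_{xy}\|_{L^2})$ for every $(x,y)$, since the $L^\infty$ estimate follows by taking the supremum. First I would reduce to $f \in \mathcal{S}(\R^2)$ by a standard density argument and then use the fundamental theorem of calculus in both variables: for fixed $(x,y)$, integrating the identity $\partial_x\partial_y(f^2)$ from $(-\infty,-\infty)$ up to $(x,y)$ gives
\[
f(x,y)^2 = \int_{-\infty}^{x}\int_{-\infty}^{y} \partial_s\partial_t\left(f(s,t)^2\right)\, dt\, ds,
\]
which is legitimate because $f$ and its derivatives decay at infinity. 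Expanding $\partial_s\partial_t(f^2) = 2(f_s f_t + f f_{st})$ yields
\[
f(x,y)^2 = 2\int_{-\infty}^{x}\int_{-\infty}^{y}\left(f_s(s,t)f_t(s,t) + f(s,t)f_{st}(s,t)\right) dt\, ds.
\]

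Next I would bound the right-hand side by extending both integrals to all of $\R^2$ and applying the Cauchy--Schwarz inequality to each of the two terms separately:
\[
\left|\int_{\R^2} f_s f_t\, ds\, dt\right| \le \|f_x\|_{L^2}\|f_y\|_{L^2}, \qquad
\left|\int_{\R^2} f f_{st}\, ds\, dt\right| \le \|f\|_{L^2}\|f_{xy}\|_{L^2}.
\]
Combining these gives $|f(x,y)|^2 \le 2(\|f_x\|_{L^2}\|f_y\|_{L^2} + \|f\|_{L^2}\|f_{xy}\|_{L^2})$ uniformly in $(x,y)$, and hence \eqref{est-Linf}. For general $f \in H^2(\R^2)$ one approximates by Schwartz functions $f_n \to f$ in $H^2$; the right-hand side converges, and $f_n$ converges uniformly (by the inequality applied to differences $f_n - f_m$, which shows $(f_n)$ is uniformly Cauchy), so the bound passes to the limit.

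I do not expect a serious obstacle here: the only point requiring a little care is justifying the double application of the fundamental theorem of calculus and the interchange with the limits at $\pm\infty$, which is immediate for Schwartz functions and is handled by the density argument for general $H^2$ data. The estimate is essentially a two-dimensional Agmon/Gagliardo--Nirenberg-type inequality, and the proof is the standard one.
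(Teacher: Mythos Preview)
Your proposal is correct and follows essentially the same route as the paper: write $f(x,y)^2$ as the double integral of $\partial_s\partial_t(f^2)=2(f_s f_t+f f_{st})$ from $(-\infty,-\infty)$ to $(x,y)$ and then apply Cauchy--Schwarz to each term. The paper's version is slightly terser (it omits the explicit density reduction to $\mathcal{S}(\R^2)$), but the argument is the same.
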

\begin{proof}
From the fundamental theorem of calculus and the Cauchy--Schwarz inequality, we obtain 
\begin{align*}
f(x, y)^{2}
&=\int_{-\infty}^{x}\int_{-\infty}^{y}\p_{z}\p_{w}\left(f(z, w)^{2}\right)dwdz \\
&=\int_{-\infty}^{x}\int_{-\infty}^{y}2\left(\p_{z}f(z, w)\p_{w}f(z, w)+f(z, w)\p_{z}\p_{w}f(z, w)\right)dwdz \\
&\le 2\left(\left\|f_{x}\right\|_{L^{2}}\left\|f_{y}\right\|_{L^{2}} + \left\|f\right\|_{L^{2}}\left\|f_{xy}\right\|_{L^{2}}\right), \ \ (x, y)\in \R^{2}. 
\end{align*}
Therefore, we can say that the desired estimate \eqref{est-Linf} is true. 
\end{proof}

\begin{lem}\label{lem.est-L2q}
Let $f \in H^{1}(\R^{2})$ and $q\in \mathbb{N}$. Then, the following inequality holds: 
\begin{equation}\label{est-L2q}
\left\|f\right\|_{L^{2q}}^{2q}
\le (q!)^{2}\left\|f\right\|_{L^{2}}^{2} \left\|f_{x}\right\|_{L^{2}}^{q-1} \left\|f_{y}\right\|_{L^{2}}^{q-1}. 
\end{equation}
\end{lem}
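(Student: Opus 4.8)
The plan is to prove \eqref{est-L2q} by induction on $q$, lowering the exponent by one unit at each step. The base case $q=1$ is the trivial identity $\|f\|_{L^2}^2=(1!)^2\|f\|_{L^2}^2$, so the content is in the inductive step, which I carry out for $q\ge2$ via a separation-of-variables argument in the spirit of Lemma~\ref{lem.est-Linfty}.

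First I would establish two pointwise bounds using the fundamental theorem of calculus in each coordinate direction. Integrating $z\mapsto|f(z,y)|^{q}$ in $x$ and using the a.e.\ bound $\bigl|\partial_z|f|^{q}\bigr|\le q|f|^{q-1}|f_x|$ gives
\[
|f(x,y)|^{q}\le q\int_{\R}|f(z,y)|^{q-1}|f_x(z,y)|\,dz=:P(y),
\]
and the analogous integration in $y$ gives
\[
|f(x,y)|^{q}\le q\int_{\R}|f(x,w)|^{q-1}|f_y(x,w)|\,dw=:Q(x).
\]
Since both inequalities hold at \emph{every} point $(x,y)$, multiplying them produces the factorized estimate $|f(x,y)|^{2q}\le P(y)\,Q(x)$.

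Integrating this over $\R^{2}$ separates the variables, and then applying the Schwarz inequality to each resulting factor (splitting $|f|^{q-1}|f_x|=|f|^{q-1}\cdot|f_x|$) yields
\[
\|f\|_{L^{2q}}^{2q}\le\left(\int_{\R}P(y)\,dy\right)\left(\int_{\R}Q(x)\,dx\right)=q^{2}\left(\int_{\R^{2}}|f|^{q-1}|f_x|\right)\left(\int_{\R^{2}}|f|^{q-1}|f_y|\right)\le q^{2}\|f\|_{L^{2q-2}}^{2q-2}\|f_x\|_{L^2}\|f_y\|_{L^2}.
\]
Writing $I_q:=\|f\|_{L^{2q}}^{2q}$, this is precisely the recursion $I_q\le q^{2}I_{q-1}\|f_x\|_{L^2}\|f_y\|_{L^2}$, where $\|f\|_{L^{2q-2}}<\infty$ for $q\ge2$ by the Sobolev embedding $H^{1}(\R^{2})\hookrightarrow L^{2q-2}(\R^{2})$. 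Iterating down to $I_1=\|f\|_{L^2}^2$ accumulates the constant $q^{2}(q-1)^{2}\cdots2^{2}=(q!)^{2}$ together with exactly $q-1$ factors of each of $\|f_x\|_{L^2}$ and $\|f_y\|_{L^2}$, which is exactly \eqref{est-L2q}.

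The only delicate point is the rigorous justification of the two pointwise bounds for a general $f\in H^{1}(\R^{2})$, namely the chain-rule estimate $\bigl|\partial_z|f|^{q}\bigr|\le q|f|^{q-1}|f_x|$ and the vanishing $|f(z,y)|^{q}\to0$ as $z\to\pm\infty$. Both are transparent on $\mathcal{S}(\R^{2})$, so I would first prove \eqref{est-L2q} for Schwartz functions and then pass to the limit by density, using that both sides of \eqref{est-L2q} are continuous with respect to the $H^{1}$-norm. This density argument, together with starting the recursion at the base value $q=1$ to avoid the degenerate exponent $2q-2=0$, constitutes the main (though entirely routine) technical obstacle.
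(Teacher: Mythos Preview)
Your proof is correct and follows essentially the same approach as the paper: both use the fundamental theorem of calculus in each coordinate direction to obtain one-variable bounds for $|f|^{q}$, multiply them to separate variables, apply the Schwarz inequality to reach the recursion $\|f\|_{L^{2q}}^{2q}\le q^{2}\|f\|_{L^{2(q-1)}}^{2(q-1)}\|f_{x}\|_{L^{2}}\|f_{y}\|_{L^{2}}$, and then iterate down to $q=1$. The only cosmetic difference is that the paper applies Schwarz immediately in the FTC integral (bounding by $\|f(\cdot,y)^{q-1}\|_{L^{2}}\|f_{x}(\cdot,y)\|_{L^{2}}$) and then again after integrating in the remaining variable, whereas you keep the $L^{1}$-type integrals $P(y)$, $Q(x)$ and apply Schwarz once globally on $\R^{2}$; your explicit mention of the density argument in $H^{1}(\R^{2})$ is a welcome addition that the paper leaves implicit.
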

\begin{proof}
From the fundamental theorem of calculus, we obtain 
\begin{align}
f(x, y)^{q}
&=\int_{-\infty}^{x}\p_{\theta}\left(f(\theta, y)^{q}\right)d\theta
=q\int_{-\infty}^{x}f(\theta, y)^{q-1}\p_{\theta}f(\theta, y)d\theta \nonumber \\
&\le q\left\|f(\cdot, y)^{q-1}\right\|_{L^{2}}\left\|f_{x}(\cdot, y)\right\|_{L^{2}}, \ \ (x, y) \in \R^{2}.  \label{pointwise-q-1}
\end{align}
In addition, we similarly have 
\begin{align}
f(x, y)^{q}\le q\left\|f(x, \cdot)^{q-1}\right\|_{L^{2}}\left\|f_{y}(x, \cdot)\right\|_{L^{2}}, \ \ (x, y) \in \R^{2}.  \label{pointwise-q-2}
\end{align}
Therefore, it follows from \eqref{pointwise-q-1}, \eqref{pointwise-q-2} and the Cauchy--Schwarz inequality that 
\begin{align}
\left\|f\right\|_{L^{2q}}^{2q}
&=\int_{\R^{2}}f(x, y)^{2q}dxdy  \nonumber \\
&\le q^{2}\left( \int_{\R}\left\|f(\cdot, y)^{q-1}\right\|_{L^{2}}\left\|f_{x}(\cdot, y)\right\|_{L^{2}}dy \right)
\left(\int_{\R}\left\|f(x, \cdot)^{q-1}\right\|_{L^{2}}\left\|f_{y}(x, \cdot)\right\|_{L^{2}}dx\right)  \nonumber \\
&\le q^{2}\left\|f^{q-1}\right\|_{L^{2}}^{2} \left\|f_{x}\right\|_{L^{2}} \left\|f_{y}\right\|_{L^{2}} 
=q^{2}\left\|f\right\|_{L^{2(q-1)}}^{2(q-1)} \left\|f_{x}\right\|_{L^{2}} \left\|f_{y}\right\|_{L^{2}}. \label{est-L2q-pre}
\end{align}
Finally, using \eqref{est-L2q-pre} repeatedly, we get 
\begin{align*}
\left\|f\right\|_{L^{2q}}^{2q}
&\le q^{2}\left\|f\right\|_{L^{2(q-1)}}^{2(q-1)} \left\|f_{x}\right\|_{L^{2}} \left\|f_{y}\right\|_{L^{2}} 
\le q^{2}(q-1)^{2}\left\|f\right\|_{L^{2(q-2)}}^{2(q-2)} \left\|f_{x}\right\|_{L^{2}}^{2} \left\|f_{y}\right\|_{L^{2}}^{2} \nonumber \\
&\le \cdots \le q^{2}(q-1)^{2}(q-2)^{2}\cdots 2^{2} \cdot \left\|f\right\|_{L^{2}}^{2} \left\|f_{x}\right\|_{L^{2}}^{q-1} \left\|f_{y}\right\|_{L^{2}}^{q-1}.  
\end{align*}
Thus, we can say that the desired estimate \eqref{est-L2q} is true. 
\end{proof}

\subsection{Local well-posedness}

In this subsection, we shall prove the local well-posedness for \eqref{ZKB} in the Sobolev spaces $H^{s}(\R^{2})$ and the anisotropic Sobolev spaces $H^{s_{1}, s_{2}}(\R^{2})$. Actually, the following result can be established: 
\begin{thm}\label{thm.LWP}
Let $p\ge1$ be an integer, $s>1$, $s_{1}>1/2$ and $s_{2}>1/2$.  

\smallskip
\noindent
{\rm (i)}
If $u_{0}\in H^{s}(\R^{2})$, then \eqref{ZKB} is locally well-posed in $H^{s}(\R^{2})$. 
More precisely, there exist a positive constant $T>0$ and a unique local mild solution $u \in C([0, T]; H^{s}(\R^{2}))$ to \eqref{ZKB} satisfying 
\begin{equation}\label{local-est-Hs}
\sup_{0\le t \le T}\left\|u(\cdot, \cdot, t)\right\|_{H^{s}}\le 2\left\|u_{0}\right\|_{H^{s}}.  
\end{equation}
Moreover, the mapping $u_{0} \mapsto u$ is continuous from $H^{s}(\R^{2})$ into $C([0, T]; H^{s}(\R^{2}))$. 

\smallskip
\noindent
{\rm (ii)} 
If $u_{0}\in H^{s_{1}, s_{2}}(\R^{2})$, then \eqref{ZKB} is locally well-posed in $H^{s_{1}, s_{2}}(\R^{2})$. 
More precisely, there exist a positive constant $T>0$ and a unique local mild solution $u \in C([0, T]; H^{s_{1}, s_{2}}(\R^{2}))$ to \eqref{ZKB} satisfying 
\begin{equation}\label{local-est-Hs1s2}
\sup_{0\le t \le T}\left\|u(\cdot, \cdot, t)\right\|_{H^{s_{1}, s_{2}}}\le 2\left\|u_{0}\right\|_{H^{s_{1}, s_{2}}}.  
\end{equation}
Moreover, the mapping $u_{0} \mapsto u$ is continuous from $H^{s_{1}, s_{2}}(\R^{2})$ into $C([0, T]; H^{s_{1}, s_{2}}(\R^{2}))$. 
\end{thm}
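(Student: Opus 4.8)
The plan is to solve the integral equation \eqref{integral-eq} by a contraction mapping argument, using the parabolic smoothing in the $x$-variable recorded in Lemma~\ref{lem.U-Hs-est} to absorb the single $x$-derivative lost in the nonlinearity $\beta u^pu_x$, and the Banach algebra property (Proposition~\ref{prop.Banach-al} for $H^{s_1,s_2}(\R^2)$, the classical one for $H^s(\R^2)$ with $s>1$) to control the power nonlinearity. I will only describe the anisotropic case (ii); case (i) is identical after replacing $\|\cdot\|_{H^{s_1,s_2}}$ by $\|\cdot\|_{H^{s}}$, \eqref{U-Hs-est-2} by \eqref{U-Hs-est-1}, and Proposition~\ref{prop.Banach-al} by the algebra property of $H^s(\R^2)$.

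First I would fix $T>0$ (to be chosen), set $R:=2\|u_0\|_{H^{s_1,s_2}}$, and work in the complete metric space
\[
X_T:=\Big\{u\in C([0,T];H^{s_1,s_2}(\R^2))\ :\ \sup_{0\le t\le T}\|u(\cdot,\cdot,t)\|_{H^{s_1,s_2}}\le R\Big\},
\quad d(u,v):=\sup_{0\le t\le T}\|u(t)-v(t)\|_{H^{s_1,s_2}},
\]
and define $\Phi(u)(t):=U(t)*u_0-\frac{\beta}{p+1}\int_0^t\p_xU(t-\tau)*\big(u^{p+1}(\tau)\big)\,d\tau$. Since the Fourier symbol of $U(t)$ has modulus $\le1$, we get $\|U(t)*u_0\|_{H^{s_1,s_2}}\le\|u_0\|_{H^{s_1,s_2}}$. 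For the Duhamel term I would use $\|\p_xh\|_{H^{s_1,s_2}}\le\|h\|_{H^{s_1+1,s_2}}$ together with \eqref{U-Hs-est-2} at $a=1$, and iterate Proposition~\ref{prop.Banach-al} to obtain
\[
\Big\|\p_xU(t-\tau)*\big(u^{p+1}(\tau)\big)\Big\|_{H^{s_1,s_2}}\le C\big\{1+(\mu(t-\tau))^{-1/2}\big\}\,\|u(\tau)\|_{H^{s_1,s_2}}^{p+1}.
\]
Because $\Lambda(t):=\int_0^t\{1+(\mu(t-\tau))^{-1/2}\}\,d\tau=t+2\mu^{-1/2}\sqrt{t}\to0$ as $t\to0^+$, the Duhamel integral is a well-defined $H^{s_1,s_2}(\R^2)$-valued Bochner integral and $\sup_{0\le t\le T}\|\Phi(u)(t)\|_{H^{s_1,s_2}}\le\|u_0\|_{H^{s_1,s_2}}+C|\beta|\,\Lambda(T)\,R^{p+1}$. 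Writing $u^{p+1}-v^{p+1}=(u-v)\sum_{k=0}^{p}u^kv^{p-k}$ and using the algebra property again gives, for $u,v\in X_T$, $\|u^{p+1}(\tau)-v^{p+1}(\tau)\|_{H^{s_1,s_2}}\le CR^p\|u(\tau)-v(\tau)\|_{H^{s_1,s_2}}$, hence $d(\Phi(u),\Phi(v))\le C|\beta|\,\Lambda(T)\,R^p\,d(u,v)$. Choosing $T=T(\|u_0\|_{H^{s_1,s_2}},p,\beta,\mu)>0$ so small that $C|\beta|\Lambda(T)R^{p+1}\le\|u_0\|_{H^{s_1,s_2}}$ and $C|\beta|\Lambda(T)R^{p}\le\tfrac12$ makes $\Phi$ a contraction of $X_T$ into itself; the Banach fixed point theorem then yields a unique $u\in X_T$ with $\Phi(u)=u$, which—since \eqref{local-est-Hs1s2} is built into $X_T$—is exactly the asserted unique mild solution. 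Continuity of $u_0\mapsto u$ follows as usual: for data in a fixed small ball one may take a common $T$, and subtracting the two fixed point identities gives $d(u,\tilde u)\le\|u_0-\tilde u_0\|_{H^{s_1,s_2}}+\tfrac12 d(u,\tilde u)$, i.e. $d(u,\tilde u)\le2\|u_0-\tilde u_0\|_{H^{s_1,s_2}}$.

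The step that requires genuine care, rather than being purely formal, is verifying $\Phi(u)\in C([0,T];H^{s_1,s_2}(\R^2))$ for $u\in X_T$, so that $\Phi$ really maps $X_T$ into itself. Continuity of $t\mapsto U(t)*u_0$ in $H^{s_1,s_2}$ is the dominated convergence theorem on the Fourier side (the symbol is continuous in $t$, bounded by $1$). For $N(u)(t):=\int_0^t\p_xU(t-\tau)*(u^{p+1}(\tau))\,d\tau$ and $0\le t_1<t_2\le T$ I would split $N(u)(t_2)-N(u)(t_1)$ into $\int_{t_1}^{t_2}\p_xU(t_2-\tau)*(u^{p+1}(\tau))\,d\tau$, whose norm is $\le C|\beta|\Lambda(t_2-t_1)R^{p+1}\to0$, and $\int_0^{t_1}\p_x\big(U(t_2-\tau)-U(t_1-\tau)\big)*(u^{p+1}(\tau))\,d\tau$, whose integrand tends to $0$ in $H^{s_1,s_2}$ for each fixed $\tau<t_1$ (continuity of $s\mapsto\p_xU(s)*f$ on $(0,\infty)$, via dominated convergence and the factor $e^{-\mu s\xi^2}$ that tames the $|\xi|$) and is dominated, uniformly for $t_2>t_1$, by the $\tau$-integrable function $2C\{1+(\mu(t_1-\tau))^{-1/2}\}\sup_\tau\|u^{p+1}(\tau)\|_{H^{s_1,s_2}}$ (using $(\mu(t_2-\tau))^{-1/2}\le(\mu(t_1-\tau))^{-1/2}$); dominated convergence then finishes it. I expect this continuity-in-time verification—though routine—to be the main obstacle, everything else being a direct consequence of Lemma~\ref{lem.U-Hs-est} and the Banach algebra property.
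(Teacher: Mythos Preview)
Your proposal is correct and follows essentially the same approach as the paper: a contraction mapping argument on the ball $\{u:\sup_{[0,T]}\|u\|\le 2\|u_0\|\}$, using Lemma~\ref{lem.U-Hs-est} to recover the $x$-derivative and the Banach algebra property (classical for $H^s$, Proposition~\ref{prop.Banach-al} for $H^{s_1,s_2}$) to bound $u^{p+1}$. In fact you go further than the paper, which omits the continuity-in-time and continuous-dependence verifications as ``standard''; your sketch of those steps is correct as well.
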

\begin{proof}
We solve the integral equation \eqref{integral-eq} by using the contraction mapping principle for
\begin{equation}\label{integral-map}
\Phi[u](t):=U(t)*u_{0}-\frac{\beta}{p+1}\int_{0}^{t}\p_{x}U(t-\tau)*\left(u^{p+1}(\tau)\right)d\tau. 
\end{equation}
For $T\in (0, 1)$, let us define the Banach spaces $X_{T}^{s}$ and $Y_{T}^{s_{1}, s_{2}}$ as follows: 
\begin{align*}
&X_{T}^{s}:=\left\{u\in C([0, T]; H^{s}(\R^{2})); \ \sup_{0\le t \le T}\left\|u(\cdot, \cdot, t)\right\|_{H^{s}}\le 2\left\|u_{0}\right\|_{H^{s}}\right\}, \\
&Y_{T}^{s_{1}, s_{2}}:=\left\{u\in C([0, T]; H^{s_{1}, s_{2}}(\R^{2})); \ \sup_{0\le t \le T}\left\|u(\cdot, \cdot, t)\right\|_{H^{s_{1}, s_{2}}}\le 2\left\|u_{0}\right\|_{H^{s_{1}, s_{2}}}\right\}. 
\end{align*}

First, we shall prove {\rm (i)}. In what follows, let $s>1$ and $u\in X_{T}^{s}$. Then, from \eqref{integral-map}, Lemma~\ref{lem.U-Hs-est} and the property of Banach algebra for $H^{s}(\R^{2})$, we have
\begin{align} 
\left\|\Phi[u](t)\right\|_{H^{s}}
&\le \left\|U(t)*u_{0}\right\|_{H^{s}} + |\beta|\int_{0}^{t}\left\|\left(1-\p_{x}^{2}\right)^{\frac{1}{2}}\left(U(t-\tau)*\left(u^{p+1}(\tau)\right)\right)\right\|_{H^{s}}d\tau \nonumber \\
&\le \left\|u_{0}\right\|_{H^{s}} + C|\beta|\int_{0}^{t}\left\{1+\left(\mu(t-\tau)\right)^{-\frac{1}{2}}\right\}\left\|u(\cdot, \cdot, \tau)\right\|_{H^{s}}^{p+1}d\tau \nonumber \\
&\le \left\|u_{0}\right\|_{H^{s}} +C|\beta|\left(t+2\mu^{-\frac{1}{2}}t^{\frac{1}{2}}\right)2^{p+1}\left\|u_{0}\right\|_{H^{s}}^{p+1} \nonumber \\
&\le \left(1+C_{\beta, \mu, p}\left\|u_{0}\right\|_{H^{s}}^{p}T^{\frac{1}{2}}\right)\left\|u_{0}\right\|_{H^{s}}. \nonumber 
\end{align}
Therefore, by choosing $T\in (0, 1)$ small enough, we get 
\begin{equation*}
\sup_{0\le t\le T}\left\|\Phi[u](t)\right\|_{H^{s}}\le 2\left\|u_{0}\right\|_{H^{s}}. 
\end{equation*}
This implies $\Phi[u]\in X_{T}^{s}$. Moreover, for $u, v\in X_{T}^{s}$, by the same argument, we obtain 
\begin{align}
\left\|\Phi[u](t)-\Phi[v](t)\right\|_{H^{s}}
&\le C|\beta|\int_{0}^{t}\left\{1+\left(\mu(t-\tau)\right)^{-\frac{1}{2}}\right\}\left\|u^{p+1}(\cdot, \cdot, \tau)-v^{p+1}(\cdot, \cdot, \tau)\right\|_{H^{s}}d\tau \nonumber \\
&\le C_{\beta, \mu, p}\left\|u_{0}\right\|_{H^{s}}^{p}T^{\frac{1}{2}}\sup_{0\le t \le T}\left\|u(\cdot, \cdot, t)-v(\cdot, \cdot, t)\right\|_{H^{s}}. \nonumber 
\end{align}
Thus, by choosing $T\in (0, 1)$ small enough, $\Phi[u]$ becomes the contraction mapping on $X_{T}^{s}$. 
Therefore, applying the Banach fixed point theorem, we can see that there exists a unique local mild solution $u \in C([0, T]; H^{s}(\R^{2}))$ to \eqref{ZKB} satisfying \eqref{local-est-Hs}. We omit the remainder of the proof, since it can be given by a standard argument.

\smallskip
Next, we shall prove {\rm (ii)}. By virtue of Proposition~\ref{prop.Banach-al}, for $s_{1}>1/2$ and $s_{2}>1/2$, we have 
\[
\left\|u^{p+1}(\cdot, \cdot, \tau)\right\|_{H^{s_{1}, s_{2}}}\le C\left\|u(\cdot, \cdot, \tau)\right\|_{H^{s_{1}, s_{2}}}^{p+1}. 
\]
Therefore, we are able to prove {\rm (ii)} by the same argument for {\rm (i)}. This completes the proof. 
\end{proof}

In the rest of this section, we shall introduce an additional result related to the regularity of the solution to \eqref{ZKB}. 
The following proposition will be used implicitly to derive a priori estimates for the solution in the next subsection. 
\begin{prop}\label{prop.regularity}
Let $p\ge1$ be an integer and $u_{0}\in H^{2, 1}(\R^{2})$. 
Suppose that $u\in C([0, T]; H^{2, 1}(\R^{2}))$ is a solution to \eqref{ZKB} on $[0, T]$ for some $T>0$. 
Then, the solution satisfies $\p_{x}u\in L^{2}(0, T; H^{2, 1}(\R^{2}))$. 
\end{prop}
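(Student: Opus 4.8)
The plan is to improve the regularity of the solution $u$ by a standard difference-quotient / smoothing argument in the $x$-variable, exploiting the fact that the dissipative term $-\mu u_{xx}$ provides one extra $x$-derivative when integrated in time. Concretely, I would first regularize the equation: fix a Friedrichs mollifier $J_\e$ acting in the $x$-variable (so that $J_\e$ commutes with every linear operator in \eqref{ZKB} and preserves the $H^{2,1}$-norm uniformly in $\e$), apply $J_\e$ to \eqref{ZKB}, and work with $u^\e := J_\e u$, which now lies in $C([0,T];H^{s}(\R^2))$ for all $s$. Then I would derive an energy identity for $\|u^\e(t)\|_{H^{2,1}}^2$ by testing the regularized equation against the appropriate Fourier multipliers $(1+\xi^2)^{2}(1+\eta^2)$ applied to $u^\e$; the dispersive terms $u_{xxx}^\e + u_{yyx}^\e$ are skew-adjoint and contribute nothing, the dissipative term yields the good term $+\mu\,\|\p_x u^\e(t)\|_{H^{2,1}}^2$ on the left, and the nonlinear term $\beta (u^p u_x)^\e$ is estimated using the Banach-algebra property of $H^{2,1}(\R^2)$ (Proposition~\ref{prop.Banach-al}, via $H^2(\R^2)\hookrightarrow H^{2,1}(\R^2)$ is too weak here, so one works directly in $H^{2,1}$) together with the a priori bound $\sup_{[0,T]}\|u(t)\|_{H^{2,1}}<\infty$ that is assumed. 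Integrating in time over $[0,T]$ gives
\begin{align*}
\|u^\e(t)\|_{H^{2,1}}^2 + \mu\int_0^t \|\p_x u^\e(\tau)\|_{H^{2,1}}^2\,d\tau
\le \|u_0\|_{H^{2,1}}^2 + C\int_0^t \|u^\e(\tau)\|_{H^{2,1}}^{p}\,\|\p_x u^\e(\tau)\|_{H^{2,1}}\,\|u^\e(\tau)\|_{H^{2,1}}\,d\tau,
\end{align*}
and after a Young inequality to absorb a small multiple of $\|\p_x u^\e\|_{H^{2,1}}^2$ into the left-hand side, one gets a bound for $\int_0^t\|\p_x u^\e(\tau)\|_{H^{2,1}}^2\,d\tau$ that is uniform in $\e$, depending only on $T$ and $\sup_{[0,T]}\|u(\tau)\|_{H^{2,1}}$.

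The second step is to pass to the limit $\e\to 0$. Since $u^\e \to u$ in $C([0,T];H^{2,1})$ and the bounds on $\int_0^T\|\p_x u^\e\|_{H^{2,1}}^2\,d\tau$ are uniform, weak lower semicontinuity of the $L^2(0,T;H^{2,1})$-norm (or weak-$*$ compactness: a subsequence of $\p_x u^\e$ converges weakly in $L^2(0,T;H^{2,1})$, and the limit must be $\p_x u$ in the sense of distributions) yields $\p_x u \in L^2(0,T;H^{2,1}(\R^2))$ with the same quantitative bound. This is exactly the claimed conclusion. An alternative to the mollifier argument — which avoids commutator issues entirely — is to run the same energy estimate directly on the Duhamel formula \eqref{integral-eq}: apply $\p_x(1-\p_x^2)(1-\p_y^2)^{1/2}$, use Lemma~\ref{lem.U-Hs-est} (estimate \eqref{U-Hs-est-2} with $a=1$) to control $\|\p_x U(t-\tau)*(u^{p+1}(\tau))\|_{H^{2,1}} \le C\{1+(\mu(t-\tau))^{-1/2}\}\|u^{p+1}(\tau)\|_{H^{2,1}}$, and then compute $\int_0^T(\,\cdot\,)^2\,dt$ using the fact that $\int_0^T (t-\tau)^{-1/2}$-type kernels are in $L^2$ away from the diagonal only in a mild sense — actually $\|\,t\mapsto\int_0^t(t-\tau)^{-1/2}g(\tau)d\tau\,\|_{L^2(0,T)}\le C\|g\|_{L^2(0,T)}$ by Young's inequality for convolutions since $t^{-1/2}\in L^1(0,T)$. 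Combined with $\|u^{p+1}(\tau)\|_{H^{2,1}}\le C\|u(\tau)\|_{H^{2,1}}^{p+1}\in L^\infty(0,T)\subset L^2(0,T)$, this directly gives $\p_x u\in L^2(0,T;H^{2,1})$.

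I expect the main technical obstacle to be making the formal energy identity rigorous: a solution merely in $C([0,T];H^{2,1})$ is not a priori regular enough to justify testing the PDE against $\p_x$ of itself (the term $\int \p_x u\cdot u_{xxx}$ requires $u\in H^2$ in $x$, which we only have $H^{2}$, so borderline, and integration by parts on the dispersive terms needs care). The mollification is what legitimizes this; one must check that $[J_\e, \p_x^k]$-type commutators vanish (trivial, since $J_\e$ is a Fourier multiplier in $x$) and that $J_\e(u^p u_x) \to u^p u_x$ in $L^2(0,T;H^{2,1})$, which follows from $u^p u_x\in L^2(0,T;H^{2,1})$ — itself a consequence of the a priori hypothesis plus the algebra property, noting $u_x\in L^2(0,T;H^{1,1})$ is not yet known, so one should instead estimate $u^p u_x$ by putting all but one derivative on the $u^p$ factor. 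Actually the cleanest route bypasses this: the Duhamel-based argument of the previous paragraph uses only Lemma~\ref{lem.U-Hs-est} and the algebra property and never differentiates the equation, so I would present that as the main proof and relegate the energy-method heuristic to a remark. The only remaining point of care there is that $\|u^{p+1}(\tau)\|_{H^{2,1}}\le C\|u(\tau)\|_{H^{2,1}}^{p+1}$ requires iterating Proposition~\ref{prop.Banach-al}, which is immediate for integer $p\ge1$.
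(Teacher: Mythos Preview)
Your Duhamel-based argument (the one you ultimately prefer) is correct for the nonlinear term and is in the same spirit as the paper's proof, but the two treatments differ in a way worth noting. The paper also splits $\p_x u = U(t)*\p_x u_0 - \tfrac{\beta}{p+1}\int_0^t U(t-\tau)*\p_x^2(u^{p+1}(\tau))\,d\tau$ and estimates each piece in $L^2(0,T;H^{2,1})$. For the Duhamel piece the paper does \emph{not} use the pointwise bound from Lemma~\ref{lem.U-Hs-est} plus Young's inequality; instead it argues on the Fourier side, applies Cauchy--Schwarz in $\tau$ to the inner integral (producing a factor $(\int_0^t e^{-\mu(t-\tau)\xi^2}d\tau)\le (\mu\xi^2)^{-1}$ that exactly cancels one of the extra $\xi^2$), then Fubini in $(t,\tau)$, and finally uses $\int_\tau^T \xi^2 e^{-\mu(t-\tau)\xi^2}dt\le \mu^{-1}$. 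This is the maximal-$L^2$-regularity computation and yields the clean bound $\tfrac{T}{\mu^2}\bigl(\sup_{[0,T]}\|u\|_{H^{2,1}}\bigr)^{2(p+1)}$. Your route via $\|\p_x U(t-\tau)*g\|_{H^{2,1}}\le C\{1+(\mu(t-\tau))^{-1/2}\}\|g\|_{H^{2,1}}$ and $t^{-1/2}\in L^1(0,T)$ is a perfectly valid alternative, slightly softer but shorter.

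There is one genuine small gap in your Duhamel argument: you do not treat the linear piece $\p_x U(t)*u_0$, and the pointwise bound from Lemma~\ref{lem.U-Hs-est} is \emph{not} enough for it. Indeed that lemma gives $\|\p_x U(t)*u_0\|_{H^{2,1}}\le C\{1+(\mu t)^{-1/2}\}\|u_0\|_{H^{2,1}}$, and $(1+t^{-1/2})^2\notin L^1(0,T)$ near $t=0$, so you cannot simply square and integrate. The paper fixes this by a direct Plancherel--Fubini computation:
\[
\int_0^T\|\p_x U(t)*u_0\|_{H^{2,1}}^2\,dt
=\int_{\R^2}(1+\xi^2)^2(1+\eta^2)|\hat u_0|^2\Bigl(\int_0^T\xi^2 e^{-2\mu t\xi^2}dt\Bigr)d\xi\,d\eta
\le \frac{1}{2\mu}\|u_0\|_{H^{2,1}}^2.
\]
Once you add this line, your Duhamel proof is complete; the energy/mollifier argument you sketch first is not needed.
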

\begin{proof}
For the solution $u(x, y, t)$ to \eqref{ZKB} on $[0, T]$, it follows from \eqref{integral-eq} that 
\begin{align}
\p_{x}u(t)
&=U(t)*\p_{x}u_{0}-\frac{\beta}{p+1}\int_{0}^{t}U(t-\tau)*\p_{x}^{2}\left(u^{p+1}(\tau)\right)d\tau \nonumber \\
&=:P(x, y, t)-\frac{\beta}{p+1}Q(x, y, t). \label{integral-eq-dx}
\end{align}
By a standard calculation, we have 
\begin{align}
\int_{0}^{T}\left\|P(\cdot, \cdot, t)\right\|_{H^{2, 1}}^{2}dt
&=\int_{0}^{T}\int_{\R^{2}}\xi^{2}\left(1+\xi^{2}\right)^{2}\left(1+\eta^{2}\right)e^{-2\mu t\xi^{2}}\left|\hat{u}_{0}(\xi, \eta)\right|^{2}d\xi d\eta dt \nonumber \\
&= \int_{\R^{2}}\left(1+\xi^{2}\right)^{2}\left(1+\eta^{2}\right)\left|\hat{u}_{0}(\xi, \eta)\right|^{2}\left(\int_{0}^{T}\xi^{2}e^{-2\mu t\xi^{2}}dt \right)d\xi d\eta \nonumber \\
&\le \frac{1}{2\mu}  \int_{\R^{2}}\left(1+\xi^{2}\right)^{2}\left(1+\eta^{2}\right)\left|\hat{u}_{0}(\xi, \eta)\right|^{2} d\xi d\eta
=\frac{1}{2\mu} \left\|u_{0}\right\|_{H^{2, 1}}^{2}. \label{int-P}
\end{align}
Also, by using the Cauchy--Schwarz inequality for the $\tau$-integral, we get  
\begin{align}
&\left\|Q(\cdot, \cdot, t)\right\|_{H^{2, 1}}^{2}
=\int_{\R^{2}}\xi^{4}\left(1+\xi^{2}\right)^{2}\left(1+\eta^{2}\right)\left|\int_{0}^{t}e^{-\mu (t-\tau)\xi^{2}+it\xi^{3}+it\xi \eta^{2}}\widehat{u^{p+1}}(\xi, \eta, \tau)d\tau\right|^{2}d\xi d\eta \nonumber \\
&\le \int_{\R^{2}}\xi^{4}\left(1+\xi^{2}\right)^{2}\left(1+\eta^{2}\right)\left(\int_{0}^{t}e^{-\mu (t-\tau)\xi^{2}}d\tau \right)
\left(\int_{0}^{t}e^{-\mu (t-\tau)\xi^{2}}\left|\widehat{u^{p+1}}(\xi, \eta, \tau)\right|^{2}d\tau\right)d\xi d\eta \nonumber \\
&\le \frac{1}{\mu}\int_{\R^{2}}\int_{0}^{t}\xi^{2}\left(1+\xi^{2}\right)^{2}\left(1+\eta^{2}\right)e^{-\mu (t-\tau)\xi^{2}}\left|\widehat{u^{p+1}}(\xi, \eta, \tau)\right|^{2}d\tau d\xi d\eta.  \nonumber 
\end{align}
Therefore, changing the integration order, and applying Proposition~\ref{prop.Banach-al}, we can see that 
\begin{align}
\int_{0}^{T}\left\|Q(\cdot, \cdot, t)\right\|_{H^{2, 1}}^{2}dt
&\le \frac{1}{\mu}\int_{\R^{2}}\int_{0}^{T}\int_{\tau}^{T}\xi^{2}\left(1+\xi^{2}\right)^{2}\left(1+\eta^{2}\right)e^{-\mu (t-\tau)\xi^{2}}\left|\widehat{u^{p+1}}(\xi, \eta, \tau)\right|^{2}dt d\tau d\xi d\eta \nonumber \\
&\le \frac{1}{\mu}\int_{\R^{2}}\int_{0}^{T}\left(\int_{0}^{\infty}\xi^{2}e^{-\mu \sigma \xi^{2}}d\sigma\right)\left(1+\xi^{2}\right)^{2}\left(1+\eta^{2}\right)\left|\widehat{u^{p+1}}(\xi, \eta, \tau)\right|^{2}d\tau d\xi d\eta \nonumber \\
&\le \frac{1}{\mu^{2}}\int_{0}^{T}\left\|u^{p+1}(\cdot, \cdot, t)\right\|_{H^{2, 1}}^{2}dt 
\le  \frac{T}{\mu^{2}}\left(\sup_{0\le t\le T}\left\|u(\cdot, \cdot, t)\right\|_{H^{2, 1}}\right)^{2(p+1)}. \label{int-Q}
\end{align}
Combining \eqref{integral-eq-dx}, \eqref{int-P} and \eqref{int-Q}, we can obtain the desired result $\p_{x}u\in L^{2}(0, T; H^{2, 1}(\R^{2}))$. 
\end{proof}

\begin{rem}
Because $H^{2, 1}(\R^{2})\hookrightarrow H^{1}(\R^{2})$, $\p_{x}u\in L^{2}(0, T; H^{1}(\R^{2}))$ also holds. 
\end{rem}

\subsection{Global well-posedness}

In this subsection, let us prove Theorem~\ref{thm.GWP-H21}. Namely, we consider the global well-posedness for \eqref{ZKB}. 
More precisely, we would like to show that \eqref{ZKB} is globally well-posed in $H^{2, 1}(\R^{2})$. In order to extend the local solution constructed in the previous subsection globally in time, we shall derive a priori estimates for the solution to \eqref{ZKB}. Actually, the following three propositions are true. 
\begin{prop}\label{prop.apriori-1st}
Let $p\ge1$ be an integer. Assume that $u_{0}\in H^{2, 1}(\R^{2})$ and $\left\|u_{0}\right\|_{H^{1}}$ is sufficiently small. 
Suppose that $u\in C([0, T]; H^{2, 1}(\R^{2}))$ is a solution to \eqref{ZKB} on $[0, T]$ for some $T>0$. 
Then, there exists a positive constant $C_{*}>0$, independent of $T$, such that 
\begin{equation}\label{apriori-1st}
\left\|u(\cdot, \cdot, t)\right\|_{H^{1}}^{2}+\mu \int_{0}^{t} \left\|u_{x}(\cdot, \cdot , \tau)\right\|_{H^{1}}^{2} d\tau \le C_{*}\left\|u_{0}\right\|_{H^{1}}^{2}, \ \ t\in [0, T]. 
\end{equation}
\end{prop}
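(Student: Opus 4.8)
The plan is to prove the a priori estimate \eqref{apriori-1st} by a standard energy method: multiply the equation by suitable functions, integrate over $\R^{2}$, and exploit the dissipative term $-\mu u_{xx}$ to control the contributions of the nonlinearity. The smallness of $\left\|u_{0}\right\|_{H^{1}}$ combined with a continuity (bootstrap) argument will then close the estimate uniformly on $[0,T]$. By Proposition~\ref{prop.regularity} we know $\p_{x}u\in L^{2}(0,T; H^{2,1}(\R^2)) \hookrightarrow L^{2}(0,T;H^{1}(\R^2))$, so all the integrations by parts below are justified and the time derivatives of the energies are meaningful.

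First I would derive the $L^{2}$-level identity. Multiplying \eqref{ZKB} by $u$ and integrating, the dispersive terms $u_{xxx}$ and $u_{yyx}$ integrate to zero (after integration by parts, using that $\int u u_{xxx} = \frac12 \p_x\!\int \dots$ etc. vanish), the nonlinear term $\beta u^{p}u_{x}=\frac{\beta}{p+1}\p_x(u^{p+1})$ also integrates to zero, and the dissipative term gives $+\mu\|u_x\|_{L^2}^2$. This yields
\begin{equation*}
\frac{1}{2}\frac{d}{dt}\left\|u(\cdot,\cdot,t)\right\|_{L^{2}}^{2}+\mu\left\|u_{x}(\cdot,\cdot,t)\right\|_{L^{2}}^{2}=0,
\end{equation*}
so the $L^2$ norm is non-increasing with no smallness needed. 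Next I would do the $\dot H^{1}$-level estimate: multiply \eqref{ZKB} by $-\Delta u = -(u_{xx}+u_{yy})$ (or equivalently work with $\nabla u$), integrate, and again check that the linear dispersive terms produce no net contribution. The dissipative term produces $+\mu\|\nabla u_x\|_{L^2}^2$. The only genuinely nonzero and dangerous term is the nonlinear contribution $\beta\int (u^{p}u_{x})\,\Delta u\,dxdy$. I would integrate by parts and estimate this by Hölder / Gagliardo--Nirenberg-type inequalities, using Lemma~\ref{lem.est-Linfty} and Lemma~\ref{lem.est-L2q} to bound the $L^{\infty}$ or $L^{2q}$ norms of $u$ by $\|u\|_{L^2}^{\theta}\|u_x\|_{L^2}^{\cdot}\|u_y\|_{L^2}^{\cdot}\cdots$, arranging the powers so that two derivatives in the $x$-direction can be absorbed by the good term $\mu\|\nabla u_x\|_{L^2}^2$ coming from dissipation (this is precisely the point noted in Remark~(ii): the derivative loss from $\beta u^p u_x$ is recovered by $-\mu u_{xx}$). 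Crucially one factor of $\|u\|_{H^1}^{p}$ (which is small by hypothesis and the bootstrap) multiplies the to-be-absorbed term.

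Combining the $L^2$ and $\dot H^1$ identities gives a differential inequality of the schematic form
\begin{equation*}
\frac{d}{dt}\left\|u(\cdot,\cdot,t)\right\|_{H^{1}}^{2}+2\mu\left\|u_{x}(\cdot,\cdot,t)\right\|_{H^{1}}^{2}\le C\left\|u(\cdot,\cdot,t)\right\|_{H^{1}}^{p}\left\|u_{x}(\cdot,\cdot,t)\right\|_{H^{1}}^{2},
\end{equation*}
where the right side comes from the nonlinear term after applying Young's inequality to split off the highest-order piece. Then I would run the standard continuity argument: set $E(t)=\sup_{0\le\tau\le t}\|u(\cdot,\cdot,\tau)\|_{H^1}$, assume provisionally that $E(t)\le 2\sqrt{C_*}\,\|u_0\|_{H^1}$ (or some such explicit bound), so that $C\,E(t)^{p}\le\mu$ by smallness of $\|u_0\|_{H^1}$; then the right-hand side is absorbed into $2\mu\|u_x\|_{H^1}^2$, leaving $\frac{d}{dt}\|u\|_{H^1}^2+\mu\|u_x\|_{H^1}^2\le 0$, and integrating in time yields \eqref{apriori-1st} with a constant $C_*$ independent of $T$ (indeed $C_*=1$ suffices for the norm part). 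A routine open/closed argument upgrades the provisional bound to an actual one.

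The main obstacle is the careful bookkeeping of the nonlinear term $\beta\int u^{p}u_{x}\,\Delta u$: one must integrate by parts in the right directions and apply the anisotropic interpolation inequalities (Lemmas~\ref{lem.est-Linfty} and \ref{lem.est-L2q}) so that the resulting product of norms contains \emph{at most} $\|u_x\|_{H^1}^{2}$ of "high" norms (absorbable by dissipation) times a positive power of the small quantity $\|u\|_{H^1}$. Since dissipation acts only in the $x$-direction, one has to be sure the $y$-derivatives appearing are only of order one (i.e.\ $\|u_y\|_{L^2}\le\|u\|_{H^1}$), never of order higher, which is exactly what Lemmas~\ref{lem.est-Linfty} and \ref{lem.est-L2q} are designed to guarantee. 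Everything else is routine.
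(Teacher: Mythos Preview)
Your outline has a genuine gap at the step you flag as ``the main obstacle'': the nonlinear term obtained by multiplying by $-\Delta u$ does \emph{not} admit the bound you claim. After integration by parts one finds
\[
\beta\int_{\R^{2}}u^{p}u_{x}\,\Delta u\,dxdy
=\beta\int_{\R^{2}}u^{p}u_{x}u_{xx}\,dxdy+\beta\int_{\R^{2}}u^{p}u_{y}u_{xy}\,dxdy,
\]
and the second piece is the problem. If you try to place $u^{p}u_{y}$ in $L^{2}$ you need $\|u_y\|_{L^4}$, and Lemma~\ref{lem.est-L2q} applied to $f=u_y$ produces $\|u_{yy}\|_{L^{2}}$, which is controlled neither by $\|u\|_{H^{1}}$ nor by the dissipated quantity $\|u_{x}\|_{H^{1}}$. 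If instead you put $u^{p}$ in $L^{\infty}$ via Lemma~\ref{lem.est-Linfty}, each factor of $\|u\|_{L^\infty}^{2}$ carries a $\|u_{xy}\|_{L^{2}}$, so the bound becomes $C\|u\|_{H^{1}}^{a}\|u_x\|_{H^{1}}^{p/2+1}$; for $p\ge 3$ the exponent on $\|u_x\|_{H^{1}}$ exceeds $2$ and cannot be absorbed, while for $p=1$ the leftover term after Young's inequality has no factor of $\|u_x\|_{H^{1}}^{2}$ and integrates to something growing like $t$, destroying uniformity in $T$. In short, the ``schematic'' inequality $\frac{d}{dt}\|u\|_{H^{1}}^{2}+2\mu\|u_x\|_{H^{1}}^{2}\le C\|u\|_{H^{1}}^{p}\|u_x\|_{H^{1}}^{2}$ is not attainable by the lemmas at hand.

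The paper circumvents this with a modified energy: in addition to multiplying by $-u_{xx}$ and $-u_{yy}$ it multiplies by $-\frac{\beta}{p+1}u^{p+1}$. The point is the algebraic identity
\[
u^{p}u_{x}u_{xx}+u^{p}u_{x}u_{yy}+\tfrac{1}{p+1}\bigl(u^{p+1}u_{xxx}+u^{p+1}u_{yyx}\bigr)
=\tfrac{1}{p+1}\,\p_x\bigl\{u^{p+1}(u_{xx}+u_{yy})\bigr\},
\]
which is a total $x$-derivative and integrates to zero. Thus the dangerous terms cancel and the \emph{only} surviving nonlinear contribution is $-\frac{2\beta\mu}{p+1}\int_0^{t}\!\int u^{p+1}u_{xx}$, coming from the interaction of the dissipation with the potential part of the energy. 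This term can be controlled for all $p\ge 1$ using only first $y$-derivatives: for $p\ge 2$ one uses Young and $\|u\|_{L^{2(p+1)}}^{2(p+1)}\le C\|u\|_{L^{2}}^{2}\|u_x\|_{L^{2}}^{p}\|u_y\|_{L^{2}}^{p}$ from Lemma~\ref{lem.est-L2q}; for $p=1$ one integrates by parts to $\int u\,u_x^{2}$ and exploits $\int_0^{t}\|u_x\|_{L^{2}}^{2}\le C\|u_0\|_{L^{2}}^{2}$ from the $L^{2}$ identity. The extra potential term $\frac{2\beta}{(p+1)(p+2)}\int u^{p+2}$ in the energy is itself harmless by Lemma~\ref{lem.est-L2q}. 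This cancellation is the missing idea in your sketch.
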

\begin{proof}
First, multiplying $u(x, y, t)$ on the both sides of \eqref{ZKB} and integrating over $\R^{2}$, we have  
\begin{equation*}
\frac{d}{dt}\left\|u(t)\right\|_{L^{2}}^{2}=-2\mu \left\|u_{x}(t)\right\|_{L^{2}}^{2}. 
\end{equation*}
Thus, integrating the both sides of the above equation over $[0, t]$, we get the following relation: 
\begin{equation}\label{CL-L2}
\left\|u(t)\right\|_{L^{2}}^{2}+2\mu \int_{0}^{t}\left\|u_{x}(\tau)\right\|_{L^{2}}^{2}d\tau =\left\|u_{0}\right\|_{L^{2}}^{2}, \ \ t\in [0, T].
\end{equation}

Next, we shall evaluate the first order derivative of the solution. First, multiplying $-u_{xx}$ on the both sides of \eqref{ZKB} and integrating over $\R^{2}$, we obtain 
\begin{equation}\label{CL-L2dx}
\frac{1}{2}\frac{d}{dt}\left\|u_{x}(t)\right\|_{L^{2}}^{2}+\mu \left\|u_{xx}(t)\right\|_{L^{2}}^{2}=\beta \int_{\R^{2}}u^{p}u_{x}u_{xx}dxdy. 
\end{equation}
Similarly, multiplying $-u_{yy}$ on the both sides of \eqref{ZKB} and integrating over $\R^{2}$, we get 
\begin{equation}\label{CL-L2dy}
\frac{1}{2}\frac{d}{dt}\left\|u_{y}(t)\right\|_{L^{2}}^{2}+\mu \left\|u_{xy}(t)\right\|_{L^{2}}^{2}=\beta \int_{\R^{2}}u^{p}u_{x}u_{yy}dxdy. 
\end{equation}
Finally, multiplying $-\beta u^{p+1}/(p+1)$ on the both sides of \eqref{ZKB} and integrating over $\R^{2}$, we have 
\begin{equation}\label{CL-L2dp}
-\frac{\beta}{(p+1)(p+2)}\frac{d}{dt}\int_{\R^{2}}u^{p+2}dxdy=\frac{\beta}{p+1}\int_{\R^{2}}u^{p+1}\left(u_{xxx}+u_{yyx}-\mu u_{xx}\right)dxdy. 
\end{equation}
Now, we note that 
\begin{align*}
&u^{p}u_{x}u_{xx}+u^{p}u_{x}u_{yy}+\frac{1}{p+1}\left(u^{p+1}u_{xxx}+u^{p+1}u_{yyx}\right) \\
&=\frac{1}{p+1}\left(u^{p+1}\right)_{x}\left(u_{xx}+u_{yy}\right)+\frac{1}{p+1}u^{p+1}\left(u_{xx}+u_{yy}\right)_{x}
=\frac{1}{p+1}\left\{u^{p+1}\left(u_{xx}+u_{yy}\right)\right\}_{x}. 
\end{align*}
Therefore, combining \eqref{CL-L2dx}, \eqref{CL-L2dy} and \eqref{CL-L2dp}, we can see that 
\begin{align*}
&\frac{d}{dt}\left(\left\|u_{x}(t)\right\|_{L^{2}}^{2}+\left\|u_{y}(t)\right\|_{L^{2}}^{2}-\frac{2\beta}{(p+1)(p+2)}\int_{\R^{2}}u^{p+2}dxdy\right) \\
&+2\mu \left\|u_{xx}(t)\right\|_{L^{2}}^{2}+2\mu \left\|u_{xy}(t)\right\|_{L^{2}}^{2}
=-\frac{2\beta \mu}{p+1}\int_{\R^{2}}u^{p+1}u_{xx}dxdy. 
\end{align*}
Hence, integrating the both sides of the above equation over $[0, t]$, we obtain the following relation: 
\begin{align}
&\left\|u_{x}(t)\right\|_{L^{2}}^{2}+\left\|u_{y}(t)\right\|_{L^{2}}^{2}+2\mu \int_{0}^{t} \left(\left\|u_{xx}(\tau)\right\|_{L^{2}}^{2}+\left\|u_{xy}(\tau)\right\|_{L^{2}}^{2}\right)d\tau \nonumber \\
&=\left\|\p_{x}u_{0}\right\|_{L^{2}}^{2}+\left\|\p_{y}u_{0}\right\|_{L^{2}}^{2}-\frac{2\beta}{(p+1)(p+2)}\int_{\R^{2}}u_{0}^{p+2}dxdy \nonumber \\
&\ \ \ \ +\frac{2\beta}{(p+1)(p+2)}\int_{\R^{2}}u^{p+2}dxdy-\frac{2\beta\mu}{p+1}\int_{0}^{t}\int_{\R^{2}}u^{p+1}u_{xx}dxdyd\tau \nonumber \\
&=:\left\|\p_{x}u_{0}\right\|_{L^{2}}^{2}+\left\|\p_{y}u_{0}\right\|_{L^{2}}^{2}+I_{1}+I_{2}(t)+I_{3}(t). \label{CL-Energy}
\end{align}

In what follows, we shall evaluate $I_{1}$, $I_{2}(t)$ and $I_{3}(t)$ in \eqref{CL-Energy}. To doing that, let us introduce the following function: 
\begin{equation}\label{DEF-M(t)}
M(t):=\sup_{0\le \tau \le t}\left(\left\|u_{x}(\tau)\right\|_{L^{2}}^{2}+\left\|u_{y}(\tau)\right\|_{L^{2}}^{2}\right). 
\end{equation}
First, we shall evaluate $I_{2}(t)$. It follows from Lemma~\ref{lem.est-L2q}, \eqref{CL-L2} and \eqref{DEF-M(t)} that  
\begin{align}
\left|I_{2}(t)\right|
&\le 
C\left\|u(t)\right\|_{L^{2p}}^{2p}+C\left\|u(t)\right\|_{L^{4}}^{4}  \nonumber \\
&\le C\left\|u(t)\right\|_{L^{2}}^{2} \left\|u_{x}(t)\right\|_{L^{2}}^{p-1} \left\|u_{y}(t)\right\|_{L^{2}}^{p-1}
+C\left\|u(t)\right\|_{L^{2}}^{2} \left\|u_{x}(t)\right\|_{L^{2}} \left\|u_{y}(t)\right\|_{L^{2}} \nonumber \\
&\le C\left\|u_{0}\right\|_{L^{2}}^{2}\left(\left\|u_{x}(t)\right\|_{L^{2}}^{2} + \left\|u_{y}(t)\right\|_{L^{2}}^{2}\right)^{p-1}
+C\left\|u_{0}\right\|_{L^{2}}^{2}\left(\left\|u_{x}(t)\right\|_{L^{2}}^{2} + \left\|u_{y}(t)\right\|_{L^{2}}^{2}\right) \nonumber \\
&\le C\left\|u_{0}\right\|_{L^{2}}^{2}\left(M(t)^{p-1}+M(t)\right), \ \ t\in [0, T]. \label{est-I2}
\end{align}
From the above \eqref{est-I2}, we obtain the same estimate for $I_{1}$, because $|I_{1}|=|I_{2}(0)|$. 

Next, let us treat $I_{3}(t)$. The evaluation for $I_{3}(t)$ is divided into the following two cases: 

\smallskip
\noindent
\underline{Case~{\rm (i)}: $p=1$.} First, we have from Lemma~\ref{lem.est-Linfty} and \eqref{CL-L2} that 
\begin{align}
\left|I_{3}(t)\right|
&=\frac{2\mu |\beta|}{p+1}\left|\int_{0}^{t}\int_{\R^{2}}u^{2}u_{xx}dxdyd\tau\right| =\frac{4\mu |\beta|}{p+1}\left|\int_{0}^{t}\int_{\R^{2}}uu_{x}^{2} dxdyd\tau\right| \nonumber \\
&\le C\int_{0}^{t}\left\|u(\tau)\right\|_{L^{\infty}}\left\|u_{x}(\tau)\right\|_{L^{2}}^{2}d\tau 
\le C\int_{0}^{t}\left\|u(\tau)\right\|_{L^{\infty}}^{2}\left\|u_{x}(\tau)\right\|_{L^{2}}^{2}d\tau + C\int_{0}^{t}\left\|u_{x}(\tau)\right\|_{L^{2}}^{2}d\tau \nonumber \\
&\le C\int_{0}^{t}\left( \left\|u_{x}(\tau)\right\|_{L^{2}}\left\|u_{y}(\tau)\right\|_{L^{2}} + \left\|u(\tau)\right\|_{L^{2}}\left\|u_{xy}(\tau)\right\|_{L^{2}} \right) \left\|u_{x}(\tau)\right\|_{L^{2}}^{2}d\tau 
+C\left\|u_{0}\right\|_{L^{2}}^{2}\nonumber \\
&\le C\int_{0}^{t}\left\|u_{x}(\tau)\right\|_{L^{2}}\left\|u_{y}(\tau)\right\|_{L^{2}} \left\|u_{x}(\tau)\right\|_{L^{2}}^{2}d\tau \nonumber \\
&\ \ \ \ +C\int_{0}^{t}\left\|u(\tau)\right\|_{L^{2}}\left\|u_{xy}(\tau)\right\|_{L^{2}} \left\|u_{x}(\tau)\right\|_{L^{2}}^{2}d\tau+C\left\|u_{0}\right\|_{L^{2}}^{2} \nonumber \\
&=:I_{3.1}(t)+I_{3.2}(t)+C\left\|u_{0}\right\|_{L^{2}}^{2}. \label{I3-devide}
\end{align}
Now, using \eqref{CL-L2} and \eqref{DEF-M(t)} again, we get
\begin{align}
I_{3.1}(t)
&\le C\int_{0}^{t}\left(\left\|u_{x}(\tau)\right\|_{L^{2}}^{2}+\left\|u_{y}(\tau)\right\|_{L^{2}}^{2}\right)\left\|u_{x}(\tau)\right\|_{L^{2}}^{2}d\tau 
\le C\left\|u_{0}\right\|_{L^{2}}^{2}M(t).  \label{est-I3.1}
\end{align}
Analogously, we obtain 
\begin{align}
I_{3.2}(t)
&\le C\int_{0}^{t}\left\|u(\tau)\right\|_{L^{2}}^{2}\left\|u_{x}(\tau)\right\|_{L^{2}}^{4}d\tau + \mu\int_{0}^{t}\left\|u_{xy}(\tau)\right\|_{L^{2}}^{2}d\tau  \nonumber \\
&\le C\sup_{0\le \tau \le t}\left(\left\|u(\tau)\right\|_{L^{2}}^{2}\left\|u_{x}(\tau)\right\|_{L^{2}}^{2}\right)\left(\sup_{0\le t\le T}\int_{0}^{t}\left\|u_{x}(\tau)\right\|_{L^{2}}^{2}d\tau\right)+ \mu\int_{0}^{t}\left\|u_{xy}(\tau)\right\|_{L^{2}}^{2}d\tau \nonumber \\
&\le C\left\|u_{0}\right\|_{L^{2}}^{4}M(t) + \mu\int_{0}^{t}\left\|u_{xy}(\tau)\right\|_{L^{2}}^{2}d\tau. \label{est-I3.2}
\end{align}
Therefore, summing up \eqref{I3-devide}, \eqref{est-I3.1} and \eqref{est-I3.2}, we can see that 
\begin{equation}
|I_{3}(t)|\le C\left\|u_{0}\right\|_{L^{2}}^{2}\left(1+M(t)\right)
+C\left\|u_{0}\right\|_{L^{2}}^{4}M(t)+ \mu\int_{0}^{t}\left\|u_{xy}(\tau)\right\|_{L^{2}}^{2}d\tau, \ \ t\in [0, T]. \label{est-I3(p=1)}
\end{equation}

\noindent
\underline{Case~{\rm (ii)}: $p\ge2$.}
It follows from Lemma~\ref{lem.est-L2q}, \eqref{CL-L2} and \eqref{DEF-M(t)} that 
\begin{align}
\left|I_{3}(t)\right|
&\le C\int_{0}^{t}\left\|u(\tau)\right\|_{L^{2(p+1)}}^{2(p+1)}d\tau + \mu \int_{0}^{t}\left\|u_{xx}(\tau)\right\|_{L^{2}}^{2}d\tau \nonumber \\
&\le C\int_{0}^{t}\left\|u(\tau)\right\|_{L^{2}}^{2}\left\|u_{x}(\tau)\right\|_{L^{2}}^{p}\left\|u_{y}(\tau)\right\|_{L^{2}}^{p}d\tau
+ \mu \int_{0}^{t}\left\|u_{xx}(\tau)\right\|_{L^{2}}^{2}d\tau \nonumber \\
&\le C\left\|u_{0}\right\|_{L^{2}}^{2}\int_{0}^{t}\left(\left\|u_{x}(\tau)\right\|_{L^{2}}^{2}\right)^{\frac{p-2}{2}}\left(\left\|u_{y}(\tau)\right\|_{L^{2}}^{2}\right)^{\frac{p}{2}}\left\|u_{x}(\tau)\right\|_{L^{2}}^{2}d\tau 
+\mu \int_{0}^{t}\left\|u_{xx}(\tau)\right\|_{L^{2}}^{2}d\tau \nonumber \\
&\le C\left\|u_{0}\right\|_{L^{2}}^{4}M(t)^{p-1}+\mu \int_{0}^{t}\left\|u_{xx}(\tau)\right\|_{L^{2}}^{2}d\tau, \ \ t\in [0, T].  \label{est-I3}
\end{align}

Combining \eqref{CL-Energy}, \eqref{est-I2}, \eqref{est-I3(p=1)} and \eqref{est-I3}, there exists a positive constant $C_{0}>0$ such that the following estimate holds: 
\begin{align}
&\left\|u_{x}(t)\right\|_{L^{2}}^{2}+\left\|u_{y}(t)\right\|_{L^{2}}^{2}+\mu \int_{0}^{t} \left(\left\|u_{xx}(\tau)\right\|_{L^{2}}^{2}+\left\|u_{xy}(\tau)\right\|_{L^{2}}^{2}\right)d\tau  \nonumber \\
&\le C_{0}\biggl\{\left\|\p_{x}u_{0}\right\|_{L^{2}}^{2}+\left\|\p_{y}u_{0}\right\|_{L^{2}}^{2}
+\left\|u_{0}\right\|_{L^{2}}^{2}\left(1+M(t)^{p-1}+M(t)\right)\nonumber \\
&\ \ \ \ \ \ \ \ \ \ +\left\|u_{0}\right\|_{L^{2}}^{4}\left(M(t)^{p-1}+M(t)\right)\biggl\}, \ \ t\in [0, T]. \label{est-energy}
\end{align}
Choosing $\left\|u_{0}\right\|_{L^{2}}$ which satisfies $C_{0}\left\|u_{0}\right\|_{L^{2}}^{2}\le 1/4$ and $C_{0}\left\|u_{0}\right\|_{L^{2}}^{4}\le 1/4$, it follows from \eqref{DEF-M(t)} and Proposition~\ref{prop.regularity} that 
\begin{equation}\label{est-M(t)-pre}
M(t)\le 2C_0\left\{\left\|u_{0}\right\|_{H^{1}}^{2}
+\left(\left\|u_{0}\right\|_{L^{2}}^{2}+\left\|u_{0}\right\|_{L^{2}}^{4}\right)M(t)^{p-1}\right\}, \ \ t\in [0, T]. 
\end{equation}
Therefore, under the smallness assumption of $\left\|u_{0}\right\|_{H^{1}}$, by the standard continuity argument, 
we eventually arrive at the following estimate:
\begin{equation}\label{est-M(t)}
M(t)\le 4C_{0}\left\|u_{0}\right\|_{H^{1}}^{2}, \ \ t\in [0, T]. 
\end{equation}
Summarizing up \eqref{CL-L2}, \eqref{est-energy} and \eqref{est-M(t)}, the desired estimate \eqref{apriori-1st} can be established. 
\end{proof}
\begin{rem}
When $p=1$ or $2$, 
the smallness condition for $u_0(x, y)$ in {\rm Proposition~\ref{prop.apriori-1st}} 
can be reduced to ``$\|u_0\|_{L^{2}}$ is sufficiently small''. 
Indeed, if $p=1$, then a priori estimate \eqref{est-M(t)}
is directly obtained by \eqref{est-M(t)-pre} without the smallness of $\|u_0\|_{H^1}$. 
If $p=2$, then by choosing $\|u_0\|_{L^{2}}$ which satisfies $C_{0}\left\|u_{0}\right\|_{L^{2}}^{2}\le 1/8$ and $C_{0}\left\|u_{0}\right\|_{L^{2}}^{4}\le 1/8$ in {\rm (\ref{est-energy})}, 
we can obtain
\begin{equation*}
M(t)\le 2C_0\left\|u_{0}\right\|_{H^{1}}^{2}, \ \ t\in [0, T] 
\end{equation*}
without the smallness of $\|u_0\|_{H^1}$. 
\end{rem}
%

\begin{prop}\label{prop.apriori-2nd}
Let $p\ge1$ be an integer. Assume that $u_{0}\in H^{2, 1}(\R^{2})$ and $\left\|u_{0}\right\|_{H^{1}}+\left\|\p_{x}u_{0}\right\|_{H^{1}}$ is sufficiently small. 
Suppose that $u\in C([0, T]; H^{2, 1}(\R^{2}))$ is a solution to \eqref{ZKB} on $[0, T]$ for some $T>0$. 
Then, there exists a positive constant $C_{\#}>0$, independent of $T$, such that 
\begin{align}
&\left\|u_{xx}(\cdot, \cdot, t)\right\|_{L^{2}}^{2}+\left\|u_{xy}(\cdot, \cdot, t)\right\|_{L^{2}}^{2}+\mu \int_{0}^{t} \left(\left\|u_{xxx}(\cdot, \cdot , \tau)\right\|_{L^{2}}^{2} +\left\|u_{xxy}(\cdot, \cdot , \tau)\right\|_{L^{2}}^{2}\right)d\tau  \nonumber \\
&\le C_{\#}\left(\left\|u_{0}\right\|_{H^{1}}^{2}+\left\|\p_{x}u_{0}\right\|_{H^{1}}^{2}\right), \ \ t\in [0, T].  \label{apriori-2nd}
\end{align}
\end{prop}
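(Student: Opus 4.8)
The plan is to run a second-order energy estimate in the spirit of the proof of Proposition~\ref{prop.apriori-1st} and to close it by a continuity argument fed by the first-order a priori bound \eqref{apriori-1st}. Concretely, I would multiply \eqref{ZKB} by $\p_x^4u+\p_x^2\p_y^2u$ and integrate over $\R^2$ (equivalently, differentiate \eqref{ZKB} in $x$ and pair the equation for $v:=u_x$ with $-\Delta v=-(v_{xx}+v_{yy})$). Because $\p_x^3$ and $\p_x\p_y^2$ are skew-adjoint and commute with $\p_x,\p_y$, every dispersive term drops out after integration by parts — exactly as the cubic terms disappear from \eqref{CL-L2dx}--\eqref{CL-L2dy} — and, using the identity $\int_{\R^2}v_{xx}v_{yy}\,dxdy=\|v_{xy}\|_{L^2}^2$, the dissipative term collapses to $\mu(\|u_{xxx}\|_{L^2}^2+\|u_{xxy}\|_{L^2}^2)$. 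This produces
\begin{equation*}
\frac12\frac{d}{dt}\!\left(\|u_{xx}(t)\|_{L^2}^2+\|u_{xy}(t)\|_{L^2}^2\right)+\mu\!\left(\|u_{xxx}(t)\|_{L^2}^2+\|u_{xxy}(t)\|_{L^2}^2\right)=\beta\!\int_{\R^2}\!\p_x\!\left(u^pu_x\right)\!\left(u_{xxx}+u_{xyy}\right)dxdy,
\end{equation*}
so, integrating over $[0,t]$, the left-hand side of \eqref{apriori-2nd} (with the supremum over $t$ taken on the first two terms) is bounded by $\tfrac12\|\p_xu_0\|_{H^1}^2$ plus the time integral of the right-hand side.

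The core of the argument is the estimate of that nonlinear integral. Expanding $\p_x(u^pu_x)=pu^{p-1}u_x^2+u^pu_{xx}$ and integrating by parts — in $x$ on the $u_{xxx}$ part (using $\int u^pu_{xxx}u_{xx}=-\tfrac p2\int u^{p-1}u_xu_{xx}^2$ and similar identities) and in $y$ on the $u_{xyy}$ part — one rewrites it as a finite sum of integrals involving only $u$, the first-order derivatives $u_x,u_y$, the second-order derivatives $u_{xx},u_{xy}$, and (at most linearly) the third-order derivative $u_{xxy}$. Every occurrence of $u_{xxy}$ is then absorbed, via Young's inequality, into a copy of $\tfrac\mu2\int_0^t(\|u_{xxx}\|_{L^2}^2+\|u_{xxy}\|_{L^2}^2)\,d\tau$, while the remaining factors are handled by the Gagliardo--Nirenberg-type Lemmas~\ref{lem.est-Linfty} and \ref{lem.est-L2q}: the decisive point is that \eqref{est-Linf} bounds $\|u\|_{L^\infty}$ and $\|u_x\|_{L^\infty}$, and \eqref{est-L2q} bounds $\|u_x\|_{L^{2q}}$, using only $\|u\|_{H^1}$, $\|u_x\|_{L^2}$, $\|u_{xx}\|_{L^2}$, $\|u_{xy}\|_{L^2}$ and $\|u_{xxy}\|_{L^2}$, so the norms $\|u_{yy}\|_{L^2}$ and $\|u_{xyy}\|_{L^2}$ — which are absent from this scheme — are never produced; one must always keep $u_y$ in $L^2$ and transfer all extra regularity onto $x$-derivatives. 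As in the treatment of the nonlinear terms in the proof of Proposition~\ref{prop.apriori-1st}, each surviving term is then arranged to carry either a time-integrable factor $\|u_{xx}(\tau)\|_{L^2}^2$ or $\|u_{xy}(\tau)\|_{L^2}^2$ — whose integral over $[0,t]$ is at most $C\|u_0\|_{H^1}^2$ by \eqref{apriori-1st} (which also gives $\int_0^t\|u_x(\tau)\|_{L^2}^2\,d\tau\le C\|u_0\|_{H^1}^2$) — or an absorbable fraction of the dissipation, so that no quantity which is not a priori time-integrable is ever generated.

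Writing $N(t):=\sup_{0\le\tau\le t}(\|u_{xx}(\tau)\|_{L^2}^2+\|u_{xy}(\tau)\|_{L^2}^2)$ and $E:=\|u_0\|_{H^1}+\|\p_xu_0\|_{H^1}$, the above yields an inequality of the form
\begin{equation*}
N(t)+\mu\!\int_0^t\!\left(\|u_{xxx}(\tau)\|_{L^2}^2+\|u_{xxy}(\tau)\|_{L^2}^2\right)d\tau\le CE^2+CE^{\theta}\!\left(N(t)^{\kappa}+N(t)^{\kappa'}\right)
\end{equation*}
for suitable exponents $\theta>0$ and $\kappa,\kappa'\ge\tfrac12$, with a separate but parallel accounting for $p=1$ and $p\ge2$ (mirroring Cases~(i) and (ii) for the corresponding term in the proof of Proposition~\ref{prop.apriori-1st}). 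Since $N(0)\le\|\p_xu_0\|_{H^1}^2\le E^2$ and $E$ is small, a standard continuity argument on $[0,T]$ then yields $N(t)\le C_\#E^2$ uniformly in $T$, and together with the absorbed dissipation this is exactly \eqref{apriori-2nd}. The formal computations above are legitimate because Proposition~\ref{prop.regularity} (together with the parabolic smoothing of \eqref{ZKB} in the $x$-variable for $t>0$) furnishes the regularity needed to justify the integrations by parts, after a routine approximation if necessary. I expect the main obstacle to be precisely this bookkeeping: several nonlinear contributions naturally call for norms one derivative above the pointwise-controlled level (such as $u_{xxy}$) or entirely outside the scheme ($u_{yy}$, $u_{xyy}$), so one must integrate by parts and choose Hölder exponents judiciously — always pairing a dangerous high-order factor with either the dissipation, through Young's inequality, or a second-order factor whose time integral is supplied by \eqref{apriori-1st} — in order to keep the estimate closed and independent of $T$.
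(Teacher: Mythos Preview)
Your proposal is essentially the paper's proof: the same energy identities (the paper multiplies \eqref{ZKB} separately by $u_{xxxx}$ and $u_{xxyy}$ rather than by their sum, but that is cosmetic), the same use of Lemmas~\ref{lem.est-Linfty}--\ref{lem.est-L2q} together with the first-order bound \eqref{apriori-1st}, and the same continuity argument for $N(t)$.

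The one place where your integration-by-parts path diverges from the paper's deserves a warning. For the $u_{xxyy}$ contribution the paper integrates by parts \emph{once in $y$} to obtain $\beta\!\int(u^pu_x)_y\,u_{xxy}$, which after expansion gives only two terms, both linear in $u_{xxy}$ and hence immediately absorbable by Young's inequality into $\frac{\mu}{2}\!\int_0^t\|u_{xxy}\|_{L^2}^2$. Your route---first in $x$ to reach $u_{xyy}$, then in $y$ to reach $u_{xy}$---produces four terms, one of which is $\int u^{p-1}u_y\,u_{xx}\,u_{xy}$. This term cannot be closed using only Lemmas~\ref{lem.est-Linfty}--\ref{lem.est-L2q} while keeping $u_y$ in $L^2$: every direct H\"older split forces either $\|u_y\|_{L^\infty}$, $\|u_{xy}\|_{L^4}$ or $\|u_{xx}\|_{L^\infty}$, each of which drags in $u_{yy}$, $u_{xyy}$ or $u_{xxxy}$. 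You can rescue it either by one more integration by parts in $x$ (writing $u_{xx}u_{xy}=\p_x(u_xu_{xy})-u_xu_{xxy}$, which returns you to terms of the paper's $K_1$, $K_2$ type), or by the anisotropic product bound $\|u_{xx}u_{xy}\|_{L^2}\le C\|u_{xx}\|_{L^2}^{1/2}\|u_{xy}\|_{L^2}^{1/2}\|u_{xxy}\|_{L^2}$, whose proof is the same one-dimensional Sobolev trick as Lemma~\ref{lem.est-Linfty}. Either fix is minor, but be aware that the paper's single-$y$ integration by parts avoids this detour entirely.
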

\begin{proof}
First, multiplying $u_{xxxx}$ on the both sides of \eqref{ZKB} and integrating over $\R^{2}$, we have 
\begin{equation}\label{CL-L2dxx}
\frac{1}{2}\frac{d}{dt}\left\|u_{xx}(t)\right\|_{L^{2}}^{2}+\mu \left\|u_{xxx}(t)\right\|_{L^{2}}^{2}=-\beta \int_{\R^{2}}u^{p}u_{x}u_{xxxx}dxdy. 
\end{equation}
Also, multiplying $u_{xxyy}$ on the both sides of \eqref{ZKB} and integrating over $\R^{2}$, it follows that  
\begin{equation}\label{CL-L2dxy}
\frac{1}{2}\frac{d}{dt}\left\|u_{xy}(t)\right\|_{L^{2}}^{2}+\mu \left\|u_{xxy}(t)\right\|_{L^{2}}^{2}=-\beta \int_{\R^{2}}u^{p}u_{x}u_{xxyy}dxdy. 
\end{equation}
Therefore, integrating the both sides of \eqref{CL-L2dxx} and \eqref{CL-L2dxy}, respectively, we are able to see that the following relations hold: 
\begin{align}
&\left\|u_{xx}(t)\right\|_{L^{2}}^{2}+2\mu \int_{0}^{t}\left\|u_{xxx}(\tau)\right\|_{L^{2}}^{2}d\tau=\left\|\p_{x}^{2}u_{0}\right\|_{L^{2}}^{2}-2\beta \int_{0}^{t}\int_{\R^{2}}u^{p}u_{x}u_{xxxx}dxdyd\tau,  \label{CL-high-dxx}\\
&\left\|u_{xy}(t)\right\|_{L^{2}}^{2}+2\mu \int_{0}^{t}\left\|u_{xxy}(\tau)\right\|_{L^{2}}^{2}d\tau=\left\|\p_{x}\p_{y}u_{0}\right\|_{L^{2}}^{2}-2\beta \int_{0}^{t}\int_{\R^{2}}u^{p}u_{x}u_{xxyy}dxdyd\tau. \label{CL-high-dxy}
\end{align}

In what follows, we shall evaluate the right hand sides of \eqref{CL-high-dxx} and \eqref{CL-high-dxy}. 
In order to treat them, let us define the following function:  
\begin{equation}\label{DEF-N(t)}
N(t):=\sup_{0\le \tau \le t}\left(\left\|u_{xx}(\tau)\right\|_{L^{2}}^{2}+\left\|u_{xy}(\tau)\right\|_{L^{2}}^{2}\right). 
\end{equation}
Here and later, $\left\|u_{0}\right\|_{H^{1}}+\left\|\p_{x}u_{0}\right\|_{H^{1}}$ is assumed to be small as $\left\|u_{0}\right\|_{H^{1}}+\left\|\p_{x}u_{0}\right\|_{H^{1}}\le 1$. 
First, we would like to consider \eqref{CL-high-dxx}. By making the integration by parts, we have 
\begin{align}
&2|\beta|\left|\int_{0}^{t}\int_{\R^{2}}u^{p}u_{x}u_{xxxx}dxdyd\tau\right| 
=2|\beta|\left|\int_{0}^{t}\int_{\R^{2}}\p_{x}\left(u^{p}u_{x}\right)u_{xxx}dxdyd\tau\right|  \nonumber \\
&\le 2p|\beta|\int_{0}^{t}\int_{\R^{2}}\left|u^{p-1}u_{x}^{2}u_{xxx}\right|dxdyd\tau+2|\beta|\int_{0}^{t}\int_{\R^{2}}\left|u^{p}u_{xx}u_{xxx}\right|dxdyd\tau \nonumber \\
&\le C\int_{0}^{t}\int_{\R^{2}}u^{2(p-1)}u_{x}^{4}dxdyd\tau 
+ C\int_{0}^{t}\int_{\R^{2}}u^{2p}u_{xx}^{2}dxdyd\tau + \mu \int_{0}^{t}\int_{\R^{2}}u_{xxx}^{2}dxdyd\tau \nonumber \\
&=:J_{1}(t)+J_{2}(t) + \mu \int_{0}^{t}\left\|u_{xxx}(\tau)\right\|_{L^{2}}^{2}d\tau.  \label{split-J1-J2}
\end{align}
In the following, we shall evaluate $J_{1}(t)$ and $J_{2}(t)$. Before doing that, let us prepare an estimate. 
From Lemma~\ref{lem.est-Linfty}, \eqref{apriori-1st} and \eqref{DEF-N(t)}, we can see that 
\begin{align}
\left\|u(\tau)\right\|_{L^{\infty}}^{2q}
&\le C \left(\left\|u_{x}(\tau)\right\|_{L^{2}}\left\|u_{y}(\tau)\right\|_{L^{2}}+\left\|u(\tau)\right\|_{L^{2}}\left\|u_{xy}(\tau)\right\|_{L^{2}}\right)^{q} \nonumber \\
&\le C \left\{\left(\left\|u_{x}(\tau)\right\|_{L^{2}}^{2}+\left\|u_{y}(\tau)\right\|_{L^{2}}^{2}+\left\|u(\tau)\right\|_{L^{2}}^{2}\right)^{q}+\left\|u_{xy}(\tau)\right\|_{L^{2}}^{2q}\right\} \nonumber \\
&\le C\left(\left\|u_{0}\right\|_{H^{1}}^{2q}+N(t)^{q}\right), \ \ 0\le \tau \le t, \ q\in \mathbb{N}\cup \{0\}. \label{Linf-add}
\end{align}

Now, let us evaluate $J_{1}(t)$. It follows from \eqref{Linf-add}, Lemma~\ref{lem.est-L2q}, \eqref{DEF-N(t)} and \eqref{CL-L2} that 
\begin{align}
J_{1}(t)
&\le C\int_{0}^{t}\left\|u(\tau)\right\|_{L^{\infty}}^{2(p-1)}\left\|u_{x}(\tau)\right\|_{L^{4}}^{4}d\tau \nonumber \\
&\le C\left(\left\|u_{0}\right\|_{H^{1}}^{2(p-1)}+N(t)^{p-1}\right)\int_{0}^{t}\left\|u_{x}(\tau)\right\|_{L^{2}}^{2}\left\|u_{xx}(\tau)\right\|_{L^{2}}\left\|u_{xy}(\tau)\right\|_{L^{2}}d\tau \nonumber \\
&\le C\left(\left\|u_{0}\right\|_{H^{1}}^{2(p-1)}+N(t)^{p-1}\right)\int_{0}^{t}\left(\left\|u_{xx}(\tau)\right\|_{L^{2}}^{2}+\left\|u_{xy}(\tau)\right\|_{L^{2}}^{2}\right)\left\|u_{x}(\tau)\right\|_{L^{2}}^{2}d\tau \nonumber \\
&\le C\left(\left\|u_{0}\right\|_{H^{1}}^{2(p-1)}+N(t)^{p-1}\right)N(t)\left\|u_{0}\right\|_{L^{2}}^{2} 
\le C\left\|u_{0}\right\|_{H^{1}}^{2}\left(N(t)+N(t)^{p}\right), \ \ t\in [0, T], \label{est-J1}
\end{align}
where we used $\left\|u_{0}\right\|_{L^2}\le \left\|u_{0}\right\|_{H^1}\le 1$ in the last inequality. 

For $J_{2}(t)$, we similarly obtain from \eqref{Linf-add} and \eqref{apriori-1st} that 
\begin{align}
J_{2}(t)
&\le C\int_{0}^{t}\left\|u(\tau)\right\|_{L^{\infty}}^{2p}\left\|u_{xx}(\tau)\right\|_{L^{2}}^{2}d\tau \nonumber \\
&\le C \left(\left\|u_{0}\right\|_{H^{1}}^{2p}+N(t)^{p}\right)\left\|u_{0}\right\|_{H^{1}}^{2}
\le C\left\|u_{0}\right\|_{H^{1}}^{2}\left(1+N(t)^{p}\right), \ \ t\in [0, T], \label{est-J2}
\end{align}
where we used $\left\|u_{0}\right\|_{H^1}\le 1$ in the last inequality. 

Therefore, summarizing up \eqref{CL-high-dxx}, \eqref{split-J1-J2}, \eqref{est-J1} and \eqref{est-J2}, we are able to see that 
\begin{align}
&\left\|u_{xx}(t)\right\|_{L^{2}}^{2}+\mu \int_{0}^{t}\left\|u_{xxx}(\tau)\right\|_{L^{2}}^{2}d\tau \nonumber \\
&\le C\left(\left\|u_{0}\right\|_{H^{1}}^{2}+\left\|\p_{x}^{2}u_{0}\right\|_{L^{2}}^{2}\right) +C\left\|u_{0}\right\|_{H^{1}}^{2}\left(N(t)+N(t)^{p}\right), \ \ t\in [0, T]. \label{est-high-dxx}
\end{align}

Next, let us evaluate the right hand side of \eqref{CL-high-dxy}. Making the integration by parts, we get
\begin{align}
&2|\beta|\left|\int_{0}^{t}\int_{\R^{2}}u^{p}u_{x}u_{xxyy}dxdyd\tau\right| 
=2|\beta|\left|\int_{0}^{t}\int_{\R^{2}}\p_{y}\left(u^{p}u_{x}\right)u_{xxy}dxdyd\tau\right|  \nonumber \\
&\le 2p|\beta|\int_{0}^{t}\int_{\R^{2}}\left|u^{p-1}u_{x}u_{y}u_{xxy}\right|dxdyd\tau+2|\beta|\left|\int_{0}^{t}\int_{\R^{2}}u^{p}u_{xy}u_{xxy}dxdyd\tau\right| \nonumber \\
&\le C\int_{0}^{t}\int_{\R^{2}}u^{2(p-1)}u_{x}^{2}u_{y}^{2}dxdyd\tau 
+C\int_{0}^{t}\int_{\R^{2}}u^{2p}u_{xy}^{2}dxdyd\tau 
+ \frac{\mu}{2} \int_{0}^{t}\int_{\R^{2}}u_{xxy}^{2}dxdyd\tau \nonumber \\
&=:K_{1}(t)+K_{2}(t)+\frac{\mu}{2} \int_{0}^{t}\left\|u_{xxy}(\tau)\right\|_{L^{2}}^{2}dxdyd\tau.  \label{split-K1-K2}
\end{align}
Now, we shall evaluate $K_{1}(t)$. It follows from Lemma~\ref{lem.est-Linfty}, \eqref{apriori-1st}, \eqref{Linf-add} and \eqref{CL-L2} that 
\begin{align}
K_{1}(t)
&\le C\int_{0}^{t} \left\|u(\tau)\right\|_{L^{\infty}}^{2(p-1)}\left\|u_{x}(\tau)\right\|_{L^{\infty}}^{2}\left\|u_{y}(\tau)\right\|_{L^{2}}^{2}d\tau \nonumber \\
&\le C\left(\sup_{0\le \tau \le t}\left\|u_{y}(\tau)\right\|_{L^{2}}^{2}\right)
\int_{0}^{t} \left\|u(\tau)\right\|_{L^{\infty}}^{2(p-1)}
\left(\left\|u_{xx}(\tau)\right\|_{L^{2}}\left\|u_{xy}(\tau)\right\|_{L^{2}}+\left\|u_{x}(\tau)\right\|_{L^{2}}\left\|u_{xxy}(\tau)\right\|_{L^{2}}\right)d\tau \nonumber \\
&\le C\left\|u_{0}\right\|_{H^{1}}^{2}
\int_{0}^{t} \left\|u(\tau)\right\|_{L^{\infty}}^{2(p-1)}\left(\left\|u_{xx}(\tau)\right\|_{L^{2}}^{2}+\left\|u_{xy}(\tau)\right\|_{L^{2}}^{2}\right)d\tau \nonumber \\
&\ \ \ +C\left\|u_{0}\right\|_{H^{1}}^{4}\int_{0}^{t} \left\|u(\tau)\right\|_{L^{\infty}}^{4(p-1)}\left\|u_{x}(\tau)\right\|_{L^{2}}^{2}d\tau
+\frac{\mu}{2}\int_{0}^{t}\left\|u_{xxy}(\tau)\right\|_{L^{2}}^{2}d\tau \nonumber \\
&\le C\left\|u_{0}\right\|_{H^{1}}^{4}\left(\left\|u_{0}\right\|_{H^{1}}^{2(p-1)}+N(t)^{p-1}\right)
+C\left\|u_{0}\right\|_{H^{1}}^{4}\left\|u_{0}\right\|_{L^{2}}^{2}\left(\left\|u_{0}\right\|_{H^{1}}^{4(p-1)}+N(t)^{2(p-1)}\right) \nonumber \\
&\ \ \ +\frac{\mu}{2}\int_{0}^{t}\left\|u_{xxy}(\tau)\right\|_{L^{2}}^{2}d\tau \nonumber \\
&\le C\left\|u_{0}\right\|_{H^{1}}^{2}\left(1 + N(t)^{p-1}+N(t)^{2(p-1)}\right)+\frac{\mu}{2}\int_{0}^{t}\left\|u_{xxy}(\tau)\right\|_{L^{2}}^{2}d\tau, \ \ t\in [0, T], \label{est-K1}
\end{align}
where we used $\left\|u_{0}\right\|_{L^2}\le\left\|u_{0}\right\|_{H^1}\le 1$ in the last inequality. 

Let us treat $K_{2}(t)$. Similarly as before, from \eqref{Linf-add} and \eqref{apriori-1st}, we have 
\begin{align}
K_{2}(t)
&\le C \int_{0}^{t}\left\|u(\tau)\right\|_{L^{\infty}}^{2p}\left\|u_{xy}(\tau)\right\|_{L^{2}}^{2}d\tau \nonumber \\
&\le C\left(\left\|u_{0}\right\|_{H^{1}}^{2p} + N(t)^{p} \right)\left\|u_{0}\right\|_{H^{1}}^{2} 
\le C\left\|u_{0}\right\|_{H^{1}}^{2}\left(1 + N(t)^{p}\right),
\ \ t\in [0, T], \label{est-K2}
\end{align}
where we used $\left\|u_{0}\right\|_{H^1}\le 1$ in the last inequality.

Therefore, combining \eqref{CL-high-dxy} and \eqref{split-K1-K2} through \eqref{est-K2}, we arrive at 
\begin{align}
&\left\|u_{xy}(t)\right\|_{L^{2}}^{2}+\mu \int_{0}^{t}\left\|u_{xxy}(\tau)\right\|_{L^{2}}^{2}d\tau \nonumber \\
&\le C\left(\left\|u_{0}\right\|_{H^{1}}^{2}+\left\|\p_{x}\p_{y}u_{0}\right\|_{L^{2}}^{2}\right) +C\left\|u_{0}\right\|_{H^{1}}^{2}\left(N(t)^{p-1}+ N(t)^{p}+N(t)^{2(p-1)}\right), \ \ t\in [0, T]. \label{est-high-dxy}
\end{align}

Combining \eqref{est-high-dxx} and \eqref{est-high-dxy}, there exists a positive constant $C_{1}>0$ such that the following estimate holds: 
\begin{align}
&\left\|u_{xx}(t)\right\|_{L^{2}}^{2}+\left\|u_{xy}(t)\right\|_{L^{2}}^{2}+\mu \int_{0}^{t} \left(\left\|u_{xxx}(\tau)\right\|_{L^{2}}^{2} +\left\|u_{xxy}(\tau)\right\|_{L^{2}}^{2}\right)d\tau  \label{est-high-pre} \\
&\le C_{1}\left\{\left\|u_{0}\right\|_{H^{1}}^{2}+\left\|\p_{x}u_{0}\right\|_{H^{1}}^{2} + \left\|u_{0}\right\|_{H^{1}}^{2}\left(N(t)+N(t)^{p-1} + N(t)^{p}+N(t)^{2(p-1)}\right)\right\}, \ \ t\in [0, T]. \nonumber 
\end{align}
Taking $\left\|u_{0}\right\|_{H^{1}}$ small as $C_{1}\left\|u_{0}\right\|_{H^{1}}^{2}\le 1/2$, it follows from \eqref{DEF-N(t)} and Proposition~\ref{prop.regularity} that 
\begin{equation}\label{est-N(t)-pre}
N(t)\le 2C_{1}\left(\left\|u_{0}\right\|_{H^{1}}^{2}+\left\|\p_{x}u_{0}\right\|_{H^{1}}^{2}\right)+N(t)^{p-1}+N(t)^{p}+N(t)^{2(p-1)}, \ \ t\in [0, T]. 
\end{equation}
Finally, under the smallness assumption of $\left\|u_{0}\right\|_{H^{1}}+\left\|\p_{x}u_{0}\right\|_{H^{1}}$, in the same way to get \eqref{est-M(t)}, applying the continuity argument again, we arrive at the following estimate: 
\begin{equation}\label{est-N(t)}
N(t)\le 4C_{1}\left(\left\|u_{0}\right\|_{H^{1}}^{2}+\left\|\p_{x}u_{0}\right\|_{H^{1}}^{2}\right), \ \ t\in [0, T]. 
\end{equation}
By virtue of \eqref{est-high-pre} and \eqref{est-N(t)}, we eventually obtain the desired estimate \eqref{apriori-2nd}. 
\end{proof}
\begin{rem}
When $p=1$, 
the smallness condition for $u_0(x, y)$ in {\rm Proposition~\ref{prop.apriori-2nd}} 
can be reduced to ``$\|u_0\|_{H^{1}}$ is sufficiently small''. 
Indeed, if $p=1$, then by choosing $\|u_0\|_{H^{1}}$ which satisfies $C_{1}\left\|u_{0}\right\|_{H^{1}}^{2}\le 1/4$ in \eqref{est-high-pre}, 
we can obtain a priori estimate \eqref{est-N(t)} without the smallness of $\|\partial_xu_0\|_{H^1}$. 
\end{rem}

\begin{prop}\label{prop.apriori-xxy}
Let $p\ge1$ be an integer. Assume that $u_{0}\in H^{2, 1}(\R^{2})$ and $\left\|u_{0}\right\|_{H^{1}}+\left\|\p_{x}u_{0}\right\|_{H^{1}}$ is sufficiently small. 
Suppose that $u\in C([0, T]; H^{2, 1}(\R^{2}))$ is a solution to \eqref{ZKB} on $[0, T]$ for some $T>0$. 
Then, there exists a positive constant $C_{\dag}>0$, independent of $T$, such that 
\begin{align}
\left\|u_{xxy}(\cdot, t)\right\|_{L^{2}}^{2} +\mu \int_{0}^{t} \left\|u_{xxxy}(\cdot , \tau)\right\|_{L^{2}}^{2} d\tau 
\le C_{\dag}\left\|u_{0}\right\|_{H^{2, 1}}^{2}, \ \ t\in [0, T].  \label{apriori-xxy}
\end{align}
\end{prop}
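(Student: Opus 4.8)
The plan is to carry out, one level higher, the same weighted energy scheme used for Propositions~\ref{prop.apriori-1st} and \ref{prop.apriori-2nd}, this time at the order of $\p_x^2\p_y u$. First I would apply $\p_x^2\p_y$ to \eqref{ZKB}, multiply the resulting equation for $w:=u_{xxy}$ by $w$, and integrate over $\R^2$. The two dispersive contributions $\int_{\R^2}ww_{xxx}\,dxdy$ and $\int_{\R^2}ww_{yyx}\,dxdy$ both vanish after integrations by parts (each reduces to the integral of a perfect $x$-derivative), while $-\mu w_{xx}$ produces the good term $\mu\|w_x\|_{L^2}^2=\mu\|u_{xxxy}\|_{L^2}^2$. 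Thus one obtains the identity
\[
\frac12\frac{d}{dt}\|u_{xxy}(t)\|_{L^2}^2+\mu\|u_{xxxy}(t)\|_{L^2}^2=-\beta\int_{\R^2}u_{xxy}\,\p_x^2\p_y\!\left(u^pu_x\right)dxdy .
\]
As in the preceding two propositions, this identity is to be justified rigorously using Proposition~\ref{prop.regularity} (which guarantees in particular that $\|u_{xxxy}\|_{L^2}\in L^2(0,T)$). Integrating one $\p_x$ by parts on the right-hand side turns it into $\beta\int_{\R^2}u_{xxxy}\,\p_x\p_y(u^pu_x)\,dxdy$, and a Young inequality absorbs $\tfrac{\mu}{2}\|u_{xxxy}\|_{L^2}^2$ on the left; after integrating in $t$, everything then comes down to estimating $\int_0^t\|\p_x\p_y(u^pu_x)(\tau)\|_{L^2}^2\,d\tau$ by $C\|u_0\|_{H^{2,1}}^2$, since the only remaining data term is $\|u_{xxy}(0)\|_{L^2}^2=\|\p_x^2\p_yu_0\|_{L^2}^2\le\|u_0\|_{H^{2,1}}^2$.

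Next I would expand $\p_x\p_y(u^pu_x)$ by the Leibniz rule as
\[
\p_x\p_y\!\left(u^pu_x\right)=p(p-1)u^{p-2}u_x^2u_y+2pu^{p-1}u_xu_{xy}+pu^{p-1}u_yu_{xx}+u^pu_{xxy}
\]
and then bound the $L^2$-norm of each of the four terms. Here the guiding principle, dictated by the anisotropy (the $y$-direction carries no smoothing and only one $y$-derivative is available), is that every $L^\infty$-bound must be spent on a factor carrying $x$-derivatives: I would use Lemma~\ref{lem.est-Linfty} on $u_x$ and on $u_{xx}$, giving $\|u_x\|_{L^\infty}^2\lesssim\|u_{xx}\|_{L^2}\|u_{xy}\|_{L^2}+\|u_x\|_{L^2}\|u_{xxy}\|_{L^2}$ and $\|u_{xx}\|_{L^\infty}^2\lesssim\|u_{xxx}\|_{L^2}\|u_{xxy}\|_{L^2}+\|u_{xx}\|_{L^2}\|u_{xxxy}\|_{L^2}$, together with Lemma~\ref{lem.est-L2q} where needed, so that the uncontrolled quantities $\|u_{yy}\|_{L^2}$ and $\|\p_x^4u\|_{L^2}$ are never produced. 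After these pointwise bounds, every resulting time integral involves only quantities already estimated in Propositions~\ref{prop.apriori-1st} and \ref{prop.apriori-2nd}: the $L^\infty_t$-bounds on $\|u\|_{H^1}$, $\|u_{xx}\|_{L^2}$, $\|u_{xy}\|_{L^2}$ (all small), and the $L^2_t$-bounds on $\|u_x\|_{H^1}$, $\|u_{xxx}\|_{L^2}$, $\|u_{xxy}\|_{L^2}$ coming from \eqref{apriori-1st} and \eqref{apriori-2nd}. I would combine these with the Schwarz inequality in $\tau$ and with a further Young inequality each time a factor $\|u_{xxxy}(\tau)\|_{L^2}$ reappears --- always keeping it squared rather than to the first power --- so that the corresponding $\int_0^t\|u_{xxxy}\|_{L^2}^2$ is again absorbed on the left (the prefactor being small thanks to the smallness of $\|u_0\|_{H^1}+\|\p_xu_0\|_{H^1}$), while the leftover $\int_0^t\|u_{xx}\|_{L^2}^2$-type integrals stay bounded by $C\|u_0\|_{H^{2,1}}^2$.

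A point worth noting is that, in contrast to Propositions~\ref{prop.apriori-1st} and \ref{prop.apriori-2nd}, no new continuity (bootstrap) argument should be necessary here: every occurrence of $u_{xxy}$ is either located inside a time integral $\int_0^t\|u_{xxy}\|_{L^2}^2$, which is controlled directly by \eqref{apriori-2nd}, or else paired with a factor whose $L^\infty_tL^2$-norm is already known (and small), so that one arrives at $\sup_{0\le\tau\le t}\|u_{xxy}(\tau)\|_{L^2}^2+\tfrac{\mu}{2}\int_0^t\|u_{xxxy}\|_{L^2}^2\,d\tau\le C\|u_0\|_{H^{2,1}}^2$ outright, which is \eqref{apriori-xxy}.

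\textbf{Main obstacle.} I expect the genuinely delicate part to be the bookkeeping in the second paragraph: in each of the four terms (and in their sub-terms arising from Lemma~\ref{lem.est-Linfty}) one must distribute the $L^\infty$- and $L^2$-norms so as to spend at most one $y$-derivative and at most the available $x$-regularity (two $x$-derivatives pointwise in $t$, three in $L^2_t$), while simultaneously arranging that every appearance of $\|u_{xxxy}\|_{L^2}$ is squared and carries a small enough coefficient to be absorbed. This is the same tension already present, in a lighter form, in the estimate of $K_1(t)$ in the proof of Proposition~\ref{prop.apriori-2nd}; here it simply has to be pushed through one derivative higher.
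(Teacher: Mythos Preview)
Your proposal is correct and follows essentially the same route as the paper: derive the energy identity for $u_{xxy}$, move one $\p_x$ onto $u_{xxy}$ by parts to produce $u_{xxxy}\cdot\p_x\p_y(u^pu_x)$, expand $\p_x\p_y(u^pu_x)$ into the four Leibniz terms you list, and control each of them using Lemma~\ref{lem.est-Linfty} (applied to $u$, $u_x$, and $u_{xx}$) together with the $L^\infty_tL^2_x$- and $L^2_tL^2_x$-bounds already secured in Propositions~\ref{prop.apriori-1st} and \ref{prop.apriori-2nd}; you are also right that no new continuity argument is needed. The only cosmetic difference is that the paper applies Young's inequality term by term (yielding four squared integrals $H_1,\dots,H_4$) rather than globally, and in the $H_2$-term it absorbs the reappearing $\int_0^t\|u_{xxxy}\|_{L^2}^2$ with a fixed Young constant $\mu/2$ (budgeted against the $2\mu$ on the left) rather than via a data-smallness prefactor as you suggest --- but either bookkeeping works.
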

\begin{proof}
Multiplying $-u_{xxxxyy}$ on the both sides of \eqref{ZKB} and integrating over $\R^{2}$, we have 
\begin{equation}\label{CL-L2dxxy}
\frac{1}{2}\frac{d}{dt}\left\|u_{xxy}(t)\right\|_{L^{2}}^{2}+\mu \left\|u_{xxxy}(t)\right\|_{L^{2}}^{2}=\beta \int_{\R^{2}}u^{p}u_{x}u_{xxxxyy}dxdy. 
\end{equation}
Then, integrating the both sides of \eqref{CL-L2dxxy}, it holds that 
\begin{align}
\left\|u_{xxy}(t)\right\|_{L^{2}}^{2}+2\mu \int_{0}^{t}\left\|u_{xxxy}(\tau)\right\|_{L^{2}}^{2}d\tau=\left\|\p_{x}^{2}\p_{y}u_{0}\right\|_{L^{2}}^{2}+2\beta\int_{0}^{t}\int_{\R^{2}}u^{p}u_{x}u_{xxxxyy}dxdyd\tau.  \label{CL-high-dxxy}
\end{align}
By making the integration by parts, we obtain 
\begin{align}
&2|\beta|\left|\int_{0}^{t}\int_{\R^{2}}u^{p}u_{x}u_{xxxxyy}dxdyd\tau\right| \nonumber \\
&=2|\beta|\left|\int_{0}^{t}\int_{\R^{2}}\left\{p(p-1)u^{p-2}u_{x}^{2}u_{y}+pu^{p-1}u_{xx}u_{y}+2pu^{p-1}u_{x}u_{xy}+u^{p}u_{xxy}\right\}u_{xxxy}dxdyd\tau\right| \nonumber \\
&\le C\int_{0}^{t}\int_{\R^{2}}u^{2(p-2)}u_{x}^{4}u_{y}^{2}dxdyd\tau 
+C\int_{0}^{t}\int_{\R^{2}}u^{2(p-1)}u_{xx}^{2}u_{y}^{2}dxdyd\tau \nonumber\\
&\ \ \ \ +C\int_{0}^{t}\int_{\R^{2}}u^{2(p-1)}u_{x}^{2}u_{xy}^{2}dxdyd\tau 
+C\int_{0}^{t}\int_{\R^{2}}u^{2p}u_{xxy}^{2}dxdyd\tau 
+\frac{\mu}{2} \int_{0}^{t}\left\|u_{xxxy}(\tau)\right\|_{L^{2}}^{2}d\tau \nonumber\\
&=:H_{1}(t)+H_{2}(t)+H_{3}(t)+H_{4}(t)+\frac{\mu}{2} \int_{0}^{t}\left\|u_{xxxy}(\tau)\right\|_{L^{2}}^{2}d\tau. \label{split-H(t)}
\end{align}
Here, we note that the first term $H_{1}(t)$ does not appear when $p=1$. 

In order to evaluate $H_{1}(t)$, $H_{2}(t)$, $H_{3}(t)$ and $H_{4}(t)$, we prepare an estimate. 
Here and later, $\left\|u_{0}\right\|_{H^{2, 1}}$ is assumed to be sufficiently small 
as $\left\|u_{0}\right\|_{H^{2, 1}}\le 1$. 
It follows from Lemma~\ref{lem.est-Linfty}, \eqref{apriori-1st} and \eqref{apriori-2nd} that 
\begin{align}
\left\|u(t)\right\|_{L^{\infty}}^{2}
&\le \left\|u_{x}(t)\right\|_{L^{2}}^{2}+\left\|u_{y}(t)\right\|_{L^{2}}^{2}+\left\|u(t)\right\|_{L^{2}}^{2}+\left\|u_{xy}(t)\right\|_{L^{2}}^{2} \nonumber \\
&\le C\left(\left\|u_{0}\right\|_{H^{1}}^{2}+\left\|\p_{x}u_{0}\right\|_{H^{1}}^{2}\right)
\le C\left\|u_{0}\right\|_{H^{2, 1}}^{2}, \ \ t\in [0, T].  \label{u-est-Linfty}
\end{align}

Now, let us evaluate $H_{1}(t)$, $H_{2}(t)$, $H_{3}(t)$ and $H_{4}(t)$. For $H_{1}(t)$, by using Lemma~\ref{lem.est-Linfty}, \eqref{u-est-Linfty}, \eqref{apriori-1st} and \eqref{apriori-2nd}, we can see that 
\begin{align}
H_{1}(t)
&\le C\int_{0}^{t}\left\|u(\tau)\right\|_{L^{\infty}}^{2(p-2)}\left\|u_{x}(\tau)\right\|_{L^{\infty}}^{4}\left\|u_{y}(\tau)\right\|_{L^{2}}^{2}d\tau  \nonumber\\
&\le C\left(\sup_{0\le \tau \le t}\left\|u(\tau)\right\|_{L^{\infty}}\right)^{2(p-2)} \left(\sup_{0\le \tau \le t}\left\|u_{y}(\tau)\right\|_{L^{2}}^{2}\right)  \nonumber \\
&\ \ \ \ \times \int_{0}^{t}\left(\left\|u_{xx}(\tau)\right\|_{L^{2}}^{2}\left\|u_{xy}(\tau)\right\|_{L^{2}}^{2}+\left\|u_{x}(\tau)\right\|_{L^{2}}^{2}\left\|u_{xxy}(\tau)\right\|_{L^{2}}^{2}\right) d\tau  \nonumber\\
&\le C\left\|u_{0}\right\|_{H^{2, 1}}^{2(p-2)}\left\|u_{0}\right\|_{H^{1}}^{2}\sup_{0\le \tau \le t}\left(\left\|u_{xx}(\tau)\right\|_{L^{2}}^{2}+\left\|u_{x}(\tau)\right\|_{L^{2}}^{2}\right)  \nonumber\\
&\ \ \ \ \times \sup_{0\le t\le T}\int_{0}^{t}\left( \left\|u_{xy}(\tau)\right\|_{L^{2}}^{2} + \left\|u_{xxy}(\tau)\right\|_{L^{2}}^{2}\right)d\tau \nonumber \\
&\le C\left\|u_{0}\right\|_{H^{2, 1}}^{2(p-2)}\left\|u_{0}\right\|_{H^{1}}^{2}\left(\left\|u_{0}\right\|_{H^{1}}^{2}+\left\|\p_{x}u_{0}\right\|_{H^{1}}^{2}\right)^{2} 
\le C\left\|u_{0}\right\|_{H^{2, 1}}^{2p+2}, \ \ t\in [0, T]. \label{est-H1}
\end{align}
We similarly obtain the estimate for $H_{2}(t)$ as follows: 
\begin{align}
H_{2}(t)
&\le C\int_{0}^{t}\left\|u(\tau)\right\|_{L^{\infty}}^{2(p-1)}\left\|u_{xx}(\tau)\right\|_{L^{\infty}}^{2}\left\|u_{y}(\tau)\right\|_{L^{2}}^{2}d\tau  \nonumber\\
&\le C\left(\sup_{0\le \tau \le t}\left\|u(\tau)\right\|_{L^{\infty}}\right)^{2(p-1)} \left(\sup_{0\le \tau \le t}\left\|u_{y}(\tau)\right\|_{L^{2}}^{2}\right)  \nonumber\\
&\ \ \ \ \times \int_{0}^{t}\left(\left\|u_{xxx}(\tau)\right\|_{L^{2}}\left\|u_{xxy}(\tau)\right\|_{L^{2}}+\left\|u_{xx}(\tau)\right\|_{L^{2}}\left\|u_{xxxy}(\tau)\right\|_{L^{2}}\right) d\tau  \nonumber\\
&\le C\left\|u_{0}\right\|_{H^{2, 1}}^{2(p-1)} \left\|u_{0}\right\|_{H^{1}}^{2}\sup_{0\le t\le T}\int_{0}^{t}\left( \left\|u_{xxx}(\tau)\right\|_{L^{2}}^{2} + \left\|u_{xxy}(\tau)\right\|_{L^{2}}^{2} + \left\|u_{xx}(\tau)\right\|_{L^{2}}^{2}\right)d\tau \nonumber \\
&\ \ \ \ +\frac{\mu}{2} \int_{0}^{t}\left\|u_{xxxy}(\tau)\right\|_{L^{2}}^{2}d\tau \nonumber\\
&\le C\left\|u_{0}\right\|_{H^{2, 1}}^{2(p-1)} \left\|u_{0}\right\|_{H^{1}}^{2}\left(\left\|u_{0}\right\|_{H^{1}}^{2}+\left\|\p_{x}u_{0}\right\|_{H^{1}}^{2}\right)+\frac{\mu}{2} \int_{0}^{t}\left\|u_{xxxy}(\tau)\right\|_{L^{2}}^{2}d\tau \nonumber \\
&\le C\left\|u_{0}\right\|_{H^{2, 1}}^{2p+2}+\frac{\mu}{2} \int_{0}^{t}\left\|u_{xxxy}(\tau)\right\|_{L^{2}}^{2}d\tau, \ \ t\in [0, T]. \label{est-H2}
\end{align}
For $H_{3}(t)$, we can analogously see that 
\begin{align}
H_{3}(t)
&\le C\int_{0}^{t}\left\|u(\tau)\right\|_{L^{\infty}}^{2(p-1)}\left\|u_{x}(\tau)\right\|_{L^{\infty}}^{2}\left\|u_{xy}(\tau)\right\|_{L^{2}}^{2}d\tau  \nonumber\\
&\le C\left(\sup_{0\le \tau \le t}\left\|u(\tau)\right\|_{L^{\infty}}\right)^{2(p-1)}
\left(\sup_{0\le \tau \le t}\left\|u_{xy}(\tau)\right\|_{L^{2}}^{2}\right)  \nonumber\\
&\ \ \ \ \times \sup_{0\le t \le T}\int_{0}^{t}\left(\left\|u_{xx}(\tau)\right\|_{L^{2}}^{2}+\left\|u_{xy}(\tau)\right\|_{L^{2}}^{2}+\left\|u_{x}(\tau)\right\|_{L^{2}}^{2}+\left\|u_{xxy}(\tau)\right\|_{L^{2}}^{2}\right)d\tau  \nonumber\\
&\le C\left\|u_{0}\right\|_{H^{2, 1}}^{2(p-1)}\left(\left\|u_{0}\right\|_{H^{1}}^{2}+\left\|\p_{x}u_{0}\right\|_{H^{1}}^{2}\right)^{2} 
\le C\left\|u_{0}\right\|_{H^{2, 1}}^{2p+2}, \ \ t\in [0, T]. \label{est-H3}
\end{align}
Finally, we can easily have the estimate for $H_{4}(t)$ as 
\begin{align}
H_{4}(t)
&\le C\int_{0}^{t}\left\|u(\tau)\right\|_{L^{\infty}}^{2p}\left\|u_{xxy}(\tau)\right\|_{L^{2}}^{2}d\tau \nonumber \\
&\le C\left\|u_{0}\right\|_{H^{2, 1}}^{2p} \left(\left\|u_{0}\right\|_{H^{1}}^{2}+\left\|\p_{x}u_{0}\right\|_{H^{1}}^{2}\right)
\le C\left\|u_{0}\right\|_{H^{2, 1}}^{2p+2}, \ \ t\in [0, T]. \label{est-H4}
\end{align}
Moreover, we have $\left\|u_{0}\right\|_{H^{2, 1}}^{2p+2}\le \left\|u_{0}\right\|_{H^{2, 1}}^{2}$ 
by using the smallness condition $\left\|u_{0}\right\|_{H^{2, 1}}\le 1$. 
Therefore, summarizing up \eqref{CL-high-dxxy}, \eqref{split-H(t)} and \eqref{est-H1} through \eqref{est-H4}, the desired estimate \eqref{apriori-xxy} can be established. 
\end{proof}

Eventually, summing up the local result and a priori estimates, we can prove the global well-posedness for \eqref{ZKB} in $H^{2, 1}(\R^{2})$. 
\begin{proof}[{\bf End of the proof of Theorem~\ref{thm.GWP-H21}}]
By virtue of a priori estimates obtained in Propositions~\ref{prop.apriori-1st}, \ref{prop.apriori-2nd} and \ref{prop.apriori-xxy}, 
we can extend the local solution constructed in Theorem~\ref{thm.LWP} globally in time. 
Therefore, \eqref{ZKB} is globally well-posed in $H^{2, 1}(\R^{2})$ and the solution $u(x, y, t)$ satisfies \eqref{apriori-H21}. Thus, the proof of Theorem~\ref{thm.GWP-H21} has been completed. 
\end{proof}

\section{Decay estimates for solutions}  

In this section, we shall prove Theorem~\ref{thm.main-decay}. Namely, we would like to derive the $L^{\infty}$-decay estimate \eqref{u-sol-decay-Linf} and the $L^{2}$-decay estimate \eqref{u-sol-decay-L2}. This section is divided into two parts below. 

\subsection{Decay estimates for the linear solutions}  

In this subsection, we would like to analyze the linearized problem \eqref{LZKB}. 
More precisely, we shall derive some decay estimates for the linear solution. 
First, let us introduce the $L^{\infty}$-decay estimate for the solution $\tilde{u}(x, y, t)$ to \eqref{LZKB}. 
To doing that, we show that the following decay estimate for the integral kernel $U(x, y, t)$ associated with \eqref{LZKB}:  
\begin{prop}\label{prop.L-decay-U}
Let $l$ be a non-negative integer. Then, we have 
\begin{equation}\label{L-decay-U}
\left\| \p_{x}^{l} U(\cdot, \cdot, t) \right\|_{L^{\infty}} \le C t^{ -\frac{3}{4} -\frac{l}{2} }, \ \ t>0,  
\end{equation}
where $U(x, y, t)$ is defined by \eqref{DEF-U}. 
\end{prop}
\begin{proof}
It follows from the definition of $U(x, y, t)$ and the Fourier transform that 
\begin{align}
\p_{x}^{l}U(x, y, t)
& = \frac{1}{(2\pi)^{2}} \int_{\R}\int_{\R} (i\xi )^{l} e^{ -\mu t \xi^{2} +it\xi^{3} +it\xi \eta^{2} }e^{ ix\xi + iy\eta } d\xi d\eta  \nonumber \\
& = \frac{1}{(2\pi)^{ \frac{3}{2} }} \int_{\R} (i\xi )^{l} e^{ -\mu t \xi^{2} +it\xi^{3}} \mathcal{F}^{-1}_{\eta} \left[ e^{it\xi \eta^{2}} \right](y) e^{ix\xi} d\xi \nonumber \\
& = \frac{t^{ -\frac{1}{2} }}{ 4\pi^{ \frac{3}{2} }} \int_{\R} (i\xi )^{l} |\xi|^{ -\frac{1}{2} } e^{ -\mu t \xi^{2} +it\xi^{3} -\frac{iy^{2}}{4t\xi} + \frac{i\pi}{4}\mathrm{sgn} \xi +ix\xi} d\xi.  
\label{re-DEF-U}
\end{align}
Here, we used the following fact: 
\begin{align}
\mathcal{F}^{-1}_{\eta} \left[ e^{it\xi \eta^{2}} \right](y) 
= \frac{1}{\sqrt{2\pi}} \int_{\R}e^{ it\xi \eta^{2}} e^{iy\eta} d\eta 
= \frac{1}{\sqrt{2\pi}}\left( \frac{i\pi}{t\xi} \right)^{\frac{1}{2}} e^{ -\frac{iy^{2}}{4t\xi} }
= \frac{e^{ -\frac{iy^{2}}{4t\xi} + \frac{i\pi}{4}\mathrm{sgn} \xi }}{\sqrt{2t|\xi|}}. \label{fact}
\end{align}
From the above expression \eqref{re-DEF-U}, for all non-negative integer $l$, we can immediately derive 
\begin{align*}
\left|\p_{x}^{l}U(x, y, t)\right| 
&\le \frac{t^{ -\frac{1}{2} }}{ 4\pi^{ \frac{3}{2} }} \int_{\R} |\xi|^{ -\frac{1}{2} + l} e^{ -\mu t \xi^{2} } d\xi
= \frac{ \Gamma \left( \frac{1+2l}{4}\right) }{4\pi^{ \frac{3}{2} }\mu^{\frac{1+2l}{4}}} t^{ -\frac{3}{4} -\frac{l}{2} }, \ \ (x, y) \in \R^{2}, \ t>0. 
\end{align*}
Therefore, we obtain the desired estimate \eqref{L-decay-U}. 
\end{proof}
By virtue of Proposition~\ref{prop.L-decay-U}, we can get the following decay estimate for the solution to \eqref{LZKB}: 
\begin{cor}\label{cor.L-decay-LZKB}
Suppose that there exists a non-negative integer $j$ such that $\p_{y}^{j}u_{0}\in L^{1}(\R^{2})$. 
Then, for all non-negative integer $l$, we have  
\begin{equation}\label{L-decay-LZKB}
\left\| \p_{x}^{l}\p_{y}^{j}\left(U(t)*u_{0}\right)\right\|_{L^{\infty}} \le Ct^{-\frac{3}{4}-\frac{l}{2}}\|\p_{y}^{j}u_{0}\|_{L^{1}}, \ \ t>0, 
\end{equation}
where $U(x, y, t)$ is defined by \eqref{DEF-U}. 
\end{cor}

Next, we shall introduce the $L^{2}$-decay estimate for the solution to \eqref{LZKB}. 
We note that the similar results have been obtained for the one-dimensional problems (cf.~\cite{F19-1, F19-2, K99-1}).  
\begin{prop}\label{prop.linear-estL2}
Let $s_1$, $s_2$, and $j$ be non-negative integers satisfying $s_2\ge j$. 
Suppose $u_{0}\in H^{s_1,s_2}(\R^{2})$, $\p_{y}^{j}u_{0}\in L^{1}(\R^{2})$ and $\p_{y}^{j+1}u_{0}\in L^{1}(\R^{2})$. Then, the estimate 
\begin{equation}\label{linear-estL2}
\left\|\p_{x}^{l}\p_{y}^{j}\left(U(t)*u_{0}\right)\right\|_{L^{2}}
\le C(1+t)^{-\frac{1}{4}-\frac{l}{2}}\left( \left\|\p_{y}^{j}u_{0}\right\|_{L^{1}}+\left\|\p_{y}^{j+1}u_{0}\right\|_{L^{1}}\right)
+e^{-\mu t}\left\|\p_{x}^{l}\p_{y}^{j}u_{0}\right\|_{L^{2}}, \ \ t\ge0
\end{equation}
holds for all integer $l$ satisfying $0\le l\le s_1$, where $U(x, y, t)$ is defined by \eqref{DEF-U}. 
\end{prop}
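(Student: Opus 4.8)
The plan is to work entirely on the Fourier side and exploit the fact that the dispersive phase $e^{it\xi^{3}+it\xi\eta^{2}}$ is unimodular, so that only the dissipative factor $e^{-\mu t\xi^{2}}$ affects the $L^{2}$-norm. Concretely, by the Plancherel theorem and \eqref{DEF-U},
\begin{equation*}
\left\|\p_{x}^{l}\p_{y}^{j}\left(U(t)*u_{0}\right)\right\|_{L^{2}}^{2}
=\int_{\R^{2}}\xi^{2l}\eta^{2j}e^{-2\mu t\xi^{2}}\left|\hat{u}_{0}(\xi,\eta)\right|^{2}d\xi d\eta ,
\end{equation*}
since $\left|e^{it\xi^{3}+it\xi\eta^{2}}\right|=1$. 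I would then split the $\xi$-integral into the high-frequency region $|\xi|\ge 1$ and the low-frequency region $|\xi|<1$ and estimate each part separately.

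For the high-frequency part, I would use $e^{-2\mu t\xi^{2}}\le e^{-2\mu t}$ when $|\xi|\ge 1$; together with $0\le l\le s_{1}$ and $0\le j\le s_{2}$, which guarantee $\left\|\p_{x}^{l}\p_{y}^{j}u_{0}\right\|_{L^{2}}\le\left\|u_{0}\right\|_{H^{s_{1},s_{2}}}<\infty$, this contributes at most $e^{-2\mu t}\left\|\p_{x}^{l}\p_{y}^{j}u_{0}\right\|_{L^{2}}^{2}$, i.e.\ $e^{-\mu t}\left\|\p_{x}^{l}\p_{y}^{j}u_{0}\right\|_{L^{2}}$ after taking the square root.

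For the low-frequency part the key point is to bound $\int_{\R}\eta^{2j}\left|\hat{u}_{0}(\xi,\eta)\right|^{2}d\eta$ uniformly in $\xi$. Here one cannot merely use $\left|\eta^{j}\hat{u}_{0}(\xi,\eta)\right|\le(2\pi)^{-1}\left\|\p_{y}^{j}u_{0}\right\|_{L^{1}}$, because $\eta^{2j}|\hat{u}_{0}|^{2}$ need not be integrable in $\eta$; this is exactly where the extra hypothesis $\p_{y}^{j+1}u_{0}\in L^{1}(\R^{2})$ is used. I would split the $\eta$-integral at $|\eta|=1$: on $|\eta|\le 1$ bound $\left|\eta^{j}\hat{u}_{0}(\xi,\eta)\right|\le(2\pi)^{-1}\left\|\p_{y}^{j}u_{0}\right\|_{L^{1}}$, and on $|\eta|>1$ write $\left|\eta^{j}\hat{u}_{0}(\xi,\eta)\right|=|\eta|^{-1}\left|\eta^{j+1}\hat{u}_{0}(\xi,\eta)\right|\le(2\pi)^{-1}|\eta|^{-1}\left\|\p_{y}^{j+1}u_{0}\right\|_{L^{1}}$, so that $\int_{|\eta|>1}|\eta|^{-2}d\eta<\infty$. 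This yields $\int_{\R}\eta^{2j}|\hat{u}_{0}(\xi,\eta)|^{2}d\eta\le C\left(\left\|\p_{y}^{j}u_{0}\right\|_{L^{1}}^{2}+\left\|\p_{y}^{j+1}u_{0}\right\|_{L^{1}}^{2}\right)$ for every $\xi$. It then remains to integrate the $\xi$-weight, and using $\int_{|\xi|<1}\xi^{2l}e^{-2\mu t\xi^{2}}d\xi\le\int_{\R}\xi^{2l}e^{-2\mu t\xi^{2}}d\xi=C_{l,\mu}\,t^{-l-\frac12}$ for $t>0$ (and the trivial bound by a constant for $t$ bounded) one obtains $\int_{|\xi|<1}\xi^{2l}e^{-2\mu t\xi^{2}}d\xi\le C(1+t)^{-l-\frac12}$ for all $t\ge 0$. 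Collecting the two regions and taking square roots gives \eqref{linear-estL2}.

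The only genuinely non-routine point is the treatment of the $\eta$-integral on low frequencies just described: it explains the role of the hypothesis $\p_{y}^{j+1}u_{0}\in L^{1}(\R^{2})$, which supplies the $|\eta|^{-1}$ decay needed to make the $\eta$-integral converge. Everything else is a standard low/high-frequency decomposition of the heat-type multiplier $e^{-\mu t\xi^{2}}$, and the dispersive terms play no role whatsoever.
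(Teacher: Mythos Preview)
Your proposal is correct and follows essentially the same approach as the paper: a Plancherel identity, a split of the $\xi$-integral at $|\xi|=1$, the trivial bound $e^{-2\mu t\xi^{2}}\le e^{-2\mu t}$ on the high-frequency part, and on the low-frequency part a uniform-in-$\xi$ bound on $\int_{\R}\eta^{2j}|\hat u_{0}(\xi,\eta)|^{2}d\eta$ using the extra $L^{1}$ assumption on $\p_{y}^{j+1}u_{0}$. The only cosmetic difference is that the paper obtains this $\eta$-bound by multiplying and dividing by $(1+\eta^{2})$ and then taking $\sup_{\xi,\eta}(1+\eta^{2})|\eta^{j}\hat u_{0}|^{2}$ together with $\int_{\R}(1+\eta^{2})^{-1}d\eta<\infty$, whereas you split the $\eta$-integral at $|\eta|=1$; both arguments encode the same idea and yield the same bound.
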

\begin{proof}
It follows from the Plancherel theorem that 
\begin{align}
&\left\|\p_{x}^{l}\p_{y}^{j}\left(U(t)*u_{0}\right)\right\|_{L^{2}}^{2}
=\left\|(i\xi)^{l}(i\eta)^{j}e^{ -\mu t \xi^{2} + it\xi^{3} + it\xi \eta^{2} }\hat{u}_{0}(\xi, \eta)\right\|_{L^{2}_{\xi \eta}}^{2} \nonumber \\
&=\left(\int_{|\xi|\le1}+\int_{|\xi|\ge1}\right)e^{-2\mu t\xi^{2}}\left(\int_{\R}\left|(i\xi)^{l}(i\eta)^{j}\hat{u}_{0}(\xi, \eta)\right|^{2}d\eta\right) d\xi 
=:L_{1}(t)+L_{2}(t). \label{L-split} 
\end{align}

First, we shall evaluate $L_{1}(t)$. Since $|(i\eta)^{k}\hat{u}_{0}(\xi, \eta)|\le \|\p_{y}^{k}u_{0}\|_{L^{1}}$ holds for all $(\xi, \eta)\in \R^{2}$ and integer $k\ge0$, we can see that  
\begin{align}
L_{1}(t)
&\le \sup_{|\xi|\le 1, \, \eta \in \R}\left\{\left(1+\eta^{2}\right)\left|(i\eta)^{j}\hat{u}_{0}(\xi, \eta)\right|^{2}\right\}
\left(\int_{|\xi|\le 1}e^{-2\mu t\xi^{2}}|\xi|^{2l}d\xi\right) \left(\int_{\R}\frac{d\eta}{1+\eta^{2}}\right) \nonumber \\
&\le 2\pi \left( \left\|\p_{y}^{j}u_{0}\right\|_{L^{1}}^{2}+\left\|\p_{y}^{j+1}u_{0}\right\|_{L^{1}}^{2}\right)\int_{0}^{1}e^{-2\mu t\xi^{2}}\xi^{2l}d\xi \nonumber \\
&\le C(1+t)^{-\frac{1}{2}-l}\left( \left\|\p_{y}^{j}u_{0}\right\|_{L^{1}}^{2}+\left\|\p_{y}^{j+1}u_{0}\right\|_{L^{1}}^{2}\right), \ \ t\ge0. \label{linear-L1-est}
\end{align}

Next, we would like to treat $L_{2}(t)$. It follows from the Plancherel theorem that 
\begin{align}
L_{2}(t)
&\le e^{-2\mu t}\int_{|\xi|\ge 1}\int_{\R}\left|(i\xi)^{l}(i\eta)^{j}\hat{u}_{0}(\xi, \eta)\right|^{2} d\eta d\xi 
\le 
e^{-2\mu t}\left\|\p_{x}^{l}\p_{y}^{j}u_{0}\right\|_{L^{2}}^{2}, \ \ t\ge0. \label{linear-L2-est}
\end{align}
Summing up \eqref{L-split} through \eqref{linear-L2-est}, we can get the desired estimate \eqref{linear-estL2}. 
\end{proof}

\subsection{Decay estimates for the nonlinear solutions} 

In this subsection, we shall derive the decay estimates for the original solution $u(x, y, t)$ to the nonlinear problem \eqref{ZKB}, i.e. let us prove Theorem~\ref{thm.main-decay}. Before doing that, we would like to prepare the following two propositions to evaluate the $L^{\infty}$-norm and the $L^{2}$-norm of the Duhamel term in the integral equation \eqref{integral-eq}. 
\begin{prop}\label{prop.Duhamel-est-Linf}
Let $t>a>0$ and $s\ge0$. 
Suppose $g \in C((0, \infty); H^{s, 1}(\R^{2}))$. Then, the estimate
\begin{align}
\left\|\p_{x}^{l}\int_{a}^{t}\p_{x}U(t-\tau)*g(\tau)d\tau\right\|_{L^{\infty}}
\le C t^{\frac{1}{4}}\sup_{a\le \tau \le t}\left(\left\|\p_{x}^{l}g(\cdot, \cdot, \tau)\right\|_{L^{2}}+\left\|\p_{x}^{l}\p_{y}g(\cdot, \cdot, \tau)\right\|_{L^{2}}\right) \label{Duhamel-est-Linf}
\end{align}
holds for all integer $l$ satisfying $0\le l \le s$, where $U(x, y, t)$ is defined by \eqref{DEF-U}. 
\end{prop}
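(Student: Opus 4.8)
The plan is to estimate the $L^{\infty}$-norm through the $L^{1}$-norm of the spatial Fourier transform of the Duhamel term; this turns the convolution with $\p_{x}U$ into an explicit Fourier multiplier and makes the anisotropy transparent, so that the $y$-smoothing that is \emph{absent} from the kernel $U$ can be compensated by the $\p_{y}$-derivative of $g$ appearing on the right-hand side of \eqref{Duhamel-est-Linf}. Concretely, I would set $F(t):=\p_{x}^{l}\int_{a}^{t}\p_{x}U(t-\tau)*g(\tau)\,d\tau$. By the convolution theorem and the definition \eqref{DEF-U} of $U$, the spatial Fourier transform of $F(t)$ is
\[
\widehat{F}(\xi,\eta)=\int_{a}^{t}(i\xi)^{l+1}e^{-\mu(t-\tau)\xi^{2}}e^{i(t-\tau)(\xi^{3}+\xi\eta^{2})}\,\widehat{g}(\xi,\eta,\tau)\,d\tau ,
\]
and since $\|F(t)\|_{L^{\infty}}\le C\|\widehat{F}\|_{L^{1}_{\xi\eta}}$, Minkowski's integral inequality reduces the problem to estimating, for each fixed $s=t-\tau>0$, the quantity $\bigl\||\xi|^{l+1}e^{-\mu s\xi^{2}}\widehat{g}(\cdot,\cdot,\tau)\bigr\|_{L^{1}_{\xi\eta}}$.

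Next, for fixed $s>0$ I would peel off exactly one power of $|\xi|$ to pair with the Gaussian, insert the weights $(1+\eta^{2})^{\pm1/2}$, and apply the Cauchy--Schwarz inequality in $(\xi,\eta)$:
\[
\bigl\||\xi|^{l+1}e^{-\mu s\xi^{2}}\widehat{g}\bigr\|_{L^{1}}
\le\Bigl(\int_{\R^{2}}|\xi|^{2}e^{-2\mu s\xi^{2}}(1+\eta^{2})^{-1}\,d\xi\,d\eta\Bigr)^{\frac12}
\Bigl(\int_{\R^{2}}|\xi|^{2l}(1+\eta^{2})|\widehat{g}|^{2}\,d\xi\,d\eta\Bigr)^{\frac12}.
\]
The first factor splits as $\int_{\R}|\xi|^{2}e^{-2\mu s\xi^{2}}\,d\xi=C_{\mu}s^{-3/2}$ times $\int_{\R}(1+\eta^{2})^{-1}\,d\eta=\pi$, so it equals $C_{\mu}s^{-3/4}$; the second factor equals $\bigl(\|\p_{x}^{l}g(\tau)\|_{L^{2}}^{2}+\|\p_{x}^{l}\p_{y}g(\tau)\|_{L^{2}}^{2}\bigr)^{1/2}\le\|\p_{x}^{l}g(\tau)\|_{L^{2}}+\|\p_{x}^{l}\p_{y}g(\tau)\|_{L^{2}}$ by Plancherel, which is exactly where the hypothesis $g(\tau)\in H^{s,1}(\R^{2})$ with $0\le l\le s$ is used. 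Hence $\bigl\||\xi|^{l+1}e^{-\mu s\xi^{2}}\widehat{g}(\tau)\bigr\|_{L^{1}}\le C_{\mu}(t-\tau)^{-3/4}\bigl(\|\p_{x}^{l}g(\tau)\|_{L^{2}}+\|\p_{x}^{l}\p_{y}g(\tau)\|_{L^{2}}\bigr)$.

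Finally I would combine this with the Minkowski bound from the first step and pull the supremum out of the time integral:
\[
\|F(t)\|_{L^{\infty}}\le C\sup_{a\le\tau\le t}\Bigl(\|\p_{x}^{l}g(\cdot,\cdot,\tau)\|_{L^{2}}+\|\p_{x}^{l}\p_{y}g(\cdot,\cdot,\tau)\|_{L^{2}}\Bigr)\int_{a}^{t}(t-\tau)^{-3/4}\,d\tau ,
\]
and since $\int_{a}^{t}(t-\tau)^{-3/4}\,d\tau=4(t-a)^{1/4}\le 4t^{1/4}$, the claimed estimate \eqref{Duhamel-est-Linf} follows.

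The only point requiring genuine care — and the reason for routing through $\|\widehat{F}\|_{L^{1}}$ rather than through the Gagliardo--Nirenberg-type inequality of Lemma~\ref{lem.est-Linfty} — is the bookkeeping of the $\xi$-powers: one must extract precisely the single power $|\xi|^{1}$ against $e^{-\mu s\xi^{2}}$, which (after also using the convergent $\eta$-integral) produces the \emph{integrable} time weight $s^{-3/4}$, while keeping $|\xi|^{l}\widehat{g}$ together with the full factor $(1+\eta^{2})^{1/2}$ on $\widehat{g}$. Extracting a second $\xi$-power, or applying Lemma~\ref{lem.est-Linfty} directly (which would demand two further $x$-derivatives on the Duhamel term), would force a non-integrable weight $s^{-1}$ near $\tau=t$; the Fourier-side argument sidesteps this, and it also automatically accommodates the full range $0\le l\le s$ without requiring any extra regularity on $g$ beyond $H^{s,1}(\R^{2})$.
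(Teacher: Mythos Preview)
Your proof is correct and follows essentially the same approach as the paper's: both pass to the Fourier side, insert the weight $(1+\eta^{2})^{\pm 1/2}$, apply Cauchy--Schwarz in $(\xi,\eta)$ so that exactly one power of $|\xi|$ is paired with the Gaussian $e^{-\mu(t-\tau)\xi^{2}}$ to produce the integrable factor $(t-\tau)^{-3/4}$, and then integrate in $\tau$. The only cosmetic difference is that the paper bounds $|F(x,y,t)|$ pointwise via the Fourier inversion formula rather than routing through $\|\widehat{F}\|_{L^{1}_{\xi\eta}}$, but the two computations are identical in substance.
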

\begin{proof}
It follows from the Fourier transform and the Cauchy--Schwarz inequality that 
\begin{align}
&\left|\p_{x}^{l}\int_{a}^{t}\p_{x}U(t-\tau)*g(\tau)d\tau\right| \nonumber \\
&=\frac{1}{2\pi} \biggl| \int_{a}^{t}\int_{\R^{2}} (i\xi)^{l+1}e^{-\mu (t-\tau)\xi^{2}+i(t-\tau)\xi^{3}+i(t-\tau)\xi\eta^{2}+ix\xi+iy\eta}\hat{g}(\xi, \eta, \tau)d\xi d\eta d\tau\biggl| \nonumber \\
&\le \int_{a}^{t}\left(\int_{\R}\xi^{2}e^{-2\mu (t-\tau)\xi^{2}}d\xi\right)^{\frac{1}{2}}\left(\int_{\R}\frac{d\eta}{1+\eta^{2}}\right)^{\frac{1}{2}}
\left(\int_{\R^{2}}\left(1+\eta^{2}\right) \left|(i\xi)^{l}\hat{g}(\xi, \eta, \tau)\right|^{2}d\xi d\eta\right)^{\frac{1}{2}}d\tau \nonumber \\
&\le C \int_{a}^{t}(t-\tau)^{-\frac{3}{4}}\left(\left\|\p_{x}^{l}g(\cdot, \cdot, \tau)\right\|_{L^{2}}+\left\|\p_{x}^{l}\p_{y}g(\cdot, \cdot, \tau)\right\|_{L^{2}}\right)d\tau \nonumber \\
&\le C t^{\frac{1}{4}}\sup_{a\le \tau \le t}\left(\left\|\p_{x}^{l}g(\cdot, \cdot, \tau)\right\|_{L^{2}}+\left\|\p_{x}^{l}\p_{y}g(\cdot, \cdot, \tau)\right\|_{L^{2}}\right), \ \ (x, y)\in \R^{2}. 
\end{align}
This completes the proof of the desired result \eqref{Duhamel-est-Linf}. 
\end{proof}

\begin{prop}\label{prop.Duhamel-est-L2}
Let $s\ge0$. Suppose $g\in C((0, \infty); H^{s, 0}(\R^{2}))$ and $\p_{x}^{l}\p_{y}^{j}g\in C((0, \infty); L^{1}(\R^{2}))$ for all integer $l$ satisfying $0\le l \le s$ and $j=0, 1$. 
Then, the estimate
\begin{align}
&\left\|\p_{x}^{l}\int_{0}^{t}\p_{x}U(t-\tau)*g(\tau)d\tau\right\|_{L^{2}} \nonumber\\
&\le C\int_{0}^{t/2}(1+t-\tau)^{-\frac{3}{4}-\frac{l}{2}}\left(\left\|g(\cdot, \cdot, \tau)\right\|_{L^{1}}+\left\|\p_{y}g(\cdot, \cdot, \tau)\right\|_{L^{1}}\right)d\tau \nonumber\\
&\ \ \ +C\int_{t/2}^{t}(1+t-\tau)^{-\frac{3}{4}}\left(\left\|\p_{x}^{l}g(\cdot, \cdot, \tau)\right\|_{L^{1}}+\left\|\p_{x}^{l}\p_{y}g(\cdot, \cdot, \tau)\right\|_{L^{1}}\right)d\tau \nonumber\\
&\ \ \ +C\left(\int_{0}^{t}e^{-\mu(t-\tau)} \left\|\p_{x}^{l}g(\cdot, \cdot, \tau)\right\|_{L^{2}}^{2}d\tau \right)^{\frac{1}{2}}, \ \ t>0 \label{Duhamel-est-L2}
\end{align}
holds for all integer $l$ satisfying $0\le l \le s$, where $U(x, y, t)$ is defined by \eqref{DEF-U}. 
\end{prop}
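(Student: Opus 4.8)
The plan is to pass to the Fourier side and decompose the estimate according to frequency ($|\xi|\le 1$ versus $|\xi|\ge 1$) and, in the low-frequency region, according to time ($\tau\le t/2$ versus $\tau\ge t/2$). By the Plancherel theorem,
\[
\left\|\p_{x}^{l}\int_{0}^{t}\p_{x}U(t-\tau)*g(\tau)d\tau\right\|_{L^{2}}
=\left\|\int_{0}^{t}(i\xi)^{l+1}e^{-\mu(t-\tau)\xi^{2}+i(t-\tau)(\xi^{3}+\xi\eta^{2})}\hat{g}(\xi,\eta,\tau)\,d\tau\right\|_{L^{2}_{\xi\eta}},
\]
and since the dispersive phase $e^{i(t-\tau)(\xi^{3}+\xi\eta^{2})}$ has modulus one, it drops out the instant we pass to absolute values: only $|\xi|^{l+1}$ and the dissipative Gaussian $e^{-\mu(t-\tau)\xi^{2}}$ remain active. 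This is precisely why the dispersion is harmless here.

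For the high-frequency part $|\xi|\ge1$, a direct application of Minkowski's inequality would produce a singular factor $(t-\tau)^{-1/2}$, coming from the extra $\p_{x}$ that falls on the kernel, which does not match the form of the asserted right-hand side. Instead I would apply the Cauchy--Schwarz inequality in $\tau$ after the splitting $|\xi|^{l+1}e^{-\mu(t-\tau)\xi^{2}}=\bigl(\xi^{2}e^{-\mu(t-\tau)\xi^{2}}\bigr)^{1/2}\bigl(|\xi|^{2l}e^{-\mu(t-\tau)\xi^{2}}\bigr)^{1/2}$. Because $\int_{0}^{t}\xi^{2}e^{-\mu(t-\tau)\xi^{2}}d\tau\le\mu^{-1}$ uniformly in $\xi$, one is left, after Fubini and the elementary bound $\mathbf 1_{|\xi|\ge1}\,e^{-\mu(t-\tau)\xi^{2}}\le e^{-\mu(t-\tau)}$, with exactly $\mu^{-1/2}\bigl(\int_{0}^{t}e^{-\mu(t-\tau)}\|\p_{x}^{l}g(\cdot,\cdot,\tau)\|_{L^{2}}^{2}d\tau\bigr)^{1/2}$, i.e. the last term in \eqref{Duhamel-est-L2}; this uses $g(\tau)\in H^{s,0}(\R^{2})$ together with $l\le s$.

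For the low-frequency part $|\xi|\le1$, I would use Minkowski's integral inequality (moving the $\tau$-integration outside the $L^{2}_{\xi\eta}$-norm) and then the trivial bounds $|\hat{g}(\xi,\eta,\tau)|\le C(1+\eta^{2})^{-1/2}\bigl(\|g(\tau)\|_{L^{1}}+\|\p_{y}g(\tau)\|_{L^{1}}\bigr)$ and the analogous estimate for $\widehat{\p_{x}^{l}g}$. On $[0,t/2]$ I keep all derivatives on the kernel, and the remaining factor $\bigl(\int_{|\xi|\le1}|\xi|^{2l+2}e^{-2\mu(t-\tau)\xi^{2}}d\xi\bigr)^{1/2}\le C\min\{1,(t-\tau)^{-\frac34-\frac l2}\}\le C(1+t-\tau)^{-\frac34-\frac l2}$ produces the first term. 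On $[t/2,t]$, where $t-\tau$ may be small, I instead transfer the whole $\p_{x}^{l}$ onto $g$ (writing $|\xi|^{l+1}|\hat{g}|=|\xi|\,|\widehat{\p_{x}^{l}g}|$), estimate $\widehat{\p_{x}^{l}g}$ by $\|\p_{x}^{l}g(\tau)\|_{L^{1}}+\|\p_{x}^{l}\p_{y}g(\tau)\|_{L^{1}}$, and use $\bigl(\int_{|\xi|\le1}|\xi|^{2}e^{-2\mu(t-\tau)\xi^{2}}d\xi\bigr)^{1/2}\le C\min\{1,(t-\tau)^{-3/4}\}\le C(1+t-\tau)^{-3/4}$ to obtain the second term. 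Adding the three contributions yields \eqref{Duhamel-est-L2}.

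The main obstacle is exactly this high-frequency term: the combination of the dispersive phase with the inhomogeneity prevents the cleanest maximal-regularity-type argument, so one must be careful to route the surplus $x$-derivative through the Cauchy--Schwarz in $\tau$ so that it lands on the weighted $L^{2}_{\tau}L^{2}_{xy}$ quantity in the statement rather than on a pointwise-in-$\tau$ bound carrying an unusable singularity. The remaining ingredients — elementary Gaussian integrals in $\xi$, Minkowski's inequality, and $|\hat{h}|\le\|h\|_{L^{1}}$ — are routine.
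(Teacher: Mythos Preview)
Your proposal is correct and follows essentially the same approach as the paper's proof: the same low/high-frequency split via Plancherel, the same Minkowski-in-$\tau$ argument on $|\xi|\le 1$ with the further $[0,t/2]\cup[t/2,t]$ split and the pointwise bound $(1+|\eta|)|\hat g|\le\|g\|_{L^{1}}+\|\p_{y}g\|_{L^{1}}$, and the same Cauchy--Schwarz-in-$\tau$ trick on $|\xi|\ge 1$ using $\int_{0}^{t}\xi^{2}e^{-\mu(t-\tau)\xi^{2}}d\tau\le\mu^{-1}$. The paper presents the low-frequency piece first and the high-frequency piece second, but the content is identical.
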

\begin{proof}
For simplicity, we set 
\begin{equation}\label{Duhamel-D}
D(x, y, t):=\int_{0}^{t}\p_{x}U(t-\tau)*g(\tau)d\tau. 
\end{equation}
By using the Plancherel theorem and splitting the integral, we have
\begin{align}
\left\|\p_{x}^{l}D(\cdot, \cdot, t)\right\|_{L^{2}} 
&\le \left\|(i\xi)^{l}\hat{D}(\xi, \eta, t)\right\|_{L^{2}(|\xi|\le1, \, \eta \in \R)}+\left\|(i\xi)^{l}\hat{D}(\xi, \eta, t)\right\|_{L^{2}(|\xi|\ge1, \, \eta \in \R)}  \nonumber \\
&=:D_{1}(t)+D_{2}(t). \label{D-split}
\end{align}

First, we evaluate $D_{1}(t)$. From the definition of $U(x, y, t)$ by \eqref{DEF-U}, we obtain
\begin{align}
D_{1}(t)
&\le \int_{0}^{t}\left\|(i\xi)^{l+1}e^{ -\mu (t-\tau) \xi^{2} + i(t-\tau)\xi^{3} + i(t-\tau)\xi \eta^{2} }
\hat{g}(\xi, \eta, \tau)\right\|_{L^{2}(|\xi|\le1, \, \eta \in \R)}d\tau \nonumber\\
&\le \int_{0}^{t/2}\sup_{|\xi|\le 1, \, \eta \in \R} \left\{\left(1+|\eta|\right)\left|\hat{g}(\xi, \eta, \tau)\right|\right\}
\left(\int_{|\xi|\le 1}\xi^{2(l+1)}e^{-2\mu(t-\tau)\xi^{2}}d\xi\right)^{\frac{1}{2}}\left(\int_{\R}\frac{d\eta}{1+\eta^{2}}\right)^{\frac{1}{2}} d\tau \nonumber \\
&\ \ \ \ +\int_{t/2}^{t}\sup_{|\xi|\le 1, \, \eta \in \R} \left\{\left(1+|\eta|\right)\left|(i\xi)^{l}\hat{g}(\xi, \eta, \tau)\right|\right\}
\left(\int_{|\xi|\le 1}\xi^{2}e^{-2\mu(t-\tau)\xi^{2}}d\xi\right)^{\frac{1}{2}}\left(\int_{\R}\frac{d\eta}{1+\eta^{2}}\right)^{\frac{1}{2}} d\tau \nonumber \\
&\le C\int_{0}^{t/2}(1+t-\tau)^{-\frac{3}{4}-\frac{l}{2}}\left(\left\|g(\cdot, \cdot, \tau)\right\|_{L^{1}}+\left\|\p_{y}g(\cdot, \cdot, \tau)\right\|_{L^{1}}\right)d\tau \nonumber\\
&\ \ \ \ +C\int_{t/2}^{t}(1+t-\tau)^{-\frac{3}{4}}\left(\left\|\p_{x}^{l}g(\cdot, \cdot, \tau)\right\|_{L^{1}}+\left\|\p_{x}^{l}\p_{y}g(\cdot, \cdot, \tau)\right\|_{L^{1}}\right)d\tau, \ \ t>0. \label{Duhamel-D1-est}
\end{align}
Here, we used the following estimate: 
\begin{equation*}
\left(\int_{|\xi|\le1}|\xi|^{j}e^{-2\mu(t-\tau)\xi^{2}}d\xi\right)^{\frac{1}{2}} \le C(1+t-\tau)^{-\frac{1+j}{4}}, \ \ j\ge0. 
\end{equation*}

Next, we shall treat $D_{2}(t)$. Applying the Cauchy--Schwarz inequality for the $\tau$-integral, we have 
\begin{align*}
\left|(i\xi)^{l}\hat{D}(\xi, t)\right|
&\le \int_{0}^{t}|\xi|e^{-\mu(t-\tau)\xi^{2}}\left|(i\xi)^{l}\hat{g}(\xi, \eta, \tau)\right|d\tau \\
&\le \left(\int_{0}^{t}\xi^{2}e^{-\mu(t-\tau)\xi^{2}}d\tau \right)^{\frac{1}{2}}\left(\int_{0}^{t}e^{-\mu(t-\tau)\xi^{2}}\left|(i\xi)^{l}\hat{g}(\xi, \eta, \tau)\right|^{2}d\tau \right)^{\frac{1}{2}} \\
&\le C\left(\int_{0}^{t}e^{-\mu(t-\tau)\xi^{2}}\left|(i\xi)^{l}\hat{g}(\xi, \eta, \tau)\right|^{2}d\tau \right)^{\frac{1}{2}}, \ \ |\xi|\ge1, \ \eta \in \R. 
\end{align*}
Therefore, it follows that 
\begin{align}
D_{2}(t)
&\le C\left(\int_{0}^{t}e^{-\mu(t-\tau)}\int_{|\xi|\ge1}\int_{\R}\left|(i\xi)^{l}\hat{g}(\xi, \eta, \tau)\right|^{2} d\eta d\xi  d\tau \right)^{\frac{1}{2}} \nonumber \\
&\le C\left(\int_{0}^{t}e^{-\mu(t-\tau)}\left\|\p_{x}^{l}g(\cdot, \cdot, \tau)\right\|_{L^{2}}^{2}d\tau \right)^{\frac{1}{2}}, \ \ t>0. \label{Duhamel-D2-est}
\end{align}
Combining \eqref{Duhamel-D} through \eqref{Duhamel-D2-est}, we obtain the desired estimate \eqref{Duhamel-est-L2}. 
\end{proof}

In what follows, let us prove the $L^{\infty}$-decay estimate \eqref{u-sol-decay-Linf}: 
\begin{thm}\label{thm.u-sol-decay-Linf}
Let $p\ge2$ be an integer. Assume that $u_{0}\in H^{2, 1}(\R^{2})\cap L^{1}(\R^{2})$ and $E_{0}=\left\|u_{0}\right\|_{H^{2, 1}}+\left\|u_{0}\right\|_{L^{1}}$ is sufficiently small. 
Then, the solution $u(x, y, t)$ to \eqref{ZKB} satisfies the following estimate: 
\begin{equation}\tag{\ref{u-sol-decay-Linf}}
\left\|\p_{x}^{l}u(\cdot, \cdot, t)\right\|_{L^{\infty}} \le CE_{0}(1+t)^{-\frac{3}{4}-\frac{l}{2}}, \ \ t\ge0, \ l=0, 1.  
\end{equation}
\end{thm}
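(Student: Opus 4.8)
The plan is to run a time-weighted a priori estimate on the Duhamel formula \eqref{integral-eq} together with a continuity (bootstrap) argument, using the global-in-time bound \eqref{apriori-H21} to absorb everything that does not decay in time.

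First I would collect the elementary ingredients. Since $\|u_{0}\|_{H^{1}}+\|\p_{x}u_{0}\|_{H^{1}}\le C\|u_{0}\|_{H^{2,1}}\le CE_{0}$ is small, Theorem~\ref{thm.GWP-H21} furnishes the global solution $u\in C([0,\infty);H^{2,1}(\R^{2}))$ with $\sup_{t\ge0}\|u(\cdot,\cdot,t)\|_{H^{2,1}}\le CE_{0}$; in particular each of $\|u\|_{L^{2}}$, $\|u_{x}\|_{L^{2}}$, $\|u_{xx}\|_{L^{2}}$, $\|u_{y}\|_{L^{2}}$, $\|u_{xy}\|_{L^{2}}$, $\|u_{xxy}\|_{L^{2}}$ is $\le CE_{0}$ uniformly in $t$. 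Applying Lemma~\ref{lem.est-Linfty} to $\p_{x}^{l}u$ gives $\|\p_{x}^{l}u(\cdot,\cdot,t)\|_{L^{\infty}}\le C\|u(\cdot,\cdot,t)\|_{H^{2,1}}\le CE_{0}$ for $l=0,1$ and all $t\ge0$ (so the weighted norm below is finite and continuous in $t$), and since $H^{2,1}(\R^{2})$ is a Banach algebra (Proposition~\ref{prop.Banach-al}) we have $u^{p+1}\in C([0,\infty);H^{2,1}(\R^{2}))$ with $\|u^{p+1}(\cdot,\cdot,t)\|_{H^{2,1}}\le C\|u(\cdot,\cdot,t)\|_{H^{2,1}}^{p+1}\le CE_{0}^{p+1}$, so Proposition~\ref{prop.Duhamel-est-Linf} applies to $g=u^{p+1}$. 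I would then introduce
\[
M(t):=\sup_{0\le\tau\le t}\sum_{l=0}^{1}(1+\tau)^{\frac{3}{4}+\frac{l}{2}}\,\|\p_{x}^{l}u(\cdot,\cdot,\tau)\|_{L^{\infty}},
\]
which is finite, continuous and non-decreasing, the goal being $M(t)\le CE_{0}$.

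Next, fixing $l\in\{0,1\}$ and applying $\p_{x}^{l}$ to \eqref{integral-eq}, I would estimate three pieces. The linear term is handled by Corollary~\ref{cor.L-decay-LZKB} with $j=0$ for $t\ge1$, and by $\|\p_{x}^{l}(U(t)*u_{0})\|_{L^{\infty}}\le C\|U(t)*u_{0}\|_{H^{2,1}}\le C\|u_{0}\|_{H^{2,1}}$ for $0<t\le1$, giving $\|\p_{x}^{l}(U(t)*u_{0})\|_{L^{\infty}}\le CE_{0}(1+t)^{-3/4-l/2}$. For the Duhamel term I split $\int_{0}^{t}=\int_{0}^{t/2}+\int_{t/2}^{t}$ (for $t\le2$ one merely uses $\|\p_{x}^{l}u\|_{L^{\infty}}\le CE_{0}$). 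On $[0,t/2]$ I move the Duhamel $\p_{x}$ onto the kernel, so the piece is $\le\int_{0}^{t/2}\|\p_{x}^{l+1}U(t-\tau)\|_{L^{\infty}}\|u^{p+1}(\tau)\|_{L^{1}}\,d\tau$; by Proposition~\ref{prop.L-decay-U} and $t-\tau\ge t/2$ the kernel is $\le Ct^{-3/4-(l+1)/2}$, while $\|u^{p+1}(\tau)\|_{L^{1}}\le\|u(\tau)\|_{L^{\infty}}^{p-1}\|u(\tau)\|_{L^{2}}^{2}\le CE_{0}^{2}M(t)^{p-1}(1+\tau)^{-3(p-1)/4}$, and since $p\ge2$ the $\tau$-integral grows at most like $(1+t)^{1/4}$, so the whole piece is $\le CE_{0}^{2}M(t)^{p-1}(1+t)^{-3/4-l/2}$. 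On $[t/2,t]$ the pointwise decay $(t-\tau)^{-3/4-(l+1)/2}$ of $\p_{x}^{l+1}U$ fails to be integrable near $\tau=t$, so there I invoke Proposition~\ref{prop.Duhamel-est-Linf} with $a=t/2$ and $g=u^{p+1}$, obtaining $\le Ct^{1/4}\sup_{t/2\le\tau\le t}(\|\p_{x}^{l}(u^{p+1})\|_{L^{2}}+\|\p_{x}^{l}\p_{y}(u^{p+1})\|_{L^{2}})$; expanding by Leibniz, every resulting term is $u^{m}$ with $m\ge p-1$ times a product of at most two first derivatives of $u$, and estimating those extra factors in $L^{\infty}$ through $\|u(\tau)\|_{L^{\infty}}\le(1+\tau)^{-3/4}M(\tau)$ and $\|u_{x}(\tau)\|_{L^{\infty}}\le(1+\tau)^{-5/4}M(\tau)$, and the last $L^{2}$-factor by the uniform $H^{2,1}$-bound, yields $\|\p_{x}^{l}(u^{p+1})\|_{L^{2}}+\|\p_{x}^{l}\p_{y}(u^{p+1})\|_{L^{2}}\le CE_{0}M(\tau)^{p}(1+\tau)^{-3p/4}$; since $p\ge2$ gives $\tfrac14-\tfrac{3p}{4}\le-\tfrac34-\tfrac{l}{2}$ for $l=0,1$, this piece is $\le CE_{0}M(t)^{p}(1+t)^{-3/4-l/2}$.

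Adding the three contributions gives $M(t)\le C_{1}E_{0}+C_{2}\big(E_{0}^{2}M(t)^{p-1}+E_{0}M(t)^{p}\big)$ for all $t\ge0$, with $C_{1},C_{2}$ independent of $t$ and $E_{0}$. As $p\ge2$ this is superlinear in $M(t)$, so whenever $M(t)\le2C_{1}E_{0}$ the right-hand side is $\le C_{1}E_{0}+C_{3}E_{0}^{p+1}<2C_{1}E_{0}$ for $E_{0}$ small; since $M$ is continuous with $M(0)\le CE_{0}$, the standard continuity argument forces $M(t)\le2C_{1}E_{0}$ for all $t$, which is precisely \eqref{u-sol-decay-Linf}. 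The main obstacle — and the reason the hypothesis is $p\ge2$ rather than $p\ge1$ — is the near-diagonal Duhamel estimate on $[t/2,t]$: there the non-integrable pointwise decay of $\p_{x}^{l+1}U$ cannot be used, and the substitute from Proposition~\ref{prop.Duhamel-est-Linf} brings in $\|\p_{x}^{l}\p_{y}(u^{p+1})\|_{L^{2}}$; as we do not assume $\p_{y}u_{0}\in L^{1}$ and hence have no $L^{2}$-type decay of $u$ available, all the time decay must be extracted from the $L^{\infty}$-factors $\|u\|_{L^{\infty}}^{p}$ and $\|u_{x}\|_{L^{\infty}}$, which both forces the $l=0$ and $l=1$ estimates to be closed simultaneously inside $M(t)$ and pins the threshold at $p\ge2$ (borderline when $p=2$, $l=1$).
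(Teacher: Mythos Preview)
Your proposal is correct and follows essentially the same route as the paper's own proof: the same time-weighted quantity (the paper calls it $\mathcal{H}(t)$), the same split of the Duhamel term at $\tau=t/2$, Proposition~\ref{prop.L-decay-U} on $[0,t/2]$, Proposition~\ref{prop.Duhamel-est-Linf} on $[t/2,t]$, the same Leibniz bounds on $\|\p_{x}^{l}\p_{y}^{m}(u^{p+1})\|_{L^{2}}$ using $\|u\|_{L^{\infty}}^{p}$ or $\|u\|_{L^{\infty}}^{p-1}\|u_{x}\|_{L^{\infty}}$ against the uniform $H^{2,1}$-bound, and the same continuity argument to close. Your identification of the borderline case $p=2$, $l=1$ in the $[t/2,t]$-piece also matches the paper exactly.
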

\begin{proof}
In order to prove \eqref{u-sol-decay-Linf}, let us introduce the following functions for $t\ge0$: 
\begin{equation}\label{DEF-H(t)}
\mathcal{H}(t):=\sup_{0\le \tau \le t}\sum_{l=0}^{1}H_{l}(\tau), \ \ 
H_{l}(t):= (1+t)^{\frac{3}{4}+\frac{l}{2}} \left\|\p_{x}^{l}u(\cdot, \cdot, t)\right\|_{L^{\infty}}, \ \ l=0, 1.
\end{equation}
Then, it follows from the integral equation \eqref{integral-eq} that 
\begin{align}
H_{l}(t)
&\le (1+t)^{\frac{3}{4}+\frac{l}{2}} \left\|\p_{x}^{l}U(t)*u_{0}\right\|_{L^{\infty}} +\frac{|\beta|}{p+1}(1+t)^{\frac{3}{4}+\frac{l}{2}}\left\|\p_{x}^{l}\int_{0}^{t/2}\p_{x}U(t-\tau)*\left(u^{p+1}(\tau)\right)d\tau\right\|_{L^{\infty}} \nonumber \\
&\ \ \ \ +\frac{|\beta|}{p+1}(1+t)^{\frac{3}{4}+\frac{l}{2}}\left\|\p_{x}^{l}\int_{t/2}^{t}\p_{x}U(t-\tau)*\left(u^{p+1}(\tau)\right)d\tau\right\|_{L^{\infty}} \nonumber \\
&=:H_{l.0}(t)+H_{l.1}(t)+H_{l.2}(t). \label{H(t)-est-split}
\end{align}

In what follows, we shall evaluate $H_{l.0}(t)$, $H_{l.1}(t)$ and $H_{l.2}(t)$ for $t\ge1$. First, let us evaluate $H_{l.0}(t)$. 
It directly follows from Corollary~\ref{cor.L-decay-LZKB} that 
\begin{align}
H_{l.0}(t)
\le C\left\|u_{0}\right\|_{L^{1}}(1+t)^{\frac{3}{4}+\frac{l}{2}} t^{-\frac{3}{4}-\frac{l}{2}} 
\le C\left\|u_{0}\right\|_{L^{1}}, \ \ t\ge1, \ l=0, 1. \label{Linf-decay-est-I0}
\end{align}

Next, we would like to treat $H_{l.1}(t)$. From the Young inequality and Proposition~\ref{prop.L-decay-U}, we have 
\begin{align}
&\left\|\p_{x}^{l}\int_{0}^{t/2}\p_{x}U(t-\tau)*\left(u^{p+1}(\tau)\right)d\tau\right\|_{L^{\infty}} \nonumber \\
&\le \int_{0}^{t/2}\left\|\p_{x}^{l+1}U(\cdot, \cdot, t-\tau)\right\|_{L^{\infty}}\left\|u^{p+1}(\cdot, \cdot, \tau)\right\|_{L^{1}}d\tau  \nonumber \\
&\le C\int_{0}^{t/2}(t-\tau)^{-\frac{3}{4}-\frac{l+1}{2}}\left\|u^{p+1}(\cdot, \cdot, \tau)\right\|_{L^{1}}d\tau 
\le Ct^{-\frac{3}{4}-\frac{l+1}{2}}\int_{0}^{t/2}\left\|u^{p+1}(\cdot, \cdot, \tau)\right\|_{L^{1}}d\tau. \label{Linf-decay-pre-I1}
\end{align}
In order to complete the evaluation \eqref{Linf-decay-pre-I1}, we shall prepare an estimate for the nonlinear term $u^{p+1}(x, y, t)$. 
It follows from the definition of $H_{l}(t)$ (i.e. \eqref{DEF-H(t)}) and a priori estimate \eqref{apriori-H21} that 
\begin{align}
\left\|u^{p+1}(\cdot, \cdot, \tau)\right\|_{L^{1}}
&\le \left\|u(\cdot, \cdot, \tau)\right\|_{L^{\infty}}^{p-1}\left\|u(\cdot, \cdot, \tau)\right\|_{L^{2}}^{2} 
\le C\left\|u_{0}\right\|_{H^{2, 1}}^{2}H_{0}(\tau)^{p-1} (1+\tau)^{-\frac{3(p-1)}{4}} \nonumber \\
&\le C\left\|u_{0}\right\|_{H^{2, 1}}^{2}H_{0}(\tau)^{p-1} (1+\tau)^{-\frac{3(p-1)}{4}}. \label{u^{p+1}-L1-pre}
\end{align}
Therefore, we can easily see that 
\begin{align}
\int_{0}^{t/2}\left\|u^{p+1}(\cdot, \cdot, \tau)\right\|_{L^{1}}d\tau \nonumber 
&\le C\left\|u_{0}\right\|_{H^{2, 1}}^{2}\int_{0}^{t/2}H_{0}(\tau)^{p-1}(1+\tau)^{-\frac{3(p-1)}{4}}d\tau \\
&\le C\left\|u_{0}\right\|_{H^{2, 1}}^{2}\mathcal{H}(t)^{p-1}
\begin{cases}
(1+t)^{\frac{1}{4}}, &p=2, \\
1, &p\ge3. 
\end{cases} \label{u^{p+1}-L1-int}
\end{align}
Combining \eqref{H(t)-est-split}, \eqref{Linf-decay-pre-I1} and \eqref{u^{p+1}-L1-int}, we arrive at
\begin{align}
H_{l.1}(t)
&\le C(1+t)^{\frac{3}{4}+\frac{l}{2}}t^{-\frac{3}{4}-\frac{l+1}{2}}\left\|u_{0}\right\|_{H^{2, 1}}^{2}\mathcal{H}(t)^{p-1}(1+t)^{\frac{1}{4}} \nonumber \\
&\le C\left\|u_{0}\right\|_{H^{2, 1}}^{2}\mathcal{H}(t)^{p-1}(1+t)^{-\frac{1}{4}}, \ \ t\ge1, \ l=0, 1. \label{Linf-decay-I1}
\end{align}

Next, we shall derive the estimate for $H_{l.2}(t)$. Now, it follows from Proposition~\ref{prop.Duhamel-est-Linf} that 
\begin{align} \label{Linf-decay-pre-I2}
H_{l.2}(t)\le C(1+t)^{\frac{3}{4}+\frac{l}{2}}t^{\frac{1}{4}}
\sup_{t/2\le \tau \le t}\left(\left\|\p_{x}^{l}\left(u^{p+1}(\cdot, \cdot, \tau)\right)\right\|_{L^{2}}+\left\|\p_{x}^{l}\p_{y}\left(u^{p+1}(\cdot, \cdot, \tau)\right)\right\|_{L^{2}}\right). 
\end{align}
Therefore, we need to prepare some estimates of $\p_{x}^{l}\left(u^{p+1}(x, y, t)\right)$ and $\p_{x}^{l}\p_{y}\left(u^{p+1}(x, y, t)\right)$ for $l=0, 1$. 
First, from the definition of $H_{l}(t)$ (i.e. \eqref{DEF-H(t)}) and a priori estimate \eqref{apriori-H21}, we have 
\begin{align}
\left\|u^{p+1}(\cdot, \cdot, \tau)\right\|_{L^{2}}
&\le \left\|u(\cdot, \cdot, \tau)\right\|_{L^{\infty}}^{p}\left\|u(\cdot, \cdot, \tau)\right\|_{L^{2}} 
\le C\left\|u_{0}\right\|_{H^{2, 1}}H_{0}(\tau)^{p} (1+\tau)^{-\frac{3p}{4}} \nonumber \\
&\le C\left\|u_{0}\right\|_{H^{2, 1}}\mathcal{H}(t)^{p}(1+\tau)^{-\frac{3p}{4}}. \label{u^{p+1}-L2-pre} 
\end{align}
In the same way as before, we obtain 
\begin{align}
\left\|\p_{x}\left(u^{p+1}(\cdot, \cdot, \tau)\right)\right\|_{L^{2}}+\left\|\p_{y}\left(u^{p+1}(\cdot, \cdot, \tau)\right)\right\|_{L^{2}}
\le C\left\|u_{0}\right\|_{H^{2, 1}}\mathcal{H}(t)^{p}(1+\tau)^{-\frac{3p}{4}}. \label{dxu^{p+1}-L2-pre}
\end{align}
In addition, noticing $\p_{x}\p_{y}(u^{p+1})=p(p+1)u^{p-1}u_{x}u_{y}+(p+1)u^{p}u_{xy}$, we analogously have
\begin{align}
&\left\|\p_{x}\p_{y}\left(u^{p+1}(\cdot, \cdot, \tau)\right)\right\|_{L^{2}} \nonumber \\
&\le C\left(\left\|u(\cdot, \cdot, \tau)\right\|_{L^{\infty}}^{p-1}\left\|u_{x}(\cdot, \cdot, \tau)\right\|_{L^{\infty}}\left\|u_{y}(\cdot, \cdot, \tau)\right\|_{L^{2}}+\left\|u(\cdot, \cdot, \tau)\right\|_{L^{\infty}}^{p}\left\|u_{xy}(\cdot, \cdot, \tau)\right\|_{L^{2}} \right)\nonumber \\
&\le C\left\{H_{0}(\tau)^{p-1}(1+\tau)^{-\frac{3(p-1)}{4}}H_{1}(\tau)(1+\tau)^{-\frac{5}{4}}\left\|u_{0}\right\|_{H^{2, 1}}+ H_{0}(\tau)^{p}(1+\tau)^{-\frac{3p}{4}}\left\|u_{0}\right\|_{H^{2, 1}} \right\} \nonumber \\
&\le C\left\|u_{0}\right\|_{H^{2, 1}}\mathcal{H}(t)^{p}(1+\tau)^{-\frac{3p}{4}}. \label{dxdyu^{p+1}-L2-pre}
\end{align}
Therefore, summing up \eqref{Linf-decay-pre-I2} through \eqref{dxdyu^{p+1}-L2-pre}, we eventually obtain 
\begin{align} 
H_{l.2}(t)
&\le C(1+t)^{\frac{3}{4}+\frac{l}{2}}t^{\frac{1}{4}}
\left\|u_{0}\right\|_{H^{2, 1}}\mathcal{H}(t)^{p}(1+t)^{-\frac{3p}{4}} \nonumber \\
&\le C\left\|u_{0}\right\|_{H^{2, 1}}\mathcal{H}(t)^{p}(1+t)^{-\frac{3p-4-2l}{4}}, \ \ t\ge1, \ l=0, 1. \label{Linf-decay-I2}
\end{align}

Summarizing up \eqref{H(t)-est-split}, \eqref{Linf-decay-est-I0}, \eqref{Linf-decay-I1} and \eqref{Linf-decay-I2} and using the Young inequality, it follows from $p\ge2$ that 
\begin{align}
H_{l}(t)&\le H_{l.0}(t)+H_{l.1}(t)+H_{l.2}(t) \nonumber \\
&\le C\left\{ \left\|u_{0}\right\|_{L^{1}} + \left\|u_{0}\right\|_{H^{2, 1}}^{2}\mathcal{H}(t)^{p-1}(1+t)^{-\frac{1}{4}} + \left\|u_{0}\right\|_{H^{2, 1}}\mathcal{H}(t)^{p}(1+t)^{-\frac{3p-4-2l}{4}}\right\} \nonumber\\
&\le C\left\{\left\|u_{0}\right\|_{L^{1}}+\left\|u_{0}\right\|_{H^{2, 1}}^{p+1}+\left\|u_{0}\right\|_{H^{2, 1}}\mathcal{H}(t)^{p}\right\}, \ \ t\ge1, \ l=0, 1. \label{Linf-decay-est-H(t)}
\end{align}

On the other hand, by virtue of a priori estimate \eqref{apriori-H21}, we can easily check the uniform boundedness of $H_{l}(t)$ for $0\le t \le1$ and $l=0, 1$. 
Actually, in the same way to get \eqref{u-est-Linfty}, we have from Lemma~\ref{lem.est-Linfty} and \eqref{apriori-H21} that 
\begin{align}
\left\|u_{x}(t)\right\|_{L^{\infty}}^{2}
&\le \left\|u_{xx}(t)\right\|_{L^{2}}^{2}+\left\|u_{xy}(t)\right\|_{L^{2}}^{2}+\left\|u_{x}(t)\right\|_{L^{2}}^{2}+\left\|u_{xxy}(t)\right\|_{L^{2}}^{2} 
\le C\left\|u_{0}\right\|_{H^{2, 1}}^{2}, \ \ t\ge0.  \label{dxu-est-Linfty}
\end{align}
Thus, combining \eqref{DEF-H(t)}, \eqref{u-est-Linfty} and \eqref{dxu-est-Linfty}, it follows that 
\begin{equation}
H_{l}(t)
= (1+t)^{\frac{3}{4}+\frac{l}{2}} \left\|\p_{x}^{l}u(\cdot, \cdot, t)\right\|_{L^{\infty}} 
\le C\left\|u_{0}\right\|_{H^{2, 1}}, \ \ 0\le t\le 1, \ l=0, 1. \label{Linf-decay-est-H(t)-zero}
\end{equation}

Finally, let us derive the desired estimate \eqref{u-sol-decay-Linf}. According to \eqref{Linf-decay-est-H(t)} and \eqref{Linf-decay-est-H(t)-zero}, we can see that the estimate
\[
\mathcal{H}(t)\le C\left(\left\|u_{0}\right\|_{L^{1}}+\left\|u_{0}\right\|_{H^{2, 1}}+\left\|u_{0}\right\|_{H^{2, 1}}\mathcal{H}(t)^{p} \right), \ \ t\ge 0 
\]
 is true if $\left\|u_{0}\right\|_{H^{2, 1}}\le 1$ holds. 
Therefore, under the smallness assumption of $E_{0}$\ $(=\left\|u_{0}\right\|_{H^{2, 1}}+\left\|u_0\right\|_{L^1})$, in the same way to get \eqref{est-M(t)} and \eqref{est-N(t)}, applying the continuity argument again, we arrive at the following estimate:
\begin{equation}\label{H(t)-bdd}
\mathcal{H}(t)\le CE_{0}, \ \ t\ge0. 
\end{equation}
This means that the proof of the desired result \eqref{u-sol-decay-Linf} has been completed. 
\end{proof}

Finally, we would like to prove the $L^{2}$-decay estimate \eqref{u-sol-decay-L2}: 
\begin{thm}\label{thm.u-sol-decay-L2}
Let $p\ge2$ be an integer. Assume that $u_{0}\in H^{2, 1}(\R^{2})\cap L^{1}(\R^{2})$, $\p_{y}u_{0}\in L^{1}(\R^{2})$ and $E_{1}=\left\|u_{0}\right\|_{H^{2, 1}}+\left\|u_{0}\right\|_{L^{1}}+\left\|\p_{y}u_{0}\right\|_{L^{1}}$ is sufficiently small. 
Then, the solution $u(x, y, t)$ to \eqref{ZKB} satisfies the following estimate: 
\begin{equation}\tag{\ref{u-sol-decay-L2}}
\left\|\p_{x}^{l}u(\cdot, \cdot, t)\right\|_{L^{2}} \le CE_{1}(1+t)^{-\frac{1}{4}-\frac{l}{2}}, \ \ t\ge0, \ l=0, 1.  
\end{equation}
\end{thm}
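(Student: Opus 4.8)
The plan is to run the same bootstrap scheme as in the proof of Theorem~\ref{thm.u-sol-decay-Linf}, now measuring the solution in $L^{2}$. First I would set
\[
\mathcal{G}_{l}(t):=(1+t)^{\frac{1}{4}+\frac{l}{2}}\left\|\p_{x}^{l}u(\cdot,\cdot,t)\right\|_{L^{2}},\qquad \mathcal{G}(t):=\sup_{0\le \tau\le t}\sum_{l=0}^{1}\mathcal{G}_{l}(\tau),\qquad l=0,1,
\]
apply $\p_{x}^{l}$ to the integral equation \eqref{integral-eq}, and bound the linear part by Proposition~\ref{prop.linear-estL2} with $j=0$. Since $u_{0}\in H^{2,1}(\R^{2})\cap L^{1}(\R^{2})$ and $\p_{y}u_{0}\in L^{1}(\R^{2})$, the hypotheses of that proposition hold for $l=0,1$, and since $(1+t)^{\frac{1}{4}+\frac{l}{2}}e^{-\mu t}$ is bounded, the linear term contributes at most $CE_{1}$ to $\mathcal{G}_{l}(t)$.

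For the Duhamel term I would use the three-term decomposition of Proposition~\ref{prop.Duhamel-est-L2} with $g=u^{p+1}$ and $s=1$; the required regularity ($u^{p+1}\in C((0,\infty);H^{1,0}(\R^{2}))$ and $\p_{x}^{l}\p_{y}^{j}(u^{p+1})\in C((0,\infty);L^{1}(\R^{2}))$ for $l\le1$, $j\le1$) follows from $u\in C([0,\infty);H^{2,1}(\R^{2}))$ and the a priori estimate \eqref{apriori-H21}. The two essential inputs are the already-established $L^{\infty}$ decay \eqref{u-sol-decay-Linf} (available here because $E_{0}\le E_{1}$ is small) and \eqref{apriori-H21}; combined with the Leibniz rule one expands $\p_{x}^{l}\p_{y}^{j}(u^{p+1})$ into monomials $u^{p+1-m}\p^{\alpha_{1}}u\cdots\p^{\alpha_{m}}u$ and distributes $L^{\infty}$ onto all but one or two factors via H\"{o}lder and Cauchy--Schwarz. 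On $[0,t/2]$ one uses $(1+t-\tau)\sim(1+t)$ together with the crude bounds $\|u^{p+1}\|_{L^{1}}\le\|u\|_{L^{\infty}}^{p-1}\|u\|_{L^{2}}^{2}$ and $\|\p_{y}(u^{p+1})\|_{L^{1}}\le(p+1)\|u\|_{L^{\infty}}^{p-1}\|u\|_{L^{2}}\|u_{y}\|_{L^{2}}$, which by \eqref{u-sol-decay-Linf} and \eqref{apriori-H21} decay like $(1+\tau)^{-\frac{3(p-1)}{4}}$ (up to constants); for $p\ge2$ the $\tau$-integral is then $O((1+t)^{1/4})$ and this piece contributes $\le C\|u_{0}\|_{H^{2,1}}^{2}\mathcal{G}(t)^{p-1}(1+t)^{-\frac{1}{4}}$. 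On $[t/2,t]$ one has $\tau\sim t$, and the point is to keep the \emph{decaying} $L^{2}$ norms $\|u\|_{L^{2}}\le(1+\tau)^{-1/4}\mathcal{G}(t)$, $\|u_{x}\|_{L^{2}}\le(1+\tau)^{-3/4}\mathcal{G}(t)$ (alongside the merely bounded $\|u_{y}\|_{L^{2}}$, $\|u_{xy}\|_{L^{2}}$ from \eqref{apriori-H21}) rather than the bounded a priori norms; the monomials in $\p_{x}^{l}(u^{p+1})$ and $\p_{x}^{l}\p_{y}(u^{p+1})$ then decay fast enough that, after integrating against $(1+t-\tau)^{-3/4}$ and multiplying by $(1+t)^{\frac{1}{4}+\frac{l}{2}}$, the contribution is $\le C\|u_{0}\|_{H^{2,1}}\mathcal{G}(t)^{p}$ (times a bounded power of $1+t$). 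The exponentially damped third piece $\left(\int_{0}^{t}e^{-\mu(t-\tau)}\|\p_{x}^{l}(u^{p+1})\|_{L^{2}}^{2}\,d\tau\right)^{1/2}$ concentrates near $\tau\sim t$ and, via $\|u^{p+1}\|_{L^{2}}\le\|u\|_{L^{\infty}}^{p}\|u\|_{L^{2}}$ and $\|\p_{x}(u^{p+1})\|_{L^{2}}\le(p+1)\|u\|_{L^{\infty}}^{p}\|u_{x}\|_{L^{2}}$, contributes $\le C\|u_{0}\|_{H^{2,1}}\mathcal{G}(t)^{p}(1+t)^{-\delta}$ with $\delta>0$ for $p\ge2$.

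Putting these together yields, for $t\ge1$,
\[
\mathcal{G}_{l}(t)\le C\left(E_{1}+\|u_{0}\|_{H^{2,1}}^{p+1}+\|u_{0}\|_{H^{2,1}}\,\mathcal{G}(t)^{p}\right),\qquad l=0,1,
\]
while for $0\le t\le1$ the uniform bound $\mathcal{G}_{l}(t)\le C\|u_{0}\|_{H^{2,1}}$ is immediate from \eqref{apriori-H21}. Hence $\mathcal{G}(t)\le C(E_{1}+\|u_{0}\|_{H^{2,1}}\mathcal{G}(t)^{p})$ for all $t\ge0$, and the continuity argument already used for \eqref{est-M(t)}, \eqref{est-N(t)} and \eqref{H(t)-bdd}, under the smallness of $E_{1}$, gives $\mathcal{G}(t)\le CE_{1}$, which is exactly \eqref{u-sol-decay-L2}. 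The main obstacle I expect is the bookkeeping of decay exponents in the $[t/2,t]$ part of the Duhamel term in the borderline case $(l,p)=(1,2)$: there one must resist using the non-decaying a priori bounds and instead feed in the not-yet-bootstrapped decay of $\|u\|_{L^{2}}$ and $\|u_{x}\|_{L^{2}}$ through $\mathcal{G}(t)$; this is legitimate precisely because $\mathcal{G}(t)$ enters that term to a power $\ge2$, so the smallness of $E_{1}$ absorbs it in the final continuity argument.
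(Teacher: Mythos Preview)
Your plan is essentially the paper's proof: define the weighted $L^{2}$ quantity, control the linear part by Proposition~\ref{prop.linear-estL2}, control the Duhamel part by the three pieces of Proposition~\ref{prop.Duhamel-est-L2} using the already-established $L^{\infty}$ decay \eqref{u-sol-decay-Linf} together with \eqref{apriori-H21}, and close by the continuity argument. One small correction to your bookkeeping: the claim that in the $[t/2,t]$ contribution $\mathcal{G}(t)$ enters only to a power $\ge 2$ is not quite right. For instance, in $\|\p_{x}\p_{y}(u^{p+1})\|_{L^{1}}$ the term $u^{p}u_{xy}$ is bounded by $\|u\|_{L^{\infty}}^{p-1}\|u\|_{L^{2}}\|u_{xy}\|_{L^{2}}$, and since $\|u_{xy}\|_{L^{2}}$ has no decay beyond \eqref{apriori-H21}, only a single factor of $\mathcal{G}(t)$ appears; the paper accordingly arrives at an inequality of the shape $\mathcal{K}(t)\le C(E_{1}+E_{0}\mathcal{K}(t)+E_{0}\mathcal{K}(t)^{2})$ rather than your $C(E_{1}+\|u_{0}\|_{H^{2,1}}\mathcal{G}(t)^{p})$. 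This does not affect the argument, because the linear term carries the small coefficient $E_{0}$ and is absorbed on the left, but your stated final inequality should be adjusted accordingly.
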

\begin{proof}
In order to prove \eqref{u-sol-decay-L2}, let us introduce the following function: 
\begin{equation}\label{DEF-K(t)}
K(t):= (1+t)^{\frac{1}{4}} \left\|u(\cdot, \cdot, t)\right\|_{L^{2}} + (1+t)^{\frac{3}{4}} \left\|u_{x}(\cdot, \cdot, t)\right\|_{L^{2}}, \ \ t\ge0. 
\end{equation}
Then, it follows from the integral equation \eqref{integral-eq} that 
\begin{align}\label{K(t)-est-split}
K(t)
&\le \left\{ (1+t)^{\frac{1}{4}} \left\|U(t)*u_{0}\right\|_{L^{2}} + (1+t)^{\frac{3}{4}} \left\|\p_{x}\left(U(t)*u_{0}\right)\right\|_{L^{2}}\right\} \nonumber \\
&\ \ \ \ +\frac{|\beta|}{p+1}(1+t)^{\frac{1}{4}}\left\|\int_{0}^{t}\p_{x}U(t-\tau)*\left(u^{p+1}(\tau)\right)d\tau\right\|_{L^{2}} \nonumber \\
&\ \ \ \ +\frac{|\beta|}{p+1}(1+t)^{\frac{3}{4}}\left\|\p_x\int_{0}^{t}\p_{x}U(t-\tau)*\left(u^{p+1}(\tau)\right)d\tau\right\|_{L^{2}} 
=:I_{0}(t)+I_{1}(t)+I_{2}(t). 
\end{align}

In the following, we shall evaluate $I_{0}(t)$, $I_{1}(t)$ and $I_{2}(t)$. For $I_{0}(t)$, it directly follows from Proposition~\ref{prop.linear-estL2} that 
\begin{equation}\label{L2-decay-est-I0}
I_{0}(t)
\le C\left( \left\|u_{0}\right\|_{L^{1}}+\left\|\p_{y}u_{0}\right\|_{L^{1}}+\left\|u_{0}\right\|_{L^{2}}+\left\|\p_{x}u_{0}\right\|_{L^{2}}\right)
\le CE_{1}, \ \ t\ge0. 
\end{equation}
Therefore, to prove \eqref{u-sol-decay-L2}, it is sufficient to evaluate $I_{1}(t)$ and $I_{2}(t)$. For these terms, if we set
\begin{equation}\label{DEF-Hl(t)}
G_{l}(t):=\left(\int_{0}^{t}e^{-\mu(t-\tau)} \left\|\p_{x}^{l}\left(u^{p+1}(\cdot, \cdot, \tau)\right)\right\|_{L^{2}}^{2}d\tau \right)^{\frac{1}{2}}, \ \ l=0, 1, 
\end{equation}
then we have from Proposition~\ref{prop.Duhamel-est-L2} that 
\begin{align*}
I_{1}(t)&\le C(1+t)^{\frac{1}{4}}\left\{ \int_{0}^{t}(1+t-\tau)^{-\frac{3}{4}}\left( \left\|u^{p+1}(\cdot, \cdot, \tau)\right\|_{L^{1}} + \left\|\p_{y}\left(u^{p+1}(\cdot, \cdot, \tau)\right)\right\|_{L^{1}} \right)d\tau +G_{0}(t)\right\}, \\
I_{2}(t)&\le C(1+t)^{\frac{3}{4}}\biggl\{ \int_{0}^{t/2}(1+t-\tau)^{-\frac{3}{4}-\frac{1}{2}}\left( \left\|u^{p+1}(\cdot, \cdot, \tau)\right\|_{L^{1}} + \left\|\p_{y}\left(u^{p+1}(\cdot, \cdot, \tau)\right)\right\|_{L^{1}} \right)d\tau \nonumber \\
&\ \ \ \ +\int_{t/2}^{t}(1+t-\tau)^{-\frac{3}{4}}\left( \left\|\p_{x}\left(u^{p+1}(\cdot, \cdot, \tau)\right)\right\|_{L^{1}} + \left\|\p_{x}\p_{y}\left(u^{p+1}(\cdot, \cdot, \tau)\right)\right\|_{L^{1}} \right)d\tau + G_{1}(t)\biggl\}. 
\end{align*}
Here, noticing 
\[
(1+t)^{\frac{3}{4}} \le C(1+t)^{\frac{1}{4}}(1+t-\tau)^{\frac{1}{2}}, \ \ 0\le \tau \le \frac{t}{2},  
\]
then we obtain 
\begin{align}
I_{1}(t)+I_{2}(t)
&\le C(1+t)^{\frac{1}{4}}\int_{0}^{t}(1+t-\tau)^{-\frac{3}{4}}\left( \left\|u^{p+1}(\cdot, \cdot, \tau)\right\|_{L^{1}} + \left\|\p_{y}\left(u^{p+1}(\cdot, \cdot, \tau)\right)\right\|_{L^{1}} \right)d\tau \nonumber \\
&\ \ \ +C(1+t)^{\frac{3}{4}}\int_{t/2}^{t}(1+t-\tau)^{-\frac{3}{4}}\left( \left\|\p_{x}\left(u^{p+1}(\cdot, \cdot, \tau)\right)\right\|_{L^{1}} + \left\|\p_{x}\p_{y}\left(u^{p+1}(\cdot, \cdot, \tau)\right)\right\|_{L^{1}} \right)d\tau \nonumber \\
&\ \ \ +C\left\{ (1+t)^{\frac{1}{4}}G_{0}(t)+(1+t)^{\frac{3}{4}}G_{1}(t)\right\} \nonumber \\
&=:J_{1}(t)+J_{2}(t)+J_{3}(t). \label{DEF-J(t)s}
\end{align}

In what follows, let us evaluate $J_{1}(t)$, $J_{2}(t)$ and $J_{3}(t)$. 
Now, we shall introduce the following function: 
\[
\mathcal{K}(t):=\sup_{0\le \tau \le t}K(\tau), \ \ t\ge0. 
\]
First, we would like to treat $J_{1}(t)$. 
We start with preparing some $L^{1}$-decay estimates for the nonlinear term $u^{p+1}(x, y, t)$. Now, from \eqref{u-sol-decay-Linf} and \eqref{DEF-K(t)}, we obtain 
\begin{align}
\left\|u^{p+1}(\cdot, \cdot, \tau)\right\|_{L^{1}} 
&\le \left\|u(\cdot, \cdot, \tau)\right\|_{L^{\infty}}^{p-1}\left\|u(\cdot, \cdot, \tau)\right\|_{L^{2}}^{2} \nonumber \\
&\le CE_{0}^{p-1}(1+\tau)^{-\frac{3(p-1)}{4}}K(\tau)^{2}(1+\tau)^{-\frac{1}{2}} 
\le CE_{0}^{p-1}K(\tau)^{2}(1+\tau)^{-\frac{3p-1}{4}}. \label{u^{p+1}-L1-pre2} 
\end{align}
In addition to \eqref{u-sol-decay-Linf} and \eqref{DEF-K(t)}, applying the Cauchy--Schwarz inequality and \eqref{apriori-H21}, we get 
\begin{align}
\left\|\p_{y}\left(u^{p+1}(\cdot, \cdot, \tau)\right)\right\|_{L^{1}} 
&\le C\left\|u(\cdot, \cdot, \tau)\right\|_{L^{\infty}}^{p-1}\left\|u(\cdot, \cdot, \tau)\right\|_{L^{2}}\left\|u_{y}(\cdot, \cdot, \tau)\right\|_{L^{2}}  \nonumber \\
&\le CE_{0}^{p-1}(1+\tau)^{-\frac{3(p-1)}{4}}K(\tau)(1+\tau)^{-\frac{1}{4}}\left\|u_{0}\right\|_{H^{2, 1}} \nonumber \\
&\le CE_{0}^{p-1}K(\tau)(1+\tau)^{-\frac{3p-2}{4}}. \label{dyu^{p+1}-L1-pre}
\end{align}
Combining \eqref{u^{p+1}-L1-pre2} and \eqref{dyu^{p+1}-L1-pre}, we have 
\begin{align}
\left\|u^{p+1}(\cdot, \cdot, \tau)\right\|_{L^{1}} + \left\|\p_{y}\left(u^{p+1}(\cdot, \cdot, \tau)\right)\right\|_{L^{1}} 
\le CE_{0}^{p-1}\left(K(\tau)+K(\tau)^{2}\right)(1+\tau)^{-\frac{3p-2}{4}}. \label{u^{p+1}+dyu^{p+1}-L1}
\end{align}
Therefore, it follows from \eqref{DEF-J(t)s} and \eqref{u^{p+1}+dyu^{p+1}-L1} that 
\begin{align}\label{J1-est-pre}
J_{1}(t)&\le CE_{0}^{p-1}\left(\mathcal{K}(t)+\mathcal{K}(t)^{2}\right)
(1+t)^{\frac{1}{4}}\int_{0}^{t}(1+t-\tau)^{-\frac{3}{4}}(1+\tau)^{-\frac{3p-2}{4}}d\tau. 
\end{align}
Since 
\begin{align*}
&\int_{0}^{t}(1+t-\tau)^{-\frac{3}{4}}(1+\tau)^{-\frac{3p-2}{4}}d\tau
=\left(\int_{t/2}^{t}+\int_{0}^{t/2}\right)(1+t-\tau)^{-\frac{3}{4}}(1+\tau)^{-\frac{3p-2}{4}}d\tau \\
&\le C(1+t)^{-\frac{3p-2}{4}}(1+t)^{\frac{1}{4}}+C(1+t)^{-\frac{3}{4}}
\begin{cases}
\log(2+t), &p=2, \\
1, &p\ge3
\end{cases} \\
&\le C(1+t)^{-\frac{3(p-1)}{4}}+C(1+t)^{-\frac{3}{4}}\log(2+t) \le C(1+t)^{-\frac{3}{4}}\log(2+t), 
\end{align*}
we can complete the evaluation \eqref{J1-est-pre} as follows: 
\begin{align}\label{J1-est}
J_{1}(t)\le CE_{0}^{p-1}\left(\mathcal{K}(t)+\mathcal{K}(t)^{2}\right)(1+t)^{-\frac{1}{2}}\log(2+t), \ \ t\ge0. 
\end{align}

Next, we would like to evaluate $J_{2}(t)$. Before doing that, we shall prepare some additional $L^{1}$-decay estimates for the nonlinear term $u^{p+1}(x, y, t)$. 
In the same way to get \eqref{dyu^{p+1}-L1-pre}, it follows from the Cauchy--Schwarz inequality, \eqref{u-sol-decay-Linf} and \eqref{DEF-K(t)} that 
\begin{align}
\left\|\p_{x}\left(u^{p+1}(\cdot, \cdot, \tau)\right)\right\|_{L^{1}} 
&\le C\left\|u(\cdot, \cdot, \tau)\right\|_{L^{\infty}}^{p-1}\left\|u(\cdot, \cdot, \tau)\right\|_{L^{2}}\left\|u_{x}(\cdot, \cdot, \tau)\right\|_{L^{2}}  \nonumber \\
&\le CE_{0}^{p-1}(1+\tau)^{-\frac{3(p-1)}{4}}K(\tau)(1+\tau)^{-\frac{1}{4}}K(\tau)(1+\tau)^{-\frac{3}{4}}\nonumber \\
&\le CE_{0}^{p-1}K(\tau)^{2}(1+\tau)^{-\frac{3p+1}{4}}. \label{dxu^{p+1}-L1-pre}
\end{align}
Moreover, recalling $\p_{x}\p_{y}(u^{p+1})=p(p+1)u^{p-1}u_{x}u_{y}+(p+1)u^{p}u_{xy}$ and \eqref{apriori-H21}, we analogously have
\begin{align}
\left\|\p_{x}\p_{y}\left(u^{p+1}(\cdot, \cdot, \tau)\right)\right\|_{L^{1}}
&\le C\left\|u(\cdot, \cdot, \tau)\right\|_{L^{\infty}}^{p-1}\left\|u_{x}(\cdot, \cdot, \tau)\right\|_{L^{2}}\left\|u_{y}(\cdot, \cdot, \tau)\right\|_{L^{2}} \nonumber \\
&\ \ \ +C\left\|u(\cdot, \cdot, \tau)\right\|_{L^{\infty}}^{p-1}\left\|u(\cdot, \cdot, \tau)\right\|_{L^{2}}\left\|u_{xy}(\cdot, \cdot, \tau)\right\|_{L^{2}}  \nonumber \\
&\le CE_{0}^{p-1}(1+\tau)^{-\frac{3(p-1)}{4}}K(\tau)(1+\tau)^{-\frac{3}{4}}\left\|u_{0}\right\|_{H^{2, 1}} \nonumber \\
&\ \ \ +CE_{0}^{p-1}(1+\tau)^{-\frac{3(p-1)}{4}}K(\tau)(1+\tau)^{-\frac{1}{4}}\left\|u_{0}\right\|_{H^{2, 1}} \nonumber\\
&\le CE_{0}^{p-1}K(\tau)(1+\tau)^{-\frac{3p-2}{4}}.  \label{dxdyu^{p+1}-L1-pre}
\end{align}
Combining \eqref{dxu^{p+1}-L1-pre} and \eqref{dxdyu^{p+1}-L1-pre}, we obtain  
\begin{align}
\left\|\p_{x}\left(u^{p+1}(\cdot, \cdot, \tau)\right)\right\|_{L^{1}}  + \left\|\p_{x}\p_{y}\left(u^{p+1}(\cdot, \cdot, \tau)\right)\right\|_{L^{1}}
\le CE_{0}^{p-1}\left(K(\tau)+K(\tau)^{2}\right)(1+\tau)^{-\frac{3p-2}{4}}. \label{dxu^{p+1}+dxdyu^{p+1}-L1}
\end{align}
Hence, we can see from \eqref{DEF-J(t)s} and \eqref{dxu^{p+1}+dxdyu^{p+1}-L1} that 
\begin{align}
J_{2}(t)&\le CE_{0}^{p-1}\left(\mathcal{K}(t)+\mathcal{K}(t)^{2}\right)(1+t)^{\frac{3}{4}}\int_{t/2}^{t}(1+t-\tau)^{-\frac{3}{4}}(1+\tau)^{-\frac{3p-2}{4}}d\tau \nonumber \\
&\le CE_{0}^{p-1}\left(\mathcal{K}(t)+\mathcal{K}(t)^{2}\right)(1+t)^{\frac{3}{4}}(1+t)^{-\frac{3p-2}{4}}(1+t)^{\frac{1}{4}} \nonumber \\
&\le CE_{0}^{p-1}\left(\mathcal{K}(t)+\mathcal{K}(t)^{2}\right)(1+t)^{-\frac{3(p-2)}{4}}, \ \ t\ge0. \label{J2-est}
\end{align}

Finally, let us deal with $J_{3}(t)$. Now, we need to derive the $L^{2}$-decay estimates for the nonlinear term $u^{p+1}(x, y, t)$. 
Here, noticing $u^{p+1}=u^{p}u$ and $\p_{x}(u^{p+1})=(p+1)u^{p}u_{x}$, we obtain from \eqref{u-sol-decay-Linf} and \eqref{DEF-K(t)} that 
\begin{align}
&\left\|\p_{x}^{l}\left(u^{p+1}(\cdot, \cdot, \tau)\right)\right\|_{L^{2}} 
\le C\left\|u(\cdot, \cdot, \tau)\right\|_{L^{\infty}}^{p}\left\|\p_{x}^{l}u(\cdot, \cdot, \tau)\right\|_{L^{2}} \nonumber \\
&\le CE_{0}^{p}(1+\tau)^{-\frac{3p}{4}}K(\tau)(1+\tau)^{-\frac{1}{4}-\frac{l}{2}}
\le CE_{0}^pK(\tau)(1+\tau)^{-\frac{3p+1}{4}-\frac{l}{2}}, \ \ l=0, 1. \label{u^{p+1}-L2-pre2}
\end{align}
Therefore, from \eqref{DEF-Hl(t)} and \eqref{u^{p+1}-L2-pre2}, we obtain 
\begin{align}
G_{l}(t)
\le CE_{0}^p\mathcal{K}(t)\left(\int_{0}^{t}e^{-\mu(t-\tau)}(1+\tau)^{-\frac{3p+1}{2}-l}d\tau\right)^{\frac{1}{2}} 
\le CE_{0}^p\mathcal{K}(t)(1+t)^{-\frac{3p+1}{4}-\frac{l}{2}}, \ \ l=0, 1. \label{est-Hl(t)}
\end{align}
Thus, summing up \eqref{DEF-J(t)s} and \eqref{est-Hl(t)}, we can see that 
\begin{align}
J_{3}(t)=\sum_{l=0}^{1}(1+t)^{\frac{1}{4}+\frac{l}{2}}G_{l}(t)
\le CE_{0}^p\mathcal{K}(t)(1+t)^{-\frac{3p}{4}}, \ \ t\ge0. \label{est-J3}
\end{align}

Summarizing up \eqref{K(t)-est-split}, \eqref{L2-decay-est-I0}, \eqref{DEF-J(t)s}, \eqref{J1-est}, \eqref{J2-est} and \eqref{est-J3}, we eventually arrive at 
\begin{align}
K(t)&\le I_{0}(t)+J_{1}(t)+J_{2}(t)+J_{3}(t) \nonumber \\
&\le CE_{1} + CE_{0}^{p-1}\left(\mathcal{K}(t)+\mathcal{K}(t)^{2}\right)(1+t)^{-\frac{1}{2}}\log(2+t) \nonumber\\
&\ \ \ + CE_{0}^{p-1}\left(\mathcal{K}(t)+\mathcal{K}(t)^{2}\right)(1+t)^{-\frac{3(p-2)}{4}}
+ CE_{0}^p\mathcal{K}(t)(1+t)^{-\frac{3p}{4}} \nonumber \\
&\le C\left\{E_{1}+E_{0}\left(\mathcal{K}(t)+\mathcal{K}(t)^{2}\right)\right\}, \ \ t\ge 0, \label{K-est-final}
\end{align}
for $p\ge 2$ if $E_0\le 1$ holds. 

Finally, let us derive the desired estimate \eqref{u-sol-decay-L2}. By virtue of \eqref{K-est-final}, we can see that the following estimate is true: 
\[
\mathcal{K}(t)\le C\left\{E_{1}+E_{0}\left(\mathcal{K}(t)+\mathcal{K}(t)^{2}\right)\right\}, \ \ t\ge0. 
\]
Therefore, under the smallness assumption of $E_{1}$\ $(=E_0+\left\|\p_yu_0\right\|_{L^1})$, in the same way to get \eqref{est-M(t)}, \eqref{est-N(t)} and \eqref{H(t)-bdd}, applying the continuity argument again, we arrive at the following estimate:
\begin{equation}\label{K(t)-bdd}
\mathcal{K}(t)\le CE_{1}, \ \ t\ge0. 
\end{equation}
This means that the proof of the desired result \eqref{u-sol-decay-L2} has been completed. 
\end{proof}

\begin{proof}[\rm{\bf{End of the proof of Theorem~\ref{thm.main-decay}}}]
Combining the results given in Theorems~\ref{thm.u-sol-decay-Linf} and \ref{thm.u-sol-decay-L2} (i.e. the $L^{\infty}$-decay estimate \eqref{u-sol-decay-Linf} and the $L^{2}$-decay estimate \eqref{u-sol-decay-L2}), 
all the statements mentioned in Theorem~\ref{thm.main-decay} have been proven. 
\end{proof}

\section{Approximation formula and lower bound for the solutions}  

In this section, we give the proofs of Theorem~\ref{thm.main-approximation} (approximation formula) and Theorem~\ref{thm.main-u-est-lower} (lower bound for the solution $u(x, y, t)$). First, we shall prove the approximation formula for the solutions to the linear and the nonlinear problems, in the next subsection. After that, let us derive the lower bound for the $L^{\infty}$-norm of the solutions. 

\subsection{Approximation formula for the solutions}  

First in this subsection, we shall construct an approximation formula for the solution to \eqref{LZKB}. 
Before doing that, we would like to show that the integral kernel of the solution to \eqref{approx-CP} is given by $V(x, y, t)$ defined in \eqref{DEF-V}. 
Now, we transform the integral kernel for \eqref{approx-CP}, in the same way that we get \eqref{re-DEF-U}, and use the change of variables, 
then it follows from \eqref{DEF-V} and \eqref{DEF-V*} that 
\begin{align}
\frac{1}{2\pi}\mathcal{F}^{-1} \left[ e^{ -\mu t \xi^{2} + it\xi \eta^{2} } \right] (x, y) 
& = \frac{t^{ -\frac{1}{2} }}{ 4\pi^{ \frac{3}{2} }} \int_{\R} |\xi|^{ -\frac{1}{2} } e^{ -\mu t \xi^{2} -\frac{iy^{2}}{4t\xi} + \frac{i\pi}{4}\mathrm{sgn} \xi +ix\xi} d\xi \nonumber \\
& = \frac{  t^{ -\frac{3}{4} } }{ 8\pi^{ \frac{3}{2} } \mu^{\frac{1}{4}} } \int_{0}^{\infty} r^{-\frac{3}{4} }e^{ -r } \left( e^{-\frac{iy^{2}}{4}\sqrt{\frac{\mu}{rt}} +\frac{i\pi}{4} +ix \sqrt{\frac{r}{\mu t}}} + e^{\frac{iy^{2}}{4}\sqrt{\frac{\mu}{rt}} -\frac{i\pi}{4} -ix \sqrt{\frac{r}{\mu t}}} \right) dr \nonumber\\
& = \frac{  t^{ -\frac{3}{4} } }{ 4\pi^{ \frac{3}{2} } \mu^{\frac{1}{4}} } \int_{0}^{\infty} r^{-\frac{3}{4} }e^{ -r } \cos \left( x \sqrt{\frac{r}{\mu t}} -\frac{y^{2}}{4}\sqrt{\frac{\mu}{r t}} 
+\frac{\pi}{4}\right) dr \nonumber\\
&=t^{ -\frac{3}{4} } V_{*} \left( xt^{-\frac{1}{2}}, yt^{-\frac{1}{4}} \right)
=V(x, y, t). \label{re-V}
\end{align}
Thus, we can see that $V(x, y, t)$ is the integral kernel of the solution to the Cauchy problem \eqref{approx-CP}. 
From the definition of $V(x, y, t)$ and the above expression \eqref{re-V}, 
we can immediately obtain the following $L^{\infty}$-decay estimates similar to Proposition~\ref{prop.L-decay-U} and Corollary~\ref{cor.L-decay-LZKB}: 
\begin{prop}\label{prop.L-decay-V}
Let $l$ be a non-negative integer. Then, we have 
\begin{equation}\label{L-decay-V}
\left\| \p_{x}^{l} V(\cdot, \cdot, t) \right\|_{L^{\infty}} \le C t^{ -\frac{3}{4} -\frac{l}{2} }, \ \ t>0, 
\end{equation}
where $V(x, y, t)$ is defined by \eqref{DEF-V}. 
\end{prop}
\begin{cor}\label{cor.L-decay-approx-CP}
Suppose that there exists a non-negative integer $j$ such that $\p_{y}^{j}u_{0}\in L^{1}(\R^{2})$. 
Then, for all non-negative integer $l$, we have  
\begin{equation}\label{L-decay-approx-CP}
\left\| \p_{x}^{l}\p_{y}^{j}\left(V(t)*u_{0}\right)\right\|_{L^{\infty}} \le Ct^{-\frac{3}{4}-\frac{l}{2}}\left\|\p_{y}^{j}u_{0}\right\|_{L^{1}}, \ \ t>0,  
\end{equation}
where $V(x, y, t)$ is defined by \eqref{DEF-V}. 
\end{cor}

Moreover, we are able to prove that $U(x, y, t)$ is well approximated by $V(x, y, t)$ as follows:  
\begin{prop}\label{prop.L-ap-U}
Let $l$ be a non-negative integer. Then, we have 
\begin{equation}\label{L-ap-U}
\left\| \p_{x}^{l} \left( U(\cdot, \cdot, t) - V(\cdot, \cdot, t) \right) \right\|_{L^{\infty}} \le C t^{ -\frac{5}{4} -\frac{l}{2} }, \ \ t>0, 
\end{equation}
where $U(x, y, t)$ and $V(x, y, t)$ are defined by \eqref{DEF-U} and \eqref{DEF-V}, respectively. 
\end{prop}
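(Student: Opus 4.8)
The plan is to compare the two integral kernels using their explicit Fourier-integral representations, exactly as in the proof of Proposition~\ref{prop.L-decay-U}. First I would write down the representation of $\p_x^l U(x,y,t)$ from \eqref{re-DEF-U}, namely
\[
\p_{x}^{l}U(x, y, t) = \frac{t^{ -\frac{1}{2} }}{ 4\pi^{ \frac{3}{2} }} \int_{\R} (i\xi )^{l} |\xi|^{ -\frac{1}{2} } e^{ -\mu t \xi^{2} +it\xi^{3} -\frac{iy^{2}}{4t\xi} + \frac{i\pi}{4}\mathrm{sgn} \xi +ix\xi} d\xi,
\]
and likewise, from \eqref{re-V}, the representation of $\p_x^l V(x,y,t)$ which is identical except that the dispersive phase $it\xi^3$ coming from the term $u_{xxx}$ is absent:
\[
\p_{x}^{l}V(x, y, t) = \frac{t^{ -\frac{1}{2} }}{ 4\pi^{ \frac{3}{2} }} \int_{\R} (i\xi )^{l} |\xi|^{ -\frac{1}{2} } e^{ -\mu t \xi^{2} -\frac{iy^{2}}{4t\xi} + \frac{i\pi}{4}\mathrm{sgn} \xi +ix\xi} d\xi.
\]
Subtracting and taking absolute values inside the integral, the difference of the integrands is controlled pointwise by $|\xi|^{l-\frac12} e^{-\mu t\xi^2} \, |e^{it\xi^3}-1|$.

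The key step is then the elementary bound $|e^{it\xi^3}-1| \le |t\xi^3| = t|\xi|^3$ (one could also use $\min\{2, t|\xi|^3\}$ but the linear bound suffices together with the Gaussian damping). This yields
\[
\left| \p_{x}^{l}\left(U(x,y,t)-V(x,y,t)\right) \right| \le \frac{t^{ \frac{1}{2} }}{ 4\pi^{ \frac{3}{2} }} \int_{\R} |\xi|^{l+\frac{5}{2}} e^{ -\mu t \xi^{2} } d\xi,
\]
uniformly in $(x,y)\in\R^2$. The remaining integral is a standard Gaussian moment: substituting $r=\mu t\xi^2$ gives $\int_{\R}|\xi|^{l+\frac52}e^{-\mu t\xi^2}d\xi = C_l (\mu t)^{-\frac{l}{2}-\frac{7}{4}}$, so the whole expression is bounded by $C t^{\frac12} \cdot t^{-\frac{l}{2}-\frac74} = C t^{-\frac54-\frac{l}{2}}$, which is exactly \eqref{L-ap-U}. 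One should remember to track that the constant depends on $\mu$ and $l$ but not on $x$, $y$, $t$, as in the convention stated in the Notations.

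I do not expect any serious obstacle here: the proof is a routine refinement of Proposition~\ref{prop.L-decay-U}, the only new ingredient being the mean-value-type estimate for $e^{it\xi^3}-1$ that quantifies how the dispersion term perturbs the kernel. The one point that requires a little care is checking that replacing $e^{it\xi^3}$ by $1$ is legitimate inside the $\eta$-integral first performed in deriving \eqref{re-DEF-U} and \eqref{re-V} — but since both kernels already appear in the reduced one-dimensional $\xi$-integral form (the $\eta$-Fresnel integral \eqref{fact} having been evaluated identically in both cases, as it does not see the $\xi^3$ term), the comparison is genuinely a one-variable estimate. Thus the plan is simply: (i) recall both reduced representations; (ii) subtract and estimate the integrand by $|\xi|^{l-\frac12}e^{-\mu t\xi^2}\,t|\xi|^3$; (iii) integrate the resulting Gaussian moment and read off the exponent.
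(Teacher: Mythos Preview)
Your proposal is correct and follows essentially the same argument as the paper: the paper uses the mean value theorem to write $e^{it\xi^{3}}=1+it\xi^{3}e^{i\theta t\xi^{3}}$, which isolates an explicit remainder $R(x,y,t)$, and then bounds $|\p_x^l R|$ by exactly the same Gaussian moment $\frac{t^{1/2}}{4\pi^{3/2}}\int_{\R}|\xi|^{5/2+l}e^{-\mu t\xi^{2}}\,d\xi$ that you obtain from $|e^{it\xi^{3}}-1|\le t|\xi|^{3}$. The two arguments are interchangeable and yield the identical decay rate and constants.
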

\begin{proof}
From the mean value theorem, there exists $\theta = \theta(\xi, t)\in (0, 1)$ such that $e^{ it\xi^{3} } = 1 + it\xi^{3} e^{ i\theta t\xi^{3} }$ holds.
Therefore, it follows from \eqref{re-DEF-U} that 
\begin{align*}
U(x, y, t) 
& = \frac{t^{ -\frac{1}{2} }}{ 4\pi^{ \frac{3}{2} }} \int_{\R} |\xi|^{ -\frac{1}{2} } e^{ -\mu t \xi^{2} -\frac{iy^{2}}{4t\xi} + \frac{i\pi}{4}\mathrm{sgn} \xi + ix\xi} d\xi \nonumber \\
&\ \ \ + \frac{i t^{ \frac{1}{2} }}{ 4\pi^{ \frac{3}{2} }} \int_{\R} \xi^{3} |\xi|^{ -\frac{1}{2} -\mu t \xi^{2} + i\theta t\xi^{3} -\frac{iy^{2}}{4t\xi} + \frac{i\pi}{4}\mathrm{sgn} \xi +ix\xi} d\xi.  
\end{align*}
Then, noticing \eqref{re-V}, it means that 
\begin{equation}\label{rere-DEF-U}
U(x,y,t)-V(x,y,t)=\frac{i t^{ \frac{1}{2} }}{ 4\pi^{ \frac{3}{2} }} \int_{\R} \xi^{3} |\xi|^{ -\frac{1}{2} -\mu t \xi^{2} + i\theta t\xi^{3} -\frac{iy^{2}}{4t\xi} + \frac{i\pi}{4}\mathrm{sgn} \xi +ix\xi} d\xi =:R(x,y,t). 
\end{equation}
In addition, we can evaluate the remainder term $R(x, y, t)$ as follows: 
\begin{align}
\left|\p_{x}^{l}R(x, y, t)\right| 
& \le \frac{t^{ \frac{1}{2} }}{ 4\pi^{ \frac{3}{2} }} \int_{\R} |\xi|^{ \frac{5}{2}+l } e^{ -\mu t \xi^{2} } d\xi 
= \frac{ \Gamma \left(  \frac{7+2l}{4} \right) }{4\pi^{ \frac{3}{2} }\mu^{\frac{7+2l}{4}}} t^{ -\frac{5}{4} -\frac{l}{2} }, \ \ (x, y) \in \R^{2}, \ t>0. \label{U-R-est}
\end{align}
Thus, combining \eqref{rere-DEF-U} and \eqref{U-R-est}, we eventually arrive at the desired estimate \eqref{L-ap-U}. 
\end{proof}

\begin{cor}\label{cor.L-decay-LZKB-ap}
Suppose that there exists a non-negative integer $j$ such that $\p_{y}^{j}u_{0}\in L^{1}(\R^{2})$. 
Then, for all non-negative integer $l$, we have  
\begin{equation}\label{L-decay-LZKB-ap}
\left\| \p_{x}^{l}\p_{y}^{j}\left((U-V)(t)*u_{0}\right)(\cdot, \cdot) \right\|_{L^{\infty}} \le Ct^{-\frac{5}{4}-\frac{l}{2}}\left\|\p_{y}^{j}u_{0}\right\|_{L^{1}}, \ \ t>0, 
\end{equation}
where $U(x, y, t)$ and $V(x, y, t)$ are defined by \eqref{DEF-U} and \eqref{DEF-V}, respectively. 
\end{cor}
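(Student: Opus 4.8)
The plan is to deduce the corollary directly from Proposition~\ref{prop.L-ap-U} via Young's convolution inequality, in exactly the same way that Corollary~\ref{cor.L-decay-LZKB} follows from Proposition~\ref{prop.L-decay-U} (and Corollary~\ref{cor.L-decay-approx-CP} from Proposition~\ref{prop.L-decay-V}). First, since convolution commutes with differentiation, I would transfer the $y$-derivatives onto the initial data and write
\[
\p_{x}^{l}\p_{y}^{j}\left((U-V)(t)*u_{0}\right)=\left(\p_{x}^{l}(U-V)(\cdot,\cdot,t)\right)*\left(\p_{y}^{j}u_{0}\right),
\]
which is legitimate because $\p_{y}^{j}u_{0}\in L^{1}(\R^{2})$ and, for each fixed $t>0$, the function $\p_{x}^{l}(U-V)(\cdot,\cdot,t)$ is a bounded smooth function of $(x,y)$, as is visible from the explicit oscillatory-integral representation \eqref{rere-DEF-U}; indeed the integrand $\xi^{3}|\xi|^{-1/2}e^{-\mu t\xi^{2}+\cdots}$ together with all of its $\xi$-derivatives is absolutely integrable for each fixed $t>0$, so differentiation under the integral sign and the use of Fubini's theorem are valid.

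Next, I would apply Young's inequality $\|f*g\|_{L^{\infty}}\le\|f\|_{L^{\infty}}\|g\|_{L^{1}}$ with $f=\p_{x}^{l}(U-V)(\cdot,\cdot,t)$ and $g=\p_{y}^{j}u_{0}$, obtaining
\[
\left\|\p_{x}^{l}\p_{y}^{j}\left((U-V)(t)*u_{0}\right)\right\|_{L^{\infty}}\le\left\|\p_{x}^{l}(U-V)(\cdot,\cdot,t)\right\|_{L^{\infty}}\left\|\p_{y}^{j}u_{0}\right\|_{L^{1}}.
\]
The first factor on the right-hand side is precisely what Proposition~\ref{prop.L-ap-U} controls: $\|\p_{x}^{l}(U-V)(\cdot,\cdot,t)\|_{L^{\infty}}\le Ct^{-\frac{5}{4}-\frac{l}{2}}$ for all $t>0$ and every non-negative integer $l$ (this is the content of \eqref{U-R-est} applied to $R=U-V$). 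Substituting this bound yields the desired estimate \eqref{L-decay-LZKB-ap}.

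There is essentially no genuine obstacle here — the corollary is an immediate consequence of Proposition~\ref{prop.L-ap-U}. The only point that deserves an explicit sentence is the transfer of the $\p_{y}^{j}$ derivative onto $u_{0}$ and the corresponding justification that $(U-V)(\cdot,\cdot,t)*\p_{y}^{j}u_{0}$ is well defined, which, as noted above, is guaranteed by the $L^{1}$-assumption on $\p_{y}^{j}u_{0}$ and the smoothness and boundedness of $\p_{x}^{l}(U-V)(\cdot,\cdot,t)$ for fixed $t>0$ coming from its integral representation. Hence the proof reduces to two lines: commuting derivatives through the convolution, then combining Young's inequality with Proposition~\ref{prop.L-ap-U}.
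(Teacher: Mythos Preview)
Your proposal is correct and matches the paper's approach exactly: the paper states Corollary~\ref{cor.L-decay-LZKB-ap} without proof immediately after Proposition~\ref{prop.L-ap-U}, treating it as the obvious consequence via Young's inequality and the transfer of $\p_y^j$ onto $u_0$, in the same pattern as Corollaries~\ref{cor.L-decay-LZKB} and~\ref{cor.L-decay-approx-CP}.
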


By virtue of the above result Corollary~\ref{cor.L-decay-LZKB-ap}, we can construct the approximation formula for the solution to the nonlinear problem \eqref{ZKB}. Actually, Theorem~\ref{thm.main-approximation} can be proven as follows:  
\begin{proof}[\rm{\bf{End of the proof of Theorem~\ref{thm.main-approximation}}}]
First, we shall evaluate the Duhamel term of the integral equation \eqref{integral-eq} again. 
Combining \eqref{Linf-decay-pre-I1}, \eqref{u^{p+1}-L1-int} and \eqref{H(t)-bdd}, we are able to see that 
\begin{align*}
\left\|\p_{x}^{l}\int_{0}^{t/2}\p_{x}U(t-\tau)*\left(u^{p+1}(\tau)\right)d\tau\right\|_{L^{\infty}} \le CE_{0}(1+t)^{-1-\frac{l}{2}}, \ \ t\ge0, \ l=0, 1. 
\end{align*}
Moreover, summing up \eqref{u^{p+1}-L2-pre} through \eqref{dxdyu^{p+1}-L2-pre} and \eqref{H(t)-bdd}, it follows from Proposition~\ref{prop.Duhamel-est-Linf} that 
\begin{align*}
\left\|\p_{x}^{l}\int_{t/2}^{t}\p_{x}U(t-\tau)*\left(u^{p+1}(\tau)\right)d\tau\right\|_{L^{\infty}} \le CE_{0}(1+t)^{-\frac{3p-1}{4}}, \ \ t\ge0, \ l=0, 1. 
\end{align*}
By virtue of the above two estimates, under the assumption $p>(4+2l)/3$, 
the following result has been established: 
\begin{align}
&\limsup_{t\to \infty}t^{\frac{3}{4}+\frac{l}{2}}\left\|\p_{x}^{l}\int_{0}^{t}\p_{x}U(t-\tau)*\left(u^{p+1}(\tau)\right)d\tau\right\|_{L^{\infty}} \nonumber \\
&\le C\lim_{t\to \infty}t^{\frac{3}{4}+\frac{l}{2}} (1+t)^{-\min\{1+\frac{l}{2}, \frac{3p-1}{4}\}}
\le C\lim_{t\to \infty}t^{-\min\{\frac{1}{4}, \frac{3p-4-2l}{4}\}}=0, \ \ l=0, 1. \label{Duhamel-est-final}
\end{align}

Now, it follows from \eqref{integral-eq} and \eqref{approx-sol} that 
\begin{align*}
\p_{x}^{l}\left(u(x, y, t)-v(x, y, t)\right)
=\p_{x}^{l}\left((U-V)(t)*u_{0}\right)(x, y)-\frac{\beta}{p+1}\p_{x}^{l}\int_{0}^{t}\p_{x}U(t-\tau)*\left(u^{p+1}(\tau)\right)d\tau. 
\end{align*}
Finally, applying Corollary~\ref{cor.L-decay-LZKB-ap} and using \eqref{Duhamel-est-final}, we are able to see that 
\begin{align*}
&\limsup_{t\to \infty}t^{\frac{3}{4}+\frac{l}{2}}\left\|\p_{x}^{l}\left(u(\cdot, \cdot, t)-v(\cdot, \cdot, t)\right)\right\|_{L^{\infty}}\\
&\le \left(\limsup_{t \to \infty} t^{ \frac{5}{4}+\frac{l}{2} } \left\|\p_{x}^{l}\left((U-V)(t)*u_{0}\right)(\cdot, \cdot)\right\|_{L^{\infty}} \right)  \left(\lim_{t \to \infty} t^{ -\frac{1}{2}}\right) \\
&\ \ \ \ +\lim_{t\to \infty}t^{\frac{3}{4}+\frac{l}{2}}\left\|\p_{x}^{l}\int_{0}^{t}\p_{x}U(t-\tau)*\left(u^{p+1}(\tau)\right)d\tau\right\|_{L^{\infty}}=0, \ \ l=0, 1. 
\end{align*}
Thus, the proof of the desired formula \eqref{u-sol-approximation} has been completed. 
\end{proof}

We have already seen that the solution $u(x, y, t)$ to \eqref{ZKB} is well approximated by the solution $v(x, y, t)$ to \eqref{approx-CP}. 
Next in this subsection, we shall prove an useful approximation formula for the solution to \eqref{approx-CP}. In order to state such a result, let us define the following new functions: 
\begin{align}
& \mathcal{V}_{j}(x, y, t) := \int_{\R} V(x, y-w, t)  M_{j}(w) dw, \ \ (x, y)\in \R^{2}, \ t>0,  \label{DEF-mathV}\\
& M_{j}(y) := \int_{\R} \p_{y}^{j}u_{0}(x, y) dx, \ \ \p_{y}^{j}u_{0}(\cdot, y)\in L^{1}(\R), \ \mathrm{a.e.} \ y\in \R, \ j \in \mathbb{N}\cup \{0\}, \label{M(y)}
\end{align}
where $V(x, y, t)$ is defined by \eqref{DEF-V}. Then, we can show that $\p_{x}^{l}\p_{y}^{j}v(x, y, t)$ converges to $\p_{x}^{l}\mathcal{V}_{j}(x, y, t)$ as $t\to \infty$ in the $L^{\infty}$-sense, if the initial data $u_{0}(x, y)$ satisfies $\p_{y}^{j}u_{0} \in L^{1}(\R^{2})$ for some $j\in \mathbb{N}\cup \{0\}$. More precisely, the following approximation formula can be established: 
\begin{thm}\label{thm.L-ap-LZKB}
Suppose that there exists a non-negative integer $j$ such that $\p_{y}^{j}u_{0}\in L^{1}(\R^{2})$. 
Then, for all non-negative integer $l$, we have  
\begin{equation}\label{L-ap-LZKB}
\lim_{t \to \infty} t^{ \frac{3}{4}+\frac{l}{2} } \left\| \p_{x}^{l}\p_{y}^{j}(V(t)*u_{0})(\cdot, \cdot) - \p_{x}^{l}\mathcal{V}_{j}(\cdot, \cdot, t) \right\|_{L^{\infty}} = 0,
\end{equation}
where $V(x, y, t)$ and $\mathcal{V}_{j}(x, y, t)$ are defined by \eqref{DEF-V} and \eqref{DEF-mathV}, respectively. 
\end{thm}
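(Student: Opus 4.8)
The plan is to rewrite both $\p_x^l\p_y^j\bigl(V(t)*u_0\bigr)$ and $\p_x^l\mathcal{V}_j$ as integrals of the kernel $V$ against $\p_y^ju_0$ over all of $\R^2$, so that their difference isolates a pure shift of $V$ in the $x$-variable. Since $\p_y^ju_0\in L^1(\R^2)$, Fubini's theorem together with \eqref{approx-sol} gives $\p_x^l\p_y^j\bigl(V(t)*u_0\bigr)(x,y)=\int_{\R^2}\p_x^lV(x-x',y-y',t)\,\p_y^ju_0(x',y')\,dx'dy'$, while \eqref{DEF-mathV}, \eqref{M(y)} and Fubini give $\p_x^l\mathcal{V}_j(x,y,t)=\int_{\R^2}\p_x^lV(x,y-y',t)\,\p_y^ju_0(x',y')\,dx'dy'$. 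Differentiation under the integral sign is justified by the smoothness of $V$ in $x$ coming from the representation \eqref{DEF-V*} and by the $L^\infty$-bounds of Proposition~\ref{prop.L-decay-V}. Subtracting, it suffices to prove that $t^{3/4+l/2}\|D_{l,j}(\cdot,\cdot,t)\|_{L^\infty}\to0$, where $D_{l,j}(x,y,t):=\int_{\R^2}\bigl[\p_x^lV(x-x',y-y',t)-\p_x^lV(x,y-y',t)\bigr]\p_y^ju_0(x',y')\,dx'dy'$.

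The next step exploits the smallness of the shift by an $\varepsilon$--$R$ truncation. Because we only assume $\p_y^ju_0\in L^1(\R^2)$ with no moment condition, the crude estimate $|\p_x^lV(x-x',\cdot,t)-\p_x^lV(x,\cdot,t)|\le C|x'|\,\|\p_x^{l+1}V(t)\|_{L^\infty}$ cannot be integrated directly against $\p_y^ju_0$. So, given $\varepsilon>0$, choose $R=R(\varepsilon)>0$ with $\int_{|x'|>R}\int_\R|\p_y^ju_0(x',y')|\,dy'dx'<\varepsilon$ and split the $x'$-integral at $|x'|=R$. On $\{|x'|\le R\}$, the fundamental theorem of calculus gives $\p_x^lV(x-x',y',t)-\p_x^lV(x,y',t)=-\int_0^{x'}\p_x^{l+1}V(x-s,y',t)\,ds$, so by Proposition~\ref{prop.L-decay-V} applied with $l+1$ one has $|\p_x^lV(x-x',\cdot,t)-\p_x^lV(x,\cdot,t)|\le C|x'|\,t^{-5/4-l/2}$; this part contributes at most $C R\,\|\p_y^ju_0\|_{L^1}\,t^{-5/4-l/2}$ to $\|D_{l,j}(t)\|_{L^\infty}$. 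On $\{|x'|>R\}$, one simply uses $|\p_x^lV(x-x',\cdot,t)-\p_x^lV(x,\cdot,t)|\le 2\|\p_x^lV(t)\|_{L^\infty}\le C t^{-3/4-l/2}$ from Proposition~\ref{prop.L-decay-V}, which contributes at most $C\varepsilon\,t^{-3/4-l/2}$.

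Combining the two pieces yields $t^{3/4+l/2}\|D_{l,j}(\cdot,\cdot,t)\|_{L^\infty}\le C R\,\|\p_y^ju_0\|_{L^1}\,t^{-1/2}+C\varepsilon$, hence $\limsup_{t\to\infty}t^{3/4+l/2}\|D_{l,j}(\cdot,\cdot,t)\|_{L^\infty}\le C\varepsilon$, and letting $\varepsilon\to0$ gives \eqref{L-ap-LZKB}. The only genuinely delicate point is the one that forces the truncation: absent a weighted hypothesis on $\p_y^ju_0$, one must balance the decay gain $t^{-1/2}$ produced by the shift against the (possibly large) cutoff constant $R$, and the $\varepsilon$--$R$ splitting is precisely what makes this work; all the remaining ingredients are immediate consequences of the kernel bounds already available for $V$.
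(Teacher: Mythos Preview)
Your proof is correct and follows essentially the same approach as the paper's: write the difference as an integral of $\p_x^l[V(x-x',y-y',t)-V(x,y-y',t)]\,\p_y^ju_0(x',y')$, perform an $\varepsilon$--$R$ truncation, use the mean value theorem plus Proposition~\ref{prop.L-decay-V} on the near part and a crude $L^\infty$ bound on the far part. The only cosmetic difference is that you truncate in the $x'$-variable alone (which is natural since only the $x$-shift appears), while the paper truncates in the full $(x',y')$-radius; both yield the same estimate $C R\,t^{-5/4-l/2}+C\varepsilon\,t^{-3/4-l/2}$.
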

\begin{proof}
First, since $\p_{y}^{j}u_{0}\in L^{1}(\R^{2})$, for any $\e>0$, there exists a constant $L=L(\e)>0$ such that 
\begin{equation}\label{L1-far}
\int_{|(x, y)| \ge L} \left|\p_{y}^{j}u_{0}(x, y)\right| dxdy < \e.
\end{equation}
Therefore, from \eqref{DEF-mathV} and \eqref{M(y)}, splitting the integral and applying the mean value theorem, there exists $\theta=\theta(x, z)\in (0, 1)$ such that  
\begin{align}
&\p_{x}^{l}\p_{y}^{j}(V(t)*u_{0})(x, y) -\p_{x}^{l}\mathcal{V}_{j}(x, y, t) \nonumber \\
& = \int_{\R^{2}} \p_{x}^{l}V(x-z, y-w, t)\p_{w}^{j}u_{0}(z, w)dzdw - \int_{\R}\p_{x}^{l}V(x, y-w, t) M_{j}(w) dw \nonumber \\
& = \left( \int_{|(z, w)| \le L}+\int_{|(z, w)| \ge L} \right)\p_{x}^{l}\left( V(x-z, y-w, t)-V(x, y-w, t) \right)\p_{w}^{j}u_{0}(z, w)dzdw \nonumber \\
& = \int_{|(z, w)| \le L} (-z)\p_{x}^{l+1}V(x-\theta z, y-w, t)\p_{w}^{j}u_{0}(z, w)dzdw \nonumber \\
&\ \ \ \ + \int_{|(z, w)| \ge L} \p_{x}^{l}\left( V(x-z, y-w, t)-V(x, y-w, t)\right)\p_{w}^{j}u_{0}(z, w)dzdw \nonumber \\
& =: J_{1}(x, y, t) + J_{2}(x, y, t). \label{L-ap-split}
\end{align}

In what follows, we shall evaluate $J_{1}(x, y, t)$ and $J_{2}(x, y, t)$. For $J_{1}(x, y, t)$, it follows from \eqref{L-ap-split} and Proposition~\ref{prop.L-decay-V} that 
\begin{align}
\left| J_{1}(x, y, t) \right| 
& \le \int_{|(z, w)| \le L} |z|\,\left|\p_{x}^{l+1}V(x-\theta z, y-w, t)\right|\,\left|\p_{w}^{j}u_{0}(z, w)\right|dzdw \nonumber \\
& \le Ct^{ -\frac{3}{4} -\frac{l+1}{2} } \int_{|(z, w)| \le L} |(z, w)|\,\left|\p_{w}^{j}u_{0}(z, w)\right|dzdw \nonumber \\
& \le CL \left\| \p_{y}^{j}u_{0} \right\|_{L^1}t^{ -\frac{5}{4} -\frac{l}{2} }, \ \ (x, y) \in \R^{2}, \ t>0. \label{L-ap-est-J1}
\end{align}
Next, let us treat $J_{2}(x, y, t)$. From \eqref{L-ap-split}, Proposition~\ref{prop.L-decay-V} and \eqref{L1-far}, we get
\begin{align}
\left| J_{2}(x, y, t) \right| 
& \le \int_{|(z, w)| \ge L} \left( \left|\p_{x}^{l}V(x-z, y-w, t)\right| + \left|\p_{x}^{l}V(x, y-w, t)\right| \right) \left|\p_{w}^{j}u_{0}(z, w)\right|dzdw \nonumber \\
& \le Ct^{ -\frac{3}{4} -\frac{l}{2} } \int_{|(z, w)| \ge L} \left|\p_{w}^{j}u_{0}(z, w)\right|dzdw \nonumber \\
&\le C\e t^{ -\frac{3}{4} -\frac{l}{2} }, \ \ (x, y) \in \R^{2}, \ t>0. \label{L-ap-est-J2}
\end{align}

Combining \eqref{L-ap-split}, \eqref{L-ap-est-J1} and \eqref{L-ap-est-J2}, we can see that 
\[
\left\| \p_{x}^{l}\p_{y}^{j}(V(t)*u_{0})(\cdot, \cdot) - \p_{x}^{l}\mathcal{V}_{j}(\cdot, \cdot, t) \right\|_{L^{\infty}} 
\le Ct^{ -\frac{5}{4}-\frac{l}{2} } + C\e t^{ -\frac{3}{4}-\frac{l}{2} }, \ \ t>0.
\] 
Therefore, we eventually arrive at
\[
\limsup_{t \to \infty} t^{ \frac{3}{4}+\frac{l}{2} } \left\| \p_{x}^{l}\p_{y}^{j}(V(t)*u_{0})(\cdot, \cdot) - \p_{x}^{l}\mathcal{V}_{j}(\cdot, \cdot, t) \right\|_{L^{\infty}} \le C\e.
\] 
Thus, we get the desired formula \eqref{L-ap-LZKB}, because $\e>0$ can be chosen arbitrarily small. 
\end{proof}

\subsection{Lower bound for the solutions}  

The above formula \eqref{L-ap-LZKB} gives us an approximation for the solution $v(x, y, t)$ to \eqref{approx-CP}, 
i.e. we can think that the main part of $\p_{x}^{l}\p_{y}^{j}v(x, y, t)$ is governed by $\p_{x}^{l}\mathcal{V}_{j}(x, y, t)$. 
Indeed, this formula helps us to derive the lower bound for the $L^{\infty}$-norm of the solution $u(x, y, t)$ to \eqref{ZKB}. 
In the rest of this section, we would like to explain such results. To doing that, let us show the following fact: 
\begin{prop}\label{prop.mathV-lower}
Suppose that there exists a non-negative integer $j$ such that $\p_{y}^{j}u_{0}\in L^{1}(\R^{2})$. 
Then, for all non-negative integer $l$, we have  
\begin{equation}\label{mathV-lower}
\liminf_{t\to \infty}t^{\frac{3}{4}+\frac{l}{2}}\left\| \p_{x}^{l} \mathcal{V}_{j}(\cdot, \cdot, t) \right\|_{L^{\infty}} 
\ge \frac{\Gamma \left(\frac{1+2l}{4}\right)}{{ 2^{\frac{5}{2}}\pi^{ \frac{3}{2} } \mu^{\frac{1+2l}{4}} }}\left|\int_{\R}M_{j}(y)dy\right|, 
\end{equation}
where $\mathcal{V}_{j}(x, y, t)$ and $M_{j}(y)$ are defined by \eqref{DEF-mathV} and \eqref{M(y)}, respectively.  
\end{prop}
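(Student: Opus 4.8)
The plan is to bound the $L^{\infty}$-norm from below by the value of $\p_{x}^{l}\mathcal{V}_{j}$ at the single point $(x,y)=(0,0)$ and to compute the large-time limit there explicitly. First I would use the self-similar form \eqref{DEF-V} of $V$ together with the definition \eqref{DEF-mathV} to write
\[
t^{\frac{3}{4}+\frac{l}{2}}\,\p_{x}^{l}\mathcal{V}_{j}(x,y,t)=\int_{\R}\left(\p_{x}^{l}V_{*}\right)\!\left(xt^{-\frac{1}{2}},(y-w)t^{-\frac{1}{4}}\right)M_{j}(w)\,dw .
\]
This is legitimate because $M_{j}\in L^{1}(\R)$ — which follows from $\p_{y}^{j}u_{0}\in L^{1}(\R^{2})$ and Fubini's theorem, with $\|M_{j}\|_{L^{1}(\R)}\le\|\p_{y}^{j}u_{0}\|_{L^{1}(\R^{2})}$ — and because $\p_{x}^{l}V_{*}$ is bounded on $\R^{2}$: differentiating \eqref{DEF-V*} under the integral sign and bounding the cosine by $1$ gives $\|\p_{x}^{l}V_{*}\|_{L^{\infty}}\le\Gamma\!\left(\tfrac{1+2l}{4}\right)\big/\!\left(4\pi^{\frac{3}{2}}\mu^{\frac{1+2l}{4}}\right)<\infty$.

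Next I would set $x=y=0$. For each fixed $w\in\R$ the argument $(0,-wt^{-1/4})$ tends to $(0,0)$ as $t\to\infty$, so by continuity $(\p_{x}^{l}V_{*})(0,-wt^{-1/4})\to(\p_{x}^{l}V_{*})(0,0)$, while the integrand is dominated by $\|\p_{x}^{l}V_{*}\|_{L^{\infty}}|M_{j}(w)|\in L^{1}_{w}(\R)$. The dominated convergence theorem then yields
\[
\lim_{t\to\infty}t^{\frac{3}{4}+\frac{l}{2}}\,\p_{x}^{l}\mathcal{V}_{j}(0,0,t)=\left(\p_{x}^{l}V_{*}\right)(0,0)\int_{\R}M_{j}(w)\,dw .
\]
Since $\|\p_{x}^{l}\mathcal{V}_{j}(\cdot,\cdot,t)\|_{L^{\infty}}\ge|\p_{x}^{l}\mathcal{V}_{j}(0,0,t)|$ for every $t>0$ and the limit above exists, passing to the $\liminf$ gives
\[
\liminf_{t\to\infty}t^{\frac{3}{4}+\frac{l}{2}}\|\p_{x}^{l}\mathcal{V}_{j}(\cdot,\cdot,t)\|_{L^{\infty}}\ge\left|\left(\p_{x}^{l}V_{*}\right)(0,0)\right|\,\left|\int_{\R}M_{j}(w)\,dw\right| .
\]

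It then remains to identify $(\p_{x}^{l}V_{*})(0,0)$. Differentiating \eqref{DEF-V*} under the $r$-integral and using $\p_{x}^{l}\cos(ax+b)=a^{l}\cos\!\left(ax+b+\tfrac{l\pi}{2}\right)$ with $a=\sqrt{r/\mu}$, then setting $x=y=0$, the cosine factors out as the constant $\cos\!\left(\tfrac{\pi}{4}+\tfrac{l\pi}{2}\right)$ and one is left with the Gamma integral $\int_{0}^{\infty}r^{-\frac{3}{4}+\frac{l}{2}}e^{-r}\,dr=\Gamma\!\left(\tfrac{1+2l}{4}\right)$, whence
\[
\left(\p_{x}^{l}V_{*}\right)(0,0)=\frac{\cos\!\left(\tfrac{\pi}{4}+\tfrac{l\pi}{2}\right)\Gamma\!\left(\tfrac{1+2l}{4}\right)}{4\pi^{\frac{3}{2}}\mu^{\frac{1+2l}{4}}} .
\]
Because $\left|\cos\!\left(\tfrac{\pi}{4}+\tfrac{l\pi}{2}\right)\right|=\tfrac{1}{\sqrt{2}}$ for every integer $l\ge0$, this equals in absolute value $\Gamma\!\left(\tfrac{1+2l}{4}\right)\big/\!\left(2^{\frac{5}{2}}\pi^{\frac{3}{2}}\mu^{\frac{1+2l}{4}}\right)$, which is precisely the constant in \eqref{mathV-lower}; combined with the previous display this completes the proof. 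All the computations are routine; the only points requiring a little care are the interchange of limit and integral in the second step (handled by the uniform boundedness of $\p_{x}^{l}V_{*}$) and the choice of evaluation point $(0,0)$ — it is the point at which the limiting argument of $V_{*}$ collapses and at which $\p_{x}^{l}V_{*}$ is nonzero, so a less careful choice would weaken the constant or yield only the trivial bound $0$.
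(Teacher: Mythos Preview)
Your proof is correct and follows essentially the same approach as the paper: evaluate at the origin $(0,0)$, use dominated convergence (with the integrable majorant $\|\p_{x}^{l}V_{*}\|_{L^{\infty}}|M_{j}(w)|$) to pass to the limit, and compute the resulting constant via the Gamma integral and $|\cos((1+2l)\pi/4)|=1/\sqrt{2}$. The only cosmetic difference is that you organize the computation through the self-similar profile $V_{*}$ from the outset, whereas the paper works directly with the $r$-integral representation before identifying the limit; the two presentations are equivalent.
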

\begin{proof}
First, we take $(x, y)=(0, 0)$ in \eqref{DEF-mathV}, then we have from \eqref{DEF-V} and \eqref{DEF-V*} that 
\begin{align}
&\left\| \p_{x}^{l}\mathcal{V}_{j}(\cdot, \cdot, t) \right\|_{L^{\infty}} 
\ge \left| (\p_{x}^{l}\mathcal{V}_{j})(0, 0, t) \right| 
= \left| \int_{\R} (\p_{x}^{l} V)(0, -w, t)  M_{j}(w) dw \right| \nonumber \\
& = \frac{ t^{-\frac{3}{4}-\frac{l}{2}} }{ 4\pi^{ \frac{3}{2} } \mu^{\frac{1+2l}{4}} } \left| \int_{\R} \left( \int_{0}^{\infty} r^{-\frac{3}{4} +\frac{l}{2}}e^{ -r } \cos \left( -\frac{w^{2}}{4}\sqrt{\frac{\mu}{rt}} + \frac{(1+2l)\pi}{4}\right) dr \right) M_{j}(w) dw \right|. \label{mathV-lower-pre}
\end{align}
Here, we used the fact $\frac{d^{l}}{d\theta^{l}}\cos \theta =\cos (\theta+(l\pi)/2)$. 

For each $r>0$ and $w\in \R$, we have 
$
\displaystyle \lim_{t \to \infty} \cos ( -(w^{2}\sqrt{\mu})/(4\sqrt{rt}) + (1+2l)/4)=\cos((1+2l)/4)
$. 
Also, we note that 
\[
\left|\left( \int_{0}^{\infty} r^{-\frac{3}{4} +\frac{l}{2}}e^{ -r } \cos \left( -\frac{w^{2}}{4}\sqrt{\frac{\mu}{rt}} + \frac{(1+2l)\pi}{4}\right) dr \right) M_{j}(w)\right|
\le  \Gamma \left(\frac{1+2l}{4}\right) \left|M_{j}(w)\right|
\]
and $M_{j}\in L^1(\R)$. Therefore, applying the Lebesgue dominated convergence theorem, we obtain 
\begin{align}
&\lim_{t\to \infty}\left|\int_{\R}\left( \int_{0}^{\infty} r^{-\frac{3}{4} +\frac{l}{2}}e^{ -r } \cos \left( -\frac{w^{2}}{4}\sqrt{\frac{\mu}{rt}} + \frac{(1+2l)\pi}{4}\right) dr \right) M_{j}(w) dw\right| \nonumber \\
&=
\left|\int_{\R}\left(\int_{0}^{\infty}r^{\frac{1+2l}{4}-1}e^{-r}\cos \left(\frac{(1+2l)\pi}{4}\right)dr\right)M_{j}(w)dw\right| 
= \frac{1}{\sqrt{2}}\Gamma \left(\frac{1+2l}{4}\right) \left|\int_{\R}M_{j}(w)dw\right|. \label{limit}
\end{align}
Therefore, from \eqref{mathV-lower-pre} and \eqref{limit}, we can immediately get the desired result \eqref{mathV-lower}. 
\end{proof}

By virtue of Theorem~\ref{thm.L-ap-LZKB} and Proposition~\ref{prop.mathV-lower}, we can derive the lower bound for the $L^{\infty}$-norm of the linear solution $\tilde{u}(x, y, t)$ to \eqref{LZKB}. Actually, the following estimate is true:  
\begin{cor}\label{cor.L-lower-LZKB}
Suppose that there exists a non-negative integer $j$ such that $\p_{y}^{j}u_{0}\in L^{1}(\R^{2})$. 
Then, for all non-negative integer $l$, we have  
\begin{equation}\label{L-lower-LZKB}
\liminf_{t\to \infty}t^{\frac{3}{4}+\frac{l}{2}}\left\| \p_{x}^{l}\p_{y}^{j}(U(t)*u_{0})(\cdot, \cdot) \right\|_{L^{\infty}} 
 \ge \frac{\Gamma \left(\frac{1+2l}{4}\right)}{{ 2^{\frac{5}{2}}\pi^{ \frac{3}{2} } \mu^{\frac{1+2l}{4}} }} \left|\int_{\R^{2}}\p_{y}^{j}u_{0}(x, y)dxdy\right|,
 \end{equation}
where $U(x, y, t)$ is defined by \eqref{DEF-U}. 
\end{cor}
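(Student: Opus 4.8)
The plan is to combine the three preceding results via the reverse triangle inequality, so that the lower bound for $\p_{x}^{l}\mathcal V_{j}$ obtained in Proposition~\ref{prop.mathV-lower} is transferred to the linear solution $U(t)*u_{0}$. First I would fix non-negative integers $l$ and $j$ with $\p_{y}^{j}u_{0}\in L^{1}(\R^{2})$ and write, using the linearity of the convolution,
\[
\p_{x}^{l}\p_{y}^{j}(U(t)*u_{0}) = \p_{x}^{l}\mathcal{V}_{j}(\cdot,\cdot,t) + \Big(\p_{x}^{l}\p_{y}^{j}(V(t)*u_{0}) - \p_{x}^{l}\mathcal{V}_{j}(\cdot,\cdot,t)\Big) + \p_{x}^{l}\p_{y}^{j}\big((U-V)(t)*u_{0}\big).
\]
Taking $L^{\infty}$-norms and using $\|A\|_{L^{\infty}}\ge\|B\|_{L^{\infty}}-\|A-B\|_{L^{\infty}}$ gives
\[
\left\|\p_{x}^{l}\p_{y}^{j}(U(t)*u_{0})\right\|_{L^{\infty}} \ge \left\|\p_{x}^{l}\mathcal{V}_{j}(\cdot,\cdot,t)\right\|_{L^{\infty}} - \left\|\p_{x}^{l}\p_{y}^{j}(V(t)*u_{0}) - \p_{x}^{l}\mathcal{V}_{j}(\cdot,\cdot,t)\right\|_{L^{\infty}} - \left\|\p_{x}^{l}\p_{y}^{j}\big((U-V)(t)*u_{0}\big)\right\|_{L^{\infty}}.
\]

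Next I would multiply through by $t^{\frac34+\frac l2}$ and let $t\to\infty$. By Theorem~\ref{thm.L-ap-LZKB}, $t^{\frac34+\frac l2}$ times the second norm on the right tends to $0$; by Corollary~\ref{cor.L-decay-LZKB-ap}, the third norm is at most $Ct^{-\frac54-\frac l2}\|\p_{y}^{j}u_{0}\|_{L^{1}}$, so $t^{\frac34+\frac l2}$ times it is $O(t^{-1/2})\to 0$. Combining this with the elementary estimate $\liminf(a_{t}-b_{t}-c_{t})\ge\liminf a_{t}-\limsup b_{t}-\limsup c_{t}$, I obtain
\[
\liminf_{t\to\infty} t^{\frac34+\frac l2}\left\|\p_{x}^{l}\p_{y}^{j}(U(t)*u_{0})\right\|_{L^{\infty}} \ge \liminf_{t\to\infty} t^{\frac34+\frac l2}\left\|\p_{x}^{l}\mathcal{V}_{j}(\cdot,\cdot,t)\right\|_{L^{\infty}}.
\]

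Finally I would apply Proposition~\ref{prop.mathV-lower} to the right-hand side and rewrite the resulting constant using the identity $\int_{\R}M_{j}(y)\,dy=\int_{\R^{2}}\p_{y}^{j}u_{0}(x,y)\,dxdy$, which is immediate from the definition \eqref{M(y)} of $M_{j}$ together with Fubini's theorem (justified since $\p_{y}^{j}u_{0}\in L^{1}(\R^{2})$). This yields exactly \eqref{L-lower-LZKB}. I do not expect any genuine obstacle here: the real work is already contained in Theorem~\ref{thm.L-ap-LZKB} and in Propositions~\ref{prop.L-ap-U} and \ref{prop.mathV-lower}, and the only point requiring a little care is the bookkeeping of $\liminf$ and $\limsup$ — specifically, checking that both error terms vanish \emph{after} multiplication by the time weight $t^{\frac34+\frac l2}$, which is precisely what those earlier results guarantee.
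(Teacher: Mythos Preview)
Your proposal is correct and follows essentially the same approach as the paper: decompose $\p_x^l\p_y^j(U(t)*u_0)$ into $\p_x^l\mathcal{V}_j$ plus the two error terms, apply the reverse triangle inequality, use Corollary~\ref{cor.L-decay-LZKB-ap} and Theorem~\ref{thm.L-ap-LZKB} to kill the weighted error terms in the limit, and finish with Proposition~\ref{prop.mathV-lower} together with the Fubini identity $\int_{\R}M_j(y)\,dy=\int_{\R^2}\p_y^ju_0\,dxdy$. The only cosmetic difference is that the paper writes the chain of inequalities in one display rather than spelling out the $\liminf/\limsup$ bookkeeping.
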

\begin{proof}
Combining Corollary~\ref{cor.L-decay-LZKB-ap}, Theorem~\ref{thm.L-ap-LZKB} and Proposition~\ref{prop.mathV-lower}, we obtain 
\begin{align*}
&\liminf_{t\to \infty}t^{\frac{3}{4}+\frac{l}{2}}\left\| \p_{x}^{l}\p_{y}^{j}(U(t)*u_{0})(\cdot, \cdot) \right\|_{L^{\infty}}  \\
&\ge \liminf_{t\to \infty}t^{\frac{3}{4}+\frac{l}{2}}\left\|\p_{x}^{l}\mathcal{V}_{j}(\cdot, \cdot, t)\right\|_{L^{\infty}}
-\lim_{t\to \infty}t^{\frac{3}{4}+\frac{l}{2}}\left\| \p_{x}^{l}\p_{y}^{j}(V(t)*u_{0})(\cdot, \cdot) -\p_{x}^{l}\mathcal{V}_{j}(\cdot, \cdot, t)\right\|_{L^{\infty}} \\
&\ \ \ \ -\left(\limsup_{t\to \infty}t^{\frac{5}{4}+\frac{l}{2}}\left\| \p_{x}^{l}\p_{y}^{j}\left(\left(U-V\right)(t)*u_{0}\right)(\cdot, \cdot) \right\|_{L^{\infty}}\right) \left(\lim_{t\to \infty}t^{-\frac{1}{2}}\right)  \\
&\ge  \frac{\Gamma \left(\frac{1+2l}{4}\right)}{{ 2^{\frac{5}{2}}\pi^{ \frac{3}{2} } \mu^{\frac{1+2l}{4}} }}\left|\int_{\R^{2}}\p_{y}^{j}u_{0}(x, y)dxdy\right|. 
\end{align*}
Thus the desired result \eqref{L-lower-LZKB} has been established. 
\end{proof}

Finally, we are able to derive the lower bound for the $L^{\infty}$-norm of the original solution $u(x, y, t)$ to the nonlinear problem \eqref{ZKB} as follows: 
\begin{proof}[\rm{\bf{End of the proof of Theorem~\ref{thm.main-u-est-lower}}}]
Applying Corollary~\ref{cor.L-lower-LZKB} and \eqref{Duhamel-est-final} for \eqref{integral-eq}, we obtain
\begin{align*}
&\liminf_{t\to \infty}t^{\frac{3}{4}+\frac{l}{2}}\left\| \p_{x}^{l}u(\cdot, \cdot, t) \right\|_{L^{\infty}} \\
&\ge
\liminf_{t\to \infty}t^{\frac{3}{4}+\frac{l}{2}}\left\| \p_{x}^{l}(U(t)*u_{0})(\cdot, \cdot) \right\|_{L^{\infty}} 
- \frac{|\beta|}{p+1}\limsup_{t\to \infty}t^{\frac{3}{4}+\frac{l}{2}}\left\|\p_{x}^{l}\int_{0}^{t}\p_{x}U(t-\tau)*\left(u^{p+1}(\tau)\right)d\tau\right\|_{L^{\infty}} \\
 &\ge \frac{\Gamma \left(\frac{1+2l}{4}\right)}{{ 2^{\frac{5}{2}}\pi^{ \frac{3}{2} } \mu^{\frac{1+2l}{4}} }} \left|\int_{\R^{2}}u_{0}(x, y)dxdy\right|. 
 \end{align*}
 Therefore, we can conclude that the desired estimate \eqref{u-est-lower} is true. 
\end{proof}

\section{Asymptotic profile for the solutions}  

Finally in this section, we would like to derive the detailed asymptotic profile for the solution $u(x, y, t)$ to \eqref{ZKB}. 
More precisely, we shall prove Theorem~\ref{thm.main-u-asymptotic}. 
By virtue of Theorem~\ref{thm.main-approximation}, we can see that the solution $u(x, y, t)$ to \eqref{ZKB} is well approximated by the solution $v(x, y, t)$ to \eqref{approx-CP}. Therefore, in order to prove Theorem~\ref{thm.main-u-asymptotic}, it is sufficient to derive the asymptotic profile for $v(x, y, t)$. To state such a result, we define the following new function: 
\begin{align}\label{DEF-psij}
\begin{split}
\psi_{j}(x, y, t):=&\,
\mathcal{M}\left[\p_{y}^{j}u_{0}\right]\left(yt^{-1}\right)V(-x, y, t), \\
=&\, \mathcal{M}\left[\p_{y}^{j}u_{0}\right]\left(yt^{-1}\right)V_{*} \left( -xt^{-\frac{1}{2}}, yt^{-\frac{1}{4}} \right)t^{ -\frac{3}{4} }, \ \ (x, y)\in \R^{2}, \ t>0, \ j\in \mathbb{N}\cup\{0\},
\end{split}
\end{align}
where $V(x, y, t)$ and $V_{*}(x, y)$ are defined by \eqref{DEF-V} and \eqref{DEF-V*}, respectively. On the other hand, the functional $\mathcal{M}\left[\p_{y}^{j}u_{0}\right](y)$ is defined by
\begin{equation}\label{DEF-mathMj}
\mathcal{M}\left[\p_{y}^{j}u_{0}\right](y):=\sqrt{2\pi }\left(\int_{\R}\mathcal{F}_{\xi}^{-1}\left[\mathcal{F}\left[\p_{y}^{j}u_{0}\right]\left(\xi, \frac{y}{2\xi }\right)\right](x)dx\right), \ \ \p_{y}^{j}u_{0}\in L^{1}(\R^{2}). 
\end{equation}
If $j=0$, this function $\psi_{0}(x, y, t)$ is equal to $\psi(x, y, t)$ defined by \eqref{DEF-psi} in the introduction, 
\begin{equation}\label{psi=psi0}
\mathrm{i.e.} \ \ \psi_{0}(x, y, t)= \psi(x, y, t), \ \ (x, y)\in \R^{2}, \ t>0. 
\end{equation}
In addition, we remark that $\p_{x}^{l}\psi_{j}(x, y, t)$ satisfy the following fact: 
\[
\left\| \p_{x}^{l}\psi_{j}(\cdot, \cdot, t)\right\|_{L^{\infty}}=\left\|\mathcal{M}\left[\p_{y}^{j}u_{0}\right](\cdot)\p_{x}^{l}V_{*}(\cdot, \cdot)\right\|_{L^{\infty}}t^{-\frac{3}{4}-\frac{l}{2}}, \ \ t>0, \ l, j \in \mathbb{N}\cup \{0\}. 
\]
In what follows, we would like to show that $\p_{x}^{l}\p_{y}^{j}v(x, y, t)$ tends to $\p_{x}^{l}\psi_{j}(x, y, t)$ as $t\to \infty$. 
Actually, combining the techniques used for the parabolic equations and for the Schr$\ddot{\mathrm{o}}$dinger equation, the following asymptotic formula can be established: 
\begin{thm}\label{thm.linear-ap-v}
Let $l$ be a non-negative integer and $\alpha>1/2$. Suppose that there exists a non-negative integer j such that $\p_{y}^{j}u_{0}\in L^{1}(\R^{2})$. 
If $y^{2}\p_{y}^{j}u_{0}\in L^{1}(\R^{2})$ when $l$ is a positive integer and $y^{2}D_{x}^{-\alpha}\p_{y}^{j}u_{0}\in L^{1}(\R^{2})$ when $l=0$, 
then we have 
\begin{equation}\label{linear-ap-v}
\lim_{t \to \infty} t^{ \frac{3}{4}+\frac{l}{2} } \left\|\p_{x}^{l}\p_{y}^{j}v(\cdot, \cdot, t)-\p_{x}^{l}\psi_{j}(\cdot, \cdot, t)  \right\|_{L^{\infty}}=0, 
\end{equation}
where $v(x, y, t)$ is the solution to \eqref{approx-CP}, while $\psi_{j}(x, y, t)$ is defined by \eqref{DEF-psij}. 
\end{thm}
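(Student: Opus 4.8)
Since $v(x,y,t)=(V(t)*u_0)(x,y)$ has Fourier symbol $e^{-\mu t\xi^{2}+it\xi\eta^{2}}$, the plan is to work on the Fourier side and exploit that, at each fixed $x$-frequency $\xi$, the $\eta$-variable evolves by the free Schr\"odinger semigroup with ``time'' $t\xi$, damped by the parabolic weight $e^{-\mu t\xi^{2}}$. Taking the partial Fourier transform in $x$ and writing $g(\xi,\cdot):=\mathcal F_x[\p_y^ju_0](\xi,\cdot)$, one has
\[
\mathcal F_x[\p_y^j v](\xi,y,t)=e^{-\mu t\xi^{2}}\bigl[e^{it\xi\p_y^{2}}g(\xi,\cdot)\bigr](y),
\]
where $e^{is\p_y^{2}}$ denotes the propagator with symbol $e^{is\eta^{2}}$. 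The first step is to insert the classical asymptotic factorization of the free Schr\"odinger propagator (phase modulation $\times$ dilation $\times$ Fourier transform): for $0<\beta\le 1$,
\[
\bigl[e^{is\p_y^{2}}h\bigr](y)=\frac{e^{i\pi\,\mathrm{sgn}(s)/4}}{\sqrt{2|s|}}\,e^{-iy^{2}/(4s)}\,\widehat h\!\left(-\tfrac{y}{2s}\right)+\mathrm{Err}_s(y),\qquad \|\mathrm{Err}_s\|_{L^{\infty}_y}\le C|s|^{-1/2-\beta}\bigl\|\,|\cdot|^{2\beta}h\bigr\|_{L^{1}},
\]
which I would apply with $s=t\xi$ and $h=g(\xi,\cdot)$. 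The point making the weight hypothesis usable is that $\bigl\|\,|\cdot|^{2}g(\xi,\cdot)\bigr\|_{L^{1}_y}\le C\|y^{2}\p_y^ju_0\|_{L^{1}(\R^{2})}$ \emph{uniformly} in $\xi$, by the elementary bound $\|\mathcal F_x f\|_{L^{\infty}_\xi L^{1}_y}\le C\|f\|_{L^{1}}$.

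\textbf{Identification of the leading term.} Substituting the leading Schr\"odinger profile back and integrating against $(i\xi)^{l}e^{-\mu t\xi^{2}+ix\xi}$ yields the candidate main term
\[
\frac{C}{\sqrt t}\int_{\R}(i\xi)^{l}|\xi|^{-1/2}e^{-\mu t\xi^{2}+ix\xi-iy^{2}/(4t\xi)+i\pi\,\mathrm{sgn}(\xi)/4}\,\widehat{\p_y^ju_0}\!\left(\xi,-\tfrac{y}{2t\xi}\right)d\xi,
\]
and the heart of the argument is to show this equals $\p_x^l\psi_j=\mathcal M[\p_y^ju_0](y/t)\,\p_x^lV(-x,y,t)$ up to an $L^{\infty}_{x,y}$ error that is $o(t^{-3/4-l/2})$. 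This is exactly where the functional $\mathcal M$ is designed to enter: $\mathcal M[\p_y^ju_0](y/t)$ is the object obtained from the restriction of $\widehat{\p_y^ju_0}$ to the hyperbola $\eta=y/(2t\xi)$ by applying $\mathcal F_\xi^{-1}$ and integrating in $x$. I would carry out the matching by a change of variables in the $\xi$-integral trading $\xi$ for the dispersion variable, using the reality relation $\overline{\widehat{\p_y^ju_0}(\xi,\eta)}=\widehat{\p_y^ju_0}(-\xi,-\eta)$ to convert the above into a form matching $V(-x,\cdot)$ and $\mathcal M[\cdot](y/t)$, together with the uniform continuity of $\widehat{\p_y^ju_0}$ at $\xi=0$ and dominated convergence; the $\xi=0$ singularities are harmless since $|\xi|^{-1/2}$ is locally integrable and multiplied by $e^{-\mu t\xi^{2}}$. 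One must also peel off the low-frequency region $|t\xi|\lesssim 1$ where the Schr\"odinger expansion is invalid: there the crude bound $\|e^{it\xi\p^{2}}g(\xi,\cdot)\|_{L^{\infty}_y}\le(t|\xi|)^{-1/2}\|\p_y^ju_0\|_{L^{1}}$ gives a contribution of size $O(t^{-l-1})=o(t^{-3/4-l/2})$.

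\textbf{The error terms and the role of $D_x^{-\alpha}$.} After subtracting $\p_x^l\psi_j$, the residual comes from the integrated Schr\"odinger error and the sub-leading terms of the expansion; both carry an amplitude of size $|t\xi|^{-3/2}\|y^{2}\p_y^ju_0\|_{L^{1}}$, so that after the $\xi$-integration the residual is bounded by $C\,t^{-3/2}\int_{\R}|\xi|^{l-3/2}e^{-\mu t\xi^{2}}d\xi$. For $l\ge 1$ this integral converges and equals $Ct^{-l/2+1/4}$, giving a residual $\lesssim t^{-5/4-l/2}=o(t^{-3/4-l/2})$. For $l=0$ the integral diverges at $\xi=0$, which is precisely why the stronger hypothesis is imposed: writing $\p_y^ju_0=D_x^{\alpha}\bigl(D_x^{-\alpha}\p_y^ju_0\bigr)$ so that $\widehat{\p_y^ju_0}(\xi,\cdot)=|\xi|^{\alpha}\widehat{D_x^{-\alpha}\p_y^ju_0}(\xi,\cdot)$ and using linearity of the propagator, the error amplitude gains a factor $|\xi|^{\alpha}$ (with $\|y^{2}D_x^{-\alpha}\p_y^ju_0\|_{L^{1}}$ replacing the old weight), and the residual becomes $\lesssim t^{-3/2}\int_{\R}|\xi|^{\alpha-3/2}e^{-\mu t\xi^{2}}d\xi$, which converges \emph{exactly} when $\alpha>1/2$ and is then $\lesssim t^{-5/4-\alpha/2}=o(t^{-3/4})$.

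\textbf{Main obstacle.} The Schr\"odinger factorization and the $\xi$-integral bounds above are routine; the genuinely delicate step is identifying the leading term with $\p_x^l\psi_j$, i.e. making rigorous sense of and using the functional $\mathcal M[\p_y^ju_0]$, defined through an $x$-integral of $\mathcal F_\xi^{-1}$ of the generally non-integrable, $\xi=0$-singular restriction $\widehat{\p_y^ju_0}(\xi,\tfrac{y}{2t\xi})$, while keeping all estimates uniform in $(x,y)\in\R^{2}$. As an alternative route for the bookkeeping one can first invoke the already established Theorem~\ref{thm.L-ap-LZKB} (so that it suffices to prove $t^{3/4+l/2}\|\p_x^l\mathcal V_j-\p_x^l\psi_j\|_{L^{\infty}}\to0$) and apply the Schr\"odinger asymptotics to the $y$-convolution $\mathcal V_j=V*_yM_j$; this is cleaner for $l\ge 1$, but for $l=0$ it still forces the $D_x^{-\alpha}$ argument above.
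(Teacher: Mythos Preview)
Your approach is essentially the paper's: take the partial Fourier transform in $x$, recognize the $y$-evolution as free Schr\"odinger with time parameter $t\xi$, expand the propagator (the paper uses the mean value theorem on $e^{iw^{2}/(4\xi t)}$, which is your factorization with $\beta=1$), and control the remainder via the weight hypothesis. The $|\xi|^{l-3/2}$ divergence at $l=0$ and its cure through $D_x^{-\alpha}$ with $\alpha>1/2$ match the paper's computation line by line.

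The step you correctly flag as the main obstacle --- identifying the leading term with $\p_x^{l}\psi_j$ --- is handled more cleanly in the paper than in either of your proposed routes. The paper simply observes that the amplitude
\[
\frac{e^{-\mu t\xi^{2}+iy^{2}/(4\xi t)-i\pi\,\mathrm{sgn}\xi/4}}{\sqrt{2t|\xi|}}
\]
is \emph{exactly} $2\pi\,\mathcal F_{x}\bigl[V(-x,y,t)\bigr](\xi)$; hence the leading term $W_j$ is literally the $x$-convolution
\[
W_j(x,y,t)=\sqrt{2\pi}\,\Bigl(V(-\,\cdot\,,y,t)\ast_x \mathcal F_\xi^{-1}\bigl[\widehat{\p_y^{j}u_0}\bigl(\xi,\tfrac{y}{2\xi t}\bigr)\bigr]\Bigr)(x),
\]
while $\psi_j$ is precisely the corresponding ``mass times kernel'' approximation (the $x$-integral of the second factor, multiplied by $V(-x,y,t)$). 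The difference $W_j-\psi_j$ is then closed verbatim by the argument of Theorem~\ref{thm.L-ap-LZKB}, with no change of variables, reality relations, or low-frequency cut-off needed. Your alternative route through $\mathcal V_j$ would instead produce $\widehat{\p_y^{j}u_0}(0,\,\cdot\,)=\widehat{M_j}$ rather than $\widehat{\p_y^{j}u_0}(\xi,\,\cdot\,)$ in the leading profile, so it does not reach $\psi_j$ without a further comparison step.
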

\begin{proof}
First, we set
\begin{equation}\label{DEF-phi}
\phi(\xi, y, t):=\mathcal{F}_{x}[v](\xi, y, t)=\frac{1}{\sqrt{2\pi}}\int_{\R}e^{-ix\xi }v(x, y, t)dx. 
\end{equation}
Then, since $v(x, y, t)$ is the solution to \eqref{approx-CP}, for fixed $\xi \in \R$, we have 
\begin{align*}
& \phi_{t}+(i\xi) \phi_{yy}+\mu \xi^{2}\phi=0, \ \ y\in \R, \ t>0, \\
& \phi(\xi, y, 0) = \mathcal{F}_{x}[u_{0}](\xi, y), \ \ y \in \R. 
\end{align*}
Also, if we set $\chi(\xi, y, t)=e^{\mu t\xi^{2}}\phi(\xi, y, t)$, it follows that 
\begin{align*}
& \chi_{t}+(i\xi)\chi_{yy}=0, \ \ y\in \R, \ t>0, \\
& \chi(\xi, y, 0) = \mathcal{F}_{x}[u_{0}](\xi, y), \ \ y \in \R. 
\end{align*}
Since this is the free Schr$\ddot{\mathrm{o}}$dinger equation, we can see that 
\[
\chi(\xi, y, t)=\frac{1}{\sqrt{4\pi i\xi t}}\int_{\R}e^{-\frac{(y-w)^{2}}{4i \xi t}}\mathcal{F}_{x}[u_{0}](\xi, w)dw, \ \ \xi \neq0, \ y\in \R, \ t>0. 
\]
Therefore, noticing \eqref{DEF-phi}, $\phi(\xi, y, t)$ can be expressed as follows: 
\begin{align}
\p_{y}^{j}\phi(\xi, y, t)
&= \frac{e^{-\mu t\xi^{2}}}{\sqrt{4\pi i\xi t}} \int_{\R}e^{-\frac{(y-w)^{2}}{4i \xi t}}\mathcal{F}_{x}\left[\p_{y}^{j}u_{0}\right](\xi, w)dw \nonumber \\
&=\frac{e^{-\mu t\xi^{2}-\frac{i\pi}{4}\mathrm{sgn} \xi}}{\sqrt{4\pi t|\xi|}}\int_{\R}e^{-\frac{(y-w)^{2}}{4i \xi t}}\mathcal{F}_{x}\left[\p_{y}^{j}u_{0}\right](\xi, w)dw, \ \ \xi \neq0, \ y\in \R, \ t>0. \label{phi-sol}
\end{align} 

In what follows, we shall derive the leading term of $v(x, y, t)$. 
First, let us decompose the integrand in \eqref{phi-sol}. 
From the mean value theorem, there exists $\theta=\theta(y, w, \xi, t) \in (0, 1)$ such that 
\begin{align}
&\int_{\R}e^{-\frac{(y-w)^{2}}{4i \xi t}}\mathcal{F}_{x}\left[\p_{y}^{j}u_{0}\right](\xi, w)dw 
=e^{\frac{iy^{2}}{4 \xi t}}\int_{\R}e^{-\frac{iyw}{2\xi t}}\left(1+\frac{iw^{2}}{4 \xi t}e^{\frac{i\theta w^{2}}{4 \xi t}}\right)\mathcal{F}_{x}\left[\p
_{y}^{j}u_{0}\right](\xi, w)dw \nonumber \\
&=\sqrt{2\pi}e^{\frac{iy^{2}}{4 \xi t}}\mathcal{F}\left[\p_{y}^{j}u_{0}\right]\left(\xi, \frac{y}{2\xi t}\right)
+\frac{ie^{\frac{iy^{2}}{4 \xi t}}}{4 \xi t}\int_{\R}e^{-\frac{iyw}{2\xi t}+\frac{i\theta w^{2}}{4 \xi t}}w^{2}\mathcal{F}_{x}\left[\p_{y}^{j}u_{0}\right](\xi, w)dw. \label{integrand}
\end{align}
Combining \eqref{phi-sol} and \eqref{integrand}, $\p_{y}^{j}\phi(\xi, y, t)$ can be rewritten by 
\begin{align}
\p_{y}^{j}\phi(\xi, y, t)
&=\frac{e^{-\mu t\xi^{2}+\frac{iy^{2}}{4\xi t}-\frac{i\pi}{4}\mathrm{sgn} \xi}}{\sqrt{2t|\xi|}}\mathcal{F}\left[\p_{y}^{j}u_{0}\right]\left(\xi, \frac{y}{2\xi t}\right) \nonumber\\
&\ \ \ +\frac{ie^{-\mu t\xi^{2}+\frac{iy^{2}}{4\xi t}-\frac{i\pi}{4}\mathrm{sgn} \xi}}{8\sqrt{\pi}t^{\frac{3}{2}}|\xi|^{\frac{1}{2}}\xi}\int_{\R}e^{-\frac{iyw}{2\xi t}+\frac{i\theta w^{2}}{4 \xi t}}w^{2}\mathcal{F}_{x}\left[\p_{y}^{j}u_{0}\right](\xi, w)dw \nonumber \\
&=:\mathcal{F}_{x}[W_{j}](\xi, y, t)+\mathcal{F}_{x}[R_{j}](\xi, y, t). \label{phi-sol-re}
\end{align}
Therefore, it follows from \eqref{DEF-phi} and \eqref{phi-sol-re} that 
\begin{equation}\label{v-sol-rewrite}
\p_{x}^{l}\p_{y}^{j}v(x, y, t)=\p_{x}^{l}W_{j}(x, y, t)+\p_{x}^{l}R_{j}(x, y, t). 
\end{equation}
Now, recalling \eqref{re-V} and using the change of variables as $\xi \mapsto -\xi$ , we have 
\begin{align*}
V(-x, y, t)
&=\frac{t^{ -\frac{1}{2} }}{ 4\pi^{ \frac{3}{2} }} \int_{\R} |\xi|^{ -\frac{1}{2} } e^{ -\mu t \xi^{2} -\frac{iy^{2}}{4t\xi} + \frac{i\pi}{4}\mathrm{sgn} \xi -ix\xi} d\xi  
=\frac{1}{2\pi}\mathcal{F}_{\xi}^{-1}\left[ \frac{e^{-\mu t\xi^{2}+\frac{iy^{2}}{4\xi t}-\frac{i\pi}{4}\mathrm{sgn} \xi}}{\sqrt{2t|\xi|}}  \right](x). 
\end{align*}
Therefore, we can see that 
\begin{equation}\label{V-fourier}
2\pi \mathcal{F}_{x}\left[V(-x, y, t)\right](\xi)=\frac{e^{-\mu t\xi^{2}+\frac{iy^{2}}{4\xi t}-\frac{i\pi}{4}\mathrm{sgn} \xi}}{\sqrt{2t|\xi|}}. 
\end{equation}
Thus, it follows from \eqref{phi-sol-re} and \eqref{V-fourier} that 
\begin{align}
W_{j}(x, y, t)
&=\mathcal{F}_{\xi}^{-1}\left[
2\pi \mathcal{F}_{x}\left[V(-x, y, t)\right](\xi) 
\mathcal{F}\left[\p_{y}^{j}u_{0}\right]\left(\xi, \frac{y}{2\xi t}\right) 
\right](x) \nonumber \\
&=\sqrt{2\pi }\left(V(-\cdot, y, t)*\mathcal{F}_{\xi}^{-1}\left[\mathcal{F}\left[\p_{y}^{j}u_{0}\right]\left(\xi, \frac{y}{2\xi t}\right)\right](\cdot)\right)(x). \label{re-W} 
\end{align}
Hence, it follows from \eqref{re-W}, \eqref{DEF-psij} and \eqref{DEF-mathMj} that 
\begin{align*}
&\p_{x}^{l}\left(W_{j}(x, y, t)-\psi_{j}(x, y, t)\right) \\
&=\sqrt{2\pi }\p_{x}^{l}\biggl\{\left(V(-\cdot, y, t)*\mathcal{F}_{\xi}^{-1}\left[\mathcal{F}\left[\p_{y}^{j}u_{0}\right]\left(\xi, \frac{y}{2\xi t}\right)\right](\cdot)\right)(x) \\
&\ \ \ \ \ \ \ \ \ \ \ \ \ \ \ \ -\left(\int_{\R}\mathcal{F}_{\xi}^{-1}\left[\mathcal{F}\left[\p_{y}^{j}u_{0}\right]\left(\xi, \frac{y}{2\xi t}\right)\right](x)dx\right)V(-x, y, t)\biggl\}. 
\end{align*}
In the completely same way to prove Theorem~\ref{thm.L-ap-LZKB}, we immediately obtain 
\begin{equation}\label{v-ap-W-psi}
\lim_{t \to \infty} t^{ \frac{3}{4}+\frac{l}{2} } \left\|\p_{x}^{l}\left(W_{j}(\cdot, \cdot, t)-\psi_{j}(\cdot, \cdot, t)\right)  \right\|_{L^{\infty}} = 0. 
\end{equation}

On the other hand, from the definition of $R_{j}(x, y, t)$ in \eqref{phi-sol-re}, we get 
\begin{align}
\left|\p_{x}^{l}R_{j}(x, y, t)\right| 
&=
\frac{1}{\sqrt{2\pi}}
\left|
\int_{\R} 
w^{2}\left(\int_{\R}(i\xi)^{l}
\frac{ie^{-\mu t\xi^{2}+\frac{iy^{2}}{4\xi t}-\frac{i\pi}{4}\mathrm{sgn} \xi}}{8\sqrt{\pi}t^{\frac{3}{2}}|\xi|^{\frac{1}{2}}\xi}e^{-\frac{iyw}{2\xi t}+\frac{i\theta w^{2}}{4 \xi t}}\mathcal{F}_{x}\left[\p_{y}^{j}u_{0}\right](\xi, w)
e^{ix\xi}d\xi\right)dw
\right| \nonumber\\
&\le 
t^{-\frac{3}{2}}
\begin{cases}
\displaystyle \int_{\R}w^{2}\left\|\mathcal{F}_{x}\left[D_{x}^{-\alpha}\p_{y}^{j}u_{0}\right](\cdot, w)\right\|_{L^{\infty}_{\xi}}
\left(\int_{\R} |\xi|^{\alpha-\frac{3}{2}}e^{-\mu t\xi^{2}}d\xi\right)dw,& l=0, \\[1em]
\displaystyle \int_{\R}w^{2}\left\|\mathcal{F}_{x}\left[\p_{y}^{j}u_{0}\right](\cdot, w)\right\|_{L^{\infty}_{\xi}}
\left(\int_{\R} |\xi|^{-\frac{3}{2}+l}e^{-\mu t\xi^{2}}d\xi\right)dw, & l\ge1
\end{cases} \nonumber \\
&\le C
\begin{cases}
\displaystyle \Gamma\left(\frac{2\alpha-1}{4}\right)\left\|y^{2}D_{x}^{-\alpha}\p_{y}^{j}u_{0}\right\|_{L^{1}}t^{-\frac{5}{4}-\frac{\alpha}{2}}, & l=0,  \\[1em]
\displaystyle \Gamma\left(\frac{-1+2l}{4}\right)\left\|y^{2}\p_{y}^{j}u_{0}\right\|_{L^{1}}t^{-\frac{5}{4}-\frac{l}{2}}, & l\ge1
\end{cases}\nonumber \\
&\le Ct^{-\frac{5}{4}-\frac{l}{2}}, \ \ (x, y)\in \R^{2}, \ t\ge1. \label{R-estimate}
\end{align}

Finally, let us prove the approximation formula \eqref{linear-ap-v}. From \eqref{v-sol-rewrite}, we have 
\begin{align*}
\p_{x}^{l}\p_{y}^{j}v(x, y, t)-\p_{x}^{l}\psi_{j}(x, y, t)
=\p_{x}^{l}\left(W_{j}(x, y, t)-\psi_{j}(x, y, t)\right)+\p_{x}^{l}R_{j}(x, y, t).  
\end{align*}
Therefore, combining \eqref{v-ap-W-psi} and \eqref{R-estimate}, we obtain 
\begin{align*}
&\limsup_{t \to \infty} t^{ \frac{3}{4}+\frac{l}{2} } \left\|\p_{x}^{l}\p_{y}^{j}v(\cdot, \cdot, t)-\p_{x}^{l}\psi_{j}(\cdot, \cdot, t) \right\|_{L^{\infty}} \\
&\le \lim_{t \to \infty} t^{ \frac{3}{4}+\frac{l}{2} } \left\|\p_{x}^{l}\left(W_{j}(\cdot, \cdot, t)-\psi_{j}(\cdot, \cdot, t)\right)  \right\|_{L^{\infty}} +\left(\limsup_{t\to \infty} t^{ \frac{3}{4}+\frac{l}{2} }\left\|\p_{x}^{l}R_{j}(\cdot, \cdot, t)\right\|_{L^{\infty}}\right) \left(\lim_{t\to \infty}t^{-\frac{1}{2}}\right)=0. 
\end{align*}
This completes the proof of the desired result \eqref{linear-ap-v}. 
\end{proof}

\begin{cor}\label{cor.linear-ap-v}
Let $l$ be a non-negative integer and $\alpha>1/2$. Suppose that there exists a non-negative integer j such that $\p_{y}^{j}u_{0}\in L^{1}(\R^{2})$. 
If $y^{2}\p_{y}^{j}u_{0}\in L^{1}(\R^{2})$ when $l$ is a positive integer and $y^{2}D_{x}^{-\alpha}\p_{y}^{j}u_{0}\in L^{1}(\R^{2})$ when $l=0$, 
then we have 
\begin{equation}\label{linear-ap}
\lim_{t \to \infty} t^{ \frac{3}{4}+\frac{l}{2} } \left\|\p_{x}^{l}\p_{y}^{j}\tilde{u}(\cdot, \cdot, t)-\p_{x}^{l}\psi_{j}(\cdot, \cdot, t)  \right\|_{L^{\infty}}=0, 
\end{equation}
where $\tilde{u}(x, y, t)$ is the solution to \eqref{LZKB}, while $\psi_{j}(x, y, t)$ is defined by \eqref{DEF-psi}. 
\end{cor}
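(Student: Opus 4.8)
The plan is to derive \eqref{linear-ap} by combining Theorem~\ref{thm.linear-ap-v} with the decay estimate for the difference $U-V$ given in Corollary~\ref{cor.L-decay-LZKB-ap}. Recall that $\tilde u(x, y, t)=(U(t)*u_{0})(x, y)$ and $v(x, y, t)=(V(t)*u_{0})(x, y)$, so that
\[
\p_{x}^{l}\p_{y}^{j}\tilde u(\cdot, \cdot, t)-\p_{x}^{l}\psi_{j}(\cdot, \cdot, t)
=\p_{x}^{l}\p_{y}^{j}\left((U-V)(t)*u_{0}\right)(\cdot, \cdot)
+\left(\p_{x}^{l}\p_{y}^{j}v(\cdot, \cdot, t)-\p_{x}^{l}\psi_{j}(\cdot, \cdot, t)\right).
\]
First I would multiply by $t^{3/4+l/2}$, take the $L^{\infty}$-norm, and apply the triangle inequality, reducing the claim to showing that each of the two resulting terms tends to $0$ as $t\to\infty$.

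For the term involving $U-V$, since $\p_{y}^{j}u_{0}\in L^{1}(\R^{2})$ by hypothesis, Corollary~\ref{cor.L-decay-LZKB-ap} gives $\left\|\p_{x}^{l}\p_{y}^{j}((U-V)(t)*u_{0})\right\|_{L^{\infty}}\le Ct^{-5/4-l/2}\left\|\p_{y}^{j}u_{0}\right\|_{L^{1}}$, whence
\[
t^{\frac{3}{4}+\frac{l}{2}}\left\|\p_{x}^{l}\p_{y}^{j}\left((U-V)(t)*u_{0}\right)(\cdot, \cdot)\right\|_{L^{\infty}}\le Ct^{-\frac{1}{2}}\longrightarrow 0, \quad t\to\infty.
\]
For the remaining term, the hypotheses of the corollary ($\p_{y}^{j}u_{0}\in L^{1}$, together with $y^{2}\p_{y}^{j}u_{0}\in L^{1}$ when $l\ge1$ and $y^{2}D_{x}^{-\alpha}\p_{y}^{j}u_{0}\in L^{1}$ when $l=0$) are precisely those of Theorem~\ref{thm.linear-ap-v}, so that theorem applies and yields $\lim_{t\to\infty}t^{\frac{3}{4}+\frac{l}{2}}\left\|\p_{x}^{l}\p_{y}^{j}v(\cdot,\cdot,t)-\p_{x}^{l}\psi_{j}(\cdot,\cdot,t)\right\|_{L^{\infty}}=0$. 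Adding the two estimates completes the argument.

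I do not expect any genuine obstacle in this corollary: the substantive work has already been carried out in Theorem~\ref{thm.linear-ap-v}, where the partial Fourier transform of $v$ in $x$ is written via the free Schr\"odinger propagator and the parabolic factor $e^{-\mu t\xi^{2}}$ is exploited to control the Taylor remainder, and in Proposition~\ref{prop.L-ap-U} and Corollary~\ref{cor.L-decay-LZKB-ap}, where the dispersion term $\tilde u_{xxx}$ is shown to contribute only at order $t^{-5/4-l/2}$. The corollary merely transfers the asymptotic profile $\psi_{j}$ from $v$ to $\tilde u$; the only point that merits a brief check is the bookkeeping of hypotheses, together with the numerology that the negligibility order $t^{-5/4-l/2}$ of $U-V$ beats the target order $t^{-3/4-l/2}$ by exactly $t^{-1/2}$.
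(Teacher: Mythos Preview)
Your proposal is correct and follows essentially the same approach as the paper: decompose $\p_{x}^{l}\p_{y}^{j}\tilde u-\p_{x}^{l}\psi_{j}$ as $\p_{x}^{l}\p_{y}^{j}((U-V)(t)*u_{0})+(\p_{x}^{l}\p_{y}^{j}v-\p_{x}^{l}\psi_{j})$, apply Corollary~\ref{cor.L-decay-LZKB-ap} to the first term and Theorem~\ref{thm.linear-ap-v} to the second, and use the $t^{-1/2}$ gain to conclude. The hypothesis bookkeeping and the numerology you flag are exactly what the paper's proof relies on.
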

\begin{proof}
It follows from \eqref{LZKB-sol} and \eqref{approx-sol} that 
\begin{align*}
\p_{x}^{l}\p_{y}^{j}\tilde{u}(x, y, t)-\p_{x}^{l}\psi_{j}(x, y, t) 
=\p_{x}^{l}\p_{y}^{j}\left((U-V)(t)*u_{0}\right)(x, y)+\p_{x}^{l}\p_{y}^{j}v(x, y, t)-\p_{x}^{l}\psi_{j}(x, y, t). 
\end{align*}
Therefore, applying Corollary~\ref{cor.L-decay-LZKB-ap} and Theorem~\ref{thm.linear-ap-v}, we obtain 
\begin{align*}
&\limsup_{t \to \infty} t^{ \frac{3}{4}+\frac{l}{2} } \left\|\p_{x}^{l}\p_{y}^{j}\tilde{u}(\cdot, \cdot, t)-\p_{x}^{l}\psi_{j}(\cdot, \cdot, t)  \right\|_{L^{\infty}} \\
&\le \left(\limsup_{t \to \infty} t^{ \frac{5}{4}+\frac{l}{2} } \left\|\p_{x}^{l}\p_{y}^{j}\left((U-V)(t)*u_{0}\right)(\cdot, \cdot)\right\|_{L^{\infty}}\right) \left(\lim_{t \to \infty} t^{ -\frac{1}{2}} \right)\\
&\ \ \ \ +\lim_{t \to \infty} t^{ \frac{3}{4}+\frac{l}{2} } \left\|\p_{x}^{l}\p_{y}^{j}v(\cdot, \cdot, t)-\p_{x}^{l}\psi_{j}(\cdot, \cdot, t)  \right\|_{L^{\infty}}=0. 
\end{align*}
This completes the proof of the desired result \eqref{linear-ap}. 
\end{proof}

Finally, we can prove the asymptotic formula \eqref{u-asymptotic}, i.e. Theorem~\ref{thm.main-u-asymptotic} is true: 
\begin{proof}[\rm{\bf{End of the proof of Theorem~\ref{thm.main-u-asymptotic}}}]
It directly follows from Theorems~\ref{thm.main-approximation}, \ref{thm.linear-ap-v} and \eqref{psi=psi0} that 
\begin{align*}
&\lim_{t\to \infty}t^{\frac{3}{4}+\frac{l}{2}}\left\|\p_{x}^{l}\left(u(\cdot, \cdot, t)-\psi(\cdot, \cdot, t)\right)\right\|_{L^{\infty}} \\
&\le \lim_{t\to \infty}t^{\frac{3}{4}+\frac{l}{2}}\left\|\p_{x}^{l}\left(u(\cdot, \cdot, t)-v(\cdot, \cdot, t)\right)\right\|_{L^{\infty}}
+\lim_{t\to \infty}t^{\frac{3}{4}+\frac{l}{2}}\left\|\p_{x}^{l}\left(v(\cdot, \cdot, t)-\psi(\cdot, \cdot, t)\right)\right\|_{L^{\infty}} 
=0. 
\end{align*}
Thus, the desired asymptotic formula \eqref{u-asymptotic} has been established. 
\end{proof}

\section*{Acknowledgments}
The first author is supported by Grant-in-Aid for Young Scientists Research No.22K13939, Japan Society for the Promotion of Science. 
The second author is supported by Grant-in-Aid for Young Scientists Research~(B) No.17K14220, Japan Society for the Promotion of Science. 
The authors also would like to thank the anonymous referees for their helpful and valuable comments on this paper.



\bigskip
\par\noindent
\begin{flushleft}Ikki Fukuda\\
Division of Mathematics and Physics, \\
Faculty of Engineering, \\
Shinshu University, \\
4-17-1, Wakasato, Nagano, 380-8553, JAPAN\\
E-mail: i\_fukuda@shinshu-u.ac.jp
\vskip15pt
Hiroyuki Hirayama\\
Faculty of Education, \\
University of Miyazaki, \\
1-1, Gakuenkibanadai-nishi, Miyazaki, 889-2192, JAPAN\\
E-mail: h.hirayama@cc.miyazaki-u.ac.jp
\end{flushleft}

\end{document}